\documentclass[a4paper, 11pt,reqno]{amsart}
\synctex=1
\usepackage[top = 1in, bottom = 1in, left=1.1in, right=1.1in]{geometry}

\usepackage{amssymb,amsthm,amsmath,graphicx,xcolor,mathtools,enumerate,mathrsfs}
\usepackage{tabularx}
\usepackage[shortlabels]{enumitem} 
\usepackage[british]{babel}
\usepackage[utf8]{inputenc}
\usepackage{tikz}
\usetikzlibrary{patterns}

\allowdisplaybreaks



\usepackage[mathlines]{lineno}
\usepackage{etoolbox} 

\newcommand*\linenomathpatch[1]{%
	\expandafter\pretocmd\csname #1\endcsname {\linenomath}{}{}%
	\expandafter\pretocmd\csname #1*\endcsname{\linenomath}{}{}%
	\expandafter\apptocmd\csname end#1\endcsname {\endlinenomath}{}{}%
	\expandafter\apptocmd\csname end#1*\endcsname{\endlinenomath}{}{}%
}
\newcommand*\linenomathpatchAMS[1]{%
	\expandafter\pretocmd\csname #1\endcsname {\linenomathAMS}{}{}%
	\expandafter\pretocmd\csname #1*\endcsname{\linenomathAMS}{}{}%
	\expandafter\apptocmd\csname end#1\endcsname {\endlinenomath}{}{}%
	\expandafter\apptocmd\csname end#1*\endcsname{\endlinenomath}{}{}%
}

\expandafter\ifx\linenomath\linenomathWithnumbers
\let\linenomathAMS\linenomathWithnumbers
\patchcmd\linenomathAMS{\advance\postdisplaypenalty\linenopenalty}{}{}{}
\else
\let\linenomathAMS\linenomathNonumbers
\fi

\linenomathpatchAMS{gather}
\linenomathpatchAMS{multline}
\linenomathpatchAMS{align}
\linenomathpatchAMS{alignat}
\linenomathpatchAMS{flalign}
\linenomathpatch{equation}



\usepackage{hyperref}
\hypersetup{colorlinks, linkcolor={red!50!black}, citecolor={green!50!black}, urlcolor={blue!50!black}}

\usepackage{caption}
\captionsetup{font=footnotesize}
\usepackage{subcaption}

\usepackage{cleveref}
\theoremstyle{plain}
\newtheorem{theorem}{Theorem}[section]
\crefname{theorem}{Theorem}{Theorems}

\newtheorem{proposition}[theorem]{Proposition}
\crefname{proposition}{Proposition}{Propositions}

\newtheorem{corollary}[theorem]{Corollary}
\crefname{corollary}{Corollary}{Corollaries}

\newtheorem{lemma}[theorem]{Lemma}
\crefname{lemma}{Lemma}{Lemmas}

\newtheorem{conjecture}[theorem]{Conjecture}
\crefname{conjecture}{Conjecture}{Conjectures}

\crefname{problem}{Problem}{Problem}

\newtheorem{claim}[theorem]{Claim}
\crefname{claim}{Claim}{Claims}

\newtheorem{observation}[theorem]{Observation}
\crefname{observation}{Observation}{Observations}

\crefname{setup}{Setup}{Setups}

\crefname{fact}{Fact}{Facts}

\newtheorem{algorithm}[theorem]{Algorithm}
\crefname{algorithm}{Algorithm}{Algorithms}

\crefname{remark}{Remark}{Remarks}

\crefname{example}{Example}{Examples}

\theoremstyle{definition}
\newtheorem{definition}[theorem]{Definition}
\crefname{definition}{Definition}{Definitions}

\newtheorem{construction}[theorem]{Construction}
\crefname{construction}{Construction}{Constructions}

\crefname{question}{Question}{Questions}

\numberwithin{equation}{section}

\crefname{section}{Section}{Sections}
\crefname{appendix}{Appendix}{Appendices}

\crefname{figure}{Figure}{Figures}

 

\newcommand{\rf}[1]{\cref{#1} (\nameref*{#1})}
\newcommand{\fr}[1]{\nameref*{#1} (\cref{#1})}

\definecolor{DarkDesaturatedBlue}{HTML}{3A3556}
\definecolor{VividOrange}{HTML}{F15918}
\definecolor{PureOrange}{HTML}{FFBA00}
\definecolor{LightGrayishPink}{HTML}{EEC5D5}
\definecolor{VerySoftBlue}{HTML}{B5AFDB}


\renewcommand{\vec}{\mathbf}
\DeclareMathOperator{\probability}{Pr}
\DeclareMathOperator{\expectation}{\mathbf{E}}
\newcommand{\es}{\emptyset}
\newcommand{\eps}{\varepsilon}
\renewcommand{\rho}{\varrho}
\newcommand{\sm}{\setminus}
\renewcommand{\subset}{\subseteq}

\newcommand{\NATS}{\mathbb{N}}
\newcommand{\REALS}{\mathbb{R}}
\newcommand{\INTS}{\mathbb{Z}}
\newcommand{\maxnorm}[1]{\lVert #1 \rVert_{\infty}}
\newcommand{\Lonenorm}[1]{\lVert #1 \rVert_{1}}

\def\le{\leqslant}
\def\leq{\leqslant}
\def\ge{\geqslant}
\def\geq{\geqslant}

\newcommand{\cA}{\mathcal{A}}
\newcommand{\cB}{\mathcal{B}}
\newcommand{\cC}{\mathcal{C}}

\newcommand{\cF}{\mathcal{F}}
\newcommand{\cG}{\mathcal{G}}
\newcommand{\cH}{\mathcal{H}}

\newcommand{\cJ}{\mathcal{J}}

\newcommand{\cM}{\mathcal{M}}

\newcommand{\cQ}{\mathcal{Q}}

\newcommand{\cV}{\mathcal{V}}

\newcommand{\cX}{\mathcal{X}}
\newcommand{\cY}{\mathcal{Y}}

\usepackage[mathscr]{eucal}

\newcommand{\fU}{\mathscr{U}}
\newcommand{\fV}{\mathscr{V}}
\newcommand{\fW}{\mathscr{W}}
\newcommand{\fF}{\mathscr{F}}
\newcommand{\fJ}{\mathscr{J}}
\newcommand{\fI}{\mathscr{I}}
\newcommand{\fZ}{\mathscr{Z}}
\newcommand{\fX}{\mathscr{X}}
\newcommand{\tfX}{\mathscr{\tilde X}}

\newenvironment{proofclaim}[1][Proof of the claim]{\begin{proof}[#1]}{\end{proof}}

\title[]{On sufficient conditions for  \\ spanning structures in dense graphs}
\date{\today}

\author[R.~Lang]{Richard Lang}
\address[R.~Lang]{Universität Hamburg,
	Fachbereich Mathematik,
	Bundesstraße 55,
	20146 Hamburg, Germany}
\email{richard.lang@uni-hamburg.de}

\author[N.~Sanhueza-Matamala]{Nicolás Sanhueza-Matamala}
\address[N.~Sanhueza-Matamala]{Departamento de Ingeniería Matemática, Facultad de Ciencias Físicas y Matemáticas, Universidad de Concepción, Chile}
\email{nicolas@sanhueza.net}

\thanks{The research leading to these results was supported by the Czech Science Foundation, grant number GA19-08740S with institutional support RVO: 67985807, also by ANID-Chile through the FONDECYT Iniciación Nº11220269 grant (N.~Sanhueza-Matamala) and also by the Deutsche Forschungsgemeinschaft (DFG, German Research Foundation) -- 450397222 (R. Lang)}

\subjclass{05C45 (05C07 05C35)}

\begin{document}
\begin{abstract}
	We study structural conditions in dense graphs that guarantee the existence of vertex-spanning substructures such as Hamilton cycles.
	It is easy to see that every Hamiltonian graph is connected, has a perfect fractional matching and, excluding the bipartite case, contains an odd cycle.
	A simple consequence of the Robust Expander Theorem of K\"uhn, Osthus and Treglown tells us that any large enough graph that robustly satisfies these properties must already be Hamiltonian.
	Our main result generalises this phenomenon to powers of cycles and graphs of sublinear bandwidth subject to natural generalisations of connectivity, matchings and odd cycles.
	
	This answers a question of Ebsen, Maesaka, Reiher, Schacht and Sch\"ulke and solves the embedding problem that underlies multiple lines of research on sufficient conditions for spanning structures in dense graphs.
	As applications, we recover and establish Bandwidth Theorems in a variety of settings including Ore-type degree conditions, Pósa-type degree conditions, deficiency-type conditions, locally dense and inseparable graphs, multipartite graphs as well as robust expanders.
\end{abstract}

\maketitle
\thispagestyle{empty}
\vspace{-0.4cm}
 
\section{Introduction}
An old question in discrete mathematics is whether a given graph contains certain vertex-spanning substructures, such as a Hamilton cycle.
Since the corresponding decision problems are often computationally intractable, we do not expect to find `simple' characterisations of the graphs that contain a particular spanning structure.
The extremal approach to these questions has therefore focused on easily-verifiable sufficient conditions.
A classic example in this direction is Dirac's theorem~\cite{Dir52}, which states that every graph on $n \geq 3$ vertices and minimum degree at least $n/2$ contains a Hamilton cycle.
Since its inception, Dirac's theorem has been extended in numerous ways by replacing the assumptions on the host graph and the conditions on the guest graph~\cite{Gou14,KO14}.
Here, we propose a general framework to approach these problems in dense graphs, and apply it to prove several new results.

One natural way to extend Dirac's theorem is to weaken the assumptions on the host graph.
Simple constructions show that a minimum degree of $n/2$ is best-possible.
However, as it turns out, not all of the vertices need to have this degree in order to ensure that the graph is Hamiltonian.
A well-known theorem of Ore~\cite{Ore60} states that a graph on $n$ vertices contains a Hamilton cycle if $\deg(u) + \deg(v) \geq n$ for all pairs of non-adjacent vertices $u$ and~$v$.
Pósa~\cite{Pos62} extended this by showing that a graph on $n$ vertices contains a Hamilton cycle provided that its degree sequence $d_1 \leq \dots \leq d_n$ satisfies $d_i \geq i+1$ for all $i < n/2$.\footnote{Indeed, a quick exercise shows that every graph satisfying Ore's conditions also satisfies Pósa's conditions.}
A decade later, Chvátal~\cite{Chv72} gave a complete characterisation of the integer sequences that guarantee Hamiltonicity for graphs of pointwise greater degree sequence.
Similar results have been obtained in the setting of bipartite host graphs~\cite{MM63}.
More recently, structural assumptions such as local density together with inseparability, deficiency conditions and robust expansion have been investigated, which we will discuss in more detail below.

Another way to generalise Dirac's theorem consists in embedding structurally more complex guest graphs, such as clique factors and powers of cycles.
A \emph{$k$-clique factor} in a graph $G$ is a collection of pairwise disjoint $k$-cliques (complete graphs on $k$ vertices) that cover all vertices of $G$.
Note that this concept generalises perfect matchings as the latter are simply $2$-clique factors. 
It was conjectured by Erd\H{o}s~\cite{Erd64a} and proved by Hajnal and Szemer\'edi~\cite{HS70} that graphs on $n$ vertices and minimum degree at least $(k-1)n/k$ have a $k$-clique factor,
assuming the obvious necessary condition that $k$ divides $n$.
Similarly, the notion of cycles can be generalised in terms of their powers.
The \emph{$k$th power} (or \emph{square} when $k=2$) of a graph $G$ is obtained from $G$ by joining any two vertices at distance at most $k$.\footnote{The \emph{distance} between two vertices in a common component is the number of edges in a shortest path connecting them.
Vertices of distinct components have infinite distance.}
Pósa (see~\cite{Erd64a}) (for $k=3$) and Seymour~\cite{Sey73} (for $k\geq3$) conjectured that any graph on $n$ vertices with minimum degree at least $(k-1)n/k$ contains the $(k-1)$st power of a Hamilton cycle.
Note that this presents a generalisation of the Hajnal--Szemerédi Theorem.
The conjecture was confirmed by Koml\'os, S\'ark\"ozy and Szemer\'edi~\cite{KSS98b} for sufficiently large  $n$.
While powers of cycles might appear to be a somewhat particular class of graphs, their embedding turned out to be an important milestone with regards to embedding the much richer class {of $k$-colourable graphs with} bounded degree and sublinear bandwidth (as defined below) under essentially the same minimum degree conditions necessary for $k$-clique factors.
This last result is known as the Bandwidth Theorem and was proved by Böttcher, Schacht and Taraz~\cite{BST09}.

In recent years there has been a surge of activity in the study of whether and when the above assumptions on the host graph allow us to embed increasingly complex guest graphs (as surveyed below).
Many of these results are proved using similar embedding techniques, but differ in their structural analysis.
One might therefore wonder whether there is a common structural base that `sits between' all of these assumptions and (variations of) Hamiltonicity.
For (ordinary) Hamilton cycles, this was done by Kühn, Osthus and Treglown~\cite{KOT2010} by means of the notion of {robust expanders}.
This was further extended to a wider class of $2$-colourable graphs by Knox and Treglown~\cite{KT13}.
Ebsen, Maesaka, Reiher, Schacht and Schülke~\cite{EMR+20} raised the question of generalising the concept of robust expanders to handle powers of cycles and other $k$-colourable graphs.

In this paper we resolve this question.
In particular, we introduce the notion of \emph{Hamilton frameworks} (\cref{def:Hamilton-framework}), which comprises the characteristic properties of powers of Hamilton cycles, while remaining computationally tractable.
Our main results (\cref{thm:main-powham,thm:main-bandwidth}) state that graphs that have a robust Hamilton framework are (in a strong sense) Hamiltonian and admit the corresponding powers of cycles.
As an application we can easily recover the above mentioned contributions and also prove several new results, including multiple conjectures.

\newcommand{\bw}{\mathrm{bw}}
\section{Applications}\label{sec:applications}
To formulate extensions of the Bandwidth Theorem, we introduce some further notation.
We denote the vertex and edge set of a graph $G$ by $V(G)$ and $E(G)$, respectively.
We write $v(G)=|V(G)|$ for its \emph{order} and $e(G)=|E(G)|$ for its number of edges.
A graph $H$ admits an ordering with \emph{bandwidth at most $b$} if the vertices of $H$ can be labelled with $1, \dotsc, n$ such that {$|i - j| \leq b$ for all edges $ij$}. We abbreviate this by $\bw(H) \leq b$.
The chromatic number and maximum degree of $H$ are denoted by $\chi(H)$ and $\Delta(H)$, respectively.
\begin{definition}[$(\beta, \Delta, k)$-Hamiltonian]
	A graph $G$ on $n$ vertices is \emph{$(\beta,\Delta,k)$-Hamiltonian} if $G$ contains every graph $H$ on $n$ vertices with $\chi(H) \leq k$, $\Delta(H)\leq \Delta$ and $\bw(H) \leq  \beta n$.
\end{definition}
Note that, for $\beta >0$ and $n$ large enough, the embedded guest graphs include $(k-1)$st powers of cycles $H$ on $n$ vertices when $n$ is divisible by $k$, since $H$ is $k$-colourable in this case.
Following Allen, Böttcher, Ehrenmüller and Taraz~\cite{ABET2020}, we also consider $(k+1)$-coloured graphs where one colour is used only sparingly, as follows.
A \emph{$\beta$-block} of $\{1, \dotsc, n\}$ is an interval of the type $\{ (i-1) \lceil \beta n \rceil + 1, \dotsc, i \lceil \beta n \rceil \}$ for some $1 \leq i \leq \beta^{-1}$.
A $(k+1)$-colouring $\chi\colon \{1, \dotsc, n\} \rightarrow \{0, 1, \dotsc, k\}$ is said to be \emph{$(z,\beta)$-zero-free} if, among every $z$ consecutive $\beta$-blocks at most one of them uses colour~$0$.
\begin{definition}[$(z,\beta, \Delta, k)$-Hamiltonian]
	A graph $G$ on $n$ vertices is \emph{$(z,\beta,\Delta,k)$-Hamiltonian} if $G$ contains all graphs $H$ on $n$ vertices with $\Delta(H)\leq \Delta$ which admit an ordering that certifies simultaneously a bandwidth of at most $\beta n$ and a $(z,\beta)$-zero-free $(k+1)$-colouring.
\end{definition}
We remark that the embedded guest graphs in this definition cover $(k-1)$st powers of cycles $H$ (of any order), when $\beta>0$, $z \leq 1/\beta$ and $n$ is large enough.
Moreover, a $(\beta,\Delta,k+1)$-Hamiltonian graph is $(z,\beta,\Delta,k)$-Hamiltonian for any $k,\Delta,z,\beta$.
Given this terminology, we can now formally state the aforementioned result of Böttcher, Schacht and Taraz~\cite{BST09}.
\begin{theorem}[Bandwidth Theorem]
	For any $\mu > 0$, $k \geq 2$, $\Delta > 0$, there exists $z, \beta > 0$ such that every sufficiently large graph $G$ on $n$ vertices with $\delta(G) \geq \frac{k-1}{k} n + \mu n$ is $(z, \beta, \Delta, k)$-Hamiltonian.
\end{theorem}

In the following, we present a series of consequences of our main results (\cref{thm:main-powham,thm:main-bandwidth}).
We remark that each of these applications benefits from already established structural insights (e.g. clique factors, connectivity), which are used as black boxes.
The proofs of these results can be found in \cref{sec:proof-applications}.

\subsection{Robust expansion} \label{sec:robustexpansion}
\newcommand{\RN}{\textrm{\upshape{RN}}}
Kühn, Osthus and Treglown~\cite{KOT2010} introduced the concept of \emph{robust expanders} to study Hamilton cycles in dense graphs and digraphs.\footnote{Similar concepts have been studied by Hefetz, Krivelevich and Szabó~\cite{HKS09} and Brandt, Broersma, Diestel and Kriesell~\cite{BBD+06}.}
Given $0 < \nu \leq \tau < 1$, a graph $G$ on $n$ vertices and $S \subseteq V(G)$, the \emph{$\nu$-robust neighbourhood $\RN_{\nu, G}(S)$ of $S$} is the set of vertices $v \in V(G)$ with $\deg_G(v, S) \geq \nu n$.
We say $G$ is a \emph{robust $(\nu, \tau)$-expander} if for every $S \subseteq V(G)$ with $\tau n \leq |S| \leq (1 - \tau)n$, we have $|\RN_{\nu, G}(S)| \geq |S| + \nu n$.

Robust expanders of sufficiently large minimum degree contain Hamilton cycles~\cite{KOT2010}.
Knox and Treglown~\cite{KT13} extended this to the bandwidth setting.
\begin{theorem}[Hamiltonicity under robust expansion] \label{thm:knoxtreglown-robustexpanders}
	For all $\eta,\Delta>0$, there are $\tau,\nu,\beta >0$ with $\nu \leq \tau$ and $n_0 \in \NATS$ such that the following holds.
	Let $G$ be a graph on $n \geq n_0$ vertices with $\delta(G) \geq \eta n$ which is a robust $(\nu, \tau)$-expander.
	Then $G$ is $(\beta, \Delta, 2)$-Hamiltonian.
\end{theorem}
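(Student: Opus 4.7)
The plan is to derive \cref{thm:knoxtreglown-robustexpanders} from the paper's main embedding result \cref{thm:main-bandwidth} by exhibiting, on any host graph $G$ satisfying the hypotheses, a robust Hamilton framework (\cref{def:Hamilton-framework}) with $k = 2$. Since \cref{thm:main-bandwidth} then yields that $G$ is $(z, \beta, \Delta, 2)$-Hamiltonian, and any bipartite $H$ on $n$ vertices admits a $(z, \beta)$-zero-free $3$-colouring by lifting its proper $2$-colouring to one using colours $\{1,2\} \subseteq \{0,1,2\}$ (colour $0$ is unused, so the zero-free condition is vacuous and the colouring is compatible with any bandwidth ordering), the desired $(\beta, \Delta, 2)$-Hamiltonicity follows.

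For the parameter chain, given $\eta, \Delta > 0$, I would pick $\mu$ with $\mu \ll \eta$, apply \cref{thm:main-bandwidth} with $k = 2$, $\Delta$, and $\mu$ to obtain $\beta, z > 0$ and the internal framework thresholds, and then choose $0 < \nu \ll \tau \ll \mu, \eta$. With $k = 2$ the framework reduces to a robust form of connectivity and of a fractional perfect matching (no odd-cycle condition is required, since the guest graphs are bipartite). Robust connectivity is immediate: for every $S \subseteq V(G)$ with $\tau n \le |S| \le (1 - \tau) n$, robust expansion gives $|\RN_{\nu, G}(S)| \ge |S| + \nu n$, which both forbids any small cut from disconnecting $G$ and persists under the mild perturbations built into the framework. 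For the robust fractional perfect matching, I would verify (the robust analogue of) Hall's condition in two regimes: when $|S| \ge \tau n$, robust expansion delivers $|N(S)| \ge |\RN_{\nu, G}(S)| \ge |S| + \nu n$; when $|S| < \tau n$, the minimum-degree hypothesis $\delta(G) \ge \eta n$ alone yields $|N(S)| \ge \eta n \gg |S|$ since $\tau \ll \eta$. Combining the two cases produces a fractional perfect matching whose existence persists under the $o(n)$-sized perturbations that the robustness quantifier introduces.

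The main obstacle is translating the geometric notion of robust expansion into the exact quantitative form that the framework demands --- in particular, verifying that the fractional matching persists under the small perturbations encoded in \cref{def:Hamilton-framework}, rather than merely existing in $G$ itself. The two hypotheses play complementary roles here: robust expansion handles all $S$ of linear size, while the minimum-degree assumption $\delta(G) \ge \eta n$ covers the sublinear regime where expansion is silent; neither alone suffices for the argument as presented, which is also the reason why both hypotheses appear in \cref{thm:knoxtreglown-robustexpanders}.
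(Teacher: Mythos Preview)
Your proposal has a genuine gap. You assert that ``no odd-cycle condition is required, since the guest graphs are bipartite'' and aim to extract $(z, \beta, \Delta, 2)$-Hamiltonicity from a bare (non-aperiodic, non-zero-free) robust Hamilton framework. But \cref{thm:main-bandwidth} with $r = 1$ yields a conclusion only under item~\ref{item:main-bandwidth-aperiodic} (aperiodic framework, giving $(\beta, \Delta, k)$-Hamiltonicity) or item~\ref{item:main-bandwidth-zerofree} (zero-free framework, giving $(z, \beta, \Delta, k)$-Hamiltonicity); a plain $\mu$-robust Hamilton framework with $r = 1$ gives neither. Your route through $(z, \beta, \Delta, 2)$-Hamiltonicity would therefore require zero-freeness, i.e.\ a triangle in every $\mu$-approximation of $G$. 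The paper explicitly notes (end of \cref{sec:robustexpansion}) that this fails: there exist triangle-free robust expanders, so item~\ref{item:main-bandwidth-zerofree} does not apply here.

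The paper instead shows (\cref{pro:expander-to-framework}) that $(G, G)$ is a $\mu$-robust \emph{aperiodic} Hamilton framework and invokes \cref{thm:main-bandwidth}\ref{item:main-bandwidth-aperiodic} to obtain $(\beta, \Delta, 2)$-Hamiltonicity directly. The missing ingredient is easy: a robust $(\nu, \tau)$-expander with $\delta(G) \geq \eta n$ cannot be bipartite (taking $S$ to be the part of size at least $n/2$ violates expansion, while $\delta(G) \geq \eta n$ forces both parts to have size at least $\eta n > \tau n$), and this persists in every $(\mu, \mu)$-approximation since such approximations remain robust $(\nu/2, \tau/2)$-expanders with $\delta \geq (\eta/2)n$. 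Your verification of connectivity and of the fractional matching via a Hall-type argument in two regimes is essentially what the paper does (it phrases the matching part through \cref{thm:tutte}); only the aperiodicity check is missing, and once added the detour through $(z, \beta, \Delta, 2)$-Hamiltonicity and zero-free colourings becomes unnecessary.
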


We recover~\cref{thm:knoxtreglown-robustexpanders} by reducing it to our notion of Hamilton frameworks (\cref{pro:expander-to-framework}).
In fact, it can be observed that the notions of robust expansion (plus minimum degree) and robust Hamilton frameworks are essentially equivalent (\cref{pro:framework-to-expander}).

In the next section, we introduce \emph{Hamilton frameworks} which generalise the combination of robust expanders with linear minimum degree.
Our main result then extends \cref{thm:knoxtreglown-robustexpanders} to $(\beta, \Delta, k)$-Hamiltonicity for all $k\geq 2$.

\subsection{Ore-type conditions}
Motivated by the Hajnal--Szemer\'edi Theorem, Kierstead and Kostochka~\cite{KK08} investigated optimal Ore-type conditions which ensure the existence of clique factors and proved the following result.
\begin{theorem}[Clique factors under Ore-type conditions]\label{thm:ore-clique-factor}
	For $n$ divisible by $k$, let $G$ be a graph on $n$ vertices with $\deg(x) + \deg (y	) \geq 2 \frac{k-1}{k} n-1$ for all $xy \notin E(G)$.
	Then $G$ contains a $k$-clique factor.
\end{theorem}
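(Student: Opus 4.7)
The plan is to follow the Hajnal--Szemer\'edi strategy, modified to accommodate the few vertices of low degree whose existence is permitted by an Ore-type (as opposed to minimum-degree) hypothesis. The guiding observation is that such vertices must be very tightly clustered: if
\[
	L = \{v \in V(G) : \deg(v) \leq (k-1)n/k - 1\}
\]
and $x, y \in L$ were non-adjacent, then $\deg(x) + \deg(y) \leq 2(k-1)n/k - 2$, contradicting the hypothesis. Hence $L$ induces a clique in $G$. If $L = \emptyset$, then $\delta(G) \geq (k-1)n/k$ and the Hajnal--Szemer\'edi theorem produces the desired $K_k$-factor immediately.

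Assuming $L \neq \emptyset$, write $|L| = qk + r$ with $0 \leq r < k$. Since $L$ is a clique, $qk$ of its vertices split into $q$ disjoint $K_k$'s, which we reserve for the final factor. The remaining $r$ low-degree vertices must be bundled together with $k - r$ common neighbours in $V \setminus L$ that induce a clique. Here the Ore condition plays a second crucial role: any vertex $u \in V \setminus L$ non-adjacent to some $v \in L$ satisfies $\deg(u) \geq 2(k-1)n/k - 1 - \deg(v) \geq (k-1)n/k$, so such vertices are abundant and have very large common neighbourhoods within $V \setminus L$. A short greedy argument then locates the required $(k-r)$-clique extending the leftover $L$-vertices.

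Let $S$ be the union of the $K_k$'s constructed so far and set $G' = G - S$. A tight counting argument using the strong degree bounds available on $V \setminus L$ (via the Ore hypothesis) shows that $\delta(G') \geq (k-1)|V(G')|/k$. Applying Hajnal--Szemer\'edi to $G'$ yields a $K_k$-factor of $G'$, which together with the cliques reserved previously completes a $K_k$-factor of $G$.

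The main obstacle is the second paragraph: the $k - r$ absorbing vertices from $V \setminus L$ must be chosen so as not to destroy the minimum-degree bound needed for Hajnal--Szemer\'edi on $G'$. In the extremal regime $|L| \approx (k-1)n/k$, only $\approx n/k$ vertices lie outside $L$, so the available slack is very small. A careful global choice of absorbers, combined with a delicate use of the Ore hypothesis on both sides of the partition $L \cup (V \setminus L)$, is required to make the counting in the third paragraph go through; this is where the technical heart of the argument lies.
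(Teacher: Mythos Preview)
First, note that the paper does not prove this theorem: it is the result of Kierstead and Kostochka, quoted here as a black box and used as an input to later arguments.

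Your outline has a genuine gap. The claim that $G' = G - S$ satisfies $\delta(G') \geq (k-1)|V(G')|/k$ is false in general, and the failure is not about absorber choice. Take $k=3$, $n=30$; let $L = \{1,\dots,9\}$ be a clique, $A = \{10,\dots,20\}$, $B = \{21,\dots,30\}$, with $L$--$A$ complete bipartite, $L$--$B$ empty, and $A \cup B$ a clique except that the edges from vertex $10$ to $12,\dots,20$ are deleted. One checks directly that every non-adjacent pair has degree sum at least $39 = 2(k-1)n/k - 1$ and that your low-degree set is exactly $\{1,\dots,9\}$. Since $3$ divides $|L|$, your recipe takes $S = L$ with $r = 0$ and no absorbers to choose whatsoever; yet vertex $10$ has $\deg_G(10) = 20$ with all nine $L$-vertices among its neighbours, so $\deg_{G'}(10) = 11 < 14 = (k-1)|V(G')|/k$. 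The phenomenon is structural: a vertex of $V \setminus L$ adjacent to all of $L$ but of total degree only $(k-1)n/k$ will always fall below the Dirac threshold once $L$ is removed, and the Ore hypothesis does not forbid such vertices. The Kierstead--Kostochka proof does not reduce to Hajnal--Szemer\'edi as a black box; it adapts the Hajnal--Szemer\'edi extremal argument itself to work with degree sums of non-adjacent pairs rather than with a uniform minimum degree.
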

Note that the result is tight, as witnessed, for instance, by slightly imbalanced complete $k$-partite graphs.
As an extension of this, Kühn, Osthus and Treglown~\cite{KOT09} showed an Ore-type result for general $F$-factors.\footnote{Given a graph $F$, an \emph{$F$-factor} of a graph $G$ is a collection of pairwise disjoint copies of $F$ that together cover all vertices of $G$. When $F$ is a clique, this gives the usual clique factor.}
For sufficiently large graphs, Châu~\cite{Cha13} proved a generalisation of Ore's theorem for squares of Hamilton cycles,
and also conjectured generalisations of this for all $k \geq 3$.
For $k = 2$, Knox and Treglown~\cite{KT13} were able to strengthen Ore's theorem to the bandwidth setting.
(This is a corollary to the more general \cref{thm:knoxtreglown-robustexpanders}).
They also conjectured corresponding extensions for all $k \geq 3$.
This was confirmed for $k=3$ by Böttcher and Müller~\cite{BM09}.
The following result proves these conjectures in a strong sense for all $k \geq 2$.

\begin{theorem}[Bandwidth Theorem under Ore-type conditions]\label{thm:bandwidth-ore}
	For $k,\Delta \in \NATS$ and $\mu > 0$, there are $z, \beta>0$ and $n_0 \in \NATS $ with the following property.
	Let $G$ be a graph on $n\geq n_0$ vertices with $\deg(x) + \deg (y) \geq 2 \frac{k-1}{k} n  + \mu n$ for all $xy \notin E(G)$.
	Then $G$ is $(z,\beta,\Delta,k)$-Hamiltonian.
\end{theorem}

\subsection{Pósa-type conditions}
Balogh, Kostochka and Treglown~\cite{BKT11,BKT13} studied degree conditions that guarantee the existence of clique factors and powers of Hamilton cycles.
Treglown~\cite{Tre16} proved the following Pósa-type result for clique factors.
\begin{theorem}[Clique factors under Pósa-type conditions]\label{thm:posa-clique-factor}
	For $k \in \NATS$ and $\mu > 0$, there is an $n_0 \in \NATS $ with the following property.
	Let $G$ be a graph with degree sequence $d_1\leq \dots \leq d_n$ where $n \geq n_0$ is divisible by $k$.
	Suppose that
	$d_i \geq  \frac{k-2}{k} n  + i + \mu n$
	for every $i \leq n/k$.
	Then $G$ contains a $k$-clique factor.
\end{theorem}
We remark that the degree conditions in \cref{thm:posa-clique-factor} are best possible apart from the term $\mu n$.
This result was further extended by Hyde, Liu and Treglown~\cite{HLT19} and Hyde and Treglown~\cite{HT20} to degree conditions which ensure factors and partial factors of arbitrary graphs (not just cliques).

Balogh, Kostochka and Treglown~\cite{BKT11} asked whether one could improve the degree sequences which ensure the existence of the $(k-1)$st power of a Hamilton cycle by allowing a non-negligible number of vertices to have degree less than $(k-1)n/k$.
Staden and Treglown~\cite{ST17} answered this question for $k=3$, showing that the conditions of \cref{thm:posa-clique-factor} also imply the existence of a squared Hamilton cycle, and conjectured extensions of this to all $k \geq 4$.

We prove this conjecture in the (more general) bandwidth setting.
{Here, Knox and Treglown~\cite{KT13} had previously} proved a Bandwidth Theorem for degree sequences for $k=2$.
(Again, a corollary to \cref{thm:knoxtreglown-robustexpanders}).
Staden and Treglown~\cite{ST17} conjectured such a result could be true for $k = 3$,
and Treglown~\cite{Tre20} extended the conjecture to all $k\geq 4$, which we hence confirm as well.

\begin{theorem}[Bandwidth Theorem under Pósa-type conditions]\label{thm:bandwidth-posa}
	For $k \geq 2$, $\Delta \in \NATS$ and $\mu > 0$, there are $z,\beta>0$ and $n_0 \in \NATS $ with the following property.
	Let $G$ be a graph with degree sequence $d_1 \leq \dots \leq d_n$ such that
	$d_i \geq  \frac{k-2}{k} n  + i + \mu n$
	for every $i \leq n/k$.
	Then $G$ is $(z,\beta,\Delta,k)$-Hamiltonian.
\end{theorem}

We remark that the degree conditions are essentially tight: by adapting examples of Balogh, Kostochka and Treglown~\cite{BKT11}, we show \cref{thm:bandwidth-posa} becomes false if $\mu n$ is replaced by $o(n^{1/2})$ (see \cref{sec:posa-lower-bound}).
Moreover, unlike the case $k=2$, the conditions of \cref{thm:bandwidth-posa} and of \cref{thm:bandwidth-ore} do not imply each other whenever $k \ge 3$, as witnessed by further constructions (see \cref{sec:posa-ore-different}).

\subsection{Locally dense and inseparable graphs}
Another type of sufficient condition for Hamiltonicity involves the notion of locally dense graphs.
For a set of vertices $U$ in a graph $G$, we denote by $e(U)$ the {number of edges of $G$ which are contained in $U$}.
For $\rho,d >0$, we say that a graph $G$ on $n$ vertices is $(\rho,d)$-\emph{dense} if $e(U) \geq d|U|^2/2-\rho n^2$ for every $U \subset V(G)$.
It was shown by Staden and Treglown~\cite{ST20} that for all $k, \Delta \in \NATS$ and $\mu,d >0$ there exists $\rho,\beta >0$ and $n_0 \in \NATS$ such that all locally dense graphs on $n \geq n_0$ vertices with minimum degree at least $(1/2+\mu)n$ are $(\beta,\Delta,k)$-Hamiltonian.
Note that in particular, the degree condition does no longer depend on $k$.

This result was further generalised by Ebsen, Maesaka, Reiher, Schacht and Schülke~\cite{EMR+20} by replacing the minimum degree with a condition of inseparability.
For $\mu > 0$, we say a graph $G$ is $\mu$-\emph{inseparable} if $e(X,Y) \geq \mu |X||Y|$ for every partition $\{X,Y\}$ of the vertex set of $G$.
Ebsen et al. proved Hamiltonicity for powers of cycles in locally dense and inseparable graphs,
and extended this to the bandwidth setting.

\begin{theorem}[Bandwidth Theorem for uniformly dense and inseparable graphs,{~\cite{EMR+20}}]\label{thm:dense-seperable-framework}
	For $k,\Delta \in \NATS$ and $\mu,d > 0$, there are $\beta,\rho>0$ and $n_0 \in \NATS $ with the following property.
	Let $G$ be a $(\rho,d)$-dense and $\mu$-inseparable graph on $n \geq n_0$ vertices.
	Then $G$ is {$(\beta,\Delta,k)$-Hamiltonian}.
\end{theorem}

Note that since a {$(\beta,\Delta,k+1)$-Hamiltonian} graph is also $(z, \beta, \Delta, k)$-Hamiltonian for all $k,\Delta,z,\beta$, this result already includes the zero-free setting.
Our framework also allows us to reprove this result.

\subsection{Deficiency-type problems}
Nenadov, Sudakov and Wagner~\cite{NSW20} proposed the study of `deficiency' problems for global spanning properties.
The \emph{join} $G \ast H$ of graphs $G$ and $H$	is the graph obtained from taking vertex-disjoint copies of $G$ and $H$ and adding every edge between the copies.
Now suppose that $G \ast K_t$ does not satisfy a particular graph property (such as being Hamiltonian).
How many edges can $G$ have in terms of its order and $t$?
Nenadov, Sudakov and Wagner~\cite{NSW20} gave a complete answer whenever the property in question is Hamiltonicity, and gave a partial answer for the property of containing a triangle factor.
Recently, Freschi, Hyde and Treglown~\cite{FHT21} resolved the problem completely for the property of containing a $k$-clique factor, for any fixed $k \ge 3$.

\begin{theorem}[Clique factors under deficiency conditions] \label{theorem:deficiency-cliquefactors}
	Let $n, t, k \in \NATS$ such that $n \ge 2$, $t \ge 0$ and $k \ge 3$ such that $t < (k-1)n$ and $k$ divides $n+t$.
	Let $r = \lceil (t+1)/(k-1) \rceil$ and $q$ be the integer remainder when $t$ is divided by $k-1$.
	Let $G$ be a graph on $n$ vertices such that $G \ast K_t$ does not have a $K_k$-factor.
	Then $e(G) \leq g(n,t,k)$ where
	\begin{equation}
		g(n,t,k) := \max \left\lbrace \binom{n}{2} - \binom{\frac{n+t}{k}+1}{2}, \binom{n}{2}-\binom{r}{2}-r(n-r-(k-2-q)) \right\rbrace. \label{equation:optimal-cliquefactor-deficiency}
	\end{equation}
\end{theorem}
These bounds are best possible when $k$ divides $n+t$, as shown by examples in the work of Freschi, Hyde and Treglown~\cite{FHT21}.
The same authors also proved an asymptotically optimal deficiency version of the Bandwidth Theorem for bipartite graphs, and asked for generalisations.
The next result extends this to $k$-colourable graphs, for any $k \geq 3$.

\begin{theorem}[Bandwidth Theorem under deficiency conditions] \label{theorem:deficiency-bandwidth}
	For $k,\Delta \in \NATS$ with $k\geq 2$ and $\mu > 0$ there are $\beta, z >0$ and $n_0 \in \NATS $ with the following property.
	Given $t \ge 0$, let $G$ be a graph on $n \ge n_0$ vertices with $e(G) \ge g(n,k,t) + \mu n^2$, as defined in \cref{equation:optimal-cliquefactor-deficiency}.
	Then $G \ast K_t$ is $(z, \beta, \Delta, k)$-Hamiltonian.
\end{theorem}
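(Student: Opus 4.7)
The plan is to deduce this theorem from the main framework result, \cref{thm:main-bandwidth}, applied to $G' := G \ast K_t$. Concretely, under the deficiency hypothesis I would show that $G'$ admits a robust Hamilton framework for $k$-chromatic guest graphs. This amounts to verifying the three structural conditions that compose such a framework: a robust perfect fractional $K_k$-tiling, robust connectivity, and the appropriate chromatic (non-$(k-1)$-partite) condition. Once all three hold, \cref{thm:main-bandwidth} produces the desired embedding of every $H$ with the prescribed bandwidth, maximum degree and zero-free $(k+1)$-colouring.

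The $K_k$-tiling condition is where the deficiency hypothesis enters. The tight theorem of Freschi--Hyde--Treglown (\cref{theorem:deficiency-cliquefactors}) yields a $K_k$-factor of $G'$ whenever $k \mid n+t$ and $e(G) \ge g(n,t,k)$, and the surplus $\mu n^2$ in our hypothesis is precisely what promotes a single factor to a robust family of factors. The key point is that the edge condition $e(G) \ge g(n,t,k) + \mu n^2$ is stable under the removal of any sublinear set of vertices: a short calculation shows that deleting $o(n)$ vertices changes both sides by $o(n^2)$, so for a suitably small loss the hypothesis survives and the Freschi--Hyde--Treglown factor theorem can be reapplied to the reduced graph. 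Running this argument across all sufficiently large induced subgraphs produces a large family of $K_k$-factors, and averaging them gives a robust perfect fractional $K_k$-tiling.

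Robust connectivity and the chromatic condition come essentially for free. When $t \ge 1$ the vertices of $K_t$ are joined to all of $V(G)$, so $V(K_t)$ acts as a universal connector that survives any sublinear deletion; when $t = 0$ the bound $e(G) \ge g(n,k,0) + \mu n^2$ already forces density close to the Turán threshold for $K_k$, which yields both connectivity and non-$(k-1)$-partiteness directly. The main obstacle is therefore the stability step in the tiling argument, in particular dealing with the second extremal configuration in \cref{equation:optimal-cliquefactor-deficiency} governed by $r = \lceil (t+1)/(k-1) \rceil$, which has a small set of low-degree vertices rather than a large independent set. I would handle this by splitting into cases depending on the magnitude of $r$: when $r$ is sublinear in $n$ the extremal construction is a near-clique and is ruled out by a direct edge count against the surplus $\mu n^2$, whereas when $r$ is linear in $n$ a proper stability argument is required, which I expect can be obtained either by invoking a stability version of the Freschi--Hyde--Treglown theorem or by a direct adaptation of their extremal analysis to $G \ast K_t$.
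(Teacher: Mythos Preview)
Your high-level plan is right: the paper deduces the result from \cref{thm:main-bandwidth} by showing that $(G \ast K_t, K_k(G \ast K_t))$ is a robust zero-free Hamilton framework (reducing via \cref{prop:linked-edges-for-free} to proto-robustness). But the way you propose to verify robustness has a genuine gap.

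A $(\mu,\mu)$-approximation in the sense of \cref{def:approximation} allows not only the removal of a $\mu$-fraction of the vertices but also the deletion of up to $\mu(n+t)$ edges \emph{at every vertex}. After such edge deletions the resulting graph $F$ is in general \emph{not} of the form $G' \ast K_{t'}$, so you cannot simply reapply \cref{theorem:deficiency-cliquefactors}; and the $K_t$-vertices are no longer universal in $F$, so the sentence ``$V(K_t)$ acts as a universal connector that survives any sublinear deletion'' is false at this level of generality. Your stability claim (``the edge condition survives removal of any sublinear set of vertices'') only addresses vertex deletions and therefore does not establish the required robustness.

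A related issue concerns small $t$. The hypothesis $e(G) \ge g(n,t,k) + \mu n^2$ is \emph{vacuous} unless $t \ge \mu(n+t)/3$ (\cref{proposition:deficiency-tnotsmall}), so the $t=0$ case you discuss never arises, and one may always assume $t$ is linear in $n+t$. The paper uses this crucially: since $t$ is linear, in any $\nu$-approximation $F$ the remnant $L' = V(K_t) \cap V(F)$ is still large and $F[L']$ has near-complete minimum degree. Tight connectivity and zero-freeness of $K_k(F)$ are then obtained by walking every $k$-clique into $L'$ one vertex at a time. For the perfect fractional matching (\cref{proposition:deficiency-robustlymatching}) the paper first reserves an absorbing matching inside $L'$, then applies \cref{theorem:deficiency-cliquefactors} not to $F$ but to the auxiliary graph $G' \ast K_{t''}$ with $G' = F \cap G$, and transfers the resulting factor into $F$ via a random bijection of the $K_{t''}$-vertices onto $L' \setminus V(A)$; in expectation only few cliques land on non-edges of $F$, and the leftover vertices are absorbed. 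Your proposed case split on the size of $r$ is not needed and would not, by itself, handle the edge-deletion aspect of approximations.
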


\subsection{Multipartite graphs}\label{sec:multipartite}
It is well known that Dirac's theorem also holds in bipartite graphs, where each part has size $n$ and each vertex has degree at least $n/2$~\cite{MM63}.
To formulate extensions of this result, we introduce some further notation.
Consider a \emph{balanced} family $\fU$ of $r$ disjoint sets  each of size $n$.
Let $G$ be a $\fU$-partite graph, meaning that $G$ has no edges with both endpoints in the same part when $r \geq 2$.
(For $r=1$, the definition is vacuous.)
The \emph{$\fU$-partite minimum degree} of $G$, written $\delta(G;\mathscr{U})$, is the least minimum degree of the bipartite subgraphs of $G$ induced by pairs of parts of $\fU$.
When $\fU$ is clear from the context, we simply write $\delta_r(G)$.

Multipartite extensions of Dirac's theorem were first investigated in terms of clique factors.
Fischer~\cite{Fischer1999} conjectured a multipartite version of the Hajnal--Szemerédi theorem in balanced $r$-partite graphs.
According to this, $\delta_r(G) \ge (k-1)n/k$ should be enough to find a $k$-clique factor in an $r$-partite graph $G$, of course assuming that the necessary condition that $k$ divides $rn$ holds.
Examples by Catlin~\cite{Catlin1980}, later generalised by Keevash and Mycroft~\cite{KM15b},
show that if $rn/k$ is odd and $k$ divides $n$, then $\delta_r(G) \ge (k-1)n/k+1$ is necessary.
The modified Fischer conjecture~\cite{KO09b} posits that, nevertheless, those examples are the only cases where Fischer's original conjecture does not hold.
For large $n$, this conjecture was confirmed by Magyar and Martin~\cite{MM02} for $k = r = 3$ and by Martin and Szemerédi~\cite{MS08} for $k = r = 4$.
An approximate version was shown by Lo and Markström~\cite{LM13b} and, independently, Keevash and Mycroft~\cite{KM15}.
Finally, Keevash and Mycroft~\cite{KM15b} proved the full conjecture for all large $n$.

\begin{theorem}[Multipartite Hajnal--Szemerédi Theorem]\label{theorem:multipartite-keevashmycroft}
	For $r,k \in \NATS$ with $r \ge k \geq 2$ and $\mu > 0$,  there exists $n_0 \in \NATS$ such that for all $n \ge n_0$ where $k$ divides $rn$,
	the following holds.
	Let $G$ be a balanced $r$-partite graph on $rn$ vertices with $\delta_r(G) > \frac{k-1}{k}n$.
	Then $G$ contains a $k$-clique factor.
\end{theorem}

Our next result extends this to powers of cycles and suitable $k$-colourable graphs.
We add an extra piece of terminology to describe our results.
A graph $H$ on $nk$ vertices admits an \emph{equitable $k$-colouring} if it admits a $k$-colouring where each colour is used exactly $n$ times.
A graph $G$ on $nk$ vertices is \emph{partite $(\beta, \Delta, k)$-Hamiltonian} if $G$ contains every graph $H$ on $nk$ vertices with $\Delta(H) \leq \Delta$, bandwidth at most $\beta kn$ and an equitable $k$-colouring.

\begin{theorem}[Multipartite Bandwidth Theorem] \label{thm:bandwidth-multipartite}
	For $r, k, \Delta \in \mathbb{N}$ with $r \ge k \geq 2$ and $\mu > 0$, there exists $\beta, z > 0$ and $n_0 \in \mathbb{N}$ such that the following holds.
	Let $G$ be a balanced $r$-partite graph on $rn\geq n_0$ vertices with $\delta_r(G) \geq \left( \frac{k-1}{k} + \mu \right)n$.
	It follows that
	\begin{enumerate}[{\upshape (i)}]
		\item \label{item:multipartite-r=k} if $r = k$, then $G$ is partite $(\beta, \Delta, k)$-Hamiltonian, and
		\item \label{item:multipartite-r>k} if $r > k$, then $G$ is $(z, \beta, \Delta, k)$-Hamiltonian.
	\end{enumerate}
\end{theorem}

It is worth noting that the second part is not restricted to partite embeddings, as one might have expected.
Moreover, observe that $(k-1)$st powers of tight cycles on $kn$ vertices admit equitable $k$-colourings,
thus in particular \cref{thm:bandwidth-multipartite} shows the existence of $(k-1)$st powers of Hamilton cycles in $G$, in all cases.
In the particular case of $(k-1)$st powers of cycles,
\cref{thm:bandwidth-multipartite} was shown to be true by DeBiasio, Martin and Molla~\cite{DMM21}, whose setting also allowed for suitably imbalanced partite graphs.

\section{Hamilton frameworks and main results}\label{sec:frameworks}
In the following, we introduce and motivate the definition of our central notion, \emph{robust Hamilton frameworks}, and then state our main results (\cref{thm:main-powham,thm:main-bandwidth}).

\subsection{Hamilton frameworks}
Recall that our goal is to find a suitable replacement for the notion of robust expanders in order to embed non-bipartite spanning structures.
To motivate the following definitions, consider a Hamiltonian graph $G$ that is (for sake of simplicity) non-bipartite.
Then $G$ contains no isolated vertices, is connected, has a perfect fractional matching (as defined below) and contains an odd cycle.
We call a graph that satisfies these properties a \emph{Hamilton framework}.
This property is not  equivalent to Hamiltonicity as witnessed by two vertex-disjoint odd cycles connected by a single edge.
Observe however that the framework properties of this construction are somewhat fragile.
Deleting few vertices or few edges (at every vertex) may easily remove the properties of connectivity, having a perfect fractional perfect matching or an odd cycle.

To exclude examples like this, we could restrict our attention to Hamilton frameworks that are robust against such operations and ask again whether this already guarantees the existence of a Hamilton cycle.
As it turns out, this is the case.
In fact, robust Hamilton frameworks $G$ contain any bipartite graph $H$ of sublinear bandwidth and maximum degree.
This is because robust Hamilton frameworks are equivalent to robust expansion (\cref{pro:expander-to-framework,pro:framework-to-expander}), and hence the work of K\"uhn, Osthus and Treglown~\cite{KOT2010} and Knox and Treglown~\cite{KT13} applies.
If in addition $G$ cannot be made triangle-free by deleting few vertices or few edges (at every vertex),
this can be extended to graphs $H$ with suitable zero-free $3$-colourings.
(Note that this condition is necessary, since such a $H$ might contain triangles.)
This discussion corresponds to the case $k=2$.
In the following we introduce the terminology to extend these ideas to powers of cycles and sublinear bandwidth graphs $H$ of higher chromatic number.

The general structural properties of Hamilton frameworks are formulated in terms of hypergraphs.
In a $k$-\emph{uniform hypergraph} (or \emph{$k$-graph} for short) $\cH$ every edge has exactly $k$ vertices.
A \emph{tight (Hamilton) cycle} $C \subset \cH$ is a (spanning) subgraph whose vertices can be cyclically ordered such that its edges consist precisely of all sets of $k$ consecutive vertices in this ordering.
For an (ordinary) graph $G$, we denote by $K_k(G)$ the \emph{$k$-clique (hyper)graph} of $G$ which has vertex set $V(G)$ and a $k$-edge $X$ whenever $X$ induces a $k$-clique in $G$.
We remark that $G$ contains the $(k-1)$st power of a Hamilton cycle if and only if $K_k(G)$ contains a tight Hamilton cycle.

A \emph{(closed) tight walk} in a $k$-graph $\cH$ is a (cyclically) ordered sequence of vertices such that every set of $k$ consecutive vertices forms an edge of $\cH$.
Note that vertices and edges are allowed to be visited more than once in a closed tight walk.
Moreover, a closed tight walk is a homomorphism of a tight cycle.
The \emph{length} of a tight walk is the number of vertices in the underlying sequence (counting repetitions).
We say that $\cH$ is \emph{tightly connected} if there is a closed tight walk that contains all its edges.
Finally, a \emph{tight component} of $\cH$ is an edge-maximal tightly connected subgraph in $\cH$.

A \emph{matching} $M$ in a $k$-graph $G$ is a subgraph of vertex-disjoint edges.
We use the following linear programming relaxation of matchings.
A \emph{fractional matching} is an edge weighting $\vec w \colon E(G) \to [0,1]$ such that  $\sum_{e \ni v} \vec w(e)\leq1$ for every vertex $v \in V(H)$.
The \emph{size} of $\vec w$ is the sum of its weights $\sum_{e \in E(G) } \vec w (e)$.
A {fractional matching} is \emph{perfect} if it attains the maximum possible size $v(G)/k$.

Given these preliminaries, we can now define general Hamilton frameworks.

\begin{definition}[Hamilton framework]\label{def:Hamilton-framework}
	For $k \geq 2$, let $G$ be a graph and $\cH \subset K_k(G)$ be a vertex-spanning subgraph.
	We say that $(G,\cH)$ is a $k$-uniform \emph{Hamilton framework} if
	\begin{enumerate}[(i)]
		\item $\cH$ is contained in a tight component of $K_k(G)$ and
		\item $\cH$ has a perfect fractional matching.
	\end{enumerate}
	Moreover, the Hamilton framework $(G,\cH)$ is
	\begin{enumerate}[(i), resume]
		\item \emph{aperiodic} if $\cH$ contains a closed tight walk of length coprime to $k$, and
		\item \emph{zero-free} if $\cH$ contains a $(k+1)$-clique.
	\end{enumerate}
\end{definition}
Note that zero-freeness implies aperiodicity, since a $(k+1)$-clique contains a tight cycle of length $k+1$.
For convenience, we use the convention of writing `(aperiodic/zero-free) Hamilton framework' in the assumptions and outcomes of our results to mean that the Hamilton framework in question is guaranteed to be aperiodic, guaranteed to be zero-free or not guaranteed to be either of the two.

\subsection{Multipartite graphs}
Our results concern graphs $G$, which may or may not be $k$-partite.
To keep the presentation compact, we generally assume that $G$ is $\fU$-partite with respect to some partition $\fU$ consisting of $r$ parts where either $r=1$ or $r=k$, with the convention that $\fU$-partiteness is vacuous when $r=1$.

More precisely, consider a hypergraph $\cH$ with an $(n,r)$-\emph{sized} vertex partition $\fU$, meaning that $\fU$ has $r$ parts of size $n$ each.
An edge $e \in E(\cH)$ is \emph{$\fU$-partite} if $e$ has at most one vertex in each part of $\fU$, or if $r=1$.
Similarly, $G$ is \emph{$\fU$-partite} if all edges of $G$ are $\fU$-partite.
We sometimes say that a Hamilton framework $(G,\cH)$ is $\fU$-partite meaning that $G$ is $\fU$-partite,
noting that thus $\cH$ must be $\fU$-partite as well.

\subsection{Robustness}
As discussed before, we require our frameworks to be robust under small modifications of the original graph.
We formalise this in terms of \emph{approximations}.

\begin{definition}[Approximation]\label{def:approximation}
	Let $\eps,d > 0$ and $G$ be a $\fU$-partite graph.
	A subgraph $G' \subseteq G$ is a \emph{$\fU$-partite $(\eps,d)$-approximation of $G$} if for all $U,U' \in \fU$ and $v \in V(G')$, we have
	\begin{enumerate}[(i)]
		\item $\deg_{G'}(v;U) \geq \deg_{G}(v;U) - d |U|$,
		\item $|V(G') \cap U| \geq (1 - \eps)|U|$, and
		\item $|V(G') \cap U| = |V(G') \cap U'|$.
	\end{enumerate}
	When $\eps=d$, we simply speak of an \emph{$\eps$-approximation}.
\end{definition}
We also require that every vertex is appropriately connected to the hypergraph structure of frameworks.
Consider an edge $e$ and a vertex $v \notin e$ in a $k$-graph.
We say that $v$ is \emph{linked} to $e$ if there is an edge $f$ with $v \in f$ and  $|e \cap f| = k-1$.
For two $k$-graphs $G_1=(V_1,E_1)$  and $G_2=(V_2,E_2)$, we write $G_1 \cap G_2 = (V_1 \cap V_2, E_1 \cap E_2)$.

With these definitions at hand, we can describe robust $k$-uniform Hamilton frameworks $(G,\cH)$ using subgraphs $\cH \subseteq K_k(G)$ which preserve their properties after passing to {any} approximation; and which in addition also have many linked edges at every vertex.

\begin{definition}[Robustness]\label{def:robustness}
	Let $G$ be a $\fU$-partite graph, with an $(n,r)$-sized partition $\fU$,
	and let $\cH \subset K_k(G)$.
	For $\mu >0$, we say that $(G,\cH)$ is a $k$-uniform \emph{$\mu$-proto-robust} $\fU$-partite (aperiodic/zero-free) Hamilton framework if
	\begin{enumerate}[{\upshape (R1)}]
		\item \label{item:robust-deletions}  $(G',\cH')$ is an (aperiodic/zero-free) Hamilton framework for each $\fU$-partite  $\mu$-approximation $G'$ of $G$  where $\cH'=\cH \cap K_k(G')$.
	\end{enumerate}
	Moreover, $(G,\cH)$ is called \emph{$\mu$-robust} if in addition to this
	\begin{enumerate}[{\upshape (R1)}]\addtocounter{enumi}{1}
		\item \label{item:linked-edges} each $v \in V(G)$ has at least $\mu n^k$ linked edges in $\cH$.
	\end{enumerate}
\end{definition}
As a convention, we omit the decorator `$\fU$-partite' when $r=1$ in this definition.
Before we continue, let us remark that $2$-uniform robust Hamilton frameworks are always aperiodic.
This is because, for $k=2$, the latter is equivalent to not being bipartite.
In a bipartite graph one can however eliminate all perfect fractional matchings by deleting a single vertex.

\subsection{Robust Hamilton frameworks are Hamiltonian}
Now we formulate our main results from which the outcomes in \cref{sec:applications} can be derived.
The first theorem states that robust aperiodic Hamilton frameworks contain powers of Hamilton cycles.

\begin{theorem}[Frameworks -- cycles]\label{thm:main-powham}
	For $k \in \NATS$ and $\mu > 0$ there exists $n_0 \in \NATS$ with the following property.
	Let $(G,\cH)$ be a $k$-uniform $\mu$-robust aperiodic Hamilton framework on $n\geq n_0$ vertices.
	Then $G$ contains the $(k-1)$st power of a Hamilton cycle.
\end{theorem}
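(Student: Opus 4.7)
The plan is to prove the theorem by the \emph{absorbing method}, suitably adapted to tight cycles in hypergraphs, using the four ingredients packaged in the definition of a robust aperiodic Hamilton framework: tight connectivity, a perfect fractional matching, an aperiodic closed tight walk, and an abundance of linked edges at every vertex. All three structural steps below (reservoir, absorber, near-cover) throw away $o(n)$ vertices or degree, so by robustness (R1) the remaining hypergraph is again a Hamilton framework; condition (R2) is what makes the local absorbing gadgets plentiful.

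\textbf{Step 1: Reservoir and approximation.} First I would set aside a small random reservoir $R \subset V(G)$ of size $\sqrt{\mu}\, n$, chosen so that every vertex of $G$ has the expected number of linked edges into $R$ and every edge of $\cH$ has the expected number of tight-walk extensions through $R$. Deleting $R$, (R1) ensures that $(G-R, \cH-R)$ remains an aperiodic Hamilton framework with slightly weakened parameters, so all four framework properties are available throughout.

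\textbf{Step 2: Absorbing path.} Using (R2), for each vertex $v \in V(G)$ I would locate $\Omega_\mu(n^{k-1})$ local \emph{$v$-absorbers}: short tight paths whose internal structure allows $v$ to be spliced in while preserving the endpoints. The key point is that two linked edges at $v$, together with tight connectivity in $K_k(G)$, can be combined into such a gadget. Then, via a probabilistic selection and a connecting routine (short tight walks provided by tight connectivity), I would glue $o(n)$ of these absorbers into a single tight path $P_A$ such that any set $T \subseteq V(G)\setminus V(P_A)$ of size at most $|R|$ can be absorbed into a tight path with the same endpoints as $P_A$.

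\textbf{Step 3: Near-perfect tight path cover.} After removing $V(P_A)$, the perfect fractional matching of $\cH$ (preserved by (R1)) is rounded, via a regularity-based argument, into a collection of $o(n)$ long tight paths covering all but $o(n)$ vertices. This is the standard fractional-to-integral step: one uses weak regularity on the shadow graph combined with the fractional matching to get an almost-perfect matching, then extends matching edges into tight paths using tight connectivity within each cluster.

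\textbf{Step 4: Stitching via aperiodicity and closing up.} Each connecting tight walk used to link two tight paths contributes a residue mod $k$ to the total cycle length. Without aperiodicity, these residues could be constrained and the global length $n$ might be incompatible. The aperiodic closed walk in $\cH$ of length coprime to $k$ gives a local operation that shifts the residue of any connecting walk by $1 \pmod k$, so by inserting an appropriate number of copies of it during the connection routine one can prescribe the length of every connecting walk modulo $k$. Using this, I would join the paths from Step 3 into a single long tight cycle $C$ that includes $P_A$, covers $V(G)\setminus R$ up to a leftover set $L$ of size $o(|R|)$, and passes through the reservoir only via short bridging walks. Finally, absorbing $L \cup R$ into $P_A$ using Step 2 yields a tight Hamilton cycle in $\cH$, equivalently the $(k-1)$th power of a Hamilton cycle in $G$.

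\textbf{Main obstacle.} The delicate step is Step 4: making the aperiodic walk interact correctly with the connecting routine so that residues modulo $k$ are controllable globally, while simultaneously keeping all connections short, vertex-disjoint, and localised in the reservoir. A secondary difficulty is the absorber construction in Step 2, where one has to define a gadget flexible enough to splice in an arbitrary vertex $v$ yet whose existence follows from just the linked-edge condition (R2) and tight connectivity, without stronger quasirandomness assumptions on $\cH$.
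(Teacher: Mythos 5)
Your proposal takes a genuinely different route from the paper (which proves the theorem via the Regularity Lemma, a reduced-graph version of the framework, an allocation argument, and the Blow-Up Lemma, never building the cycle edge-by-edge in $G$), and as written it has a gap that I think is fatal rather than cosmetic: the framework axioms do not supply a connecting lemma. Steps 2 and 4 both require many, short, pairwise vertex-disjoint tight walks in $\cH$ (or $K_k(G)$) between prescribed $(k-1)$-tuples, avoiding previously used vertices. Tight connectivity of $\cH$ inside a component of $K_k(G)$ only gives \emph{existence} of a connecting walk, with no bound on its length other than something polynomial in $n$, no counting statement, and no way to localise it in a reservoir; robustness under $(\mu,\mu)$-approximations preserves connectivity after deletions but again says nothing about walk lengths or multiplicities. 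Since you must perform $\Theta(n/\ell)$ connections (for paths of length $\ell$) within a total length budget of $o(n)$ wasted vertices, you need connections of length $o(\ell)$ each, and nothing in \cref{def:Hamilton-framework} or \cref{def:robustness} yields this. The same problem defeats your use of aperiodicity: the closed tight walk of length coprime to $k$ guaranteed by the framework may have length polynomial in $n$ (the paper's own bound, \cref{prop:closed-walk-bounded-length}, is of order $\binom{t}{k}t^k$ with $t$ the number of vertices), so you cannot insert even one copy of it into a connection to fix a residue class. The paper sidesteps all of this by doing connectivity, aperiodicity and matching arithmetic only in the bounded-size reduced graph $R$ (where walks have bounded length) and transferring the resulting \emph{allocation} to $G$ with the Blow-Up Lemma, rather than concatenating walks in $G$.

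A secondary gap is the absorber in Step 2. A linked edge of $v$ is a single pair $(e,f)$ with $v\in f$, $|e\cap f|=k-1$; to splice $v$ into a tight \emph{path} (rather than a walk, where repetition is allowed as in \cref{prop:tightly-connected-observation}) you need $v$ to lie in $k$ consecutive edge-windows, i.e.\ roughly $2k-2$ fresh padding vertices forming a power-of-path around $e$ in both of its states. The framework does not guarantee that an edge of $\cH$ extends to even one tight path of length $2k$, let alone to $\Omega(n^{2k-2})$ of them, so the count of absorbers does not follow from (R2) alone. If you want to salvage the absorption strategy, you would need to add hypotheses amounting to a counting/connecting lemma (as Ebsen et al.\ prove separately in the locally dense setting the paper cites), at which point you are proving a strictly weaker theorem; alternatively, you could try to run the absorption entirely at the reduced-graph level, but then you are essentially reconstructing the paper's argument.
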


The second theorem shows that graphs which admit $k$-uniform robust frameworks contain different kinds of bounded-degree sublinear-bandwidth spanning graphs,
depending on whether the framework is partite, aperiodic or zero-free.

\begin{theorem}[Frameworks -- bandwidth]\label{thm:main-bandwidth}
	For $k,\Delta \in \NATS$, $\mu > 0$, and $r=1$ or $r=k$ there are $\beta, z$ and $n_0 \in \NATS $ with the following property.
	Let $(G,\cH)$ be a $\mu$-robust $\fU$-partite $k$-uniform Hamilton framework  where $\fU$ is an $(n,r)$-sized partition of $G$ with $n\geq n_0$.

	\begin{enumerate}[{\upshape (i)}]
		\item \label{item:main-bandwidth-partite} If $r=k$, then  $G$ is $\fU$-partite $(\beta, \Delta, k)$-Hamiltonian.
		\item \label{item:main-bandwidth-aperiodic} If $r=1$ and  $(G,\cH)$ is $\mu$-robust aperiodic, then $G$ is $(\beta,\Delta,k)$-Hamiltonian.
		\item \label{item:main-bandwidth-zerofree} If $r=1$ and  $(G,\cH)$ is $\mu$-robust zero-free, then $G$ is $(z,\beta,\Delta,k)$-Hamiltonian.
	\end{enumerate}
\end{theorem}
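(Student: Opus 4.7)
The plan is to derive \cref{thm:main-bandwidth} by lifting the conclusion of \cref{thm:main-powham} from $(k-1)$th powers of Hamilton cycles to the much richer class of bounded-degree sublinear-bandwidth graphs, via the regularity--blow-up methodology.

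First I would apply a (partite or non-partite) regularity lemma to $G$ to obtain a cluster partition $\cV$, a reduced graph $R$ and a reduced $k$-graph $\cR \subset K_k(R)$ whose edges correspond to $k$-tuples of clusters that are pairwise $\eps$-regular in $G$ and span a positive density of $\cH$-edges. A standard transfer argument then shows that $(R,\cR)$ inherits a $\mu'$-robust (aperiodic/zero-free) $\fU$-partite Hamilton framework: perfect fractional matchings pass through by averaging cluster-level demands, tight connectedness transfers because closed tight walks in $\cH$ project onto $\cR$, aperiodic walks and $(k+1)$-cliques descend after cleaning, and robustness itself transfers because any $(\mu',\mu')$-approximation of $R$ lifts to a $(\mu,\mu)$-approximation of $G$. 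Applying \cref{thm:main-powham} to $(R,\cR)$ then yields a tight Hamilton cycle in $\cR$, equivalently the $(k-1)$th power of a Hamilton cycle in $R$, which serves as the scaffold along which $H$ will be embedded.

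Next I would prepare the cluster sizes along this scaffold so that they match the colour-class demands of $H$. For each tight edge of the scaffold I regularise the induced $k$-partite subgraph of $G$ via a standard slicing/super-regularity step and set aside a small absorbing reservoir in each cluster. In case (i), the $k$-partition $\fU$ already aligns with the equitable $k$-colouring of $H$. In case (ii), the aperiodic closed tight walk of length coprime to $k$ provides a rotation trick that cyclically permutes the roles of the colour classes along the scaffold, permitting balancing against an arbitrary $k$-colouring of $H$. In case (iii), the $(k+1)$-clique in $\cH$ opens local slots into which vertices of colour $0$ of a $(z,\beta)$-zero-free colouring can be absorbed, the $z$-sparsity of colour $0$ ensuring that successive adjustments do not overlap. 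With scaffold and cluster sizes prepared, I would embed $H$ block-by-block along a bandwidth ordering using the Blow-up Lemma, placing each $\beta n$-block inside one $k$-clique of clusters and using the $(k-1)$-cluster overlap between consecutive cliques of the scaffold to realise interface edges of $H$; the hypotheses $\Delta(H)\le\Delta$ and $\bw(H)\le\beta n$ are exactly those needed for this chunked Blow-up procedure to succeed.

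The main obstacle I expect is achieving \emph{exact} balance between cluster sizes and the cumulative colour-class demands of $H$ along the scaffold, particularly in cases (ii) and (iii) where $H$ is not automatically aligned with the partite structure of $(G,\cH)$. This forces a careful interleaving of the aperiodic-walk or $(k+1)$-clique rebalancing with the absorbing reservoirs so that the final Blow-up step receives vertex classes of exactly the prescribed sizes. This local-versus-global bookkeeping, together with the robust transfer of the framework structure from $G$ to $R$, is where the bulk of the technical work will lie.
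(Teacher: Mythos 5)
Your overall architecture matches the paper's: regularity lemma, transfer of the robust framework to the reduced graph $(R,\cJ)$, a power-of-Hamilton-cycle scaffold in $R$ obtained from \cref{thm:main-powham}, and a chunked Blow-up embedding of $H$ along its bandwidth ordering. However, there are two genuine gaps at exactly the points you defer as "technical work".

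First, applying \cref{thm:main-powham} to $R$ as a black box is not enough. The Regularity Lemma leaves exceptional vertices that must be reinserted into clusters while preserving super-regularity on the scaffold $R'$. When $R'$ is an arbitrary bounded-degree cover (as in the proof of \cref{thm:main-powham} itself, via \cref{lem:kcover}), one has freedom to route $R'$ through edges where each exceptional vertex has many linked edges; when $R'$ is forced to be a power of a Hamilton cycle, it has degree exactly $2(k-1)$ and no such freedom. The paper resolves this by proving the strictly stronger \cref{thm:cycle-distributed}: the power of the Hamilton cycle in $R$ must pass through $\pi t$ edges of each member of a prescribed family $\fF$ of "entry-point" subgraphs of $\cJ$ (supplied by \cref{lem:regular-partition+absorber}), so that every exceptional vertex can be attached to an edge the scaffold actually uses. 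Your proposal does not anticipate needing this strengthening, and without it the super-regularisation step fails for the exceptional vertices.

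Second, the balancing mechanism you call a "rotation trick" needs to be an actual argument, and it is where aperiodicity and cluster-matchability do real work. The paper's route is: (a) a deterministic allocation of $H$ onto a power of a path respecting the $(k+1)$-colouring; (b) a Markov-chain random walk of that path around a power of a cycle, analysed with a Hoeffding-type inequality for Markov chains, where aperiodicity of the chain comes from $t$ being coprime to $k$ (hence the insistence $t\equiv 1\bmod k$ in \cref{lem:lemma-for-G-bandwidth}); (c) since $\fV$ is only $(1+\eps)$-balanced while the random allocation is balanced to within $\xi\ll\eps$, a separate imbalance-correcting homomorphism of a long auxiliary power of a cycle into $R$ (\cref{lemma:lemma-for-C-correcting}, another application of \cref{lem:lemma-for-C} relying on the cluster-matchability of $(R,\cJ)$); and (d) a final exact shift of the partition $\fV$, again using coprimality. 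Step (c) in particular is indispensable: a balanced allocation of $H$ cannot be matched to the genuinely unbalanced clusters returned by the Lemma for $G$, and your "absorbing reservoirs" as described would need to absorb $\Theta(\eps n)$ vertices per cluster, which the Blow-up Lemma's restriction-pair framework does not tolerate.
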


We remark that  $\fU$ is the trivial partition in the cases~\ref{item:main-bandwidth-aperiodic} and~\ref{item:main-bandwidth-zerofree}.

\subsection{Remarks on frameworks} \label{section:remarksonframeworks}
We reflect briefly about our definition of Hamilton frameworks, robustness and our main results.

Firstly, unlike for Hamilton cycles, searching for Hamilton frameworks is computationally tractable (see \cref{sec:complexity}).
This indicates, on a theoretical level, that Hamilton frameworks are `simpler' structures.
It is moreover conceivable, that the proofs of \cref{thm:main-powham,thm:main-bandwidth} lead to efficient algorithms for constructing spanning structures given a robust framework as an input (see \cref{sec:conclusion}).

Secondly, it is not hard to see that in a $k$-uniform Hamilton framework $(G,\cH)$ of sufficiently large order, every vertex is linked to at least one edge of the tight component of $K_k(G)$ that contains $\cH$.
One could suspect that in a proto-robust framework every vertex has many linked edges.
However, we were only able to prove this when $\cH=K_k(G)$.
\begin{proposition}[Linked edges]\label{prop:linked-edges-for-free}
	For $k \in \NATS$ and $\mu > 0$ there are $0 < \mu' \leq \mu$ and $n_0 \in \NATS $ such that
	every $\fU$-partite $\mu$-proto-robust $k$-uniform (aperiodic/zero-free) Hamilton framework  $(G, K_k(G))$ on $n \ge n_0$ vertices
	is $\mu'$-robust.
\end{proposition}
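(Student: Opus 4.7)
The plan is a proof by contradiction. Suppose some vertex $v \in V(G)$ has fewer than $\mu' n^k$ linked edges in $\cH = K_k(G)$, where $\mu' = \mu'(\mu, k) \leq \mu$ will be fixed below. I would construct a $\fU$-partite $(\mu, \mu)$-approximation $G' \subseteq G$ such that $(G', K_k(G'))$ fails to be a Hamilton framework, contradicting \cref{item:robust-deletions}.

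First I would extract local-density consequences of proto-robustness. To begin, every vertex $u$ has large partite degree into each other part, specifically $\deg_G(u; U) > \mu n$ for each $U \in \fU$ not containing $u$: otherwise, deleting all edges from $u$ to the deficient part (and discarding one vertex from each remaining part when $r = k$ to preserve balance) yields an approximation in which $u$ lies in no edge of $K_k(G')$, contradicting the perfect fractional matching. A more delicate variant of this argument---first peeling off the high-link-degree vertices inside $N_G(v)$, then deleting the residual edges of the link---shows that $|K_{k-1}(N_G(v))| \geq c_1 n^{k-1}$ for a constant $c_1 = c_1(\mu, k) > 0$; otherwise $v$ would have no $(k-1)$-clique in its link in $G'$, again killing the perfect fractional matching.

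The key combinatorial identity is that each linked edge $e$ of $v$ decomposes as $e = S \cup \{w\}$ for some witness $S \in K_{k-1}(N_G(v))$ and extension $w \in N_G(S) \setminus \{v\}$, with at most $k$ such pairs $(S, w)$ per edge, yielding
\[
L(v) \;\geq\; \frac{1}{k}\sum_{S \in K_{k-1}(N_G(v))} \bigl(|N_G(S)| - 1\bigr).
\]
Finally, I would argue that a positive fraction of witness cliques $S \in K_{k-1}(N_G(v))$ admit $|N_G(S)| \geq c_2 n$ extensions for some $c_2 = c_2(\mu, k) > 0$; otherwise a suitable vertex removal $X \subseteq V(G) \setminus \{v\}$ with $|X| \leq \mu n$ (symmetrised across parts when $r = k$, to respect (iii) of \cref{def:approximation}) would detach the edges of $K_k(G - X)$ containing $v$ from the rest of the hypergraph, contradicting tight connectivity. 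Plugging both density estimates into the displayed inequality yields $L(v) \geq c_1 c_2 \, n^k / (2k)$, contradicting the hypothesis once $\mu' < c_1 c_2 / (2k)$. The main obstacle is this last extension-density step, where one must engineer $X$ to meet both the size and balance constraints of an approximation simultaneously; aperiodicity and zero-freeness are preserved for free, since \cref{item:robust-deletions} is monotone in the parameter.
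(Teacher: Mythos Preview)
Your overall strategy is sound in spirit, but there is a genuine gap in step 2 (and an analogous one in step 4) that your ``peeling'' argument does not bridge.

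Concretely, you claim that if $|K_{k-1}(N_G(v))| < c_1 n^{k-1}$ then one can ``peel off high-link-degree vertices inside $N_G(v)$, then delete the residual edges of the link'' to obtain a $(\mu,\mu)$-approximation $G'$ in which $v$ lies in no $k$-clique. For $k=3$ this works, because the link is a $2$-graph and few edges can simply be deleted after removing high-degree vertices. For $k\ge 4$, however, the link $K_{k-1}(G[N_G(v)])$ is a $(k-1)$-uniform hypergraph, and the approximation only permits you to delete $2$-edges of $G$ (up to $\mu n$ per vertex) and a $\mu$-fraction of vertices. The implication ``few $(k-1)$-cliques in $N_G(v)$ $\Rightarrow$ one can destroy all of them by deleting $o(n^2)$ $2$-edges'' is precisely the content of the Graph Removal Lemma; it does not follow from any elementary peeling. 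After removing the vertices contained in many $(k-1)$-cliques, the surviving cliques may still sit on $\Omega(n^2)$ distinct $2$-edges, and there is no way to hit them all within the edge budget of an approximation. Step 4 has the same flavour of difficulty: even if most $(k-1)$-cliques $S\subset N_G(v)$ have $|N_G(S)|<c_2 n$, neither the ``bad'' extension sets nor the ``good'' cliques admit a small vertex hitting set in general, so one cannot manufacture the desired $X$ with $|X|\le \mu n$.

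The paper's proof is in fact a single clean application of the Removal Lemma that bypasses your two-step decomposition. Let $G_v\subseteq G-v$ be the graph of all $2$-edges meeting $N_G(v)$; every $k$-clique in $G_v$ is a linked edge of $v$. One shows that after deleting \emph{any} $(\mu^2/8)n^2$ edges $Y$ from $G_v$, some $k$-clique survives: the vertices losing more than $(\mu/2)n$ edges form a set $X$ of size $<(\mu/2)n$, and removing $X$ and $Y$ from $G$ yields a $\mu$-approximation $G'$. Proto-robustness then gives a Hamilton framework $(G',K_k(G'))$, whence $v$ lies in some edge and, by tight connectivity with an edge avoiding $v$, has a linked edge in $K_k(G')$ --- which is a $k$-clique of $G_v-Y$. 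The Removal Lemma now converts ``cannot be made $K_k$-free by deleting $\Theta(n^2)$ edges'' into ``contains $\Omega(n^k)$ copies of $K_k$'', yielding $\mu' n^k$ linked edges directly. Your approach can be repaired by invoking the Removal Lemma at step 2, but then steps 3--4 become redundant, collapsing to exactly the paper's argument.
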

\cref{prop:linked-edges-for-free} is a simple consequence of the Graph Removal Lemma. Its proof can be found in \cref{sec:linked-edges}.

Thirdly, there are interesting graphs $G$ in which $K_k(G)$ contains a tight Hamilton cycle but is not tightly connected.
It is thus very convenient for our proofs to allow that $\cH \subset K_k(G)$ must not be tightly connected itself
(see \cref{sec:inseparable}).

Next, we note that `being a robust Hamilton framework' is stronger than `robustly having a Hamilton framework', by which we mean having \emph{some} framework after the edge and vertex deletion.
This stricter assumption is necessary, as there are graphs with the latter property which do not contain the corresponding powers of cycles (see \cref{sec:fragile-frameworks}).

Finally, it is possible to relax Hamilton frameworks to almost perfect fractional matchings.
More precisely, \cref{thm:main-powham,thm:main-bandwidth} remain valid if for $\rho \geq 0$ small enough with respect to $\beta,z,k,\Delta,\mu$, we require that $\cH$ has a fractional matching of size at least $(1-\rho)v(G)/k$.
If $\rho >0$, we need as a further assumption that every vertex of $G$ lies in an edge of $\cH$, which is implicit when $\rho =0$.

\subsection{Methodology}
Let us explain how we prove our main results.
The approach rests on a combination of graph regularity and the Blow-Up Lemma.
To overcome obstacles that arise in our general setting, we require a series of new ideas, which we describe in this section.
For simplicity, we focus in the aperiodic case.
The proof of \cref{thm:main-powham} is based on two lemmas.
Given an aperiodic Hamilton framework $(G,\cH)$, we first apply \rf{lem:lemma-for-G-cycle} to find a regular partition $\fV$ of bounded size, together with a reduced graph $R$ whose vertices are the clusters of $\fV$.
(The parts of a regular partition are called \emph{clusters}.)
It turns out we can find $\cJ \subseteq K_k(R)$ so that $(R,\cJ)$ is an aperiodic Hamilton framework.
This will mean that $(R,\cJ)$ approximately captures the structure of $(G,\cH)$.
\cref{lem:lemma-for-G-cycle} also provides a spanning subgraph $R' \subset R$ of bounded maximum degree, such that $G$ has stronger regularity properties with respect to $R'$ (it consists of super-regular pairs).
We then use \rf{lem:lemma-for-C} to allocate a power of a Hamilton cycle $C$ to a $\fV$-blow-up $R^\ast$ of $R$.
Finally, we apply the Blow-Up Lemma to turn this allocation into an embedding of $C$ in $G$.

The proof of \rf{lem:lemma-for-G-cycle} begins similar as in the setting of minimum degree conditions.
However, incorporating certain exceptional vertices (into the structure described by  $R'$) turns out to be much more complicated and requires additional absorption arguments.
In the proof of \rf{lem:lemma-for-C}, a new obstacle appears.
If the partition $\fV$ is balanced after a few intermediate preparations, we can effectively turn a perfect fractional matching of $\cJ$ into the desired allocation.
Unfortunately (and unlike in the case of minimum degree conditions), \cref{lem:lemma-for-G-cycle} returns a partition $\fV$ that is too unbalanced for this argument.
We deal with this by showing that $(R,\cJ)$ inherits a stronger matching property (called `cluster-matchability'), which can handle the varying cluster sizes of~$\fV$.
This allows us to find an allocation that covers most of the vertices.
The last few vertices are then allocated using the aperiodicity of the framework.

The proof of \cref{thm:main-bandwidth} has broadly the same structure as the one of \cref{thm:main-powham}.
Instead of a powers of a cycle $C$, we are given a graph $H$ of sublinear bandwidth and bounded maximum degree, which we have to embed into $G$.
As before, the proof is based on two lemmas: \rf{lem:lemma-for-G-bandwidth} and \rf{lem:lemma-for-H}.
The difference between \cref{lem:lemma-for-G-cycle} and \cref{lem:lemma-for-G-bandwidth} is that the latter returns a subgraph $R' \subset R$, which is not only a bounded-degree graph but, more specifically, a power of a cycle; this is obtained by applying \cref{thm:main-powham} to $R$.
Unfortunately, this complicates matters when including exceptional vertices into $R'$.
We therefore have to ensure that $R'$ runs through a set of predefined edges of $\cJ$ (the hypergraph part of the reduced framework), which effectively means that we have to prove a stronger result on Hamilton cycles (\cref{thm:cycle-distributed}).

In the second part of the proof of \cref{thm:main-bandwidth}, we use \cref{lem:lemma-for-H} to allocate the given graph $H$ to the $\fV$-blow-up $R^\ast$ of the reduced graph $R$.
This allocation proceeds in several steps.
We first allocate $H$ greedily to a blow-up of a power of a path $P^\ast$, where the sizes of the blown-up clusters of $P^\ast$ may differ by a lot.
Next, we allocate $P^\ast$ to the blow-up of a power of a cycle $C^\ast$, whose blown-up clusters are almost completely balanced.
This is done by allocating the clusters of $P^\ast$ by means of a random walk on the clusters of $C^\ast$, which is ultimately possible thanks to the aperiodicity of the Hamilton framework $(G,\cH)$.
If $\fV$ was a balanced partition, we could simply let $C^\ast$ have the cluster sizes of $\fV$ (up to a tiny error) and take $R' = C$.
However, in general, we can only assume that $\fV$ is almost balanced.
We deal with this by taking the cluster sizes of $C^\ast$ to be much smaller than the cluster size of $\fV$ and then allocating $C^\ast$ into $R^\ast$ with another application of \rf{lem:lemma-for-C}.
Together, this results in an allocation that is very close to the cluster size of $\fV$.
As a final step (before applying the Blow-Up Lemma), we modify the partition $\fV$ to precisely fit this allocation.
This can be done with a simple (but novel) shifting argument, which again relies on the aperiodicity of the framework.

Finally, in the situations where the Hamilton framework is partite or zero-free, the proof takes a very similar shape.
For the case of zero-freeness, much stays the same as zero-freeness implies aperiodicity.
Only in \cref{lem:lemma-for-H} we have to be a bit more careful, when embedding the vertices of colour $0$.
More details on this can be found in \cref{sec:bandwidth}.
The partite version of \cref{thm:main-bandwidth} is actually easier to prove.
This is because most of the arguments involving aperiodicity can be replaced by simple observations on the partite assumptions on $(G,\cH)$ and $H$.

\subsection{Organisation of the paper}
The rest of the paper is organised as follows.
The next section contains some basic definitions and notation.
In \cref{sec:proof-applications}, we derive the results of \cref{sec:applications} from \cref{thm:main-powham,thm:main-bandwidth}.
The second part of the paper is dedicated to the proofs of \cref{thm:main-powham,thm:main-bandwidth}.
In \cref{sec:regularity,section:intermediate}, we introduce tools and prove a few intermediate results.
\cref{sec:cycle,sec:bandwidth} contain the proofs of \cref{thm:main-powham,thm:main-bandwidth}, respectively.
The latter depends on \rf{lem:lemma-for-H}, which is proved in \cref{sec:lemmaforH}.
We conclude the paper with a few open problems and reflections in \cref{sec:conclusion}.
Some further constructions as well as proofs of standard and borrowed results can be found in the appendix.

\section{Notation}\label{sec:notation}
In this section, we introduce some general notation that will be used throughout the paper.
For $x,y,z \in \REALS$ with $z \geq 0$, we write $x = y \pm z$ to mean $y-z \leq x \leq y + z $.
For a positive integer $n$, we write $[n]=\{1,\dots,n\}$.
A hypergraph $\cH$ consists of a set of \emph{vertices} $V(\cH)$ and \emph{edges} $E(\cH)$, where the edges are subsets of vertices.
A sub(hyper)graph $\cH' \subset \cH$ is a hypergraph with vertex set $V(\cH') \subset V(\cH)$ and edge set $E(\cH') \subset E(\cH)$.
A vertex in a hypergraph is called \emph{isolated} if it is not contained in any edge.
We say that $\cH'$ is \emph{spanning} in $\cH$ if $V(\cH') = V(\cH)$.
We say that a hypergraph is \emph{$k$-uniform} (or a \emph{$k$-graph} for short) if all edges have size $k$.
So, graphs are simply $2$-graphs.
A set  $X$ of size $k$ is a \emph{$k$-set}, or a \emph{$k$-edge} if $X$ is an edge in a hypergraph.

For a hypergraph $\cH$, a set $U \subset V(\cH)$, and $v \in V(\cH)$, the \emph{neighbours $N_\cH(v;U)$ of $v$ in $U$} is the collection of sets $X \subseteq U \setminus \{ v \}$ such that $X \cup \{ v \} \in E(\cH)$.
The \emph{degree of $v$ in $U$} is $\deg_\cH(v;U) = |N_\cH(v;U)|$.
If $U = V(\cH)$, we just write $N_\cH(v)$ and $\deg_\cH(v)$.
Finally, if $\cH$ is clear from the context, we omit its references in the index.
We denote by $\cH[U]$ the \emph{induced hypergraph of {$\cH$ on $U$}}.
A \emph{homomorphism} from a $k$-graph $\cH_1$ to a $k$-graph $\cH_2$ is a function $\phi\colon V(\cH_1) \to V(\cH_2)$, which maps edges of $\cH_1$ to edges of $\cH_2$.
An \emph{embedding} is an injective homomorphism.
The \emph{maximum degree} $\Delta(\cH)$ is defined as the maximum of $\deg_\cH(v)$ over all vertices $v \in V(\cH)$.
For $1\leq i \leq k$, we define the \emph{$i$th shadow of $\cH$}, denoted $\partial_{i} \cH $, as the $i$-graph on $V(\cH)$ which contains all $i$-edges that are a subset of an edge of $\cH$.
A \emph{tight path} in a $k$-graph is a tight walk which does not repeat vertices.
For a $k$-graph $\cH$ and $X_1, X_2 \in V(\cH)^{k-1}$,
a \emph{tight $(X_1, X_2)$-{path} in $\cH$} is a tight path in $\cH$ that starts with the vertices of $X_1$ and ends with the vertices of $X_2$.

To express the constant hierarchies in the results and definitions, we use the following standard notation.
We write $x \ll y$ to mean that for any $y \in (0, 1]$ there exists an $x_0 \in (0,1)$ such that for all $x \leq x_0$ the subsequent statements hold.
Hierarchies with more constants are defined in a similar way and are to be read from the right to the left.
Implicitly, we will assume constants appearing in a hierarchy are positive real numbers.
Moreover, if $1/x$ appears in a hierarchy we will further assume $x$ is a natural number.
Finally, for a set $X$ and a tuple $\vec v \in \REALS^X$, we let $\maxnorm{\vec v} = \max_{x \in X} |\vec v(x)|$ and $\Lonenorm{\vec v} = \sum_{x \in X} |\vec v(x)|$.

\section{Proof of the applications}\label{sec:proof-applications}
In this section, we show the applications detailed in \cref{sec:applications}.

\subsection{Observations on tight connectivity}\label{sec:connectivity}
In this subsection we make two elementary but useful observations about tight connectivity.\footnote{For a more systematic investigation of tight connectivity, see our work on tight Hamilton cycles under minimum degree conditions~\cite{LS20}.}
The following proposition implies that the tight components of a $k$-graph $\cH$ form a partition of its edge set, and hence justifies the notation of tight components.

\begin{proposition}\label{prop:tightly-connected-observation}
	Let $X$ and $Y$ be two edges in a $k$-graph which intersect in $k-1$ vertices.
	Then $X$ and $Y$ are in the same tight component.
\end{proposition}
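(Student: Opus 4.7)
The plan is to exhibit an explicit closed tight walk of length $2k$ that uses precisely the edges $X$ and $Y$; since such a walk certifies that the two-edge subgraph $\{X,Y\}$ is tightly connected, edge-maximality of tight components then forces $X$ and $Y$ to lie in the same tight component.

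Concretely, write $X \cap Y = \{v_1, \dots, v_{k-1}\}$ and let $x, y$ be the unique vertices with $X = \{v_1, \dots, v_{k-1}, x\}$ and $Y = \{v_1, \dots, v_{k-1}, y\}$. I would then consider the cyclic sequence
\[
W = (v_1,\, v_2,\, \dots,\, v_{k-1},\, x,\, v_1,\, v_2,\, \dots,\, v_{k-1},\, y),
\]
of length $2k$, read cyclically. The main step is to check that every window of $k$ cyclically consecutive entries of $W$ forms, as a set, either $X$ or $Y$. Windows starting at positions $1,2,\dots,k$ each contain the entry $x$ together with all of $v_1,\dots,v_{k-1}$ (possibly in rotated order, once we wrap around the $v_j$ that appear on both sides of $x$), and therefore equal $X$ as a set. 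Symmetrically, windows starting at positions $k+1, k+2, \dots, 2k$ each contain $y$ together with all of $v_1, \dots, v_{k-1}$ and equal $Y$. In particular, all $2k$ windows really are $k$-sets (no repeats), and both $X$ and $Y$ appear among the edges traversed by $W$.

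The verification is the only real content; I expect no obstacle beyond tracking the indices carefully across the wrap-around from position $2k$ back to position $1$. Once $W$ is checked to be a closed tight walk in $\cH$ containing both $X$ and $Y$, the subgraph $(\,V(X)\cup V(Y),\, \{X,Y\}\,)$ is tightly connected by definition, so it is contained in some tight component; by edge-maximality of tight components, $X$ and $Y$ belong to the same one, as claimed.
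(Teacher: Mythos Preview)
Your proof is correct. The walk $W=(v_1,\dots,v_{k-1},x,v_1,\dots,v_{k-1},y)$ of length $2k$ is indeed a closed tight walk whose windows are exactly $X$ and $Y$, and the step from ``$\{X,Y\}$ is tightly connected'' to ``$X,Y$ lie in a common tight component'' is justified by finiteness (any tightly connected subgraph sits inside an edge-maximal one).

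The paper argues slightly differently: instead of exhibiting a single short closed walk through both edges, it proves a \emph{splicing} statement --- given any closed tight walk $W$ visiting $X$, one can locally replace the subwalk $(x_1,\dots,x_k)$ by a length-$3k$ segment that additionally visits $Y$, obtaining a closed tight walk covering all edges of $W$ together with $Y$. Applied to the trivial closed walk on $X$ alone, this recovers essentially your construction (with length $3k$ rather than $2k$). The payoff of the paper's formulation is twofold: it directly shows that \emph{every} tight component containing $X$ must also contain $Y$ (not just that some component contains both), which is exactly what is needed to conclude that tight components partition the edge set; and the splicing trick is reused verbatim later in the paper (for instance in the proof of \cref{proposition:orderedspanningwalk}). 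Your route is a touch cleaner for the proposition in isolation, while the paper's route is packaged for downstream use.
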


\begin{proof}
	Consider an arbitrary closed tight walk $W$, which visits the edge $X$.
	To prove the proposition, it suffices to show that there is a tight closed walk $W'$ which covers the same edges as $W$ and in addition $Y$.
	Since $W$ contains $X$, there is a subwalk $\cX=(x_1,\dots,x_k)$ of $W$ such that $X=\{x_1,\dots,x_k\}$.
	By assumption, we have $|X \cap Y| = k-1$, so $X \setminus Y = \{x_j\}$ for some $1 \leq j \leq k$ and there exists a vertex $y$ such that $Y \setminus X = \{y\}$.
	It follows that
	\begin{align*}
		\cY:=(x_1,\dots,x_k,x_1,\dots,x_{j-1},y,x_{j+1},\dots,x_k, x_1,\dots,x_{k})
	\end{align*}
	is a tight walk which visits $X$ and $Y$.
	Hence we can replace $\cX$ with $\cY$ in $W$ to obtain a tight closed walk $W'$ which covers all edges covered by $W$ and also $Y$.
\end{proof}

Given a $k$-graph $\cH$ and a vertex $v$, we define the \emph{link graph} $L_\cH(v)$ to be the $(k-1)$-graph with vertex set $V(\cH)$ and edge set $N_\cH(v) = \{ X \colon\{v\} \cup X \in E(\cH)\}$.
\begin{proposition}\label{prop:link-graph-tightly-connected}
	Let $\cH$ be a $k$-graph and $v \in V(\cH)$.
	Suppose that $L_\cH(v)$ is tightly connected.
	Then the edges of $\cH$ containing $v$ are in the same tight component.
\end{proposition}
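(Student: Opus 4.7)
The plan is to derive the result from \cref{prop:tightly-connected-observation} by walking through a closed tight walk of $L_\cH(v)$ and noticing that successive edges of $\cH$ built from this walk by prepending $v$ always share $k-1$ vertices.

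More concretely, since $L_\cH(v)$ is tightly connected, I would fix a closed tight walk $W = (u_1, u_2, \ldots, u_m)$ in $L_\cH(v)$ (indices taken cyclically modulo $m$) that visits every edge of $L_\cH(v)$. By the definition of a tight walk in a $(k-1)$-graph, every $k-1$ cyclically consecutive vertices of $W$ form an edge of $L_\cH(v)$, hence are pairwise distinct, and moreover the set $\{v, u_i, u_{i+1}, \ldots, u_{i+k-2}\}$ is an edge of $\cH$ because $u_j \neq v$ for all $j$. Write $e_i := \{v, u_i, u_{i+1}, \ldots, u_{i+k-2}\}$ for this edge of $\cH$.

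For each $i$, I would then compare $e_i$ and $e_{i+1}$. We have $e_i \cap e_{i+1} \supseteq \{v, u_{i+1}, \ldots, u_{i+k-2}\}$, a set of size $k-1$, so either $e_i = e_{i+1}$ or $|e_i \cap e_{i+1}| = k-1$. In either case, \cref{prop:tightly-connected-observation} (trivially in the first case) places $e_i$ and $e_{i+1}$ in the same tight component of $\cH$. Chaining this through $i = 1, 2, \ldots, m$, all $e_i$ lie in a common tight component.

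Finally, since $W$ covers every edge of $L_\cH(v)$, any edge of $\cH$ containing $v$ has the form $\{v\} \cup f$ with $f \in E(L_\cH(v))$, and $f$ appears as $\{u_i, \ldots, u_{i+k-2}\}$ for some $i$, so the corresponding edge of $\cH$ is some $e_i$. The main thing to be a little careful about is the case where the walk revisits vertices so that $u_i = u_{i+k-1}$, which collapses $e_i$ and $e_{i+1}$ into the same set; but this only makes the step trivial and does not obstruct the chain. No further difficulty is expected.
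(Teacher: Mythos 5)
Your proof is correct and follows essentially the same route as the paper: take a closed tight walk visiting all edges of $L_\cH(v)$, prepend $v$ to each window of $k-1$ consecutive vertices to get edges of $\cH$ intersecting in $k-1$ vertices, and chain applications of \cref{prop:tightly-connected-observation} along the walk. Your explicit treatment of the degenerate case $e_i = e_{i+1}$ is a small point of care the paper leaves implicit, but the argument is the same.
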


\begin{proof}
	By assumption, there is a tight walk $W$ in $L_\cH(v)$ that visits all of its edges.
	Let $(u_1,\dots,u_{k}) \subset W$ be a tight subwalk.
	By \cref{prop:tightly-connected-observation}, the edges $\{u_1,\dots,u_{k-1},v\}$ and $\{u_2,\dots,u_{k},v\}$ are in the same tight component of $\cH$.
	We conclude by iterating this observation along all the vertices of $W$.
\end{proof}

\subsection{Robust expansion}
In this section, we show that robust expansion together with a linear minimum degree is essentially equivalent to having a robust $2$-uniform Hamilton framework.
Recall the following result of Tutte~\cite[Corollary 30.1a]{Sch03}.

\begin{theorem}[Tutte's theorem]\label{thm:tutte}
	A graph $G$ has a perfect fractional matching if and only if $|\bigcup_{s \in S} N(s)| \geq |S|$ for every independent set $S$.
\end{theorem}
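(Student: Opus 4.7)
The plan is to deduce the equivalence from linear-programming duality applied to the fractional matching LP. The forward direction is a one-line counting step: suppose $w\colon E(G)\to[0,1]$ is a perfect fractional matching and $S$ is an independent set. Every edge incident to $S$ has its other endpoint in $N(S) = \bigcup_{s\in S} N(s)$ (since $S$ has no internal edges), so summing the tight constraint $\sum_{e\ni s}w(e)=1$ over $s\in S$ and re-indexing from the perspective of $N(S)$, where $\sum_{e\ni v}w(e)\leq 1$, immediately gives $|S|\leq |N(S)|$.

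For the converse direction I would argue by contrapositive using LP duality. The maximum fractional matching LP is $\max\sum_e w(e)$ subject to $\sum_{e\ni v}w(e)\leq 1$ and $w\geq 0$; its dual is the fractional vertex-cover LP $\min\sum_v y(v)$ subject to $y(u)+y(v)\geq 1$ on every edge $uv$ and $y\geq 0$. If $G$ has no perfect fractional matching then both optima are strictly less than $|V(G)|/2$. The key external tool I would invoke is the classical half-integrality of fractional vertex covers (originally due to Balinski; see~\cite{Sch03}), which guarantees an optimal $y^{*}$ with entries in $\{0,\tfrac12,1\}$.

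Given such a $y^{*}$, partition $V(G)$ into $A = (y^{*})^{-1}(0)$, $B = (y^{*})^{-1}(\tfrac12)$, $C = (y^{*})^{-1}(1)$. The strict bound $|C|+\tfrac12|B| < \tfrac12(|A|+|B|+|C|)$ rearranges to $|A|>|C|$. Feasibility of $y^{*}$ forces $A$ to be independent (an edge inside $A$ would violate $y^{*}(u)+y^{*}(v)\geq 1$) and forces $N(A)\subseteq C$ (a neighbour of a weight-$0$ vertex must satisfy $y^{*}\geq 1$, and at optimality one may assume $y^{*}\leq 1$). Therefore $|N(A)|\leq |C| < |A|$, contradicting the Hall-type hypothesis applied to the independent set $A$.

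The only non-elementary ingredient is the half-integrality of the vertex-cover polytope, and this is the step I expect to be the main obstacle if one wants a fully self-contained proof. A direct derivation runs as follows: at any extreme optimal $y^{*}$, the fractional support $F = \{v\colon 0 < y^{*}(v) < 1\}$ must induce a bipartite subgraph, since otherwise shifting values along an odd cycle by $\pm\varepsilon$ would remain feasible while strictly decreasing the objective. With $F$ bipartite, a bipartite Hall-type perturbation converts $y^{*}$ into a half-integral optimum. Modulo this structural statement, the whole theorem reduces to LP duality and the short combinatorial argument above.
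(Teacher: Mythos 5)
The paper does not actually prove this statement: it is quoted verbatim from Schrijver~\cite{Sch03} and used as a black box, so there is no in-paper argument to compare against. Your proof is correct and is the standard LP-duality derivation. In the forward direction the only thing worth making explicit is why the vertex constraints are tight: summing $\sum_{e\ni v}w(e)\le 1$ over all $v$ double-counts the edges, so a fractional matching of size $v(G)/2$ forces $\sum_{e\ni v}w(e)=1$ for every $v$, which is what licenses $|S|=\sum_{s\in S}\sum_{e\ni s}w(e)$. The converse is also sound: strong duality applies (both LPs are feasible), $|C|+\tfrac12|B|<\tfrac12 n$ does rearrange to $|C|<|A|$, feasibility of $y^{*}$ does force $A$ independent and $N(A)\subseteq C$ once $y^{*}\le 1$ is arranged, and $A\neq\emptyset$ since $|A|>|C|\ge 0$, so $A$ genuinely violates the Hall-type condition.

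The one shaky spot is your closing sketch of half-integrality. Shifting $+\varepsilon,-\varepsilon,+\varepsilon,\dots$ around an odd cycle of fractional vertices does not close up: the last edge of the cycle sees its constraint change by $\pm 2\varepsilon$, so the objective-decreasing direction can be infeasible precisely when that constraint is tight, and your claim that the fractional support must be bipartite does not follow as stated. The clean classical (Nemhauser--Trotter / Balinski) argument perturbs instead by $\pm\varepsilon$ on $U=\{v\colon 0<y^{*}(v)<\tfrac12\}$ and simultaneously by $\mp\varepsilon$ on $W=\{v\colon \tfrac12<y^{*}(v)<1\}$; every tight edge constraint pairs a vertex of $U$ with one of $W$ (or avoids both), so both perturbations stay feasible and $y^{*}$ is a midpoint of two feasible points unless $U=W=\emptyset$. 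Since half-integrality of the fractional vertex-cover polytope is classical and appears in the very reference the paper cites, you may simply invoke it; if you want the proof self-contained, replace the odd-cycle sketch with the perturbation above.
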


Given this, we can prove the following two simple propositions, which justify our claim that the notions of $2$-uniform Hamilton frameworks and robust expanders are essentially equivalent.

\begin{proposition}\label{pro:expander-to-framework}
	Let $1/n \ll \mu \ll \nu \leq \tau \ll \eta$.
	Let $G$ be a robust $(\nu, \tau)$-expander on $n$ vertices with $\delta(G) \geq \eta n$.
	Then $(G,G)$ is a $2$-uniform  $\mu$-robust aperiodic Hamilton framework.
\end{proposition}

\begin{proof}
	Let $G'$ be a $\mu$-approximation of $G$.
	Since $\mu \ll \nu, \tau, \eta$, it follows that $G'$ is a robust $(\nu/2, \tau/2)$-expander on $n$ vertices with $\delta(G) \geq (\eta/2) n$.
	The robust expansion and minimum-degree conditions imply easily that $G'$ is connected and not bipartite.
	The conditions also imply that $|\bigcup_{s \in S} N_{G'}(s)| \geq |S|$ for every independent set $S$ in $G'$.
	By \cref{thm:tutte}, $G'$ has a perfect fractional matching.
\end{proof}

\begin{figure}

	
	\tikzset{
		pattern size/.store in=\mcSize, 
		pattern size = 5pt,
		pattern thickness/.store in=\mcThickness, 
		pattern thickness = 0.3pt,
		pattern radius/.store in=\mcRadius, 
		pattern radius = 1pt}
	\makeatletter
	\pgfutil@ifundefined{pgf@pattern@name@_lfyd8b9q0}{
		\pgfdeclarepatternformonly[\mcThickness,\mcSize]{_lfyd8b9q0}
		{\pgfqpoint{0pt}{0pt}}
		{\pgfpoint{\mcSize+\mcThickness}{\mcSize+\mcThickness}}
		{\pgfpoint{\mcSize}{\mcSize}}
		{
			\pgfsetcolor{\tikz@pattern@color}
			\pgfsetlinewidth{\mcThickness}
			\pgfpathmoveto{\pgfqpoint{0pt}{0pt}}
			\pgfpathlineto{\pgfpoint{\mcSize+\mcThickness}{\mcSize+\mcThickness}}
			\pgfusepath{stroke}
	}}
	\makeatother
	
	
	\tikzset{
		pattern size/.store in=\mcSize, 
		pattern size = 5pt,
		pattern thickness/.store in=\mcThickness, 
		pattern thickness = 0.3pt,
		pattern radius/.store in=\mcRadius, 
		pattern radius = 1pt}
	\makeatletter
	\pgfutil@ifundefined{pgf@pattern@name@_mzwapm6zz}{
		\pgfdeclarepatternformonly[\mcThickness,\mcSize]{_mzwapm6zz}
		{\pgfqpoint{0pt}{-\mcThickness}}
		{\pgfpoint{\mcSize}{\mcSize}}
		{\pgfpoint{\mcSize}{\mcSize}}
		{
			\pgfsetcolor{\tikz@pattern@color}
			\pgfsetlinewidth{\mcThickness}
			\pgfpathmoveto{\pgfqpoint{0pt}{\mcSize}}
			\pgfpathlineto{\pgfpoint{\mcSize+\mcThickness}{-\mcThickness}}
			\pgfusepath{stroke}
	}}
	\makeatother
	
	
	\tikzset{
		pattern size/.store in=\mcSize, 
		pattern size = 5pt,
		pattern thickness/.store in=\mcThickness, 
		pattern thickness = 0.3pt,
		pattern radius/.store in=\mcRadius, 
		pattern radius = 1pt}
	\makeatletter
	\pgfutil@ifundefined{pgf@pattern@name@_tcr4zb385}{
		\makeatletter
		\pgfdeclarepatternformonly[\mcRadius,\mcThickness,\mcSize]{_tcr4zb385}
		{\pgfpoint{-0.5*\mcSize}{-0.5*\mcSize}}
		{\pgfpoint{0.5*\mcSize}{0.5*\mcSize}}
		{\pgfpoint{\mcSize}{\mcSize}}
		{
			\pgfsetcolor{\tikz@pattern@color}
			\pgfsetlinewidth{\mcThickness}
			\pgfpathcircle\pgfpointorigin{\mcRadius}
			\pgfusepath{stroke}
	}}
	\makeatother
	\tikzset{every picture/.style={line width=0.75pt}} 
	
	\begin{tikzpicture}[x=0.75pt,y=0.75pt,yscale=-1,xscale=1]
		
		\draw   (50,67) -- (290,67) -- (290,240) -- (50,240) -- cycle ;
		
		\draw  [pattern=_lfyd8b9q0,pattern size=6pt,pattern thickness=0.75pt,pattern radius=0pt, pattern color={rgb, 255:red, 0; green, 0; blue, 0}] (50,64.4) -- (291,64.4) -- (291,90) -- (50,90) -- cycle ;
		\draw  [pattern=_mzwapm6zz,pattern size=6pt,pattern thickness=0.75pt,pattern radius=0pt, pattern color={rgb, 255:red, 0; green, 0; blue, 0}] (50,181) -- (75,181) -- (75,240) -- (50,240) -- cycle ;
		
		\draw  [color={rgb, 255:red, 74; green, 144; blue, 226 }  ,draw opacity=1 ][fill={rgb, 255:red, 74; green, 144; blue, 226 }  ,fill opacity=0.5 ][line width=2]  (50,66) -- (290,66) -- (290,181) -- (50,181) -- cycle ;

		\draw  [color={rgb, 255:red, 208; green, 2; blue, 27 }  ,draw opacity=1 ][fill={rgb, 255:red, 208; green, 2; blue, 27 }  ,fill opacity=0.18 ][line width=2]  (50,64.4) -- (150,64.4) -- (150,240) -- (50,240) -- cycle ;

		\draw   (105.8,137.13) .. controls (96.8,106.13) and (158.8,91.13) .. (184.8,98.13) .. controls (210.8,105.13) and (238.8,110.13) .. (244.8,144.13) .. controls (250.8,178.13) and (240.8,214.13) .. (212.8,213.13) .. controls (184.8,212.13) and (173.8,188.13) .. (157.8,173.13) .. controls (141.8,158.13) and (114.8,168.13) .. (105.8,137.13) -- cycle ;
		
		
		\draw   (146.8,57.13) .. controls (146.85,52.46) and (144.55,50.1) .. (139.88,50.04) -- (106.53,49.64) .. controls (99.86,49.55) and (96.56,47.18) .. (96.61,42.52) .. controls (96.56,47.18) and (93.2,49.47) .. (86.53,49.39)(89.53,49.43) -- (62.08,49.09) .. controls (57.42,49.04) and (55.06,51.34) .. (55,56.01) ;
		
		\draw   (300,177) .. controls (304.67,177) and (307,174.67) .. (307,170) -- (307,127.87) .. controls (307,121.2) and (309.33,117.87) .. (314,117.87) .. controls (309.33,117.87) and (307,114.54) .. (307,107.87)(307,110.87) -- (307,77) .. controls (307,72.33) and (304.67,70) .. (300,70) ;
		\draw   (43,67) .. controls (40.12,67) and (38.68,68.44) .. (38.68,71.32) -- (38.68,71.32) .. controls (38.68,75.44) and (37.24,77.5) .. (34.35,77.5) .. controls (37.24,77.5) and (38.68,79.56) .. (38.68,83.68)(38.68,81.82) -- (38.68,83.68) .. controls (38.68,86.56) and (40.12,88) .. (43,88) ;
		
		\draw   (51.8,247.4) .. controls (51.8,250.15) and (53.17,251.52) .. (55.91,251.52) -- (55.91,251.52) .. controls (59.84,251.52) and (61.8,252.89) .. (61.8,255.64) .. controls (61.8,252.89) and (63.76,251.52) .. (67.68,251.52)(65.91,251.52) -- (67.68,251.52) .. controls (70.43,251.52) and (71.8,250.15) .. (71.8,247.4) ;
		
		\draw   (40.8,94.4) .. controls (36.13,94.4) and (33.8,96.73) .. (33.8,101.4) -- (33.8,130.26) .. controls (33.8,136.93) and (31.47,140.26) .. (26.8,140.26) .. controls (31.47,140.26) and (33.8,143.59) .. (33.8,150.26)(33.8,147.26) -- (33.8,171.4) .. controls (33.8,176.07) and (36.13,178.4) .. (40.8,178.4) ;

		\draw    (125.8,116.13) -- (164.8,109.13) ;
		\draw [shift={(164.8,109.13)}, rotate = 349.82] [color={rgb, 255:red, 0; green, 0; blue, 0 }  ][fill={rgb, 255:red, 0; green, 0; blue, 0 }  ][line width=0.75]      (0, 0) circle [x radius= 1.34, y radius= 1.34]   ;
		\draw [shift={(125.8,116.13)}, rotate = 349.82] [color={rgb, 255:red, 0; green, 0; blue, 0 }  ][fill={rgb, 255:red, 0; green, 0; blue, 0 }  ][line width=0.75]      (0, 0) circle [x radius= 1.34, y radius= 1.34]   ;
		\draw    (182.8,143.13) -- (197.8,117.13) ;
		\draw [shift={(197.8,117.13)}, rotate = 299.98] [color={rgb, 255:red, 0; green, 0; blue, 0 }  ][fill={rgb, 255:red, 0; green, 0; blue, 0 }  ][line width=0.75]      (0, 0) circle [x radius= 1.34, y radius= 1.34]   ;
		\draw [shift={(182.8,143.13)}, rotate = 299.98] [color={rgb, 255:red, 0; green, 0; blue, 0 }  ][fill={rgb, 255:red, 0; green, 0; blue, 0 }  ][line width=0.75]      (0, 0) circle [x radius= 1.34, y radius= 1.34]   ;
		\draw    (139.8,144.13) -- (207.8,191.13) ;
		\draw [shift={(207.8,191.13)}, rotate = 34.65] [color={rgb, 255:red, 0; green, 0; blue, 0 }  ][fill={rgb, 255:red, 0; green, 0; blue, 0 }  ][line width=0.75]      (0, 0) circle [x radius= 1.34, y radius= 1.34]   ;
		\draw [shift={(139.8,144.13)}, rotate = 34.65] [color={rgb, 255:red, 0; green, 0; blue, 0 }  ][fill={rgb, 255:red, 0; green, 0; blue, 0 }  ][line width=0.75]      (0, 0) circle [x radius= 1.34, y radius= 1.34]   ;
		\draw    (222.8,194.13) -- (219.8,153.13) ;
		\draw [shift={(219.8,153.13)}, rotate = 265.82] [color={rgb, 255:red, 0; green, 0; blue, 0 }  ][fill={rgb, 255:red, 0; green, 0; blue, 0 }  ][line width=0.75]      (0, 0) circle [x radius= 1.34, y radius= 1.34]   ;
		\draw [shift={(222.8,194.13)}, rotate = 265.82] [color={rgb, 255:red, 0; green, 0; blue, 0 }  ][fill={rgb, 255:red, 0; green, 0; blue, 0 }  ][line width=0.75]      (0, 0) circle [x radius= 1.34, y radius= 1.34]   ;

		\draw (88,20.4) node [anchor=north west][inner sep=0.75pt]  [font=\Large,color={rgb, 255:red, 208; green, 2; blue, 27 }  ,opacity=1 ]  {${\textstyle R}$};
		\draw (322,110.4) node [anchor=north west][inner sep=0.75pt]  [font=\Large,color={rgb, 255:red, 208; green, 2; blue, 27 }  ,opacity=1 ]  {$\textcolor[rgb]{0.29,0.56,0.89}{S}$};
		\draw (300,200.4) node [anchor=north west][inner sep=0.75pt]  [font=\Large ,opacity=1 ]  {$ {V(G)}$};
		\draw (226,216.4) node [anchor=north west][inner sep=0.75pt]  [font=\Large ,opacity=1 ]  {$F$};
		\draw (10,65.4) node [anchor=north west][inner sep=0.75pt]  [font=\large,color={rgb, 255:red, 208; green, 2; blue, 27 }  ,opacity=1 ]  {$\textcolor[rgb]{0.29,0.56,0.89}{{\displaystyle S^{\ast }}}$};
		\draw (51,260.4) node [anchor=north west][inner sep=0.75pt]  [font=\large,color={rgb, 255:red, 208; green, 2; blue, 27 }  ,opacity=1 ]  {$R^{\ast }$};
		\draw (6,130.4) node [anchor=north west][inner sep=0.75pt]  [font=\large,color={rgb, 255:red, 208; green, 2; blue, 27 }  ,opacity=1 ]  {${\displaystyle \textcolor[rgb]{0.29,0.56,0.89}{S}\textcolor[rgb]{0.29,0.56,0.89}{_{1}}}$};

	\end{tikzpicture}
	\caption{The vertex and edge sets considered in the proof of \cref{pro:framework-to-expander}.}
	\label{fig:expander}
\end{figure}
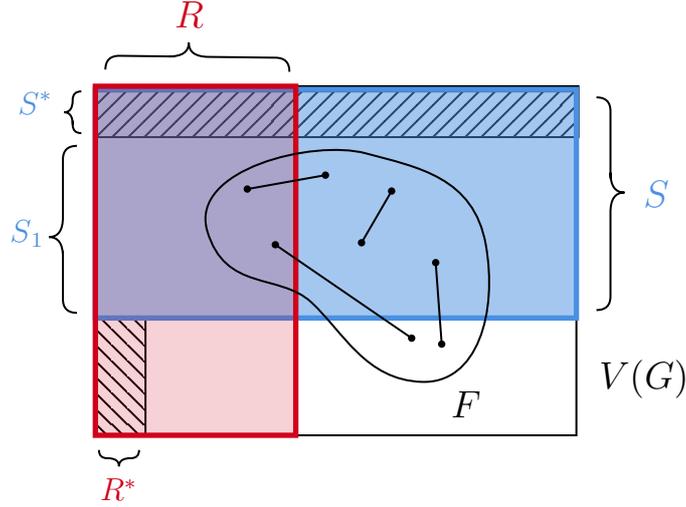

\begin{proposition} \label{pro:framework-to-expander}
	Let $\mu > 0$ and let $(G,G)$ be a $2$-uniform $\mu$-robust Hamilton framework on $n$ vertices.
	Then $G$ is a robust $(\mu^2/64, \mu/2)$-expander with $\delta(G) \geq \mu n$.
\end{proposition}

\begin{proof}
	Since $(G,G)$ is a $2$-uniform $\mu$-robust Hamilton framework, $G$ is $(\mu n)$-vertex-connected and in particular, $\delta(G) \geq \mu n$.
	Let $\nu = \mu^2/64$ and $\tau = \mu/2$.
	Set $V:=V(G)$.
	Now suppose, for sake of contradiction, that $G$ is not a robust $(\nu, \tau)$-expander.
	Then there is some set $S \subseteq V$ with $\tau n \leq |S| \leq (1 - \tau)n$ and $|R| < |S| + \nu n$,  where $R = \RN_{\nu, G}(S)$.
	This is illustrated in \cref{fig:expander}.
	Let $S^\ast = \{ v \in S \colon d(v, V \setminus R) > \mu n/8 \}$.
	We have that
	\begin{align*}
		|S^\ast|  \frac{\mu}{8} n < \sum_{v \in S} d(v, V \setminus R) = \sum_{v \in V \setminus R} d(v, S) < 2 \nu n |V \setminus R| \leq \frac{\mu^2}{32} n^2.
	\end{align*}
	From this, we deduce
	\begin{align}\label{itm:framework-to-expander-Sstar-bound}
		|S^\ast| < \frac{\mu}{4} n.
	\end{align}
	In particular, $|S \setminus S^\ast| = |S| - |S^\ast| \geq \tau n  - \mu n / 4 > 0$ by choice of $S$ and $\tau$.
	Next, let
		{$S_1 = S \setminus S^\ast$ and $F = \{ uv \in E(G)\colon u \in S_1,\, v \in V \setminus R \}$.}
	Note that $d(u,V \sm R) \leq \mu n/8$ for $u \in S_1$ and $d(v,S_1) \leq \nu n$ for $v \in V \sm R$.
	It follows that for all $v \in V$,
	\begin{align}\label{itm:framework-to-expander-degree-F}
		d_F(v) \leq  \frac{\mu n}{8} + \nu n = \frac{\mu n}{8} + \frac{\mu^2 n}{64} \leq \frac{\mu}{4}n.
	\end{align}
	Now let $G_1 = G - S^\ast - F$.
	Note that 
	\begin{align}
		\label{item:framtorobust-keyobservation}
		\text{$G_1$ has no edges with one end in $S_1$ and the other end in $V(G_1) \setminus R$.}
	\end{align}
	Consequently, 
	\begin{align}
		\label{item:framtorobust-keyobservation-otherstuff}
		\text{the neighbours of every $s \in S_1 \sm R$ in $G_1$ are in $R \sm S$.}
	\end{align}
	Next, we claim that
	\begin{align}\label{itm:framework-to-expander-RminusS}
		|R \setminus S| \geq \frac{\mu}{2} n.
	\end{align}
		To see this, suppose otherwise.
		We then have $|R \cup S| = |S| + |R \setminus S| < |S| + \mu n / 2 \leq n$, where we used that $|S| \leq (1 - \tau)n$ in the last inequality.
		So in particular, $V \setminus (S \cup R)$ is non-empty.
		Let $G_2 = G_1 - (R \setminus S)$.
		Since $|R \setminus S| + |S^\ast| \leq \mu n/2 + \mu n / 4 = 3 \mu n / 4$ by~\eqref{itm:framework-to-expander-Sstar-bound} and $\Delta(F) \leq \mu  n/4$ by \eqref{itm:framework-to-expander-degree-F}, it follows that $G_2$ is a $\mu$-approximation of $G$.
		In particular, $G_2$ is connected.
		Yet, $S_1$ and $V \setminus (S \cup R)$ are disconnected in $G_2$ by observation~\eqref{item:framtorobust-keyobservation}.
		This is a contradiction, which proves the above claim.
	
	By inequality~\eqref{itm:framework-to-expander-RminusS}, we may consider a set $R^\ast \subseteq R \setminus S$ of size $\mu n / 2$.
	Let $G_3 = G - S^\ast - R^\ast - F$.
	Since $|S^\ast|+|R^\ast| \leq 3 \mu n / 4$ by~\eqref{itm:framework-to-expander-Sstar-bound} and $\Delta(F) \leq \mu n / 4$, it follows that $G_3$ is a $\mu$-approximation of $G$.
	Note that $S_1 \setminus R$ is non-empty, since
	\begin{align*}
		|S_1 \setminus R| \geq |S \setminus R| - |S^\ast| = (|S| - |R|) + |R \setminus S| - |S^\ast| \geq - \nu n + \frac{\mu}{2} n  - \frac{\mu}{4} n > 0,
	\end{align*}
	where we used the choice of $S$ together with inequalities~\eqref{itm:framework-to-expander-Sstar-bound}~and~\eqref{itm:framework-to-expander-RminusS}.
	
	Now observation~\eqref{item:framtorobust-keyobservation} implies that $S_1 \setminus R$ is an independent set in $G_3 \subset G_1$.
	Since $G_3$ is a $\mu$-approximation of $G$ it must contain a perfect fractional matching, so together with \Cref{thm:tutte}, we have $\left| \bigcup_{s \in S_1 \setminus R} N_{G_3}(s) \right| \geq |S_1 \setminus R|$.
	On the other hand, using observation~\eqref{item:framtorobust-keyobservation} again, we see that
	\begin{align*}
		\left| \bigcup_{s \in S_1 \setminus R} N_{G_3}(s) \right|
		& = \left|  \bigcup_{s \in S_1 \setminus R} N_{G_1}(s)\setminus R^\ast   \right| \\
		& \leq |R \sm S_1 | - |R^\ast| \\
		& = |R| - |R \cap S_1| - |R^\ast| \\
		& < |S| + \nu n  - |R \cap S_1| - |R^\ast| \\
		& = |S_1| + |S^\ast| + \nu n  - |R \cap S_1|   - |R^\ast| \\
		& = |S_1 \setminus R| + |S^\ast| + \nu n - |R^\ast| \\
		& \leq |S_1 \setminus R| + \frac{\mu}{4} n  + \frac{\mu^2}{64} n - \frac{\mu}{2} n \leq |S_1 \setminus R|,
	\end{align*}
	where we used inequality~\eqref{itm:framework-to-expander-degree-F}, the definition of $\nu$ and the choice of $R^\ast$ in the last step.
	This gives a contradiction and finishes the proof.
\end{proof}

Note that \cref{pro:expander-to-framework} together with  \cref{thm:main-powham} implies that robust expanders of sufficiently large minimum degree contain Hamilton cycles,
and together with
\cref{thm:main-bandwidth}\ref{item:main-bandwidth-aperiodic}
we recover \cref{thm:knoxtreglown-robustexpanders}.
We remark that \cref{thm:main-bandwidth}\ref{item:main-bandwidth-zerofree} does not apply here, as there are triangle-free robust expanders.

\subsection{Pósa-type conditions}\label{sec:posa}
The goal of this subsection is to prove \rf{thm:bandwidth-posa}.
All we require in addition to the results of \cref{sec:applications,sec:frameworks} is the following lemma (to be proved later).
\begin{lemma}\label{lem:posa-connectivity}
	For $k\geq2$, let $G$ be a graph with vertices $v_1,\dots,v_n$.
	Suppose that $\deg(v_i)   >  (k-2)n/k  + \min\{i,n/k\}$ for every $i \leq n$.
	Then $K_k(G)$ is tightly connected, spans all vertices of $G$, and contains a $(k+1)$-clique.
\end{lemma}

Given \cref{lem:posa-connectivity}, we can easily combine \rf{thm:posa-clique-factor} and \cref{thm:main-bandwidth} to derive the main result of this section.

\begin{proof}[Proof of \cref{thm:bandwidth-posa}]
	We can assume $\mu \leq 1/2$, as otherwise the statement is vacuous.
	Let $\mu', n_1$ be given by \cref{prop:linked-edges-for-free} under input $k, \mu/2$.
	Let $n_2$ be given by \cref{thm:posa-clique-factor} under input $k, \mu/4$.
	Let $\beta, z >0$, and $n_3 \in \NATS$ be obtained from \cref{thm:main-bandwidth} applied with $k, \Delta,\mu'$,
	and let $n_0 = \max \{ n_1, (1 + \mu)(n_2+k^2), n_3, 8 \mu^{-1} k^2\}$.

	Let $n \geq n_0$ and consider a graph $G$ with degree sequence $d_1\leq \dots \leq d_n$ such that $d_i \geq \frac{k-2}{k} n  + i + \mu n$ for every $i \leq n/k$.
	Let $\cH = K_k(G)$.
	By \cref{thm:main-bandwidth}, it is enough to show that $(G, \cH)$ is a $\mu'$-robust $k$-uniform zero-free Hamilton framework. 
	By \cref{prop:linked-edges-for-free} and the choice of $\mu', n_0$, it is enough to show that $(G,\cH)$ is a $(\mu/2)$-proto-robust zero-free Hamilton framework.

	To see this, let $G' \subset G$ be a $(\mu/2)$-approximation of $G$ and let $\cH'=\cH \cap K_k(G')$.
	Let $n'$ be the number of vertices of $G'$, and note that $n' \geq (1 - \mu/2)n$.
	Moreover $G'$ satisfies the same Pósa-type degree conditions as $G$, now with $\mu/2$ instead of $\mu$.
	Consequently, $\cH'$ is tightly connected and contains a $(k+1)$-clique by \cref{lem:posa-connectivity}.
	In fact, iterating this observation we can obtain $q$ such $(k + 1)$-cliques
	where $q$ is the integer remainder when dividing $n'$ by $k$.
	Removing these cliques leaves a graph $G''$ on at least $n' - k^2 \geq n_2$ vertices.
	Since $\mu \leq 1/2$, $n \geq 8 \mu^{-1}k^2$ and $n' \geq (1 - \mu/2)n$, we have $\mu n' / 4 \geq k^2$.
	This implies that $G''$ retains its Pósa-type conditions, but now with $\mu/4$ instead of $\mu$.
	By \cref{thm:posa-clique-factor}, $G''$ has a $k$-clique factor $M$.
	Thus, we can define a perfect fractional matching in $G'$ by assigning weight $1$ to the cliques of $M$ and weight $1/k$ to the edges of the $(k+1)$-cliques.
	This shows that $(G',\cH')$ is a zero-free Hamilton framework, and finishes the proof.
\end{proof}

The rest of this subsection is dedicated to the proof of \cref{lem:posa-connectivity}.
Note that the assumptions of this lemma imply that $n \geq k+1$.
Given the statement of \cref{lem:posa-connectivity}, we say a vertex $v$ of $G$ is \emph{big} if $\deg_G(v) > (k-1)n/k$.
Note that if $k\ge 3$, then there are more than $2n/k$ big vertices in $G$.
The following observation clears the way for an induction on $k$.

\begin{proposition}\label{prop:posa-link}
	Given the assumptions of \cref{lem:posa-connectivity} and $k \ge 3$, let $v$ be a big vertex, $G'=G[N(v)]$ and $n' = | V(G')|$.
	Then $\deg_{G'}(v_j) > \frac{k-3}{k-1}n' + \min\left\lbrace j,\frac{n' }{k-1}\right\rbrace$ for each $v_j \in V(G')$.
\end{proposition}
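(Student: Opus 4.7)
The plan is to reduce everything to the standard inclusion--exclusion bound on common neighbourhoods together with a short case check; the only role of ``$v$ is big'' is to provide the slack $n' = \deg_G(v) \geq (k-1)n/k$, equivalently $b := n'/(k-1) \geq n/k =: a$.

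First I would observe that for any $v_j \in V(G') = N_G(v)$, the neighbours of $v_j$ in $G'$ are precisely the common neighbours of $v$ and $v_j$ in $G$, so
\[
\deg_{G'}(v_j) = |N_G(v_j) \cap N_G(v)| \geq \deg_G(v_j) + \deg_G(v) - n = \deg_G(v_j) + n' - n.
\]
Substituting the Pósa-type hypothesis $\deg_G(v_j) > (k-2)n/k + \min\{j, n/k\}$ would then yield
\[
\deg_{G'}(v_j) > n' - \frac{2n}{k} + \min\{j, a\}.
\]
Subtracting the target bound $\tfrac{k-3}{k-1}n' + \min\{j, b\}$ and simplifying using $n'(1 - \tfrac{k-3}{k-1}) = \tfrac{2}{k-1}n' = 2b$, the whole proposition reduces to verifying the elementary inequality
\[
2(b - a) + \min\{j, a\} - \min\{j, b\} \,\geq\, 0.
\]

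I expect the only remaining ``work'' to be a three-case check on where $j$ lies relative to $a$ and $b$. For $j \leq a$ both minima equal $j$ so their difference is $0$. For $j > b$ the difference is $a - b$. In the intermediate range $a < j \leq b$, where the two minima genuinely disagree, the difference equals $a - j$ which is at worst $a - b = -(b - a)$. In every case it is dominated by the slack $2(b - a) \geq 0$ guaranteed by bigness of $v$, so the inequality holds. The main (and only) obstacle is thus verifying that the factor of $2$ comfortably absorbs the worst-case shortfall in the intermediate range, which goes through because $2(b - a) \geq (b - a)$. No further ingredients seem necessary.
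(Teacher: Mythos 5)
Your proof is correct and follows essentially the same route as the paper: both start from the inclusion--exclusion bound $\deg_{G'}(v_j)\geq \deg_G(v_j)-(n-n')$, insert the Pósa hypothesis, and convert to the target via the bigness bound $n'\geq \tfrac{k-1}{k}n$. Your three-case check of $2(b-a)+\min\{j,a\}-\min\{j,b\}\geq 0$ is just a different (and if anything slightly cleaner) way of organising the final algebra, which the paper instead handles by substituting $n\leq \tfrac{k}{k-1}n'$ into the chain of inequalities.
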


\begin{proof}
	Let $v_j$ be a vertex in $V(G')$.
	Using that $\deg_G(v) = n' > (k-1)n/k$, we obtain
	\begin{align*}
		\deg_{G'}(v_j)
		 & \geq |N_{G}(v_j) \sm (V(G) \sm N_{G}(v))|
		\geq \deg_G(v_j) - (n- \deg_G(v))                                                                            \\
		 & > \frac{k-2}{k}n + \min\left\lbrace j,\frac{n}{k}\right\rbrace - (n - n')
		= n' - \left(\frac{2}{k} - \min\left\lbrace \frac{j}{n},\frac{1}{k}\right\rbrace \right)n                 \\
		 & > n' - \left(\frac{2}{k} - \min\left\lbrace \frac{j}{n},\frac{1}{k}\right\rbrace \right) \frac{k}{k-1} n'
		= \frac{k-3}{k-1}n' + \min\left\lbrace \frac{k}{k-1} \frac{j}{n},\frac{1}{k-1}\right\rbrace n'            \\
		 & \geq \frac{k-3}{k-1}n' + \min\left\lbrace j,\frac{{n'}}{k-1}\right\rbrace.\qedhere
	\end{align*}
\end{proof}

\begin{proof}[Proof of \cref{lem:posa-connectivity}]
	We go by induction on $k$.
	The case of $k=2$ is simple.
	By Pósa's theorem, $G$ contains a Hamilton cycle, thus $K_2(G) = G$ is spanning and tightly connected.
	A vertex $v$ of maximum degree in $G$ has $\deg_G(v) > n/2$,
	consequently any Hamilton cycle in $G$ must have an edge in $N(v)$.
	This implies $G$ contains a triangle, as needed.

	Now let us assume that $k \geq 3$ and the statement holds for all $k' < k$.
	Consider a graph $G$ with vertices $v_1,\dots,v_n$ as in the statement and abbreviate $\cH = K_k(G)$.
	Let $v$ be a big vertex.
	By \cref{prop:posa-link} and induction, the link graph $L_{\cH}(v)$ contains a $k$-clique.
	Hence $G$ contains a $k+1$ clique.

	Showing that $\cH$ is tightly connected is a bit more involved.
	We proceed in three steps.
	First, we prove that the edges in $\cH$ that contain a big vertex $v$ are in a common tight component $\cH_v$.
	Next, we show that there is a tight component $\cH'$ such that $\cH'=\cH_v$ for all big vertices $v$.
	Finally, we prove that $\cH'$ covers in fact all edges of $\cH$.
	Now come the details.

	\begin{observation}\label{cla:vi-tighlty-connected}
		If $v$ is a big vertex, then $L_{\cH}(v)$ is tightly connected and spans $N_G(v)$.
	\end{observation}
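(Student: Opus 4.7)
The plan is to deduce the observation directly from the inductive hypothesis of \cref{lem:posa-connectivity} applied to the graph $G' = G[N(v)]$ at parameter $k-1$. The key bridge is the identification $L_\cH(v) = K_{k-1}(G')$: a $(k-1)$-set $X$ is an edge of the link graph precisely when $X \cup \{v\}$ is a $k$-clique of $G$; and since $X \subseteq N_G(v)$, this is the same as $X$ being a $(k-1)$-clique of $G[N_G(v)] = G'$. In particular, the vertex set of $L_\cH(v)$ coincides with $V(G') = N_G(v)$, so the `spanning' conclusion and the `tightly connected' conclusion of the observation translate exactly into the corresponding conclusions for $K_{k-1}(G')$.

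Next, I would verify that $G'$ satisfies the hypothesis of \cref{lem:posa-connectivity} at parameter $k-1$. This is the content of \cref{prop:posa-link}, which gives $\deg_{G'}(v_j) > \frac{(k-1)-2}{k-1}n' + \min\{j, n'/(k-1)\}$ for all $v_j \in V(G')$, where $n' = |V(G')|$. Since $v$ is big, $n' = \deg_G(v) \geq (k-1)n/k$, so $n' \geq k$ (using $n \geq k+1$, which is implied by the hypothesis of the lemma) and the induction can be invoked.

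Applying the inductive hypothesis of \cref{lem:posa-connectivity} to $G'$ at parameter $k-1 \geq 2$ then yields that $K_{k-1}(G')$ is tightly connected and spans $V(G')$. Transporting this across the identification $L_\cH(v) = K_{k-1}(G')$ gives the observation. There is no real obstacle here: the proof is essentially a reduction, and all the work is packaged into \cref{prop:posa-link} together with the inductive hypothesis that is already in play in the surrounding argument.
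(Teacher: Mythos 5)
Your proposal is correct and matches the paper's proof: both reduce the observation to the induction hypothesis applied to $G' = G[N_G(v)]$ via \cref{prop:posa-link}, using the identification $L_{\cH}(v) = K_{k-1}(G')$. The extra care you take in checking $n' \geq k$ is a harmless elaboration of what the paper leaves implicit.
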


	\begin{proofclaim}
		Indeed, note that all edges of $L_{\cH}(v)$ are contained in $N_G(v)$.
		So we can restrict our analysis to $G' = G[N_G(v)]$.
		Since $k \ge 3$, \cref{prop:posa-link} tells us that $G'$ satisfies the induction hypothesis with $k' = k-1 \ge 2$,
		so we get that $K_{k-1}(G') = L_{\cH}(v)$ is tightly connected and spanning in $V(G') = N_G(v)$, as desired.
	\end{proofclaim}

	\begin{observation}\label{claim:spanningHv}
		If $v$ is a big vertex and $u \in N_G(v)$, then $u$ and $v$ belong to a common $k$-clique of $G$.
	\end{observation}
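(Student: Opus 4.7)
The plan is to observe that this statement is an immediate consequence of the spanning part of \cref{cla:vi-tighlty-connected}, which has already been established in the preceding paragraph. Recall that, by definition, the edges of the link graph $L_\cH(v)$ are precisely the $(k-1)$-sets $X \subseteq V(G) \setminus \{v\}$ such that $X \cup \{v\}$ is an edge of $\cH = K_k(G)$, i.e., such that $X \cup \{v\}$ spans a $k$-clique of $G$.

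Given the hypothesis that $v$ is big and $u \in N_G(v)$, I would apply \cref{cla:vi-tighlty-connected} to conclude that $L_\cH(v)$ spans $N_G(v)$. Consequently, there exists some edge $X$ of $L_\cH(v)$ with $u \in X$. Unwinding the definition of the link graph, $X \cup \{v\}$ is a $k$-clique of $G$; since $u \in X$ and $v \notin X$, this $k$-clique contains both $u$ and $v$, as required. There is no genuine obstacle here: the real content was absorbed into the induction step carried out in \cref{cla:vi-tighlty-connected}, which in turn relied on the induction hypothesis of \cref{lem:posa-connectivity} combined with \cref{prop:posa-link}.
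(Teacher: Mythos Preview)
Your proof is correct and essentially identical to the paper's own argument: both invoke \cref{cla:vi-tighlty-connected} to get that $L_\cH(v)$ spans $N_G(v)$, pick an edge $X$ of the link graph containing $u$, and observe that $X \cup \{v\}$ is the required $k$-clique.
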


	\begin{proofclaim}
		By \cref{cla:vi-tighlty-connected}, $L_{\cH}(v)$ is tightly connected and spans all $N_G(v)$.
		Since $u \in N_G(v)$, in particular $u$ belongs to a $(k-1)$-edge of $L_{\cH}(v)$,
		which is equivalent to say that $u$ lies in a $(k-1)$-clique inside $N_G(v)$, and thus in a $k$-clique together with $v$.
	\end{proofclaim}

	Now it is easy to see that $\cH$ is spanning.

	\begin{observation} \label{claim:spanning}
		Every vertex of $G$ is contained in a $k$-clique.
	\end{observation}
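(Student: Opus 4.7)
The plan is to leverage the previously established \cref{claim:spanningHv}, which already gives us the desired conclusion whenever the vertex in question is adjacent to a big vertex. Thus, the proof reduces to showing that every vertex of $G$ has a big neighbour (or is itself big and has at least one neighbour).

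First, I would observe that the Pósa-type condition forces most vertices of $G$ to be big. Concretely, if a vertex $v_i$ is not big, then by definition $\deg_G(v_i) < (k-1)n/k$, and combining this with the hypothesis $\deg_G(v_i) > (k-2)n/k + \min\{i, n/k\}$ forces $\min\{i, n/k\} < n/k$, so $i < n/k$. Consequently, the set of non-big vertices has size strictly less than $n/k$.

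Next, I would argue that every vertex has a big neighbour. If $u$ is big, pick any $w \in N_G(u)$ (which exists since $u$ has degree at least $(k-1)n/k \geq 1$) and apply \cref{claim:spanningHv} with $v = u$ to find a $k$-clique containing both $u$ and $w$. If $u$ is not big, then $\deg_G(u) > (k-2)n/k$ (using the Pósa bound for $i \geq 1$), while there are fewer than $n/k$ non-big vertices in total. Hence $u$ has at least $(k-2)n/k - (n/k-1) = (k-3)n/k + 1$ big neighbours, which is at least $1$ since $k \geq 3$. Taking any big neighbour $v$ of $u$ and applying \cref{claim:spanningHv} yields a $k$-clique containing $u$ and $v$.

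I do not anticipate any serious obstacle here: all of the work has already been done in \cref{cla:vi-tighlty-connected} and \cref{claim:spanningHv}, and the only remaining content is the counting argument showing that non-big vertices are few enough that every vertex (big or not) sees a big vertex in its neighbourhood. The mild subtlety is just to use the Pósa condition twice, once to bound the number of non-big vertices and once to lower-bound the degree of a non-big vertex.
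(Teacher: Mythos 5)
Your proposal is correct and follows essentially the same route as the paper: both arguments reduce the observation to \cref{claim:spanningHv} by showing every vertex has a big neighbour, using the Pósa condition to bound the number of non-big vertices (the paper phrases this as ``more than $2n/k$ big vertices'' and compares with $\deg_G(v) > (k-2)n/k+1$, which is the same counting you do in two cases). The only nitpick is that $(k-2)n/k - (n/k-1)$ is not quite the right lower bound since ``fewer than $n/k$'' non-big vertices need not mean at most $n/k-1$; but the strict inequalities still give strictly more than $(k-3)n/k \ge 0$ big neighbours, so the conclusion stands.
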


	\begin{proofclaim}
		Let $v$ be a vertex of $G$.
		Using $\deg_G(v) > (k-2)n/k+1$ and the fact that there are more than $2n/k$ big vertices in $G$,
		we deduce that there exists a big vertex $w$ such that $w \in N_G(v)$.
		It follows by \cref{claim:spanningHv} that $v \in N_G(w)$ is contained in a $k$-clique.
	\end{proofclaim}

	Together with \cref{prop:link-graph-tightly-connected}, \cref{cla:vi-tighlty-connected} implies that, given a big vertex $v$, all the $k$-edges containing $v$ belong to the same tight component of $\cH$.
	We will denote that component by $\cH_v$.

	\begin{observation}\label{claim:big-component-posa}
		If $u, v$ are big vertices, then $\cH_u = \cH_v$.
	\end{observation}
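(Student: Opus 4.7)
The plan is a case split on whether $u$ and $v$ are adjacent in $G$, building on the structural observations already assembled in the proof of \cref{lem:posa-connectivity}.

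If $u \in N_G(v)$, I would invoke \cref{claim:spanningHv} applied to the big vertex $v$ and the neighbour $u$ to obtain a $k$-clique $X$ of $G$ containing both. Then $X$ is an edge of $\cH$ belonging to both $\cH_v$ and $\cH_u$, and since tight components are edge-disjoint by \cref{prop:tightly-connected-observation}, this forces $\cH_u = \cH_v$.

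If instead $u \notin N_G(v)$, the plan is to locate a bridging big vertex $w \in N_G(u) \cap N_G(v)$ and then chain the previous case twice to conclude $\cH_u = \cH_w = \cH_v$. To produce such a $w$, two counts combine: bigness of $u$ and $v$ together with inclusion--exclusion yields
\[
|N_G(u) \cap N_G(v)| \ge \deg(u) + \deg(v) - n > 2\tfrac{k-1}{k}n - n = \tfrac{k-2}{k}n,
\]
while the hypothesis $\deg(v_i) > (k-2)n/k + \min\{i, n/k\}$ forces $\deg(v_i) > (k-1)n/k$ for every $i \ge n/k$, so all but at most $n/k$ vertices of $G$ fail to be big. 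Subtracting, the number of big vertices lying in $N_G(u) \cap N_G(v)$ strictly exceeds $(k-3)n/k \ge 0$, which is at least one since we are in the inductive regime $k \ge 3$. I do not expect any real obstacle here; the whole argument is a brief pigeonhole estimate on top of the structural claims already established, and the only point worth emphasising is that the base case $k = 2$ of \cref{lem:posa-connectivity} is handled separately by Pósa's theorem, so we may freely assume $k \ge 3$ when applying the bridging count above.
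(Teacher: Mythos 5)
Your proof is correct and follows essentially the same route as the paper: the paper dispenses with your case split by always locating a big vertex $w$ in $N_G(u)\cap N_G(v)$ (using that the common neighbourhood has at least $(k-2)n/k$ vertices while fewer than $(k-2)n/k$ vertices are non-big) and then applying \cref{claim:spanningHv} twice. One small slip in your bridging count: bigness only gives $\deg(u)+\deg(v)\ge 2\frac{k-1}{k}n$, not a strict inequality, so the strictness you need when $k=3$ should instead be drawn from the count of non-big vertices (there are strictly fewer than $n/k$ of them, since every $v_i$ with $i\ge n/k$ is big); relatedly, the phrase ``all but at most $n/k$ vertices of $G$ fail to be big'' literally asserts the opposite of what your arithmetic uses.
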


	\begin{proofclaim}
		Since $u$ and $v$ are big, their common neighbourhood contains at least $(k-2)n/k$ vertices.
		Since there are more than $2n/k$ big vertices in $G$, there exists a big vertex $w$ adjacent to both $u$ and $v$.
		By \cref{claim:spanningHv}, there exists a $k$-clique $X$ containing both $u$ and $w$.
		Since $u$ and $w$ are big vertices, then $X$ belongs to $\cH_w$ as well as $\cH_u$, hence $\cH_u = \cH_w$.
		By symmetry, also $\cH_v = \cH_w$ and thus $\cH_u = \cH_v$.
	\end{proofclaim}

	By \cref{claim:big-component-posa}, there exists a tight component $\cH'$ of $\cH$ which contains $\cH_u$ for every big vertex $u$.
	By \cref{claim:spanning}, we can finish the proof by showing that $\cH'$ actually contains all edges of $\cH$.

	For the sake of contradiction, suppose that this is not true.
	Consider an edge $X=\{v_{i_1}, \dots ,v_{i_k}\}$ in $\cH$, which is not in $\cH'$ and maximises $\sum_j i_j$.
	Suppose $i_1 < \dotsb < i_k$.
	If $X$ contains a big vertex $u$, then $X \in E(\cH_u) = \cH'$, a contradiction.
	We therefore must have $i_j < n/k$ for all $1 \leq j \leq k$.
	Let $e(X)$ and $e(X, V\setminus X)$ be the number of edges of $G$ inside $X$, and the number of edges with exactly one endpoint in $X$, respectively.
	On one hand, we have
	\begin{align}
		e(X, V \setminus X)
		 & = \sum \deg_G(v_{i_j}) - 2 e(X)
		\geq k \left(\frac{k-2}{k}\right)n  + \sum  i_j - 2 \binom{k}{2} \nonumber \\
		 & = (k-2)n  +\sum  i_j - k(k-1). \label{equation:lowerbound}
	\end{align}

	Define $I_1 = \{ 1, \dotsc, i_1 - 1 \}$, $I_2 = \{ i_1,i_1+1, \dotsc, i_{k} \}$, $I_3 = \{ i_k + 1, \dotsc, n \}$, and for all $1 \leq i \leq 3$ let $V_i = \{ v_j \colon j \in I_i \} \setminus X$.
	Note that $|V_1| = i_1 - 1$, $|V_2| = |I_2| - |X| = i_k - i_1 + 1- k$ and $|V_3| = n - i_k$.
	In the following, we show that
	\begin{enumerate}[(a)]
		\item \label{itm:V1-k-neighbours} every vertex in $V_1$ has at most $k$ neighbours in $X$,
		\item \label{itm:V2-k-1-neighbours} every vertex in $V_2$ has at most $k-1$ neighbours in $X$, and
		\item \label{itm:V3-k-2-neighbours} every vertex in $V_3$ has at most $k-2$ neighbours in $X$.
	\end{enumerate}

	Part~\ref{itm:V1-k-neighbours} is trivial.
	For part~\ref{itm:V2-k-1-neighbours}, let us suppose otherwise and obtain a contradiction.
	Then there exists $\ell > i_1$ such that $v_{\ell}$ has $k$ neighbours in $X$.
	Let $X' = ( X \setminus \{ v_{i_1} \} ) \cup \{ v_{\ell} \}$.
	It follows that $X'$ is an edge in $\cH$ with a higher sum of indices as $X$.
	Moreover, $X'$ is in the same tight component as $X$ by \cref{prop:tightly-connected-observation}, a contradiction.
	Similarly, for part~\ref{itm:V3-k-2-neighbours}, suppose otherwise for sake of contradiction.
	Then there exists $\ell > i_k$ such that $v_{\ell}$ has at least $k-1$ neighbours in $X$.
	Let $X'$ be a $k$-clique obtained from $X$ by replacing a vertex not adjacent to $v_\ell$ with $v_\ell$.
	(If $v_\ell$ is adjacent to all vertices of $X$, just replace an arbitrary vertex.)
	As before, $X'$ is an edge in $\cH$ in the same component as $X$ and with a higher sum of indices as $X$, contradiction.

	Putting observations~\ref{itm:V1-k-neighbours} to~\ref{itm:V3-k-2-neighbours} together, we get
	\begin{align*}
		e(X, V \setminus X)
		 & \leq |V_1|k + |V_2|(k-1) + |V_3|(k-2)                    \\
		 & = (i_1 - 1)k + (i_k - i_1 - k + 1)(k-1) + (n - i_k)(k-2) \\
		 & = (k-2)n + i_1 + i_k -k(k-1) -1                          \\
		 & < (k-2)n + \sum i_j -k(k-1),
	\end{align*}
	which contradicts inequality~\eqref{equation:lowerbound} and finishes the proof.
\end{proof}

\subsection{Ore-type conditions}\label{sec:ore}
In this section, we show \rf{thm:bandwidth-ore}.
The general strategy follows what we have done in the last section.
Recall that a graph $G$ is $\delta$-\emph{Ore} if $\deg(x) + \deg (y) > 2\delta v(G)$ for all $xy \notin E(G)$.
In such a graph, we also say that a vertex $x$ is \emph{$\delta$-big} if $\deg(x) > \delta n$, and otherwise is \emph{$\delta$-small}.
If the context is clear, we just say \emph{big} and \emph{small} instead.
Note that the set of small vertices in a $\delta$-Ore graph forms a clique under this definition.

Unlike Dirac-type conditions, Ore-type conditions are not necessarily inherited by approximations, for instance when an edge between two small vertices is deleted.
We will therefore need stronger structural insights before applying \cref{thm:main-bandwidth}.

We show the following two lemmas.
\begin{lemma}\label{cor:ore-robust-connectivity}
	Let $1/n \ll \nu \ll \mu,1/k$ and
	let $G$ be a $(\frac{k-1}{k} + \mu)$-Ore graph on $n$ vertices.
	Let $G'$ be a $\nu$-approximation of $G$.
	Then $K_k(G')$ is tightly connected, spans all vertices, and contains a $(k+1)$-clique.
\end{lemma}
\begin{lemma}\label{lem:ore-perfect-matchin}
	Let $1/n \ll \nu \ll \mu, 1/k$.
	Let $G$ be a $(\frac{k-1}{k}+\mu)$-Ore graph on $n$ vertices.
	Let $G'$ be a $\nu$-approximation of $G$.
	Then $K_k(G')$ has a perfect fractional matching.
\end{lemma}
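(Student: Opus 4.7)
The plan is to mirror the strategy of the Pósa case in the proof of \cref{thm:bandwidth-posa}. Abbreviate $\delta = (k-1)/k + \mu$, set $n' = |V(G')|$ and $q = n' \bmod k$. First I iteratively apply \cref{cor:ore-robust-connectivity} to extract $q$ vertex-disjoint $(k+1)$-cliques from $G'$; after each extraction the graph remains a $\nu'$-approximation of $G$ for $\nu' \leq \nu + O(1/n)$, so \cref{cor:ore-robust-connectivity} reapplies. Denote the resulting subgraph by $G''$; it has $n'' = n' - q(k+1)$ vertices and satisfies $k \mid n''$. Given a $k$-clique factor $M$ of $G''$, I assemble a perfect fractional matching of $K_k(G')$ by assigning weight $1$ to each clique of $M$ and weight $1/k$ to each $k$-face of the extracted $(k+1)$-cliques (each vertex of a $(k+1)$-clique lies in exactly $k$ such faces, which sum to weight $1$). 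The task therefore reduces to constructing the clique factor $M$ in $G''$, ideally by invoking the Kierstead--Kostochka theorem (\cref{thm:ore-clique-factor}).

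The principal obstacle, absent in the Pósa case, is that Ore's condition is not automatically preserved under the approximation: for a non-edge $xy \in E(G) \setminus E(G'')$ with $\deg_G(x), \deg_G(y)$ both small, no Ore-type bound on $\deg_{G''}(x) + \deg_{G''}(y)$ is available, so \cref{thm:ore-clique-factor} cannot be applied to $G''$ directly. The saving observation is that $S := \{v \in V(G) : \deg_G(v) < \delta n\}$ is a clique in $G$ (any non-adjacent pair in $S$ would violate Ore), so all ``bad'' non-edges of $G''$ are incident only to $S \cap V(G'')$. Consequently, for $B := V(G) \setminus S$, every pair in $B$ has $G$-degree sum at least $2\delta n$ regardless of adjacency, and $G''[B \cap V(G'')]$ \emph{does} satisfy the Ore condition needed by \cref{thm:ore-clique-factor} up to the $O(\nu n)$ approximation loss.

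Accordingly the main step becomes: first cover $S \cap V(G'')$ by a $k$-clique matching $M_1$ in $G''$ (possibly using helper vertices from $B \cap V(G'')$), then apply \cref{thm:ore-clique-factor} to the remaining part of $B \cap V(G'')$ to obtain a factor $M_2$, and set $M = M_1 \cup M_2$. Covering $S \cap V(G'')$ splits into two regimes. If $|S|$ is small (say $|S| \leq \mu n/(2k)$), then for each small vertex I greedily include it in a $k$-clique with $k-1$ big neighbours, using the spanning property of $K_k(G'')$ from \cref{cor:ore-robust-connectivity} and the fact that any non-neighbour of a small vertex is necessarily big. If $|S|$ is large, then $G''[S \cap V(G'')]$ is nearly complete (each vertex misses only $O(\nu n)$ edges out of $|S| - 1$), so I extract a near-perfect $k$-clique matching inside $S$ via Hajnal--Szemerédi (or a direct greedy argument on near-cliques), placing any at most $k-1$ residual $S$-vertices into mixed cliques with $B$-vertices.

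The hard part will be the careful bookkeeping of divisibility: after $M_1$, the number of unused $B$-vertices must be divisible by $k$ for \cref{thm:ore-clique-factor} to apply. Since $k \mid n''$ and each clique of $M_1$ contains exactly $k$ vertices, the residue modulo $k$ is automatically controlled, but any residual discrepancy in the large-$|S|$ regime can be corrected by extracting a few extra $(k+1)$-cliques inside $S \cap V(G'')$ (which exist by near-completeness) to tune the count of uncovered $S$-vertices modulo $k$; these extra $(k+1)$-cliques are then incorporated into the fractional matching with weight $1/k$ on each of their $k$-faces exactly as in the first step.
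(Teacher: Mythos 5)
There is a genuine gap, and it sits exactly at the step you flag as the main one: the claim that $G''[B\cap V(G'')]$ satisfies the Ore condition required by \cref{thm:ore-clique-factor}. The set $S$ of small vertices is a clique, but that only forces $|S|\leq \delta n+1$; it can be a constant fraction of $n$ far exceeding $k\mu n$. For non-adjacent big vertices $x,y$ you know $\deg_G(x)+\deg_G(y)\geq 2\delta n$, but after restricting to $B'' = B\cap V(G'')$ each degree can drop by up to $|S|$, while the Ore threshold for a graph on $|B''|\approx n-|S|$ vertices only drops by $\tfrac{k-1}{k}|S|$ per vertex. The condition $\deg_{G''[B'']}(x)+\deg_{G''[B'']}(y)\geq 2\tfrac{k-1}{k}|B''|-1$ therefore requires roughly $|S|\leq k\mu n$, and fails otherwise. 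Worse, this is not merely a failure of the hypothesis: when $|S|$ is a large constant fraction of $n$, the graph $G[B]$ can contain an independent set of size greater than $|B|/k$ (the big vertices may have most of their neighbourhoods inside $S$), so $G[B]$ has no $k$-clique factor, nor even an almost-perfect one. In that regime every clique factor of $G$ must use cliques mixing $S$ and $B$, so the decomposition ``cover $S$ first, then factor $B$ separately'' cannot work, no matter how the bookkeeping is arranged. The other ingredients of your proposal (assembling the fractional matching from a clique factor plus weight-$1/k$ faces of $(k+1)$-cliques, the divisibility adjustment, the greedy covering of $S$ when $|S|$ is small) are fine.

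The paper sidesteps this by never applying \cref{thm:ore-clique-factor} to the approximation or to an induced subgraph on the big vertices. Instead it applies Kierstead--Kostochka to $G$ itself (which does satisfy Ore) and proves, via a switching argument on uniformly random $k$-clique factors of $G$ (\cref{lem:Ore-almost-perfect-matching}), that some clique factor of $G$ uses only $O(\beta n)$ edges outside $G'$; discarding the offending cliques yields an almost-perfect matching of $K_k(G')$. The $O(\beta n)$ leftover vertices are then absorbed using a pre-selected family $A$ of $k$-cliques such that every vertex sees many cliques of $A$ in its neighbourhood, each leftover vertex being attached to a private clique of $A$ to form a $(k+1)$-clique whose $k$-faces receive weight $1/k$. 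If you want to salvage your approach, you would need a replacement for the ``factor $B$ on its own'' step that lets cliques straddle $S$ and $B$; the paper's random-factor-plus-absorption argument is one way to do precisely that.
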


Given \cref{cor:ore-robust-connectivity,lem:ore-perfect-matchin}, one can derive \cref{thm:bandwidth-ore} from \cref{thm:main-bandwidth} in the same way as it was done for the proof of \rf{thm:bandwidth-posa}.
We omit the details.

The rest of the subsection is dedicated to the proof of \cref{cor:ore-robust-connectivity,lem:ore-perfect-matchin}.
We start by showing the following.

\begin{lemma}\label{lem:ore-connectivity}
	Let $G$ be a $\frac{k-1}{k}$-Ore graph on $n \geq k+1$ vertices.
	Then $K_k(G)$ is tightly connected, spans all vertices, and all big vertices are on a clique of size $k+1$.
\end{lemma}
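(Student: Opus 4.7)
I would prove \cref{lem:ore-connectivity} by induction on $k$, in parallel with the Pósa-type \cref{lem:posa-connectivity}. The base case $k = 2$ is immediate: a $\frac{1}{2}$-Ore graph has a Hamilton cycle by Ore's theorem, making $K_2(G) = G$ tightly connected and spanning, and for any big vertex $v$ a pigeonhole argument along the cycle produces two consecutive cycle-vertices in $N(v)$, yielding a triangle through~$v$.

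The crucial structural feature specific to the Ore regime is that \emph{the set $S$ of small vertices forms a clique}, since two non-adjacent small vertices would violate Ore. This also yields a global lower bound $\deg(v) \geq (k-2)n/k + 1$ for every $v$ (else some non-neighbour would be forced to have degree exceeding $n-1$). Given this, the inductive step proceeds through five substeps modelled on the Pósa proof. \textbf{(a)} Every big $v$ has a big neighbour: otherwise $N(v) \subseteq S$ would force $|S| > (k-1)n/k$, but any $s \in S \cap N(v)$ would then have degree at least $|S|$, contradicting smallness. \textbf{(b)} For big $v$, the graph $G' = G[N(v)]$ is $\frac{k-2}{k-1}$-Ore on $n' \geq k$ vertices; a computation analogous to \cref{prop:posa-link} using $n' > (k-1)n/k$ verifies this, and (a) supplies a vertex that is big in both $G$ and $G'$, so induction produces a $(k+1)$-clique through $v$. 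Moreover $L_\cH(v) = K_{k-1}(G')$ is tightly connected and spans $N(v)$, so by \cref{prop:link-graph-tightly-connected} all edges of $\cH := K_k(G)$ containing $v$ lie in a single tight component $\cH_v$.

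\textbf{(c)} For any two big $u, v$, $\cH_u = \cH_v$. When $uv \in E(G)$, the spanning tightly connected link graph of $u$ yields a $k$-clique containing both. When $uv \notin E(G)$, either some big $w \in N(u) \cap N(v)$ reduces the situation to the adjacent case twice, or $N(u) \cap N(v) \subseteq S$; in the latter case the Ore bound $|N(u) \cap N(v)| > (k-2)n/k + 2 \geq k-1$ (valid for $n \geq k+1$ and $k \geq 3$) together with the clique structure of $S$ yields a $(k-1)$-clique $T \subseteq N(u) \cap N(v) \cap S$, and the two $k$-cliques $T \cup \{u\}$ and $T \cup \{v\}$ share $k - 1$ vertices, so \cref{prop:tightly-connected-observation} concludes $\cH_u = \cH_v$. \textbf{(d)} Every vertex lies in a $k$-clique: for big vertices this is (b); for small $s$ with $|S| \geq k$ the clique $S$ suffices, and for $|S| < k$ the lower bound $(k-2)n/k + 1 > |S| - 1$ forces a big neighbour of $s$ whose link graph delivers a $k$-clique through $s$. \textbf{(e)} The common tight component $\cH'$ of all $\cH_v$ (big $v$) contains every edge of $\cH$: trivially if the edge contains a big vertex, else the edge $X \subseteq S$ and $|S| \geq k$. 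For $|S| \geq k+1$ the complete $k$-graph $K_k(S)$ is tightly connected, and a counting argument on the edges between $S$ and $B$ produces a big $u$ with $|N(u) \cap S| \geq k - 1$, linking one edge of $K_k(S)$ to $\cH'$. The boundary case $|S| = k$ is excluded once $n$ is large enough with respect to $k$, because otherwise $G$ would decompose as $G[S] \sqcup G[B]$, contradicting $\deg(s) \geq (k-2)n/k + 1 \geq k$.

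The main obstacle is substep (c) for non-adjacent big $u, v$. Unlike the Pósa proof, where more than $2n/k$ big vertices guaranteed an abundant supply of common big neighbours, the Ore regime permits very few big vertices alongside a large clique~$S$. The key insight is that when $N(u) \cap N(v) \subseteq S$, the clique structure of $S$ converts the Ore intersection bound directly into a $(k-1)$-clique, so \cref{prop:tightly-connected-observation} takes over the role that a common-big-neighbour argument plays in Pósa's setting. A subsidiary difficulty is the edge-propagation step (e) for edges inside $S$, where the split between $|S| = k$ and $|S| \geq k+1$ must be handled separately, using the minimum-degree bound to rule out an isolated clique component.
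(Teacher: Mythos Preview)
Your overall approach matches the paper's: induction on $k$, base case via Ore's theorem, passage to $G' = G[N(v)]$ via an analogue of \cref{prop:posa-link}, and the same sequence of claims linking the components $\cH_v$. Your step (c) and the paper's argument differ only in how the dichotomy is phrased (you split on whether $N(u)\cap N(v)$ contains a big vertex; the paper splits on whether it contains a $(k-1)$-clique), and both work.

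The gap is in step (e). Your handling of $|S| = k$ does not work: the assertion that ``$G$ would decompose as $G[S] \sqcup G[B]$'' does not follow from $X \notin \cH'$, and you are not entitled to take $n$ large --- the hypothesis is only $n \geq k+1$. What actually happens is that the double-counting you allude to for $|S| \geq k+1$ works for all $|S| \geq k$ uniformly, with no case split. Assuming no vertex of $L$ has $k-1$ neighbours in $S$ gives $e(L,S) \leq (k-2)|L| = (k-2)(n-|S|)$; on the other hand each $y \in S$ has a non-neighbour $x \in L$, and the Ore bound on $|N(x) \cap N(y)|$ together with $\deg(x,S) \leq k-2$ forces $\deg(y,L) > (k-2)(n/k - 1)$, whence $e(L,S) > (k-2)(n/k-1)|S|$. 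Combining yields $|S| < k$, a contradiction. No separate treatment of $|S| = k$ is required.

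A smaller point: step (a) is superfluous, and your claim that it produces a vertex big in $G'$ is not actually justified. The observation made just before the lemma (any $\tfrac{k-1}{k}$-Ore graph on at least $k+1$ vertices has a big vertex) applies directly to $G'$ via \cref{prop:ore-link}, so the induction hypothesis alone delivers the $k$-clique in $L_\cH(v)$.
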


Note that, since $n \geq k+1$ in this lemma, the set of big vertices is never empty.
The following observation clears the way for an induction on $k$.

\begin{proposition}\label{prop:ore-link}
	Let $G$ be a $\frac{k-1}{k}$-Ore graph on $n > k$ vertices with $k\geq 3$.
	Let $v$ be a big vertex, $G'=G[N(v)]$ and $n' = | V(G')|$.
	Then $G'$ is $\frac{k-2}{k-1}$-Ore with $n' > k-1$.
\end{proposition}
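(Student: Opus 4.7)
The statement is essentially a direct calculation, so the plan is short. I would first dispense with the lower bound on $n'$, then verify the Ore condition by combining the original Ore bound on non-adjacent pairs with the trivial estimate on how many neighbours are lost when restricting to $N_G(v)$.

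Since $v$ is big, $n' = \deg_G(v) > \frac{k-1}{k} n$. Combined with the hypothesis $n > k$ (i.e.\ $n \geq k+1$), this gives
\[
  n' > \tfrac{k-1}{k} n \geq \tfrac{(k-1)(k+1)}{k} = k - \tfrac{1}{k} > k-1,
\]
so the cardinality bound is immediate.

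For the Ore condition, take $x,y \in V(G') = N_G(v)$ that are non-adjacent in $G'$. Since $G'$ is an induced subgraph, $x,y$ are non-adjacent in $G$ as well. Since $V(G')$ misses exactly $n-n'$ vertices of $G$ (and $x,y \neq v$), every vertex $u \in V(G')$ satisfies
\[
  \deg_{G'}(u) \geq \deg_G(u) - (n-n').
\]
Applying this to $x$ and $y$ and using the Ore condition in $G$ yields
\[
  \deg_{G'}(x) + \deg_{G'}(y) \geq \deg_G(x) + \deg_G(y) - 2(n-n') > \tfrac{2(k-1)}{k} n - 2(n-n') = 2n' - \tfrac{2n}{k}.
\]
It therefore suffices to check that $2n' - \frac{2n}{k} \geq \frac{2(k-2)}{k-1}\, n'$, which rearranges to $\frac{n'}{k-1} \geq \frac{n}{k}$, i.e.\ $n' \geq \frac{k-1}{k}n$. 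But this is precisely the bigness of $v$, and it holds strictly, so the required strict Ore inequality follows.

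There is no genuine obstacle here; the only thing to double-check is that the loss of $2(n-n')$ in the degree estimate is exactly compensated by the slack coming from the bigness of $v$, which is the content of the last rearrangement. The role of the assumption $k \geq 3$ is implicit (it guarantees $k-1 \geq 2$ so that the target ratio $\frac{k-2}{k-1}$ makes sense as a non-trivial Ore threshold, which is what the induction in the surrounding lemma will ultimately exploit).
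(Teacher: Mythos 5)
Your proof is correct and follows essentially the same route as the paper's: the same degree estimate $\deg_{G'}(u) \geq \deg_G(u) - (n - n')$ combined with the Ore condition in $G$ and the bigness of $v$ (i.e.\ $n' > \tfrac{k-1}{k}n$) to absorb the $\tfrac{2n}{k}$ loss. The only cosmetic difference is that you rearrange the final inequality into the clean equivalent form $n' \geq \tfrac{k-1}{k}n$, whereas the paper chains the estimates directly.
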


\begin{proof}
	Since $n > k$ and $n' = \deg_G(v) > (k-1)n/k$, we have $n' > k-1$.
	Let $x,y$ be vertices in $V(G')$ with $xy \notin E(G')$.
	Using $n' > (k-1)n/k$, we obtain
	\begin{align*}
		\deg_{G'}(x) + \deg_{G'}(y)
		 & \geq \deg_G(x) + \deg_G(y) - 2(n- \deg_G(v))
		\\ &> 2\frac{k-1}{k}n - 2\frac{k}{k}n + 2\deg_G(v)
		= 2\deg_G(v) - 2\frac{1}{k}n
		\\ &\geq 2n' - 2\frac{1}{k-1}  n'
		= 2 \frac{k-2}{k-1}n'.\qedhere
	\end{align*}
\end{proof}

\begin{proof}[Proof of \cref{lem:ore-connectivity}]
	We go by induction on $k$.
	If $k = 2$, Ore's theorem implies the existence of a Hamilton cycle, so $K_2(G)=G$ is tightly connected.
	Let $x$ be a big vertex in $G$ and note that $\deg(x) > n/2$.
	Hence any Hamilton cycle of $G$ must have an edge in $N(x)$.
	It follows that $G$ has a triangle.
	This concludes the case $k=2$.

	Now let us assume that $k \geq 3$ and the statement holds for all $k' < k$.
	Consider a graph $G$ as in the statement and abbreviate $\cH = K_k(G)$.
	Let $L$ and $S$ be the set of big and small vertices of $G$, respectively.
	Let $v$ be a big vertex.
	By \cref{prop:ore-link} and induction, the link graph $L_{\cH}(v)$ contains a $k$-clique.
	Hence $v$ is on a $(k+1)$-clique.
	It remains to show that $\cH$ is tightly connected and spans all vertices.

	The following two claims follow as in the proof of \cref{lem:posa-connectivity} with the only difference being that we apply \cref{prop:ore-link} instead of \cref{prop:posa-link}.
	\begin{observation}\label{cla:vi-tighlty-connected-ore}
		If $v$ is a big vertex, then $L_{\cH}(v)$ is tightly connected and spans $N_G(v)$.
	\end{observation}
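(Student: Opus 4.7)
The plan is to mirror exactly the proof of \cref{cla:vi-tighlty-connected} from the Pósa section, using the Ore-type link lemma (\cref{prop:ore-link}) in place of the Pósa-type link lemma (\cref{prop:posa-link}). Since we are in the middle of the induction on $k$ that proves \cref{lem:ore-connectivity}, the induction hypothesis for $k' = k-1$ is available.

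First, I would observe that every edge of $L_\cH(v)$ is a $(k-1)$-clique of $G$ contained in $N_G(v)$, so it suffices to analyse $G' = G[N_G(v)]$ and show that $K_{k-1}(G')$ is tightly connected and spanning in $V(G') = N_G(v)$. Since $k \ge 3$ and $v$ is a big vertex, $n' := |V(G')| = \deg_G(v) > (k-1)n/k \ge k-1$, so \cref{prop:ore-link} applies and yields that $G'$ is $\tfrac{k-2}{k-1}$-Ore on $n' > k-1$ vertices. By the induction hypothesis of \cref{lem:ore-connectivity} applied with $k' = k-1$, the $(k-1)$-graph $K_{k-1}(G')$ is tightly connected, spans every vertex of $G'$, and (if $k \ge 4$) its big vertices lie in a $k$-clique — only the tight connectivity and spanning parts are needed here. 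Since $K_{k-1}(G') = L_\cH(v)$ by definition, the claim follows.

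There is essentially no obstacle beyond checking that the induction hypothesis is legitimately available: one only has to verify that $n' > k-1$ so that the base case $k' = 2$ or the inductive case applies to $G'$, which is immediate from the bigness of $v$ and $k \ge 3$. The rest is a one-line translation between the $(k-1)$-cliques in $N_G(v)$ and the edges of the link hypergraph $L_\cH(v)$.
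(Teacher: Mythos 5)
Your proof is correct and is exactly what the paper intends: the paper states that this observation "follows as in the proof of \cref{lem:posa-connectivity} with the only difference being that we apply \cref{prop:ore-link} instead of \cref{prop:posa-link}", and your argument is precisely that substitution, including the (correct) check that $n' = \deg_G(v) > (k-1)n/k \ge k-1$ makes the induction hypothesis applicable to $G[N_G(v)]$ with $k' = k-1$.
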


	\begin{observation}\label{claim:spanningHv-ore}
		If $v$ is a big vertex and $u \in N_G(v)$, then $u$ and $v$ belong to a common $k$-clique of $G$.
	\end{observation}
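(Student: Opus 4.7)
The plan is to mirror the argument for the analogous Pósa-type observation (\cref{claim:spanningHv}), simply swapping in the Ore-type ingredients that have just been established. The whole proof will take only a couple of sentences once we unpack what the preceding lemma gives us.

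Concretely, I would first invoke \cref{cla:vi-tighlty-connected-ore} on the big vertex $v$: this tells us that the link graph $L_{\cH}(v)$ is a tightly connected $(k-1)$-graph whose vertex set spans $N_G(v)$. Since $u \in N_G(v)$, the vertex $u$ is incident to at least one edge of $L_{\cH}(v)$ (a tightly connected hypergraph with a nonempty edge set cannot have isolated vertices within its spanned vertex set). Picking any such edge $X$ of $L_{\cH}(v)$ containing $u$, the set $X$ is a $(k-1)$-subset of $N_G(v)$ which, by the definition of $L_{\cH}(v) = N_\cH(v)$ as a link graph of $\cH = K_k(G)$, induces a $(k-1)$-clique in $G$.

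Finally, since every vertex of $X$ lies in $N_G(v)$, the set $X \cup \{v\}$ is a $k$-clique in $G$ containing both $u$ and $v$, which is exactly what we need. The only point that requires any thought is whether $u$ really sits in some edge of $L_{\cH}(v)$, rather than being an isolated vertex of the hypergraph; this is guaranteed by the word ``spans'' in the statement of \cref{cla:vi-tighlty-connected-ore}, which is how that term is used consistently throughout the paper (every vertex is contained in some edge). So there is essentially no obstacle here: the real work was done in establishing \cref{cla:vi-tighlty-connected-ore} via the inductive application of \cref{prop:ore-link}, and the present observation is a direct translation from hypergraph incidence back to clique language in $G$.
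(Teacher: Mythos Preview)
Your proof is correct and follows essentially the same approach as the paper: both invoke \cref{cla:vi-tighlty-connected-ore} to conclude that $L_{\cH}(v)$ spans $N_G(v)$, so $u$ lies in some $(k-1)$-edge of the link graph, and then translate that edge back into a $k$-clique in $G$ containing both $u$ and $v$. Your explicit justification that ``spans'' rules out $u$ being isolated in $L_{\cH}(v)$ is exactly the point the paper uses implicitly.
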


	Together with \cref{prop:ore-link}, \cref{cla:vi-tighlty-connected-ore} implies that, given a big vertex $v$, all the $k$-edges containing $v$ belong to the same tight component of $\cH$.
	We will denote that component by $\cH_v$.

	\begin{observation}\label{cla:big-component}
		If $u, v$ are big vertices, then $\cH_u = \cH_v$.
	\end{observation}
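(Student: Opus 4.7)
The plan is to mirror the structure of the Pósa version (\cref{claim:big-component-posa}), but more carefully: while there we could find a common \emph{big} neighbour of any two non-adjacent big vertices, such a guarantee is no longer available under the weaker Ore-type count of big vertices, so an extra case needs to be handled using the clique formed by the small vertices.

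First I would treat the easy situation in which $u$ and $v$ are adjacent. Applying \cref{claim:spanningHv-ore} with the big vertex $u$ and $v \in N_G(u)$ yields a $k$-clique $X$ containing both. Since $u$ and $v$ are big, $X \in \cH_u \cap \cH_v$, so $\cH_u = \cH_v$.

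Suppose instead $uv \notin E(G)$. The Ore condition then gives $|W| > (k-2)n/k$ for $W := N_G(u) \cap N_G(v)$, and with $n \geq k+1$ and $k \geq 3$ this forces $|W| \geq k-1$. I distinguish two subcases. If $W$ contains a big vertex $w$, then both $uw$ and $vw$ are edges between big vertices, so the adjacent case applied twice yields $\cH_u = \cH_w = \cH_v$. Otherwise $W \subseteq S$, where $S$ denotes the set of small vertices; here I exploit the fact (noted before \cref{cor:ore-robust-connectivity}) that $S$ forms a clique in any $\frac{k-1}{k}$-Ore graph, since otherwise two non-adjacent small vertices would violate Ore's inequality. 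Pick any $w_1, \dots, w_{k-1} \in W$. Because $\{w_1, \dots, w_{k-1}\} \subseteq S$ is a clique and $u, v$ are adjacent to every $w_i$, both $X := \{u, w_1, \dots, w_{k-1}\}$ and $Y := \{v, w_1, \dots, w_{k-1}\}$ are $k$-cliques of $G$. They share $k-1$ vertices, so \cref{prop:tightly-connected-observation} places $X$ and $Y$ in the same tight component of $\cH$. Since $u \in X$ and $v \in Y$ are big, $X \in \cH_u$ and $Y \in \cH_v$, and therefore $\cH_u = \cH_v$.

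The main obstacle is the second subcase: unlike in the Pósa regime, the set of big vertices may be too sparse (only $|L| \geq n/k - 1$) to guarantee a common big neighbour of $u$ and $v$. The resolution leverages the characteristic feature of Ore-type conditions that the small vertices are themselves forced to form a clique, which supplies the required $(k-1)$-clique inside $W$ and hence the pair of overlapping $k$-cliques needed to merge $\cH_u$ and $\cH_v$.
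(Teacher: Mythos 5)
Your proof is correct and follows essentially the same route as the paper's: both arguments rest on the dichotomy that either $u$ and $v$ have a common big neighbour (reducing to the adjacent case via \cref{claim:spanningHv-ore}) or their common neighbourhood contains a $(k-1)$-clique supplied by the clique on the small vertices, yielding two overlapping $k$-cliques and an application of \cref{prop:tightly-connected-observation}. The paper merely organises the case split in the complementary order (first testing for a $(k-1)$-clique in $N(u)\cap N(v)$, then noting that otherwise two non-adjacent common neighbours exist, one of which must be big) and avoids your separate adjacency case by bounding $|N(u)\cap N(v)|$ directly from both vertices being big rather than from the Ore inequality.
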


	\begin{proofclaim}
		It suffices to show that there is a big vertex $w$ in the common neighbourhood of $u$ and $v$.
		Indeed, if this is the case, then there exists a $k$-clique $X$ containing both $u$ and $w$
		by \cref{claim:spanningHv-ore}.
		Since $u$ and $w$ are big vertices, then $X$ belongs to $\cH_w$ as well as $\cH_u$, hence $\cH_u = \cH_w$.
		By symmetry, also $\cH_v = \cH_w$ and thus $\cH_u = \cH_v$.

		Now let us show that $u$ and $v$ share a big neighbour.
		Any two big vertices $u$ and $v$ share stricly more than $(k-2)n/k \ge k-2$ neighbours (we used $n \ge k$), so $|N(u) \cap N(v)| \ge k-1$.
		If $N(u)\cap N(v)$ contains a $(k-1)$-clique $X'$, then $X' \cup \{u\}$ and $X' \cup \{v\}$ are $k$-cliques in $\cH_u, \cH_v$ respectively, which share $k-1$ vertices, this immediately shows that $\cH_u = \cH_v$.
		Therefore, we can assume that there are two non-adjacent vertices in the joint neighbourhood of $u$ and~$v$.
		But then one of these two vertices is big and we are done.
	\end{proofclaim}

	\begin{observation} \label{claim:spanning-ore}
		Every vertex of $G$ is contained in a $k$-clique.
	\end{observation}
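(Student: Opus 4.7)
The plan is to distinguish whether $v$ is big or small. If $v$ is big, then the previous work in the proof of \cref{lem:ore-connectivity} already suffices: \cref{prop:ore-link} combined with the inductive hypothesis applied to $G[N_G(v)]$ produces a $(k-1)$-clique in $L_\cH(v)$, hence a $k$-clique of $G$ containing $v$.

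Now suppose $v$ is small, and write $S$ for the set of small vertices of $G$, so that the Ore condition forces $S$ to be a clique (as noted at the start of \cref{sec:ore}). If $v$ has any big neighbour $u$, then \cref{claim:spanningHv-ore} directly yields a $k$-clique containing $u$ and $v$. Otherwise every neighbour of $v$ is small, so $N(v) \subseteq S \setminus \{v\}$; conversely, $S \setminus \{v\} \subseteq N(v)$ because $v \in S$ and $S$ is a clique. Thus $N(v) = S \setminus \{v\}$, and in particular $|S| = \deg(v) + 1$, so the task reduces to showing $|S| \geq k$.

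If $G$ has no big vertex at all, then $G = G[S]$ is a complete graph on $n \geq k+1$ vertices and the conclusion is immediate. Otherwise, fix any big vertex $u$; since $uv \notin E(G)$ and $\deg(u) \leq n - 1$, the Ore assumption gives
\[
\deg(v) > 2\tfrac{k-1}{k}n - (n-1) = \tfrac{k-2}{k}n + 1.
\]
Using $n \geq k+1$ and $k \geq 3$ (the base case $k=2$ having been disposed of at the beginning of the induction), the right-hand side is at least $(k^2-2)/k = k - 2/k > k - 1$, so $\deg(v) \geq k$ and hence $|S| \geq k+1$. Any $k$-subset of $S$ containing $v$ is then the desired $k$-clique.

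I do not anticipate a real obstacle here: structurally, this Ore version is even cleaner than its Pósa-type counterpart \cref{claim:spanning}, since having $S$ be a clique lets us avoid counting big vertices to guarantee that $v$ has a big neighbour and instead exploit directly that, when $v$ has no big neighbour, the small vertices already span a clique of the required size.
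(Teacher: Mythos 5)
Your proposal is correct and follows essentially the same route as the paper: big vertices are already handled by the induction, small vertices with a big neighbour are handled by \cref{claim:spanningHv-ore}, and small vertices with only small neighbours lie in the clique $S$, whose size is bounded below via the Ore condition applied to a non-neighbour. The only (immaterial) difference is that the paper derives $\deg(y)\ge k$ from an arbitrary non-neighbour before the case split, whereas you derive it afterwards using a big non-neighbour.
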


	\begin{proof}
		We know already that big vertices in $L$ are covered by $k$-cliques, so it remains to consider small vertices.
		Let $y$ be a small vertex and let $x$ be a non-neighbour of $y$ (which exists, since $y$ is small).
		The Ore-type condition implies $\deg(x)+\deg(y) > 2(k-1)n/k$, and $\deg(x) < n$ implies $\deg(y) \ge (k-2)n/k+2 \ge k$, since $n \ge k$.
		If there exists a big vertex in $N(y)$, we are done by \cref{claim:spanningHv-ore}.
		Otherwise, $N(y) \subseteq S$, but since $S$ forms a clique, we deduce $y$ is contained in a $k$-clique, as desired.
	\end{proof}

	By \cref{cla:big-component}, there exists a tight component $\cH'$ of $\cH$ which contains $\cH_u$ for every big vertex $u$.
	By \cref{claim:spanning-ore}, we can finish the proof by showing that $\cH'$ actually contains all edges of $\cH$.
	For a contradiction, suppose $X$ is an edge in $\cH$ that is not in $\cH'$.
	Then $X$ cannot intersect $L$, so it must be completely contained in $S$,
	hence $|S| \ge k$.
	By assumption, one of any two non-connected vertices in $G$ is big, hence $G[S]$ is a clique.
	Thus, no vertex of $L$ can have $k-1$ neighbours in $S$, otherwise it would form a $k$-clique in $\cH'$ which is in the same tight component as $X$.
	Thus $\deg(x, S) \leq k-2$ for all $x \in L$, and thus $e(L, S) \leq (k-2)|L| = (k-2)(n - |S|)$.

	Let $y \in S$.
	This vertex must have a non-neighbour $x$ (otherwise it would not be small).
	Since $G[S]$ is a clique, this non-neighbour $x$ must be in $L$.
	By assumption, $x$ and $y$ have more than $(k-2)n/k$ common neighbours, and at most $k-2$ of them can be in $S$ since $\deg(x, S) \leq k-2$.
	Thus $y$ has more than $(k-2)n/k - (k-2) = (k-2)(n/k - 1)$ neighbours in $L$.
	Therefore, $\deg(y, L) > (k-2)(n/k - 1)$ holds for every $y \in S$, and therefore $e(L, S) > (k-2)(n/k - 1)|S|$.
	Combined with the upper bound we had for $e(L,S)$, we obtain
	\[ (k-2)(n/k - 1)|S| < e(L,S) \leq (k-2)(n - |S|). \]
	Since $k \ge 3$, then $k-2 > 0$, so dividing we get $|S|(n/k - 1) < n - |S|$, which after rearranging gives $|S| < k$, a contradiction.
\end{proof}

Since random (induced) subgraphs inherit Ore's condition, we can easily obtain the following strengthened version of \cref{prop:ore-link}.

\begin{proposition}\label{prop:ore-super-saturated-connectivity}
	Let $1/n   \ll 1/s \ll \mu,1/k$.
	Let $G$ be a $(\frac{k-1}{k} + \mu)$-Ore graph on $n$ vertices.
	Then every vertex of $G$ is on at least $s^{-2k}n^{k-1}$ many $k$-cliques.
	Moreover, all big vertices of $G$ are on at least $s^{-2k}n^{k}$ many $(k+1)$-cliques.
	Finally, for every two $k$-cliques $X,Y$ in $G$, there are at least $n^s/(3s^s)$ sets $S$ of size $s$ each, disjoint from $X \cup Y$, such that $K_k(G)$ contains a tight walk, which starts with $X$, ends with $Y$, and only uses vertices from $X \cup S \cup Y$.
\end{proposition}
 
\begin{proof}
	Consider a vertex $v \in V(G)$, a $(\frac{k-1}{k} + \mu)$-big vertex $w$ and any two $k$-cliques $X,Y$ in $G$.
	Select an $s$-set $S \subset V(G) \setminus ( \{ v, w \} \cup X \cup Y )$ uniformly at random, and let $S' = S \cup X \cup Y \cup \{v,w\}$.
	Let us say that $S$ is \emph{successful} if the graph $G[S']$ is $\frac{k-1}{k}$-Ore, and also $w$ is $\frac{k-1}{k}$-big in $G[S']$.
	A standard probabilistic argument shows that $S$ is successful with probability at least $1/2$.
	Indeed, (a hypergeometric version of) Chernoff's bound (\cref{lem:che}) reveals that a fixed vertex $u$ satisfies $\deg_{G[S']} (u)\geq \frac{\deg_{G}(u)}{n-1} s - (\mu/2) s$ with probability at least $1-e^{-\Omega(s)}$.
	Taking the union bound shows that all $s$ vertices of $S'$ satisfy this property simultaneously with probability at least $1/2$.
	Since $G$ is $(\frac{k-1}{k} + \mu)$-Ore and $1/n   \ll 1/s \ll \mu,1/k$, the claim follows.
	{In particular, the number of successful $s$-sets is at least $\binom{n-2(k+1))}{s}/2 \geq n^s/(3s^s)$.}
	 
	If $S$ is successful, by \cref{lem:ore-connectivity} applied to $G[S']$, it follows that $K_k(G[S'])$ is tightly connected, every vertex is on a $k$-clique in $G[S']$ and every $\frac{k-1}{k}$-big vertex is on a $(k+1)$-clique in $G[S']$.
	{Those properties in particular imply that there exists a tight walk $W$ from $X$ to $Y$ in $K_k(G[S'])$, and this proves the last part of the statement.}

	Using the previous discussion, now we can show the other desired statements.
	Let us prove first that every vertex is on at least $s^{-2k} n^{k-1}$ ordered cliques.
	For each successful $S$, we have shown $G[S']$ contains a $k$-clique which contains $v$.
	There are at least $\frac{1}{2}\binom{n-2(k+1)}{s}$ successful choices of $S$,
	and each $(k-1)$-clique (after removing $v$) is contained at most in $\binom{n-2(k+1)-(k-1)}{s-(k-1)}$ successful $S$.
	Thus the number of $k$-cliques which contain $v$ must be at least $\frac{1}{2}\binom{n-2(k+1)}{s} \binom{n-2(k+1)-(k-1)}{s-(k-1)}^{-1} \geq s^{-2k} n^{k-1}$, as required.
	The other properties follow similarly.
\end{proof}

Now \cref{prop:ore-super-saturated-connectivity} implies \cref{cor:ore-robust-connectivity} since the strengthened properties survive in every $\nu$-approximation if $\nu$ is sufficiently small.

\begin{proof}[Proof of \cref{cor:ore-robust-connectivity}]
	We focus on the tight connectivity, since the other properties follow analogously.
	Introduce $s$ with $\nu \ll 1/s \ll \mu,1/k$.
	Let $X$ and $Y$ be the vertex sets of two $k$-cliques in $G'$.
	{By \cref{prop:ore-super-saturated-connectivity},  there are at least $n^s/(3s^s)$ sets of $s$ vertices each, disjoint from $X \cup Y$, and such that $K_k(G[X \cup S \cup Y])$ contains a tight walk from $X$ to $Y$.
	Note that every edge $e \in E(G)\sm E(G')$ which is disjoint from $X \cup Y$ is contained in at most $n^{s-2}$ of these sets.
	Since there are at most $\nu n^2$ such edges, at most $\nu n^s$ sets $S$ are invalidated by this.
	On the other hand, there are at most $|X \cup Y| \nu n \leq 2 k \nu n$ edges $e \in E(G) \sm E(G')$ which intersects $X \cup Y$ in one vertex, and each of them is contained in at most $n^{s-1}$ of the sets $S$.
	Thus these edges can invalidate at most $2 k \nu n^s$ of the sets $S$.
	Thus the number of sets $S$ which are not invalidated by any edge of $E(G) \sm E(G')$ is at least $n^s/(3s^s) - (2 k + 1) \nu n^s > 0$, so there exists such a set $S$ with $G[X \cup S \cup Y] \subseteq G'[X \cup S \cup Y]$.
	Since $K_k(G[X \cup S \cup Y])$ contains a tight walk from $X$ to $Y$, we deduce that $X$ and $Y$ are also connected by a tight walk in $K_k(G')$, and thus $K_k(G')$ is tightly connected.}
\end{proof}

Next, we turn to the proof of \cref{lem:ore-perfect-matchin}.
To begin, we show the following.

\begin{lemma}\label{lem:Ore-almost-perfect-matching}
	Let $1/n \ll \nu \ll \beta, \mu, 1/k$ with $n$ divisible by $k$.
	Let $G$ be a $(\frac{k-1}{k}+\mu)$-Ore graph on $n$ vertices.
	Let $G'$ be a $\nu$-approximation of $G$ on $n$ vertices.
	Then $K_k(G')$ has a matching which covers all but at most $\beta n$ vertices.
\end{lemma}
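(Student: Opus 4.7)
Proof plan:

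The plan is to reduce the problem to the Ore-type clique-factor theorem of Kierstead--Kostochka (\cref{thm:ore-clique-factor}), applied to suitable subgraphs of $G'$.

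Let $S = \{v\in V(G'): \deg_{G'}(v)<(\tfrac{k-1}{k}+\tfrac{\mu}{2})n\}$ and $L = V(G')\setminus S$. Since $\deg_G(v)\le \deg_{G'}(v)+\nu n$ and $\nu \le \mu/4$, any two vertices of $S$ must be adjacent in $G$: otherwise the Ore condition $\deg_G(u)+\deg_G(v)\ge 2(\tfrac{k-1}{k}+\mu)n$ for non-adjacent pairs would fail. Hence $S$ spans a clique in $G$, and therefore $G'[S]$ is near-complete with $\delta(G'[S])\ge |S|-\nu n-1$.

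In the light case $|S|\le \beta n/2$, the subgraph $G'[L]$ satisfies $\delta(G'[L])\ge \tfrac{k-1}{k}|L|+\tfrac{\mu}{4}n$, and \cref{thm:ore-clique-factor} applied to $G'[L]$ (after discarding at most $k-1$ vertices of $L$ to restore divisibility by $k$) yields a $K_k$-factor covering all but $\le k-1$ vertices of $L$, for a total miss of at most $\beta n$. In the heavy case $|S|>\beta n/2$ and $|S|\ge (k-1)n/k$, I pack $K_k$-cliques of ``mixed type'': one vertex of $L$ together with $k-1$ vertices of $S$. Since $\deg_{G'}(v,S)\ge |S|-n/k+\mu n/2$ for each $v\in L$ and $G'[S]$ is near-complete, this can be arranged via a bipartite matching (Hall-type) argument to pair every $v\in L$ with a suitable $K_{k-1}$ inside $S$; the leftover $|S|-(k-1)|L|$ vertices of $S$ are then covered by \cref{thm:ore-clique-factor} applied to the dense $G'[S]$, losing at most $k-1$ vertices.

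The main obstacle is the intermediate regime $|S|\in(\beta n/2, (k-1)n/k)$, where $(k-1)|L|>|S|$ and the mixed pairing cannot consume all of $L$, yet $G'[L]$ is too sparse to admit its own $K_k$-factor. I plan to address this by applying \cref{thm:ore-clique-factor} directly to $G$ (which satisfies the Ore condition with slack $\mu$) to obtain a $K_k$-factor $M^\star$ of $G$, and then transferring it to $G'$ by locally repairing the cliques of $M^\star$ that use edges of $E(G)\setminus E(G')$. Using the near-completeness of $G'[S]$ together with the super-saturation of $K_k$-cliques in $G'$ inherited from \cref{prop:ore-super-saturated-connectivity}, every bad clique of $M^\star$ can be swapped with a fresh $K_k$-clique entirely contained in $G'$; a greedy selection of these repairs, with pairwise disjointness enforced by the abundance of candidate cliques, produces an almost-perfect matching missing at most $\beta n$ vertices provided $\nu$ is sufficiently small.
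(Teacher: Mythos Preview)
Your light case is fine and the heavy case, while sketchy, can plausibly be filled in. The real gap is in the intermediate regime, and it is not a matter of missing detail but of a missing idea.

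You propose to take one fixed $K_k$-factor $M^\star$ of $G$ (from \cref{thm:ore-clique-factor}) and then ``locally repair'' the cliques of $M^\star$ that use an edge of $E(G)\setminus E(G')$. The problem is that you have no control whatsoever over how many such bad cliques there are. A $\nu$-approximation $G'$ is allowed to delete up to $\nu n$ edges at every vertex; in particular an adversary may delete a single edge from each of the $n/k$ cliques of $M^\star$ (this costs each vertex at most one edge, well within the $\nu n$ budget) and render \emph{every} clique of $M^\star$ bad. At that point there is nothing left to repair: you would need to build an almost-perfect $K_k$-matching in $G'$ from scratch, which is precisely the statement you are trying to prove. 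Super-saturation of cliques (\cref{prop:ore-super-saturated-connectivity}) does not help here, since the issue is not the abundance of cliques but the absence of any usable skeleton.

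What the paper does instead is an averaging (switching) argument over \emph{all} $K_k$-factors of $G$: it shows that for every edge $e\in E(G)$, a uniformly random perfect $K_k$-factor of $G$ uses $e$ with probability at most $C/n$ for some $C=C(\mu,k)$. Consequently a random factor uses in expectation at most $C\,e(G\setminus G')/n\le C\nu n$ missing edges, so some factor uses at most $\beta n/k$ of them, and removing those few bad cliques leaves an almost-perfect matching in $K_k(G')$. The key combinatorial input is the bound $|\cM_e|/|\cM|\le C/n$, proved by double counting ``switchings'' that locally rebuild a factor so as to avoid $e$; this relies on the fact that a random set of $m$ cliques of a factor induces, with high probability, a $(\tfrac{k-1}{k}+\tfrac{\mu}{2})$-Ore subgraph which therefore has its own $K_k$-factor by \cref{thm:ore-clique-factor}. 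This argument is uniform and needs no case split on $|S|$.
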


\begin{proof}
	Introduce $s$ divisible by $k$ with $1/n \ll \nu \ll 1/s \ll \beta$.
	To begin, consider a random $s$-set $S \subset V(G')$.
	We claim that $G'[S]$ is $(\frac{k-1}{k}+\mu/2)$-Ore with probability at least $1-\beta$.
	Indeed, since $G'$ is a $\nu$-approximation of $G$, we have $|E(G) \sm E(G')| \leq \nu \binom{n}{2}$.
	So the probability that a fixed edge of $E(G) \sm E(G')$ is contained in $S$ is at most $\nu$.
	It follows by the union bound that $G[S]=G'[S]$ with probability at least $1- \nu \binom{s}{2}$.
	Moreover, by (a hypergeometric version of) Chernoff's bound (\cref{lem:che}), every fixed vertex $u \in V(G)$ satisfies $\deg_{G'[S]} (u)\geq \frac{\deg_{G'}(u)}{n-1} s - (\mu/2) s$ with probability at least $1-e^{-\Omega(s)}$.
	So by the union bound, $G'[S]$ is $(\frac{k-1}{k}+\mu/2)$-Ore with probability at least $1-\nu \binom{s}{2}-se^{-\Omega(s)} \geq 1-\beta$.
	
	{Now consider a random partition of $V(G')$ into parts of size $s$ and one residue part of size at most $s$.
	By the above, the expected number of parts $S$ in the partition such that $G'[S]$ fails to be $(\frac{k-1}{k}+\mu/2)$-Ore is at most $\beta n / s$.
	Thus, there exists such a partition where all but $\beta n/s$ parts induce a $(\frac{k-1}{k}+\mu/2)$-Ore graph in $G'$.}
	Let us fix such a partition.
	We may then finish by applying \cref{thm:ore-clique-factor} to each of the induced $(\frac{k-1}{k}+\mu/2)$-Ore graphs.
\end{proof}

We require the following fact, which is proved using a standard concentration argument.
\begin{proposition}\label{prop:absorption}
	Let $1/n \ll \eta \ll 1/k$.
	Let $\cH$ be a $k$-graph on $n$ vertices such that every vertex is contained in at least $\sqrt{\eta} n^k$ $(k+1)$-cliques.
	Then $\cH$ contains a matching $A$ with $|A| \leq \eta n$ such that every vertex of $\cH$ forms a $(k+1)$-clique with at least $\eta^2 n^k$ edges of $A$.
\end{proposition}
\begin{proof}
	Let ${A}$ be a random set containing each $k$-tuple in ${V(\cH)}^{k}$ independently with probability $p := 4 \eta^{1.5}  n^{-k+1}$.
	Since $\expectation[|{A}|] = p n^{k} =   4\eta^{1.5} n \leq (\eta/2) n$, Markov's inequality gives $\Pr \left(|{A}| > \eta n \right) \leq  {1}/{2}$.
	
	We say that two distinct $k$-tuples \emph{overlap} if there is a vertex occurring in both.
	Note that there are at most $k^2 n^{2k-1}$ ordered pairs of overlapping $k$-tuples.
	Let $P$ be the random variable which counts the number of such pairs that are both in ${A}$.
	We have $\expectation[P] \leq k^2 n^{2k-1} p^2 = k^2 16 \eta^3 n$.
	Thus another application of Markov's inequality reveals $\Pr(P > \eta^2 n) \leq  k^2 16 \eta \leq  {1}/{4}.$ 
	
	For a vertex $v \in V(\cH)$, let ${A}_v$ be the number of $k$-tuples in ${A}$ that form a $(k+1)$-clique with $v$.
	It follows that ${A}_v$ is binomially distributed. 
	By assumption, we have $\expectation[A_{v}] \geq  p \sqrt{\eta} n^{k} = 4 \eta^2 n$.
	Using {Chernoff's bound (\cref{lem:che})}, we deduce that $\Pr({A}_v \leq 3 \eta^2 n) \leq {1}/{(8 n)}.$
	
	In combination, the above bounds on the probabilities imply that there is an outcome $A$ of this random experiment such that
	\begin{itemize}[-]
		\item $A$ contains at most $\eta n$ $k$-tuples;
		\item $A$ has at most $\eta^2 n$ overlapping pairs;
		\item $A$ contains at least $3\eta^2 n$ edges that form a $(k+1)$-clique with every vertex of $\cH$.
	\end{itemize}
	To turn $A$ into a matching, we delete one tuple from every overlapping pair in $A$ and all tuples of $A$ which are not edges.
\end{proof}

\begin{proof}[Proof of \cref{lem:ore-perfect-matchin}]
	Introduce $s,\beta$ with $1/n \ll \nu \ll \beta \ll 1/s \ll \mu, 1/k$.
	By \cref{prop:ore-super-saturated-connectivity}, every vertex of $G$ is on at least $s^{-2k} n^{k-1}$ $k$-cliques.
	If $G$ has fewer than $s^{-4k} n$ small vertices, we match all small vertices with vertex-disjoint $k$-cliques, and remove these cliques from the graph.
	The proof then continues in the same way.
	Hence in the following, we will assume that $G$ has at least $s^{-4k} n$ small vertices.
	Since small vertices form  a clique in $G$, all small vertices are on at least $s^{-6k^2} n^k$ $(k+1)$-cliques.
	Moreover, by \cref{prop:ore-super-saturated-connectivity}, every big vertex of $G$ is on at least $s^{-2k} n^{k}$ $(k+1)$-cliques.
	As $1/n \ll \nu \ll \beta \ll 1/s,1/k$ and $G'$ is a $\nu$-approximation of $G'$, it follows that 
			every vertex in $G'$ is on at least $\sqrt{\beta} n^k$ $(k+1)$-cliques.
	By \cref{prop:absorption} applied with $K_k(G'),\beta$ playing the roles of $\cH,\eta$, there is a set $A$ of at most $\beta n$ disjoint $k$-cliques of $G'$ such that every $v \in V(G')$ has at least $\beta^2 n$ elements of $A$ in its neighborhood $N_{G'}(v)$.
	We define $F'=G'-V(A)$ and $F = G[V(F')]$.
	{Since $F$ is obtained from $G$ after deleting at most $\beta^2 k n \leq \mu n / (2k)$ vertices, we have that $F$ is a $((k-1)/k + \mu/2)$-Ore graph.}
	Since $G'$ is a $\nu$-approximation of $G$, it follows that $F'$ is a $(2\nu)$-approximation of $F$ on the same number of vertices as $F$.\footnote{We remark that that $F'$ is also a $(\nu+k\beta)$-approximation of $G$. But this is not compatible with the constant hierarchy of \cref{lem:Ore-almost-perfect-matching}.}
	Hence we may apply \cref{lem:Ore-almost-perfect-matching} with $F,F',2\nu,\beta^2, \mu/2$ playing the roles of $G,G',\nu,\beta, \mu$ to cover all but $\beta^2 n$ vertices of $F'$ with disjoint $k$-cliques $C$.
	To finish, we match each of the remaining vertices of $G'-V(A)-V(C)$ to a distinct $k$-clique of $A$.
	(This can be done greedily by the choice of $A$.)
	So each remaining vertex is on a private $(k+1)$-clique.
	We then give each $k$-clique contained in these $(k+1)$-cliques weight $1/k$, and all other $k$-cliques of $A \cup C$ weight $1$.
	This results in a perfect fractional matching of $K_k(G')$.
\end{proof}

\subsection{Uniformly dense and inseparable graphs}\label{sec:inseparable}
In this section, we prove \cref{thm:dense-seperable-framework}.
In light of \cref{thm:main-bandwidth}, it suffices to show that any inseparable and locally dense graph admits a robust Hamilton framework, and this is precisely the content of \cref{lem:inseparable-Hamilton}.
We introduce some further definitions to describe the statement.

For a $k$-graph $\cH$ on $n$ vertices and $\eta > 0$, let $\partial_\eta \cH = \{ X \in E(\partial_{k-1} \cH) \colon \deg_\cH(X) \ge \eta n \}$ be the set of shadow edges of degree at least $\eta n$ in $\cH$.
We define the \emph{$\eta$-adherence of $\cH$} to be the subgraph $A_\eta(\cH) \subseteq \cH$ on vertex set $V(\cH)$ that consists of the edges of $\cH$ that contain an element of $\partial_\eta \cH$.

\begin{lemma}\label{lem:inseparable-Hamilton}
	Let $1/n \ll \rho \ll \mu' \ll \eta \ll \mu,d,1/k$.
	Suppose $G$ is a $\mu$-inseparable $(\rho, d)$-dense graph on $n \geq n_0$ vertices, and let $\cH=A_\eta(K_k(G))$.
	Then $(G,\cH)$ is a  $\mu'$-robust $k$-uniform  Hamilton framework.
\end{lemma}

The connectivity part of the proof of \cref{lem:inseparable-Hamilton} relies on the following result of Ebsen, Maesaka, Reiher, Schacht and Schülke~\cite{EMR+20}, which we restate in our notation.

\begin{lemma}[{\cite[Connecting Lemma, Lemma 3.1]{EMR+20}}] \label{lemma:EMRSSconnecting}
	Let $1/n \ll  1/M, \rho, \zeta \ll d, \mu, \eta, 1/k$.
	Let $G$ be a $(\rho, d)$-dense and $\mu$-inseparable graph on $n$ vertices.
	Let $X_1, X_2 \in V(G)^{k-1}$ be disjoint tuples whose unordered sets are in $\partial_\eta (K_k(G))$.
	Then the number of tight $(X_1, X_2)$-{paths} in $K_k(G)$ of length $m$ is at least $\zeta n^{m-2(k-1)}$ for some $2(k-1)+1 \leq m \leq M$.
\end{lemma}

\begin{corollary}\label{cor:dense-inseperable-connectivity}
	Let $1/n \ll \mu' \ll \rho \ll d, \mu, \eta, 1/k$.
	Let $G$ be a $(\rho, d)$-dense and $\mu$-inseparable graph on $n$ vertices and $\cH = A_{\eta} (K_k(G))$.
	Let $G' \subset G$ be a  $\mu'$-approximation of $G$ and, let $\cH' = \cH \cap K_k(G')$.
	Then $\cH'$ is contained in a tight component of $K_k(G')$.
\end{corollary}
\begin{proof}
	Let $M, \zeta$ be such that $\mu' \ll 1/M, \zeta \ll d, \mu, 1/k$.
	Consider any two disjoint edges $Y_1,Y_2$ in $\cH'$.
	By assumption, there are $X_1 \subset Y_1$, $X_2 \subset Y_2$ with $X_1, X_2 \in \partial_\eta(\cH)$.
	It follows by \cref{lemma:EMRSSconnecting} that the number of tight $(X_1, X_2)$-{paths} in $K_k(G)$ of length $m$ is at least $\zeta n^{m-2(k-1)}$ for some $2(k-1)+1 \leq m \leq M$.
	Since $\mu' \ll \zeta,1/M$, at least one of these paths `survives' the transition from $K_k(G)$ to $K_k(G')$.
	(See also the proof of \cref{cor:ore-robust-connectivity}.)
	Hence $Y_1$ and $Y_2$ are in the same tight component of $K_k(G')$.
	The case where $Y_1,Y_2$ intersect follows by transitivity.
\end{proof}

We also need the following facts, which are a simple consequence of $(\rho, d)$-density.
In this context, an \emph{ordered clique} clique is simply a clique that comes with a vertex ordering.
(So in particular, two ordered cliques on the same vertex set are distinct if their orderings are not the same.)

\begin{lemma}[{\cite[Lemma 2.1]{EMR+20}}] \label{lemma:EMRSScliquecounting}
	Let $k \ge 1$, $d \in [0,1]$ and $\rho > 0$.
	Every $(\rho, d)$-dense graph $G$ on $n$ vertices contains at least $\left( d^{\binom{k}{2}} - (k-1)k\rho \right) n^k$ ordered $k$-cliques.
\end{lemma}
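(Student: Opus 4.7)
My plan is to prove the bound by induction on $k$. The base case $k = 1$ is immediate: every vertex is an ordered $1$-clique, so $T_1(G) = n = d^0 \cdot n$. For the inductive step, I would decompose
\[
T_k(G) = \sum_{v \in V(G)} T_{k-1}(G[N(v)]),
\]
where $T_j(H)$ denotes the number of ordered $j$-cliques in a graph $H$. Each ordered $k$-clique $(v_1, \ldots, v_k)$ is counted exactly once: via $v_1$ together with the ordered $(k-1)$-clique $(v_2, \ldots, v_k) \subseteq N(v_1)$.

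The key observation is that $G[N(v)]$ inherits density from $G$. Writing $n_v = |N(v)|$, for any $U \subseteq N(v)$ we have $e_{G[N(v)]}(U) = e_G(U) \geq d|U|^2/2 - \rho n^2$, so $G[N(v)]$ is $(\rho n^2 / n_v^2, d)$-dense whenever $n_v \geq 1$. The inductive hypothesis then gives
\[
T_{k-1}(G[N(v)]) \geq d^{\binom{k-1}{2}} n_v^{k-1} - (k-2)(k-1) \rho n^2 n_v^{k-3},
\]
which remains a valid (if sometimes trivial) lower bound for every $v$.

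Summing over $v$, the error term contributes at most $(k-2)(k-1)\rho n^k$, since $n_v^{k-3} \leq n^{k-3}$. For the main term, the power-mean inequality together with the density-based bound $\sum_v n_v = 2e(G) \geq (d - 2\rho) n^2$ yields
\[
\sum_v n_v^{k-1} \geq n \left( \frac{2e(G)}{n} \right)^{k-1} \geq (d - 2\rho)^{k-1} n^k.
\]
Applying Bernoulli's inequality $(d - 2\rho)^{k-1} \geq d^{k-1} - 2(k-1) \rho d^{k-2}$ and the identity $\binom{k-1}{2} + (k-1) = \binom{k}{2}$, together with $d \leq 1$, gives
\[
T_k(G) \geq d^{\binom{k}{2}} n^k - 2(k-1)\rho n^k - (k-2)(k-1)\rho n^k = \bigl( d^{\binom{k}{2}} - k(k-1)\rho \bigr) n^k,
\]
as required. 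The main technicality lies in the degenerate regime $\rho \geq d/2$, where Bernoulli's estimate breaks down; but in that range the claimed lower bound is already non-positive, so the statement holds vacuously, and I would treat this case separately at the outset.
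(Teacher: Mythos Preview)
Your argument is correct: the induction on $k$ via the decomposition $T_k(G) = \sum_v T_{k-1}(G[N(v)])$, the observation that $G[N(v)]$ is $(\rho n^2/n_v^2, d)$-dense, and the combination of Jensen's inequality with Bernoulli's bound all go through as you describe, and the case split on $\rho \geq d/2$ correctly handles the degenerate regime.

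There is nothing to compare against in the present paper: this lemma is quoted from \cite{EMR+20} and no proof is given here. Your proof is the standard one (and is essentially the argument in the cited source). A couple of cosmetic points you may want to tidy up: for $k=2$ the term $n_v^{k-3}$ is awkward when $n_v=0$, though the coefficient $(k-2)(k-1)$ vanishes so nothing is lost; and the inductive hypothesis should be stated so that it applies to graphs on any number of vertices (not just $n$), since you invoke it on $G[N(v)]$. Neither affects the validity of the argument.
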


\begin{corollary}\label{cor:dense-many-well-connected-cliques}
	Let $\rho \leq \eta \ll d,k$.
	Let $G$ be a $(\rho, d)$-dense graph $G$ on $n$ vertices, and let $\cH = A_\eta(K_k(G))$.
	Then $\cH[S]$ has at least $\eta n^{k}$ edges for every $S \subset V(G)$ of size at least $\eta^{1/(2k)} n$.
\end{corollary}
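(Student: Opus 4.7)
The plan is to combine the clique-counting bound of \cref{lemma:EMRSScliquecounting} applied to $G[S]$ with a direct averaging argument showing that only a negligible fraction of $k$-cliques of $G$ fall outside $\cH$. The main (but mild) obstacle is that the $(\rho,d)$-density of $G$ must first be transferred to $G[S]$, with the absolute error term $\rho n^2$ rescaled relative to $|S|^2$; the hierarchy $\rho \leq \eta \ll d, 1/k$ is set up precisely so that this transfer goes through.

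First I would transfer density from $G$ to $G[S]$. Writing $m := |S| \geq \eta^{1/(2k)} n$, we have $n^{2} \leq \eta^{-1/k} m^{2}$, so the defining inequality $e_G(U) \geq d|U|^2/2 - \rho n^2$ restricted to $U \subseteq S$ becomes $e_{G[S]}(U) \geq d|U|^2/2 - \rho \eta^{-1/k} m^2$. Since $\rho \eta^{-1/k} \leq \eta^{1-1/k}$ is much smaller than $d$ under the hierarchy, the induced graph $G[S]$ is $(\rho', d)$-dense for some $\rho' \ll d, 1/k$ (in its own vertex count $m$). \cref{lemma:EMRSScliquecounting} then yields at least $(d^{\binom{k}{2}} / 2) m^{k} \geq (d^{\binom{k}{2}} / 2) \eta^{1/2} n^{k}$ ordered $k$-cliques in $G[S]$.

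Next I would bound the number of $k$-cliques that do not belong to $\cH$. By definition of the $\eta$-adherence, a $k$-clique $X$ fails to lie in $\cH$ exactly when every $(k-1)$-subset $Y \subset X$ has $\deg_{K_k(G)}(Y) < \eta n$, i.e., fewer than $\eta n$ common neighbours in $G$. For any ordering $(v_1, \dots, v_k)$ of such a bad $k$-clique, the set $\{v_1, \dots, v_{k-1}\}$ is in particular low-degree, so $v_k$ can be chosen in fewer than $\eta n$ ways once the first $k-1$ coordinates are fixed. Summing over the at most $n^{k-1}$ orderings of the first $k-1$ coordinates gives at most $\eta n^{k}$ bad ordered $k$-cliques in $V(G)$. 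Subtracting this bound from the ordered-clique count of $G[S]$ and dividing by $k!$ to pass to unordered edges, the number of edges of $\cH[S]$ is at least $(d^{\binom{k}{2}}/(4 k!)) \eta^{1/2} n^k$, which comfortably exceeds $\eta n^k$ under $\eta \ll d, 1/k$. All loss factors ($\eta^{-1/k}$ from the density transfer, $k!$ from unordering) are absorbed by the gap between $\eta$ and $d, 1/k$, so no genuine difficulty arises.
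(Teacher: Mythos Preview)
Your argument is correct and, in fact, more direct than the paper's. Both proofs begin identically: transfer the $(\rho,d)$-density of $G$ to $G[S]$ (the paper gets $(\sqrt{\rho},d)$-dense, you get $(\rho\eta^{-1/k},d)$-dense; both work under the hierarchy) and then apply \cref{lemma:EMRSScliquecounting} to count roughly $d^{\binom{k}{2}} m^k$ ordered $k$-cliques in $S$. The divergence is in how the cliques outside $\cH$ are eliminated. The paper runs an iterative clean-up \emph{inside} $S$: starting from $K_k(G[S])$, it repeatedly deletes edges that contain a $(k-1)$-set of degree below $\sqrt{\eta}m$ in the current hypergraph, and then observes that every survivor has \emph{all} of its $(k-1)$-shadows of degree at least $\sqrt{\eta}m \geq \eta n$, hence lies in $\cH$. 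You instead bound bad cliques globally by a one-line averaging: any ordered bad $k$-clique has its first $k-1$ vertices forming a low-degree $(k-1)$-set, so there are at most $n^{k-1}\cdot \eta n = \eta n^k$ of them in total. Your route is shorter and avoids the iterative deletion entirely; the paper's route yields the stronger intermediate fact that the surviving cliques have \emph{every} $(k-1)$-shadow of high degree already within $S$, though that extra strength is not used in what follows.
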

\begin{proof}
	Let $F= G[S]$ and $m=|S|$.
	Observe that $F$ is $(\sqrt{\rho}, d)$-dense.
	This follows, since for any $U \subset V(F)$, we have $e(U) \geq d|U|^2/2-\rho n^2$ and, we also have $\rho n^2 \leq \sqrt{\rho} m^2$ since $m \geq \eta^{1/(2k)} n$ and $\rho \ll \eta$.

	By \cref{lemma:EMRSScliquecounting}, the number of edges in $K_{k}(F)$ is at least
	\begin{align*}
		\frac{ d^{\binom{k}{2}} - (k-1)k\sqrt{\rho} }{k!} m^{k} \geq \left(\frac{d}{2}\right)^{\binom{k}{2}} m^{k},
	\end{align*}
	where we used that $\rho \ll d,k$.
	We define a subgraph $\cJ \subseteq K_k(F)$ as follows.
	Initially, $\cJ = K_{k}(F)$.
	Next, we iteratively delete edges from $\cJ$, at each step removing those edges that contain a $(k-1)$-set that is contained in at most $\sqrt{\eta} m$ edges (in the updated hypergraph).
	So the number of removed edges is at most
	\begin{align*}
		\sqrt{\eta} m\binom{m}{k-1} \leq \frac{\sqrt{\eta}}{(k-1)!}  m^{k}.
	\end{align*}
	Since $\eta \ll d,k$ it follows that, after the removal, the number of edges in $\cJ$ is at least
	\[ \left(\frac{d}{2}\right)^{\binom{k}{2}} m^{k} - \frac{\sqrt{\eta}}{(k-1)!}  m^{k} \geq \frac{1}{2} \left(\frac{d}{2}\right)^{\binom{k}{2}} m^k \geq \frac{1}{2} \left(\frac{d}{2}\right)^{\binom{k}{2}} \eta^{1/2} n^k \geq \eta n^k, \]
	where in the second to last inequality we used $m \geq \eta^{1/(2k)}n$ and in the last inequality we used $\eta \ll d$.
	Each of these edges induces a $k$-clique $X$ in $F$,
	such that every $(k-1)$-set of $X$ is in at least $\sqrt{\eta} m$ edges of $\cJ$.
	In particular, each such $(k-1)$-set must be in at least $\sqrt{\eta} m \geq \eta n$ $k$-cliques in $K_k(F)$, and thus in $\cH=\partial_\eta(K_k(G))$.
	Thus $e(\cH[S]) \geq e(\cJ) \geq \eta n^k$, as required.
\end{proof}

\begin{proposition}\label{prop:dense-inseperable-matching}
	Let $1/n \ll \mu' \ll \rho \ll \eta \ll d, \mu, 1/k$.
	Let $G$ be a $(\rho, d)$-dense and $\mu$-inseparable graph on $n$ vertices and $\cH = A_{\eta}(K_k(G))$.
	Let $G' \subset G$ be a $\mu'$-approximation of $G$, and let $\cH' = \cH \cap K_k(G')$.
	Then $\cH'$ contains a perfect fractional matching.
\end{proposition}

\begin{proof}
	Let $\eta' > 0$ with $\eta \ll \eta' \ll d, \mu, 1/k$.
	Consider a vertex $v \in V(\cH')$.
	Since $G$ is $\mu$-inseparable, we have $|N_G(v)| \geq \mu (n-1) \geq (\eta')^{1/(2k)} n$, where the last inequality follows by our choice of $\eta'$.
	Since $A_{\eta'}(K_k(G)) \subseteq A_{\eta}(K_k(G)) = \cH$,
	an application of \cref{cor:dense-many-well-connected-cliques} with $\eta'$ in place of $\eta$ implies that there are at least $\eta' n^{k}$ edges in $\cH[N_G(v)]$.
	Since $1/n \ll \mu' \ll \eta'$, at least $(\eta'/2) n^{k}$ of these edges are still contained in $\cH'[N_{G'}(v)]$.

	By \cref{prop:absorption} applied with $\cH',\eta'/2$ playing the roles of $\cH,\eta$, there is a matching $A\subset \cH'$ with $|A| \leq (\eta'/2)^2n \leq \eta' n$ such that every $v \in V(\cH')$ has at least  $(\eta'/2)^4 n \geq \eta^{1/(2k)} n$ elements of $A$ in its neighborhood $N_{G'}(v)$.
	Next, we use \cref{cor:dense-many-well-connected-cliques} to greedily find a matching $M \subset \cH' - V(A)$ such that $M \cup A$ covers all but $\eta^{1/(2k)} n$ vertices of $\cH'$.
	To be precise, in each step we apply \cref{cor:dense-many-well-connected-cliques} (with $\eta$ as input) to find an edge in the set of uncovered vertices.
	We then join each of the remaining vertices to a (private) edge of $A$.
	This gives a collection of pairwise disjoint edges and $(k+1)$-cliques in $\cH'$ that together cover all vertices of $\cH'$.
	Thus, we can define a perfect fractional matching by assigning weight $1$ to single edges and weight $1/k$ to the edges of the $(k+1)$-cliques.
\end{proof}

\begin{proof}[Proof of \cref{lem:inseparable-Hamilton}]
	Let $G' \subset G$ be an arbitrary $\mu'$-approximation of $G$ and, let $\cH' = \cH \cap K_k(G')$.
	By \cref{cor:dense-inseperable-connectivity}, $\cH'$ is contained in a tight component of $K_k(G')$.
	By \cref{prop:dense-inseperable-matching}, $\cH'$ has a perfect fractional matching.
	Consider any vertex $v \in V(\cH')$.
	Since $G$ is $\mu$-inseparable, we have $|N_G(v)| \geq \mu n \geq \eta^{1/(2k)} n$.
	By \cref{cor:dense-many-well-connected-cliques}, there are at least $\eta n^{k}$ edges in $\cH[N_G(v)]$.
	So in particular, every vertex in $\cH$ has at least $\eta n^{k} \geq \mu' n^k$ linked edges.
	Moreover, since $\mu' \ll \eta$, one of these sets $Y$ together with $v$ induces a $(k+1)$-clique in $G'$.
	Hence $\cH'$ contains a $(k+1)$-clique.
\end{proof}

\subsection{Deficiency}

To show \cref{theorem:deficiency-bandwidth}, we will need some preparations.
Consider the function given by
\begin{align*}
	f(n,t,k) = \begin{dcases}
		           \binom{n}{2}-\binom{\frac{t}{k-1}}{2}-\frac{t}{k-1}\left(n-\frac{t}{k-1}\right) & \text{if } t < \frac{(k-1)n}{2k^2 - 2k+1},                       \\
		           \binom{n}{2} - \binom{\frac{n+t}{k}}{2}                                         & \text{if } \frac{(k-1)n}{2k^2-2k+1} \leq t < (k-1)n, \text{ and} \\
		           0                                                                               & \text{if } t \ge (k-1)n.
	           \end{dcases}.
\end{align*}
It is not difficult to check that, for fixed $k$, the function $g(n,t,k)$ appearing in \cref{theorem:deficiency-cliquefactors} satisfies $f(n,t,k) = g(n,t,k) + o(n^2)$.
We will find it more convenient to work with $f(n,t,k)$ instead.

We show next that, to show \cref{theorem:deficiency-bandwidth}, we can assume $t$ is not very close to $0$.

\begin{proposition} \label{proposition:deficiency-tnotsmall}
	Let $\mu \geq 0$ and $n, k, t$ integers with $k \ge 2$ and  {$t \leq k n$}.
	Suppose $G$ is a graph on $n$ vertices with $e(G) > f(n,t,k) + \mu n^2$.
	Then $t \ge \mu(n+t)/3$.
\end{proposition}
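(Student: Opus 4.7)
We argue by contradiction, aiming to rule out $t < \mu(n+t)/3$, equivalently $t < \mu n/(3-\mu)$. First, we may assume $\mu < 1/2$, since otherwise the hypothesis $e(G) > \mu n^2 \ge n^2/2 > \binom{n}{2}$ is already vacuous. Combined with $e(G) \le \binom{n}{2}$, the hypothesis gives
\[
	\binom{n}{2} - f(n,t,k) > \mu n^2,
\]
and the plan is to analyse each of the three pieces in the definition of $f$ separately and show that this deficit forces $t \ge \mu n/(3-\mu)$.

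The two outer regimes are straightforward. In the first regime $t < (k-1)n/(2k^2-2k+1)$, setting $a = t/(k-1)$, a direct calculation yields $\binom{n}{2} - f(n,t,k) = a\bigl(n - (a+1)/2\bigr) \le an \le tn$, whence $t > \mu n$, which is stronger than the required $t \ge \mu n/(3-\mu)$. In the third regime $t \ge (k-1)n$, we have $t \ge n$ and hence $\mu(n+t)/3 \le 2\mu t/3 \le t$ since $\mu < 3/2$.

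The middle regime $(k-1)n/(2k^2-2k+1) \le t < (k-1)n$, where $\binom{n}{2} - f(n,t,k) = \binom{(n+t)/k}{2}$, is the main obstacle, and requires a further sub-split on the size of $\mu$ relative to the threshold $\mu_0 := 3(k-1)/(k(2k-1))$. If $\mu \le \mu_0$, the case condition $t \ge (k-1)n/(2k^2-2k+1)$ itself already yields $t \ge \mu n/(3-\mu)$, since the inequality $(k-1)/(2k^2-2k+1) \ge \mu/(3-\mu)$ is equivalent to $\mu \le \mu_0$ after clearing denominators. If instead $\mu > \mu_0$, we extract $n+t > k\sqrt{2\mu}\,n$ from $\binom{(n+t)/k}{2} > \mu n^2$, and then need to verify $k\sqrt{2\mu}(3-\mu) \ge 3$. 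Since the map $\mu \mapsto \sqrt{2\mu}(3-\mu)$ is increasing on $(0,1)$, it suffices to check this at $\mu = \mu_0$, which reduces to the purely algebraic inequality $6(k-1)(2k^2-2k+1)^2 \ge k(2k-1)^3$. Via the substitution $x := (k-1)/(2k-1) \in [1/3, 1/2)$ this simplifies to $6x(k-x)^2/k \ge 1$, which is readily verified for all $k \ge 2$. Identifying precisely this threshold $\mu_0$, where the case-condition bound on $t$ meets the edge-count bound, and confirming the boundary inequality, is the hardest step; the rest is elementary manipulation using $\mu < 1/2$ and $k \ge 2$.
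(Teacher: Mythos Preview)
Your proof is correct. All three regimes are handled soundly, including the subtle sub-split at $\mu_0 = 3(k-1)/(k(2k-1))$ in the middle regime, and the boundary inequality $6(k-1)(2k^2-2k+1)^2 \ge k(2k-1)^3$ does hold for all $k\ge 2$ as you claim.

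Compared with the paper, your argument is considerably more thorough but also more laborious in the middle regime. The paper only writes out the first regime $t < (k-1)n/(2k^2-2k+1)$ explicitly, deriving (with $\alpha n = t/(k-1)$) the inequality $2\alpha - \alpha^2 \ge 2\mu$, hence $\alpha \ge \mu$, hence $t \ge \mu(k-1)n$; the conclusion then follows from $t \le kn$ via $\mu(k-1)n \ge \mu(k+1)n/3 \ge \mu(n+t)/3$. The other two regimes are left tacit. The third regime is immediate (as you note). For the middle regime, the cleaner route the paper has in mind is to observe that the piecewise definition of $f$ coincides with the pointwise maximum of the first two formulae, so in particular $f(n,t,k) \ge f_1(n,t,k) := \binom{n}{2} - \binom{t/(k-1)}{2} - \tfrac{t}{k-1}\bigl(n - \tfrac{t}{k-1}\bigr)$ throughout $0 \le t < (k-1)n$. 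Hence $\mu n^2 < \binom{n}{2} - f \le \binom{n}{2} - f_1$, and the exact same quadratic bound $\alpha \ge \mu$ applies, with no need for your $\mu_0$ threshold or the algebraic verification at the boundary. Your approach has the virtue of being self-contained (it does not appeal to $f$ being a maximum), at the cost of the extra case analysis.
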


\begin{proof}
	We have $\mu n^2 + f(n,k,t) < e(G) \leq \binom{n}{2}$.
	Let $\alpha n = t/(k-1)$.
	For $t < (k-1)n/(2k^2 - 2k + 1)$, we obtain
	\begin{equation*}
		\mu n^2 \leq \binom{\alpha n}{2} + \alpha n (n - \alpha n) \leq \frac{1}{2} \alpha^2 n^2 + \alpha n (n - \alpha n).
	\end{equation*}
	Hence $\alpha^2 + 2\alpha(1 - \alpha) \ge 2 \mu$, which implies $\alpha  \ge \mu$.
	This gives $t \ge \mu(k-1)n$.
	We can conclude, since $t \leq k n$ and $k \geq 2$, that $\mu(k-1)n \geq \mu(n+t)/3$.
\end{proof}

Again, we have to be careful since deleting a few edges can destroy the structure of $G \ast K_t$,
so \cref{theorem:deficiency-cliquefactors} cannot be applied directly to approximations of $G\ast K_t$ to get Hamilton frameworks.
However, this is merely a technical obstacle as the next proposition shows.

\begin{proposition} \label{proposition:deficiency-robustlymatching}
	Let $1/n \ll \nu \ll \mu, 1/k$ and $k \ge 3$.
	Let $t \leq kn$.
	Suppose $G$ is a graph on $n$ vertices with $e(G) \ge f(n,t,k) + \mu n^2$.
	Let $F$ be a $\nu$-approximation of $G \ast K_t$.
	Then $\cH'=K_k(F)$ has a fractional perfect matching.
\end{proposition}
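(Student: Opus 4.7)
The plan is to realise the desired perfect fractional matching by combining a near-perfect integer $k$-clique matching with a fractional absorbing structure built from the $K_t$-side. Write $N = n+t$, $A = V(F) \cap V(G)$, and $B = V(F) \cap V(K_t)$. By \cref{proposition:deficiency-tnotsmall}, $t \ge \mu N/3$, so together with $\nu \ll \mu$ one gets $|B| \ge t - \nu N \ge \mu N/4$. Since $F$ is a $\nu$-approximation of $G \ast K_t$, each vertex of $V(F)$ has at most $\nu N$ non-neighbours in $V(F)$, and in particular at most $\nu N$ non-neighbours in $B$. A routine random sampling then yields a reservoir $B_0 \subseteq B$ of size $\lceil \sqrt{\nu} N \rceil$ (with $k \mid |B_0|$) such that every $v \in V(F) \setminus B_0$ has at most $2 \nu N$ non-neighbours in $B_0$.

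For the bulk of the matching, I would remove a further set $R \subseteq B \setminus B_0$ of size $r < k$ to adjust divisibility so that $k \mid |V(F)| - |B_0| - r$, and consider $F^{\ast} = (G \ast K_t)[V(F) \setminus (B_0 \cup R)]$, which has the form $G[A] \ast K_{|B| - |B_0| - r}$. Since $f(n,t,k) = g(n,t,k) + o(n^2)$ and both functions are quadratic in $n, t$, while $|A| = n - O(\nu N)$ and $|B| - |B_0| - r = t - O(\sqrt{\nu} N)$, the hypothesis $e(G) \ge f(n,t,k) + \mu n^2$ translates (with a small loss) into $e(G[A]) \ge g(|A|, |B| - |B_0| - r, k) + (\mu/2) |A|^2$. \cref{theorem:deficiency-cliquefactors} then yields a $K_k$-factor of $F^{\ast}$. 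As $F$ and $F^{\ast}$ may differ in up to $\nu N^2$ edges, this factor need not lie in $F$; I would overcome this by picking a $K_k$-factor $M'$ of $F^{\ast}$ uniformly at random and applying the switching argument from the proof of \cref{lem:Ore-almost-perfect-matching} to show that each edge of $F^{\ast}$ appears in $M'$ with probability $O(1/N)$. A first moment estimate then produces a particular $M'$ using $O(\nu N)$ edges absent from $F$; deleting the cliques that contain such edges leaves a $k$-clique matching $M \subseteq K_k(F)$ covering all but $O(\nu N)$ vertices of $V(F) \setminus (B_0 \cup R)$.

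It remains to turn $M$ into a perfect fractional matching by absorbing $V(F) \setminus V(M) = B_0 \cup R \cup L$, where the leftover $L$ from the switching has size $O(\nu N)$, and where the divisibility condition on $r$ forces $k \mid |L|$. Since $|B_0| \gg k(|R| + |L|)$, a greedy selection yields vertex-disjoint private $k$-sets $X_v \subseteq B_0 \cap N_F(v)$ for each $v \in R \cup L$ such that $\{v\} \cup X_v$ spans a $(k+1)$-clique in $F$; distributing weight $1/k$ over the $k+1$ $k$-subcliques of $\{v\} \cup X_v$ covers $v$ and every element of $X_v$ fractionally with total weight $1$. The remaining vertices of $B_0$, of which there are $|B_0| - k(|R| + |L|)$ and which form a near-complete graph inside $F[B_0]$, split exactly into $k$-cliques by the divisibility choice $k \mid |B_0|$; each such clique is given weight $1$. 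Summed together, this produces a perfect fractional matching of $\cH' = K_k(F)$.

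The main obstacle is the switching step, which needs a supersaturation counterpart of \cref{theorem:deficiency-cliquefactors}: small random sub-instances of $F^{\ast}$ must also satisfy the deficiency inequality with positive slack, so that they too admit $K_k$-factors and produce enough $(e, M)$-switchings. Since $f(n,t,k)$ is quadratic in $(n,t)$, a uniform random sample of $V(F^{\ast})$ preserves the relative edge density, and standard Chernoff concentration together with the $\mu n^2$ slack passes the deficiency hypothesis down to the sample. This supplies the lower bound on the number of $(e, M)$-switchings that is needed to upper bound $|\mathcal{M}_e|/|\mathcal{M} \setminus \mathcal{M}_e|$ by $O(1/N)$, closing the argument.
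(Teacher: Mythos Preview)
Your proof is essentially correct, but it takes a noticeably harder path than the paper's. Both proofs share the same architecture (absorber on the $K_t$-side, an almost-spanning integer $k$-clique matching built from \cref{theorem:deficiency-cliquefactors}, followed by fractional absorption of the leftover via $(k{+}1)$-cliques), but they diverge at the key step of transferring a clique factor of $G' \ast K_{t''}$ into the approximation~$F$.

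You treat this as a robustness problem and invoke a switching argument: pick a uniformly random $K_k$-factor of $F^\ast$, argue that each edge appears with probability $O(1/N)$, and hence some factor uses only $O(\nu N)$ edges absent from $F$. To justify the probability bound you need a supersaturation version of \cref{theorem:deficiency-cliquefactors}, i.e.\ that random $m$-clique unions drawn from a fixed factor again satisfy the deficiency inequality with slack. This is correct in spirit (the function $f(n,t,k)$ is degree-two homogeneous in $(n,t)$ up to lower order, so the slack $\mu n^2$ rescales), but note that your samples are unions of random \emph{cliques} of a given factor, not uniform vertex samples, and the concentration of $|A\cap V|$, $|B\cap V|$ and $e(G[A\cap V])$ has to be argued via bounded-difference inequalities rather than a direct Chernoff bound. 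It all goes through, but it is genuine extra work.

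The paper avoids this entirely by exploiting the symmetry of the $K_t$-side. It applies \cref{theorem:deficiency-cliquefactors} to $G' \ast K_{t''}$ with an \emph{abstract} copy of $K_{t''}$, obtains a factor $M$, and then maps the abstract $K_{t''}$ into $V(K_t)\cap V(F)\setminus V(A)$ via a uniformly random bijection $\phi$. Edges of $M$ inside $G'=F\cap G$ are automatically in $F$; for each remaining edge (which touches $K_{t''}$) the probability that $\phi$ sends it to a non-edge of $F$ is at most $\nu(n+t)/t'' \le \nu^{3/4}$, since every vertex of $F$ misses at most $\nu(n+t)$ neighbours. A first-moment bound then gives a $\phi$ under which only $\nu^{1/2}(n+t)$ cliques of $M$ fail, and absorption handles the rest. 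This is strictly simpler: no switching, no supersaturation lemma, no appeal to the fine structure of~$f$. Your route works, but the paper's random-bijection trick is the shortcut you were looking for.
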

\begin{proof}
	It will be convenient to assume that $\mu \ll 1$.
	We set $\cH'=K_k(F)$, $K' = V(F) \cap V(K_t)$, and $t' = |K'|$.
	We begin by showing that there is an `absorbing' structure $A \subset \cH'[K']$.
	First, by \cref{proposition:deficiency-tnotsmall}, we can assume that $t \ge \mu(n+t)/4$.
	This implies that $\nu(n+t) \leq 4 \nu \mu^{-1} t \leq \mu t$.
	Next, fix a vertex $v \in V(F)$.
	Since $F$ is a  $\nu$-approximation and $\nu \ll \mu$, we have $\deg_F(v;K') \geq t - \nu (n+t) \geq (1 - \mu)t'$.
	Similarly, $F[N_F(v;K')]$ has minimum degree at least $(1 - 2\mu)t'$, as vertices in $K'$ have lost at most $\nu (n+t)$ neighbours each.
	It follows that $F[N_F(v;K')]$ has at least $(1 - 4k\mu) \binom{t'}{k} \geq \mu^2 \binom{n+t}{k}$ many $k$-cliques.

	Let $\cH''$ be obtained from $\cH'$ by retaining only the edges that intersect with $K'$.
	By \cref{prop:absorption} applied with $\cH'',\mu^2$ playing the roles of $\cH,\eta$, there is a matching $A \subseteq \cH'[K']$ of size at most $\mu^4 t' \leq \mu t$ with the additional property that, for every vertex $v \in V(F)$ there are at least $\mu^8 (n+t)$ edges $X \in E(A)$ such that $\{v\} \cup X$ is a $(k+1)$-clique in~$F$.
	Next, we find a large matching in $\cH' - V(A)$.
	Let $t'' = t' - |V(A)|$.
	By choice of $A$, we have $t'' \geq (1-\mu^4)t' \geq (1-\mu)t \geq \mu(n+t)/5$.
	Set $G' = F \cap G$  and $n' = v(F\cap G)$.
	Since $F$ is a $\nu$-approximation, we have  $n' \geq n - \nu (n+t) \geq (1 - 2k \nu) n$.
	Moreover, $e(G') \geq f(n,t,k) + \mu n^2 - \nu (n+t)^2$.
	Using these bounds together with $1/n \ll \nu \ll \mu, 1/k$ a tedious but straightforward calculation yields that $e(G') \geq f(n', t'', k) + (\mu/2)n'^2$.

	Now define $F' = G' \ast K_{t''}$.
	Since $e(G') \ge f(n', t'', k) + (\mu/2)n'^2$, we can apply \rf{theorem:deficiency-cliquefactors} to obtain a matching $M$ in $K_k(F')$ which misses at most $k-1$ vertices of $F'$.
	We will use $M$ to find a large matching in $K_k(F)$.

	Let $\phi\colon V(F') \rightarrow V(F)$ be defined as follows.
	Set $\phi(x) = x$ for all $x \in V(G')$,
	and consider a random bijection $\phi\colon V(K_{t''}) \to V(K'-V(A))$.
	Let $\phi(M)$ be the (random) image of $M$ in $F$ via $\phi$.
	Let $x \in V(F')$ and $y \in V(K_{t''})$.
	Since $F$ is a $\nu$-approximation, $\phi(x)$ misses at most $\nu (n+t)$ of its possible $t''$ neighbours in $F$ (or $t''-1$ if $x$ is in $K_{t''}$).
	So the probability that $\phi(x)\phi(y)$ is not an edge in $F$ is at most
	$\nu (n+t)/t'' \leq \nu/(\mu/5) \leq \nu^{3/4}$, where the last inequality follows from $\nu \ll \mu$.
	Hence, the expected number of edges of $M$ which are mapped to non-edges of $F$ via $\phi$ is at most
	$\nu^{3/4} e(M) \leq \nu^{3/4} \binom{k}{2} (n'+t'')/k \leq \nu^{2/3} (n'+t'')$.
	It follows that there exists a choice of $\phi$ where at most $\nu^{2/3} (n'+t'')$ edges of $M$ are mapped to non-edges.
	Recall that $v(M) \geq n'+t'' - k + 1$.
	Note that removing a $k$-clique which contains a non-edge uncovers $k$ vertices.
	It follows that $\cH'- V(A)$ contains a matching
	which covers all but at most $k-1 + k \nu^{2/3} (n'+t'') \leq \nu^{1/2}(n'+t'')$ vertices of $\cH'$.

	To finish, we cover the vertices of $\cH' - V(A)$ by joining each of the remaining vertices to a (private) edge of $A$.
	This is possible, since each vertex has $\mu^8 (n+t)$ edges available in $A$ and the number of uncovered vertices is certainly at most $\nu^{1/2}(n'+t'') \leq \mu^8 (n+t)$.
	This gives a collection of pairwise disjoint edges and $(k+1)$-cliques in $\cH'$ that together cover all vertices of $\cH'$.
	Thus, we can define a perfect fractional matching by assigning weight $1$ to single edges and weight $1/k$ to the edges of the $(k+1)$-cliques.
\end{proof}

\begin{proof}[Proof of \cref{theorem:deficiency-bandwidth}]
	Let $n_2, \nu$ be given by \cref{proposition:deficiency-robustlymatching} under input $k, \mu$.
	Without loss of generality we can also suppose $\nu \leq \mu/(3(k+2))$.
	Let $\mu', n_1$ be given by \cref{prop:linked-edges-for-free} under input $k, \nu$.
	Let $\beta, z>0$, and $n_3 \in \NATS$ be obtained from \cref{thm:main-bandwidth} applied with $k,\Delta,\mu'$,
	and let  $n_0 = \max \{ n_1, n_2, n_3, k^3 \mu'^{-3} \nu^{-1}\}$.

	Let $n \geq n_0$.
	Given $t \ge 0$, it suffices to consider a graph $G$ on $n$ vertices with at least $e(G) \ge g(n,k,t) + 2 \mu n^2$.
	Let $\cH = K_k(G \ast K_t)$.
	By \cref{thm:main-bandwidth}, it is enough to show that $(G, \cH)$ is a $\mu'$-robust $k$-uniform  zero-free Hamilton framework.
	By \cref{prop:linked-edges-for-free} and the choice of $\mu', n_0$, it is enough to show that $(G \ast K_t,\cH)$ is a $\nu$-proto-robust $k$-uniform  zero-free Hamilton framework.

	If $t \ge kn$, actually there is nothing to show because $G \ast K_t$ contains every $(k+1)$-colourable graph $J$ on $n+t$ vertices: the largest colour class in $J$ has at least $(n+t)/(k+1) \ge n$ vertices, so we can embed $J$ in $G \ast K_t$ making sure that the largest colour class covers all of $V(G)$ and allocate the rest arbitrarily.
	Thus we can assume $t < kn$.
	By choice of $n_0$ and the fact that $f(n,t,k) = g(n,t,k) + o(n^2)$, we have $e(G) > f(n,k,t) + \mu n^2$.
	Thus $t \ge \mu(n+t)/3$ by \cref{proposition:deficiency-tnotsmall}.
	In summary, we have $\mu(n+t)/3 \leq t < kn$.

	Now we show that $(G \ast K_t,\cH)$ is a $\nu$-proto-robust zero-free Hamilton framework.
	To do so, let $F \subseteq G \ast K_t$ be a $\nu$-approximation of $G \ast K_t$.
	Note that $\cH \cap K_k(F) = K_k(F)$.
	We will be done if we show $K_k(F)$ is a zero-free Hamilton framework, that is, we need to show $K_k(F)$ is tightly connected, contains a perfect fractional matching, and a $(k+1)$-clique.

	We show first that $K_k(F)$ is tightly connected.
	Let $L = V(G \ast K_t) \setminus V(G)$, so $|L| = t$.
	Let $L' = L \cap V(F)$.
	Since $t \ge \mu  (n+t)/3 \ge k$, we deduce that $F[L']$ has minimum degree at least $|L'|-1 - \nu (n+t) > (k-1)|L'|/k$.
	By \cref{lem:posa-connectivity}, this implies that $K_k(F[L'])$ is tightly connected.
	Given this, in order to show that $K_k(F)$ is tightly connected it is enough to show that each edge $X$ in $K_k(F)$ is in the same tight component as the edges of $K_k(F[L'])$.
	We show this latter statement by induction on $|X \cap V(G)|$.
	If $|X \cap V(G)| = 0$, then $X \subseteq L$, thus $X \in F[L']$, so there is nothing to show.
	Suppose $|X \cap V(G)| > 0$, and let $X = \{v_1, \dotsc, v_k\}$, ordered so that vertices in $L$ come first.
	In $G \ast K_t$, the common neighbourhood of $\{v_1, \dotsc, v_{k-1}\}$ in $L$ has size at least $|L|- k = t-k$,
	thus in $F$ it has size at least  $t - 2k \nu(n+t) > 0$.
	Thus there exists $v_0 \in L' \cap N_F(v_1)\cap \dots \cap N_F(v_{k-1})$, so $X' = \{v_0, v_1, \dotsc, v_{k-1}\}$ is a $k$-clique in $\cH$ such that $|X' \cap V(G)| < |X \cap V(G)|$.
	Moreover, by \cref{prop:tightly-connected-observation}, $X'$ is in the same tight component as $X$.
	By the induction hypothesis, $X'$ is tightly connected to some $Y \in K_k(F[L'])$ and so is $X$, as required.

	Next, observe that $K_k(F)$ has a $(k+1)$-clique.
	This follows from \cref{lem:posa-connectivity}, because $F[L']$ has minimum degree strictly larger than $(k-1)|L'|/k$.
	Finally, \cref{proposition:deficiency-robustlymatching} implies that $K_k(F)$ has a perfect fractional matching.
	So $K_k(F)$ is indeed a zero-free Hamilton framework, as required, and this finishes the proof.
\end{proof}

\subsection{Multipartite graphs} \label{section:applications-partite}
In this section, we show \rf{thm:bandwidth-multipartite}.
We start with the following simple observation.

\begin{observation} \label{lemma:multipartite-greedyextend}
	For $r \ge k \ge 2$, let $\fU=\{U_j\}_{j=1}^r$ be a family of $r$ disjoint sets of size $n$ each.
	Let $G$ be a $\fU$-partite graph with $\delta(G;\fU) > \frac{k-1}{k}n$.
	Let $S$ be any set of at most $k$ vertices disjoint from some part $U_j$.
	Then the vertices of $S$ have a common neighbour in $U_j$.
\end{observation}

\begin{proof}
	The common neighbourhood of $S$ in $U_j$ has size at least $|S|\delta(G;\fU) -(|S|-1)n > 0$.
\end{proof}

Next, we show a result about connectivity in multipartite graphs.

\begin{lemma} \label{lemma:multipartite-tightconn}
	Let $r \geq k \geq 2$, and let $G$ be a balanced $r$-partite graph on $rn$ vertices with $\delta_r(G) > \frac{k-1}{k}n$.
	Then $K_k(G)$ is tightly connected.
\end{lemma}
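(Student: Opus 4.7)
The plan is to show that any two $k$-cliques $X$ and $Y$ in $G$ lie in the same tight component of $K_k(G)$. Two observations drive the argument. First, \cref{prop:tightly-connected-observation} tells us that two $k$-cliques sharing $k-1$ vertices are in the same tight component. Second, by a union bound over neighbourhoods in a given part $U \in \fU$, for any $k$ vertices $v_1, \dots, v_k$ outside $U$ we have
\[ |N(v_1) \cap \dots \cap N(v_k) \cap U| \ge n - \sum_{s=1}^{k} (n - \deg(v_s;U)) > n - k \cdot \tfrac{n}{k} = 0, \]
so any such $k$ vertices have a common neighbour in $U$, complementing \cref{lemma:multipartite-greedyextend} (which handles $k-1$ vertices). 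A first reduction brings $X$ and $Y$ to use the same $k$ parts of $\fU$: given $X$ using $U_{i_1}, \dots, U_{i_{k-1}}, U_a$ and any $U_b$ outside $\{U_{i_1}, \dots, U_{i_{k-1}}\}$, I replace the vertex of $X$ in $U_a$ by a common neighbour of the other $k-1$ vertices in $U_b$ (which exists by \cref{lemma:multipartite-greedyextend}); the new clique shares $k-1$ vertices with $X$, and iterating achieves any desired set of $k$ parts.

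In the main step I assume $X, Y$ use the same parts $U_{i_1}, \dots, U_{i_k}$ with $x_\ell, y_\ell \in U_{i_\ell}$, and proceed by induction on $k - |X \cap Y|$. The case $|X \cap Y| \ge k-1$ is immediate from \cref{prop:tightly-connected-observation}. Otherwise, after reindexing I may write $X \cap Y = \{x_1, \dots, x_\ell\}$ with $\ell \le k-2$, and it suffices to produce a $k$-clique $X'$ in the same tight component as $X$ containing $\{x_1, \dots, x_\ell, y_{\ell+1}\}$, which strictly grows the overlap with $Y$. I construct $X'$ via a sequence of single-vertex swaps: for $j = \ell+2, \ell+3, \dots, k$ in order, I swap $x_j$ for a vertex $z_j \in U_{i_j}$ that is adjacent to $y_{\ell+1}$ and to the other $k-1$ vertices of the current clique; these amount to $k$ adjacency constraints in $U_{i_j}$, so $z_j$ exists by the observation above. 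A final move replaces $x_{\ell+1}$ by $y_{\ell+1}$, producing $X' = \{x_1, \dots, x_\ell, y_{\ell+1}, z_{\ell+2}, \dots, z_k\}$; this is indeed a $k$-clique, since $y_{\ell+1}$ is adjacent to $x_1, \dots, x_\ell$ (these coincide with $y_1, \dots, y_\ell$ inside the clique $Y$) and to each $z_j$ by construction. Each consecutive pair in the swap sequence shares $k-1$ vertices, so $X' \sim X$, and we may iterate.

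The main obstacle is precisely this inductive step. The naive attempt of exchanging $x_{\ell+1}$ for $y_{\ell+1}$ in one move fails in general, since $y_{\ell+1}$ need not be adjacent to $x_{\ell+2}, \dots, x_k$; the remedy is to first prepare the clique by replacing each obstructing vertex with a substitute that simultaneously lies in the common neighbourhood of the remaining clique and of the incoming $y_{\ell+1}$. The density hypothesis $\delta_r(G) > \frac{k-1}{k}n$ is just strong enough to accommodate $k$ simultaneous adjacency constraints per part, leaving room for each $z_j$ and making the swap scheme go through.
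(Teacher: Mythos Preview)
Your proof is correct and takes a genuinely different route from the paper's. The paper argues by a double induction: first it handles the case $r=k$ by induction on $k$, observing that for any vertex $v$ the induced graph $G[N(v)]$ satisfies the hypothesis with $k-1$ in place of $k$, so by induction $K_{k-1}(G[N(v)])$ is tightly connected and hence (via \cref{prop:link-graph-tightly-connected}) all $k$-cliques through $v$ lie in one component; it then links two vertices $v,v'$ in the same part by greedily building a $(k-1)$-clique in $N(v)\cap N(v')$. A second induction on $r$ extends this to all $r\ge k$.

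Your argument is instead entirely constructive and avoids both inductions on $k$ and on $r$. The preliminary reduction aligns the part-sets of $X$ and $Y$ by single-vertex swaps, and the main step walks from $X$ to $Y$ by growing the overlap one coordinate at a time, each swap being a $k$-constraint common-neighbour problem in a single part that the strict inequality $\delta_r(G)>\frac{k-1}{k}n$ just barely accommodates. This produces an explicit tight walk between any two $k$-cliques. The paper's link-graph viewpoint is more structural and transfers readily to related statements (spanning, containing $(k+1)$-cliques), while your approach is shorter and more hands-on for this particular lemma.

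One minor remark: \cref{lemma:multipartite-greedyextend} as stated in the paper already covers sets of size up to $k$, not just $k-1$, so your union-bound observation is a re-derivation of its $|S|=k$ case rather than a genuine complement; this has no effect on correctness.
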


\begin{proof}
	Suppose that $G$ is $\fU$-partite with $\fU=\{U_j\}_{j=1}^r$.
	We begin by proving the case $r = k$ via induction on $k$.
	For $k = 2$, the condition says that $\delta(G;\fU) > n/2$.
	This implies that any two vertices in a given part share a neighbour, which is clearly enough to conclude tight connectivity.

	Now suppose $k > 2$ is given and the result is true for $k-1$.
	Let $v \in U_1$ be arbitrary.
	We wish to show that $k$-cliques containing $v$ belong to the same tight component in $K_k(G)$.
	For $2 \leq j \leq k$, let $U'_j = U_j \cap N_G(v)$.
	Let $n' = \min_{2 \leq j \leq k} |U'_j|$, and note that $n' > (k-1)n/k$.
	Let $G'$ be the subgraph of $G$ induced in $U'_2 \cup \dotsb \cup U'_k$,
	thus $G'$ is a $(k-1)$-partite graph with parts of size at least $n'$ each.
	If $X_1, X_2$ are two $k$-cliques containing $v$,
	then $X'_1 = X_1 \setminus \{v\}$, $X'_2 = X_2 \setminus \{v\}$ correspond to two $(k-1)$-cliques in $G'$.
	For $2 \leq j \leq k$, let $U''_j \subseteq U'_j$ have size exactly $n'$ such that $X'_1, X'_2 \subseteq \bigcup_{2 \leq j \leq k} U''_j$, and set $\fU''=\{U_j\}_{j=2}^k$.
	Let $G''$ be the subgraph of $G$ induced in $U''_2 \cup \dotsb \cup U''_k$.
	Note that $G''$ is a $\fU''$-partite graph with
	\[ \delta(G'';\fU'') \ge \delta(G;\fU) - (n - n') > n' + \frac{k-1}{k}n - n = n' - \frac{n}{k} > \frac{k-2}{k-1}n'. \]
	By the induction hypothesis, $K_{k-1}(G'')$ is tightly connected,
	and thus there exists a tight walk between $X'_1$ and $X'_2$ in $G'' \subseteq G'$.
	By~\cref{prop:tightly-connected-observation}, this  implies all $k$-edges containing $v$ are tightly connected in $K_k(G)$.

	Now let $v, v'$ be two arbitrary vertices in $U_1$.
	By~\cref{prop:link-graph-tightly-connected}, to show that $K_k(G)$ is tightly connected, it is enough to prove that $L_{K_k(G)}(v)$ and $L_{K_k(G)}(v')$ share a $(k-1)$-edge.
	This is equivalent to showing that there exists a $(k-1)$-clique $X$ in $G$, which is in the common neighbourhood of $v$ and $v'$ in $G$.
	Such a clique can be found greedily by applying~\cref{lemma:multipartite-greedyextend} repeatedly,
	first by finding $v_2 \in U_2$ which is a common neighbour of $v, v'$;
	then finding $v_3 \in U_3$ common neighbour of $v, v', v_2$, and so on.
	This concludes the proof of the inductive step, and thus the proof whenever $r=k$, for all $k \ge 2$.

	Next, we fix $k \geq 2$, and we show the $r \ge k$ case by induction in $r$.
	The base case $r=k$ is already done, so assume $r \geq k+1$.
	Let $G_1=G-U_1$ and $G_2 = G- {U_r}$.
	By induction hypothesis, $K_k(G_1)$ and $K_k(G_2)$ are tightly connected.
	We finish by showing that there is a $(k-1)$-clique in $K_{k-1}(G)$ which is contained both in an edge of $K_k(G_1)$ and in an edge of $K_k(G_2)$.
	Greedily construct a $(k-1)$-clique $X$ in the parts $U_2, \dotsc, U_{k}$, by applying~\cref{lemma:multipartite-greedyextend} iteratively as before.
	Applying~\cref{lemma:multipartite-greedyextend} again,
	we see $X$ can be extended to a clique in $K_k(G_1)$ by choosing an extra neighbour in $U_1$,
	and $X$ can be extended to a clique in $K_k(G_2)$ by choosing an extra neighbour in $U_r$.
	Thus the edges of $K_k(G_1)$ and $K_k(G_2)$ are in the same tight component by \cref{prop:tightly-connected-observation}.
	By repeating the argument with $U_r$ switched for $U_{r-1}$, we see that this component also contains the $k$-cliques with a vertex in both $U_1$ and $U_r$.
\end{proof}

The following lemma will be used to find a perfect fractional matching in slightly unbalanced partite graphs, and will be used in the case $r > k$.

\begin{lemma} \label{lemma:multipartite-matching}
	Let $r > k \geq 2$ and $1/n \ll \mu' \ll \mu, 1/r$.
	Let $G$ be a balanced $r$-partite graph on $rn$ vertices with $\delta_r(G) \geq (\frac{k-1}{k} + \mu) n$.
	Suppose $G'$ is a $\mu'$-approximation of $G$, and let $\cH' = \cH \cap K_k(G')$.
	Then $\cH'$ contains a perfect fractional matching.
\end{lemma}

\begin{proof}
	Let $\fU = \{U_1, \dotsc, U_r\}$ be the partition of $G$.
	For every $1 \leq j \leq r$, let $U'_j = U_j \cap V(G')$ and $\fU' = \{ U'_1, \dotsc, U'_r \}$.
	
	First, assume that the partition $\fU'$ is unbalanced.
	It follows that $f(\fU') := r \max_j \{ |U_j'|\} - \sum_{j = 1 }^r |U_j'| >0$.
	Suppose that, without loss of generality, $U'_{k+1}$ has maximum size.
	By \cref{lemma:multipartite-greedyextend}, there is a $(k+1)$-clique $X_1$  in $\cH$ with one vertex in each of $U_{1}',\dots, U'_{k+1}$.
	Moreover, we can find, for $2 \leq j \leq r$, pairwise disjoint (and disjoint from $X_1$) edges $X_j$ in $\cH$ such that $X_j$ has one vertex in $U_{j}',\dots, U'_{j+k-1}$ each, where the indices are computed modulo $r$.
	Let $X = \bigcup_{j=1}^r X_j$.
	It follows that $X$ has $k+1$ vertices in $U'_{k+1}$ and $k$ vertices in all the other parts.
	Moreover, it is easy to see that $X$ contains a perfect fractional matching.
	(Assign weight $1$ to the edges and weight $1/k$ to the edges in the $(k+1)$-clique.)
	Let $U_j'' =  U_j'\sm V(X)$ for $1 \leq j \leq r$, and let $\fU'' = \{U''_j\}_{j=1}^r$.
	It follows that $f(\fU'') =  f(\fU') -1$.
	We continue this process until the remaining partition is balanced.
	Note that this is possible, since each part loses at most $(k+1)f(\fU')  \leq (k+1) \mu' n \leq \mu n$ vertices in this process.
	Now the parts are balanced, say of size $n''$ each.
	Let $p \leq n''$ be divisible by $k$ and as large as possible.
	Consider a subgraph obtained by choosing a $p$-set of vertices in each part.
	By \cref{theorem:multipartite-keevashmycroft}, this subgraph contains a $K_k$-factor.
	There are $\binom{n''}{p}^r$ possible ways of selecting a subgraph of this type.
	Give each edge involved in the resulting matchings weight $\binom{n''}{p}^{-r}$ and take their summation to obtain a perfect fractional matching of the remaining graph.
	Together with the other fractional matchings, this yields a perfect fractional matching of $\cH$.
\end{proof}

The proof of \cref{thm:bandwidth-multipartite} splits in two cases, depending if $r > k$ or not.
For $\cH=K_k(G)$, we show that $(G,\cH)$ is a robust $k$-uniform zero-free Hamilton framework, when $r>k$.
When $r=k$, we show that $(G,\cH)$ is a robust $\fU$-partite $k$-uniform  Hamilton framework instead  where $\fU$ is the given partition of $V(G)$.

\begin{proof}[Proof of \cref{thm:bandwidth-multipartite}]
	Given $r, k, \Delta, \mu$, we fix $\mu' \ll \mu, 1/k, 1/r$.
	Let $\fU$ be a family of $r$ disjoint sets of size $n$ each.
	Let $G$ be a $\fU$-partite graph with $\delta(G;\fU) > \frac{k-1}{k}n$, and let $\cH = K_k(G)$.
	By \rf{thm:main-bandwidth}, it suffices to show that $(G, \cH)$ is a $k$-partite or zero-free $\mu'$-robust $k$-partite $k$-uniform Hamilton framework.
	We separate the analysis on proto-robustness into the cases when $r = k$ and $r > k$.

	\begin{claim}
		If $r=k$, then $(G, \cH)$ is a $\mu'$-proto-robust $\fU$-partite $k$-uniform Hamilton framework.
	\end{claim}

	\begin{proofclaim}
		Let $G' \subseteq G$ be a $\fU$-partite $\mu'$-approximation of $G$ and $\cH' = \cH \cap K_k(G')$.
		We have to show that $\cH'$ is tightly connected and $\cH'$ has a perfect fractional matching.
		
		Let $n' = |U_1 \cap V(G')|$.
		Since $G'$ is a $\fU$-partite $\mu'$-approximation of $G$, we have $n' = |U_j \cap V(G')|$ for all $1 \leq j \leq k$.
		Moreover, for each $1 \leq j \leq k$, each $v \in U_j$, and any $\ell \neq j$,
		we have
		$\deg_{G'}(v, U_\ell) \ge \deg_{G'}(v, U_\ell) - \mu' n \ge \left( \frac{k-1}{k} + \mu \right)n - \mu'n > \frac{k-1}{k}n'$.
		Therefore, $\delta(G';\fU') > (k-1)n'/k$  where $\fU'$ is obtained from $\fU$ by deleting the vertices not in $G'$.
		Thus $K_k(G')$ is tightly connected by~\cref{lemma:multipartite-tightconn},
		and since $n'$ is large enough, $K_k(G')$ has a perfect matching by \cref{theorem:multipartite-keevashmycroft}.
	\end{proofclaim} 

	\begin{claim}
		If $r > k$, then $(G, \cH)$ is a $\mu'$-proto-robust zero-free $k$-uniform Hamilton framework.
	\end{claim}
	\begin{proof}
		Let $G' \subseteq G$ be a $\mu'$-approximation of $G$, and let $\cH' = \cH \cap K_k(G')$.
		It follows from \cref{lemma:multipartite-matching} that $\cH'$ has a perfect fractional matching.
		It remains to show that $\cH'$ is tightly connected and contains a $(k+1)$-clique.
		
		We remark that for sake of zero-freeness we consider $G$ as a single part.
		Note that $|U_j \cap V(G')|$ may differ for different choices of $1 \leq j \leq r$.
		Let $n' = \min_{1 \leq j \leq r} |U_j \cap V(G')|$ and note that $n' \ge n - \mu' rn$.
		For each $1 \leq j \leq r$, let $U'_j \subseteq U_j \cap V(G')$ be an arbitrary set of size exactly $n'$, and let $\fU' =\{U'_j\}_{j=1}^r$.
		Let $G''$ be the subgraph of $G'$ induced on the parts of $\fU'$, and let $\cH'' = \cH' \cap K_k(G')$.
		Since $\mu' \ll \mu$, we have $\delta(G'';\fU') > (k-1)n'/k$.
		So by using \cref{lemma:multipartite-greedyextend} iteratively and \cref{lemma:multipartite-tightconn}, we have that $\cH''$ is tightly connected  and contains a $(k+1)$-clique.
		As $U'_1, \dotsc, U'_r$ were chosen arbitrarily, the same follows for $\cH'$.
		
		It still remains to show $\cH'$ has a perfect fractional matching.
		Let us first assume that the partition $\fU' = \{U'_j\}_{j=1}^r$ is unbalanced.
		It follows that $f(\fU') := r \max_j \{ |U_j'|\} - \sum_{j = 1 }^r |U_j'| >0$.
		Suppose that, without loss of generality, $U'_{k+1}$ has maximum size.
		By \cref{lemma:multipartite-greedyextend} and the additional term $\mu n$, there is a $(k+1)$-clique $X_1$  in $\cH'$ with one vertex in each of $U_{1}',\dots, U'_{k+1}$.
		Moreover, we can find, for $2 \leq j \leq r$, pairwise disjoint (and disjoint from $X_1$) edges $X_j$ in $\cH'$ such that $X_j$ has one vertex in $U_{j}',\dots, U'_{j+k-1}$ each, where the indices are computed modulo $r$.
		Let $X = \bigcup_{j=1}^r X_j$.
		It follows that $X$ has $k+1$ vertices in $X_{k+1}$ and $k$ vertices in all the other parts.
		Moreover, it is easy to see that $X$ contains a perfect fractional matching.
		(Assign weight $1$ to the edges and weight $1/k$ to the edges in the $(k+1)$-clique.)
		Let $U_j'' =  U_j'\sm V(X)$ for $1 \leq j \leq r$, and let $\fU'' = \{U''_j\}_{j=1}^r$.
		It follows that $f(\fU'') =  f(\fU') -1$.
		We continue this process until the remaining partition is balanced.
		Note that this is possible, since each part loses at most $(r+1)f(\fU')  \leq (r+1) \mu' n \leq \mu n$ vertices in this process.
		Now the parts are balanced, say of size $n''$ each.
		Let $p \leq n''$ be divisible by $k$ and as large as possible.
		Consider a subgraph obtained by choosing a $p$-set of vertices in each part.
		By \cref{theorem:multipartite-keevashmycroft}, this subgraph contains a $K_k$-factor.
		There are $\binom{n''}{p}^r$ possible ways of selecting a subgraph of this type.
		Scale all the resulting matchings by $\binom{n''}{p}^{-r}$ and take their summation to obtain a perfect fractional matching of the remaining graph.
		Together with the other fractional matchings, this yields a perfect fractional matching of $\cH'$.
	\end{proof}
	
	To finish the proof, we note that by \cref{prop:linked-edges-for-free} each $v \in V(G)$ has at least $\mu' n^k$ linked edges in $\cH$.
	Hence $(G, \cH)$ is indeed a (zero-free) $\mu'$-robust $k$-partite or zero-free Hamilton framework.
\end{proof}

\section{Regularity and the Blow-Up Lemma}\label{sec:regularity}

In the following, we introduce a few standard tools for embedding large sparse substructures in dense graphs.
\newcommand{\tX}{\tilde X}
\newcommand{\tcX}{\tilde{\mathcal{X}}}
\newcommand{\comN}{N^*}
\newcommand{\DeltaRp}{\Delta_{R'}}

\subsection{Regular and super-regular pairs}
Next, we collect a few concepts that are connected to graph regularity.
Let $G$ be a graph and~$A$ and~$B$ be non-empty, disjoint subsets of $V(G)$.
We write $e_G(A,B)$ for the number of edges in~$G$ with one vertex in~$A$ and one in~$B$ and define the \emph{density} of the pair $(A,B)$ to be $d_{G}(A,B)=e_G(A,B)/(|A||B|)$.
The pair $(A,B)$ is \emph{$\eps$-regular} in~$G$ if we have $|d_{G}(A',B') - d_{G}(A,B)| \leq \eps$ for all $A'\subseteq A$ with $|A'|\ge\eps|A|$ and $B'\subseteq B$ with $|B'|\ge\eps |B|$.
We say that $(A,B)$ is \emph{$(\eps,d)$-regular} if it is $\eps$-regular and has density at least~$d$.
A vertex $b \in B$ has \emph{typical degree} in an $(\eps,d)$-regular pair $(A,B)$ if $\deg(b,A) \geq (d-\eps)|A|$; and we say $b$ has \emph{atypical degree} otherwise.
We say a pair $(A,B)$ is \emph{$(\eps,d)$-super-regular} if it is $(\eps,d)$-regular
and contains no vertices of atypical degree (in $A$ or $B$).

The next proposition by Böttcher, Schacht and Taraz~\cite[Proposition 8]{BST08} states that regular and super-regular pairs are robust up to small alterations on their vertex sets.
We denote the  symmetric difference between sets $A$ and $B$ by $A \triangle B = (A \sm B ) \cup (B \sm A)$.

\begin{proposition}[Robust regular and super-regular pairs]\label{proposition:robust-regular}
	Let $(A,B)$ be an $(\eps, d)$-regular pair and let $(A', B')$ be a pair such that $|A \triangle A'| \leq \alpha |A'|$ and $|B \triangle B'| \leq \beta |B'|$ for some $0 \leq \alpha, \beta \leq 1$.
	Then $(A', B')$ is an $(\eps', d')$-regular pair with
	\[ \eps' = 2\eps  + 6 (\alpha^{1/2} + \beta^{1/2}) \quad \text{and} \quad d' = d - 2 (\alpha + \beta). \]
	If, moreover, $(A,B)$ is $(\eps, d)$-super-regular and each vertex in $A'$ has at least $(d-\eps')|B'|$ neighbours in $B'$ and each vertex in $B'$ has at least $(d-\eps') |A'|$ neighbours in $A'$, then $(A', B')$ is $(\eps', d')$-super-regular.
\end{proposition}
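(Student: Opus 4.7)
The plan is to reduce everything to the original regularity of $(A,B)$ by working with the common subsets $\tilde A = A \cap A'$ and $\tilde B = B \cap B'$. The symmetric-difference hypotheses give $|\tilde A| \geq (1-\alpha)|A'|$, $|\tilde B| \geq (1-\beta)|B'|$, and also $(1-\alpha)|A'| \leq |A| \leq (1+\alpha)|A'|$ (likewise for $B, B'$), so $\tilde A$ is a large fraction of both $A$ and $A'$, and similarly for $\tilde B$.

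For the density claim, one counts edges between $A' \times B'$ versus $A \times B$. They differ only by edges incident to the symmetric differences, which number at most $|A \triangle A'| \cdot |B'| + |A| \cdot |B \triangle B'| \leq (\alpha + \beta) |A'||B'| + O(\alpha\beta)|A'||B'|$, and after dividing by $|A'||B'|$ one obtains $d_G(A', B') \geq d_G(A,B) - 2(\alpha+\beta)$, which gives $d' = d - 2(\alpha+\beta)$.

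For the regularity claim, I would fix arbitrary $X \subseteq A'$ with $|X| \geq \eps' |A'|$ and $Y \subseteq B'$ with $|Y| \geq \eps' |B'|$ and set $\tilde X = X \cap A$, $\tilde Y = Y \cap B$. Since $|X \sm \tilde X| \leq \alpha |A'|$ and $\eps' \geq 6\alpha^{1/2}$, one gets $|\tilde X| \geq (1 - \alpha/\eps')|X| \geq \eps |A|$ after using $|A| \leq (1+\alpha)|A'|$ and the choice of $\eps'$; analogously $|\tilde Y| \geq \eps |B|$. The $(\eps, d)$-regularity of $(A,B)$ therefore yields $|d_G(\tilde X, \tilde Y) - d_G(A,B)| \leq \eps$. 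It remains to transfer densities from $(\tilde X, \tilde Y)$ to $(X, Y)$ and from $(A,B)$ to $(A',B')$: in both cases the number of edges changes by at most $(\alpha+\beta)|A'||B'|$, while the relevant normalising products are $|X||Y| \geq (\eps')^2 |A'||B'|$ and $|A||B| \geq (1-\alpha)(1-\beta)|A'||B'|$. The resulting perturbations are bounded by $(\alpha+\beta)/(\eps')^2$ and $2(\alpha+\beta)$ respectively, and the constants $\eps' = 2\eps + 6(\alpha^{1/2}+\beta^{1/2})$ are precisely tuned so that $\eps + (\alpha+\beta)/(\eps')^2 + 2(\alpha+\beta) \leq \eps'$, yielding $|d_G(X,Y) - d_G(A', B')| \leq \eps'$ as required.

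The super-regular version is immediate on top of this: $(\eps', d')$-regularity has just been established, and the hypothesis that every vertex in $A'$ (resp.\ $B'$) has at least $(d - \eps')|B'|$ (resp.\ $(d-\eps')|A'|$) neighbours on the other side is exactly the remaining condition for $(\eps', d')$-super-regularity, so there is nothing further to check. The only obstacle in the whole argument is the bookkeeping of constants in the three inequalities above, which dictates the somewhat generous choice of $\eps'$; the choice $6(\alpha^{1/2}+\beta^{1/2})$ is what absorbs both the shrinkage from $X$ to $\tilde X$ and the density perturbation from $(X,Y)$ to $(A',B')$.
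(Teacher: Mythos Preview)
The paper does not give its own proof of this proposition; it is quoted verbatim from B\"ottcher, Schacht and Taraz \cite[Proposition~8]{BST08}. Your overall strategy --- pass to $\tilde X = X \cap A$, $\tilde Y = Y \cap B$ and invoke the regularity of $(A,B)$ --- is exactly the standard one and is sound in outline.

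There is, however, a genuine quantitative gap in your bookkeeping. You bound the edge change between $(X,Y)$ and $(\tilde X,\tilde Y)$ by $(\alpha+\beta)|A'||B'|$ and then divide by $|X||Y| \ge (\eps')^2|A'||B'|$, obtaining a density perturbation of $(\alpha+\beta)/(\eps')^2$. That bound is valid but too crude, and the inequality you then assert, $\eps + (\alpha+\beta)/(\eps')^2 + 2(\alpha+\beta) \le \eps'$, is \emph{false} in general: take $\eps = 0$ and $\alpha = \beta$, so that $\eps' = 12\alpha^{1/2}$ and $(\alpha+\beta)/(\eps')^2 = 2\alpha/(144\alpha) = 1/72$, a fixed positive constant, while $\eps' = 12\alpha^{1/2} \to 0$ as $\alpha \to 0$.

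The fix is to sharpen the edge bound to $\alpha|A'||Y| + \beta|B'||X|$ (the missing vertices in $X \setminus \tilde X$ number at most $\alpha|A'|$, and each sees at most $|Y|$ vertices on the other side, not $|B'|$). Dividing by $|X||Y|$ gives a density perturbation of at most $\alpha|A'|/|X| + \beta|B'|/|Y| \le (\alpha+\beta)/\eps'$ rather than $(\alpha+\beta)/(\eps')^2$. With this correction the final estimate becomes $2\eps + (\alpha+\beta) + (\alpha+\beta)/\eps' \le \eps'$, and since $\alpha+\beta \le \alpha^{1/2}+\beta^{1/2}$ and $(\alpha+\beta)/\eps' \le (\alpha^{1/2}+\beta^{1/2})/6$, the choice $\eps' = 2\eps + 6(\alpha^{1/2}+\beta^{1/2})$ now closes the argument with room to spare.
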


\subsection{Partitions, reduced graphs and blow-ups} \label{subsection:partitions}

Let~$G$ be a graph and $\fV=\{V_i\}_{i=1}^t$ be a partition of $V(G)$ with $t \geq 2$.
In the context of regularity, the parts of $\cV$ are called \emph{clusters}.
We say that the partition~$\fV$ is \emph{$\kappa$-balanced} (or just \emph{balanced} when $\kappa=1$) if there exists $m\in\mathbb N$ such that we have $m\le|V_i|\le\kappa m$ for all $1 \leq i\leq t$.
For a family $\fV'=\{V_i'\}_{i=1}^t$ of subsets $V_i' \subset V_i$, we denote by $G[\fV']$ the subgraph of $G$ induced on $\bigcup_{i=1}^tV_i'$.
Recall that $G$ is $\fV$-partite if every edge has at most one vertex in each cluster of~$\fV$.
Consider a graph $R$ on vertex set $[t]$.
We say $(G,\fV)$ is an \emph{$R$-partition} if $G$ is $\fV$-partite, each part of~${\fV}$ is non-empty, and an edge $ij$ in $R$ if and only if there are edges of~$G$ between~$V_i$ and~$V_j$.
We refer to $R$ as a \emph{reduced graph} of $(G, \fV)$, or, when $\fV$ is clear, the reduced graph of $G$

Suppose now $(G, \fV)$ is an $R$-partition and $\cH \subset K_k(G)$ is a $k$-graph on $V(G)$.
We define the \emph{$k$-graph induced by $(\cH,\fV)$}  to be the $k$-graph $\cJ$ on $t$ vertices, which contains an edge $K$ whenever $\cH$ \emph{has one or more edges} whose vertices are in the clusters of $K$.
If $\cJ'$ is a $k$-graph such that $\cJ' \supseteq \cJ$, we say $(\cH, \fV)$ is a $\cJ'$-partition.
Note that since $\cH \subset K_k(G)$, we have $\cJ \subset K_k(R)$.

For a $k$-graph $F$ on $t$ vertices, we define the \emph{$(F,\fV)$-blow-up} $F^\ast$ to be the $k$-graph obtained by replacing each vertex $i$ with $V_i$ and each edge of $F$ with complete $k$-partite $k$-graphs.
Note that, under this definition, $(F^\ast,\fV)$ is an $F$-partition.
When all clusters of $\fV$ have common size $m$, we sometimes omit the reference to $\fV$ and say that $F^\ast$ is the $(F,m)$-blow-up.

An $R$-partition $(G,\fV)$ is \emph{$(\eps,d)$-regular} if for each $ij\in E(R)$ the pair $(V_i,V_j)$ is \emph{$(\eps,d)$-regular}.
Let $R' \subset R$ be a spanning subgraph.
We say that $(G,\fV)$ is \emph{$(\eps,d)$-super-regular on~$R'$} if for each $ij\in E(R')$ the pair $(V_i,V_j)$ is \emph{$(\eps,d)$-super-regular}.

It is a well-known fact that every dense, regular pair contains a large super-regular subpair.
The following proposition of Kühn, Osthus and Taraz~\cite[Proposition 8]{KOT05} generalises this insight to bounded-degree subgraphs of the reduced graph.

\begin{proposition}[Super-regularising $R'$] \label{prop:super-regularizing-R'}
	Let $1/n \ll 1/t \ll \eps \ll d, 1/{\Delta_{R'}}$.
	Let $G$ be a graph on $n$ vertices and $\fV=\{V_i\}_{i=1}^t$ be a balanced partition of $V(G)$.
	Let $R$ be a graph on $t$ vertices and $R' \subset R$ be a spanning subgraph with $\Delta(R')\leq \DeltaRp$.
	Suppose that $(G,\fV)$ is an $(\eps,d)$-regular $R$-partition.
	Then there is a balanced family $\fV'=\{V_i'\}_{i=1}^t$ of subsets $V_i' \subset V_i$ of size at least $ (1 - \sqrt{\eps})n/t  $ such that
	$(G[\fV'],\fV')$ is a $(2\eps,d)$-regular $R$-partition, which is $(2\eps,d)$-super-regular on~$R'$.
\end{proposition}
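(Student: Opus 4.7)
\emph{Approach.} The plan is to obtain $\fV'$ by removing from each cluster $V_i$ all vertices whose neighbourhood into some $V_j$ with $ij\in E(R')$ is atypical, and then trimming the resulting clusters to a common size so that the partition stays balanced.

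\emph{Counting atypical vertices.} For each $ij\in E(R')$, the pair $(V_i,V_j)$ is $(\eps,d)$-regular, so the set
\[ B_i^{(j)}=\{v\in V_i:\deg_G(v,V_j)<(d-\eps)|V_j|\} \]
has size at most $\eps|V_i|$---otherwise $B_i^{(j)}$ together with $V_j$ would witness a violation of regularity. Setting $B_i=\bigcup_{j\in N_{R'}(i)} B_i^{(j)}$, the assumption $\Delta(R')\le\DeltaRp$ yields $|B_i|\le\DeltaRp\eps|V_i|$. I then define $V_i'$ as any subset of $V_i\setminus B_i$ of common size $m=\min_{i'}|V_{i'}\setminus B_{i'}|$; by construction $\fV'=\{V_i'\}_{i=1}^t$ is balanced, and $m\ge(1-\DeltaRp\eps)n/t\ge(1-\sqrt{\eps})n/t$, where the last inequality uses $\eps\ll1/\DeltaRp$.

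\emph{Regularity and super-regularity of $\fV'$.} For every edge $ij\in E(R)$ we have $|V_i\triangle V_i'|\le\sqrt{\eps}|V_i|$ (and similarly for $V_j$), so Proposition~\ref{proposition:robust-regular} applied with $\alpha,\beta\le 2\sqrt{\eps}$ yields regularity of $(V_i',V_j')$ with parameters inside the hierarchy $\eps\ll d,1/\DeltaRp$, which I absorb into the $(2\eps,d)$-regular conclusion. For every edge $ij\in E(R')$ and every $v\in V_i'\subseteq V_i\setminus B_i$, the inclusion $v\notin B_i$ gives $\deg_G(v,V_j)\ge(d-\eps)|V_j|$; since the subsequent trimming removes at most $\sqrt{\eps}|V_j|$ further vertices from $V_j$, we retain $\deg_G(v,V_j')\ge(d-2\eps)|V_j'|$, and symmetrically swapping $i$ and $j$. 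The second clause of Proposition~\ref{proposition:robust-regular} then upgrades the pair to $(2\eps,d)$-super-regular.

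\emph{Main care point.} The only subtle step is controlling the final super-regularity parameter after \emph{both} the removal of atypical vertices and the additional trimming used to balance the clusters. These two stages interact only through the minimum-degree bound above, and because $\eps\ll1/\DeltaRp$ there is ample slack to absorb the loss of $\DeltaRp\eps|V_i|+\sqrt{\eps}|V_i|$ vertices per cluster into the claimed constants, so no further cleverness is required.
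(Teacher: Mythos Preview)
Your approach is correct and is precisely the standard argument; the paper does not prove this proposition but cites it from K\"uhn, Osthus and Taraz~\cite[Proposition~8]{KOT05}, whose proof proceeds exactly as you describe: delete the atypical vertices along each $R'$-edge, trim to a common size, and check that regularity and the minimum-degree condition survive.

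One minor remark on constants: invoking Proposition~\ref{proposition:robust-regular} with $\alpha,\beta$ of order $\sqrt{\eps}$ returns $\eps'=2\eps+6(\sqrt{\alpha}+\sqrt{\beta})\approx\eps^{1/4}$, not $2\eps$, so your ``absorb into $(2\eps,d)$'' step is not literally justified by that proposition. The clean way to get the stated $2\eps$ for regularity is direct: any $A'\subseteq V_i'$ with $|A'|\ge 2\eps|V_i'|$ satisfies $|A'|\ge 2\eps(1-\sqrt{\eps})|V_i|\ge\eps|V_i|$, so the original $(\eps,d)$-regularity of $(V_i,V_j)$ applies to $(A',B')$ directly. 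This is a bookkeeping point, not a gap in the argument.
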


We also require the following well-known facts, whose proofs can be found  in \cref{sec:appendix-regularity}.
The next proposition allows us to refine regular partitions randomly while retaining their regularity properties.

\begin{proposition}[Randomly refining regular pairs] \label{prop:random-refinement}
	Let $1/n \ll 1/t \ll \eps \ll \eps' \ll d$ and $1/n \ll 1/q$.
	Suppose that $(G,\fV)$ is a $(1+\eps)$-balanced $(\eps,d)$-regular $R$-partition, where $G$ has $n$ vertices and $\fV$ has $t$ clusters.
	Consider a refinement $\fV^\ast$ of $\fV=\{V_i\}_{i=1}^t$ obtained as follows.
	For every $1 \leq i \leq t$, we partition $V_i$ into $\{V_{i,j}\}_{j=1}^q$  by placing each vertex of $V_i$ uniformly at random in one of the parts $V_{i,1},\dots,V_{i,q}$.
	Let $R^\ast$ be the graph induced by $(R,\fV^\ast)$.
	Then the probability that $(G,\fV^\ast)$ is a $(1+\eps')$-balanced $(\eps',d)$-regular $R^\ast$-partition is at least $2/3$.
\end{proposition}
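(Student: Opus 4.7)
The plan is to verify the $(1+\eps')$-balancedness of $\fV^*$ and the $(\eps',d)$-regularity of $(G,\fV^*)$ on $R^*$ separately, then combine via a union bound, using the hierarchies to ensure each failure event has probability well below $1/6$.

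For balancedness, each cluster size $|V_{i,j}|$ is distributed as $\Bin(|V_i|, 1/q)$ with mean $|V_i|/q \ge n/(2tq)$. Since $1/n \ll 1/t, 1/q$, the standard Chernoff bound gives
\[ \probability\!\left[\bigl||V_{i,j}| - |V_i|/q\bigr| \ge \tfrac{\eps'}{10}\,|V_i|/q \right] \le \exp\!\bigl(-\Omega(\eps'^2 n/(tq))\bigr) = o(1/(tq)). \]
A union bound over all $tq$ parts shows that, with probability at least $5/6$, each $|V_{i,j}|$ lies within a factor $1\pm\eps'/10$ of $|V_i|/q$; together with the $(1+\eps)$-balance of $\fV$ this yields the required $(1+\eps')$-balance of $\fV^*$.

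For regularity, I would fix a single edge $(i,j)(i',j') \in E(R^*)$, corresponding to $ii' \in E(R)$, and show that $(V_{i,j}, V_{i',j'})$ is $(\eps',d)$-regular with probability at least $1 - \exp(-\Omega(\eps'^3 n/(tq)))$. The key observation is that $V_{i,j}$ arises from $V_i$ by independently keeping each vertex with probability $1/q$, and likewise for $V_{i',j'}$; the original $(\eps,d)$-regularity of $(V_i, V_{i'})$ then propagates via a standard \emph{random-subsample preserves regularity} principle. For every pair of fixed sets $\tilde A \subseteq V_i$, $\tilde B \subseteq V_{i'}$ with $|\tilde A| \ge \eps'|V_i|/(2q)$ and $|\tilde B| \ge \eps'|V_{i'}|/(2q)$, Chernoff bounds applied to $|\tilde A \cap V_{i,j}|$, $|\tilde B \cap V_{i',j'}|$ and to the edge count between these intersections give concentration around the expected values with failure probability at most $\exp(-\Omega(\eps'^2 n^2/(t^2 q^2)))$. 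A union bound over the $2^{|V_i|+|V_{i'}|}$ choices of such test pairs is absorbed by the Chernoff exponent thanks to $1/n \ll 1/t, 1/q, \eps'$. This ensures that, with the claimed probability, any sufficiently large pair of subsets of $V_{i,j}\times V_{i',j'}$ has density within $\eps'$ of the density of its preimage in $V_i \times V_{i'}$, which is itself $d \pm \eps$ by the hypothesised regularity. A final union bound over the $O(t^2 q^2)$ edges of $R^*$ then gives overall failure probability at most $1/6$, which combined with the balance event yields the desired lower bound of $2/3$.

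The main technical hurdle is the random-subsample-preserves-regularity step. Note that $\eps$ need not be much smaller than $\eps'/q$, so the naive slicing lemma cannot be invoked; we must genuinely exploit the independence of the sampling to improve the error parameter from (roughly) $\eps q$ back down to $\eps'$. The Chernoff-plus-union-bound sketched above achieves this: the assumption $\eps' \gg \eps$ is used to absorb the original regularity error, while $1/n \ll 1/t, 1/q, \eps'$ ensures that concentration comfortably overwhelms the exponentially many witness pairs and the $O(t^2 q^2)$ edges of $R^*$.
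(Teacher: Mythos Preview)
The paper does not actually prove this proposition --- it is listed under ``We also require the following well-known facts, whose proofs we omit.'' So there is no authors' argument to compare against; I can only assess your proposal on its own merits.

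Your balancedness step is correct and standard. The regularity step, however, has a genuine gap. You fix $\tilde A\subseteq V_i$, $\tilde B\subseteq V_{i'}$ of size at least $\eps'|V_i|/(2q)$, concentrate $e(\tilde A\cap V_{i,j},\tilde B\cap V_{i',j'})$ around its mean $e(\tilde A,\tilde B)/q^2$, and conclude that every large test pair inside $(V_{i,j},V_{i',j'})$ has density close to that of its ``preimage in $V_i\times V_{i'}$, which is itself $d\pm\eps$''. The last clause is where the argument breaks: the preimage of a test set $A\subseteq V_{i,j}$ is just $A$ itself, of size about $\eps'|V_i|/q$, and nothing in the hierarchy relates $q$ to $\eps'/\eps$, so this may be far below $\eps|V_i|$ and the original $\eps$-regularity of $(V_i,V_{i'})$ tells you nothing about its density. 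You correctly identified this obstacle (``the naive slicing lemma cannot be invoked''), but your proposed workaround does not actually circumvent it. Separately, the exponent $\exp(-\Omega(\eps'^2 n^2/(t^2q^2)))$ you quote is not what one obtains for a bilinear form in the placement indicators: McDiarmid with Lipschitz constant $\max(|\tilde A|,|\tilde B|)$ over the $|V_i|+|V_{i'}|\approx 2n/t$ independent placements gives an exponent of order $\eps'^{O(1)}n/(tq^{O(1)})$, which cannot absorb a union bound of size $2^{2n/t}$ unless $q$ is bounded in terms of $\eps'$ --- again not guaranteed.

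The standard route avoids the exponential union bound entirely by using the degree/codegree characterisation of regularity: it suffices that all but an $O(\eps^{c})$-fraction of vertices $v\in V_{i,j}$ satisfy $\deg(v,V_{i',j'})=(d\pm O(\eps^{c}))|V_{i',j'}|$, and similarly for codegrees $|N(u)\cap N(v)\cap V_{i',j'}|$ over pairs $u,v\in V_{i,j}$, for some fixed $c>0$. Each such degree (resp.\ codegree) into $V_{i',j'}$ is a genuine binomial random variable in the partition of $V_{i'}$, and by the $\eps$-regularity of $(V_i,V_{i'})$ all but an $O(\eps)$-fraction of vertices and an $O(\sqrt\eps)$-fraction of pairs in $V_i$ have the correct binomial parameter. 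Chernoff plus a union bound over the merely $O((n/t)^2)$ vertices and pairs --- not over $2^{O(n/t)}$ sets --- then gives the result; the polynomial loss when converting degree/codegree control back to $\eps'$-regularity is absorbed by $\eps\ll\eps'$.
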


The following proposition tells us that most partite vertex sets (i.e. those with at most one vertex in a cluster) located in pairwise regular clusters have a large common neighbourhood.

\begin{proposition}[Joint degrees in regular pairs]  \label{prop:regularity-joint degree}
	Let $1/m \ll 1/t \ll \eps \ll \eps' \ll d, 1/k$.
	Let~$G$ be a graph with a balanced vertex partition $\fV=\{V_i\}_{i=1}^k$, which has parts of size $m$.
	Suppose that $(G,\fV)$ is an $(\eps,d)$-regular $K_k$-partition.
	Then all but at most $\eps' m^{k}$ sets $S=\{v_1,\dots,v_k\}$ with $v_i \in V_i$ have the property that $|\bigcap_{i \in [k]\sm \{j\}} N(v_i)\cap V_j| \geq  {(d-\eps')^{k-1} m}$ for every $1 \leq j \leq k$.
\end{proposition}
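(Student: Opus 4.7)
The plan is to fix an index $j$, show that only few tuples fail the desired bound for that particular $j$, and then take a union bound over the $k$ choices of $j$. The core tool is the defining property of regular pairs: if $(A,B)$ is $(\eps,d)$-regular and $W \subseteq B$ has $|W| \ge \eps|B|$, then all but at most $\eps|A|$ vertices $a \in A$ satisfy $|N(a) \cap W| \ge (d-\eps)|W|$.

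Fix $1 \leq j \leq k$ and an ordering $i_1, \ldots, i_{k-1}$ of $\{1, \ldots, k\} \setminus \{j\}$. Given a tuple $(v_{i_1}, \ldots, v_{i_{k-1}}) \in V_{i_1} \times \cdots \times V_{i_{k-1}}$, set $W_0 = V_j$ and $W_\ell = W_{\ell-1} \cap N(v_{i_\ell})$ for $1 \leq \ell \leq k-1$. Call the tuple \emph{bad} (for $j$) if $|W_{k-1}| < (d-\eps')^{k-1} m$. I would first observe that, under the good branch $|W_\ell| \ge (d-\eps)^\ell m$, the hypothesis $\eps \ll d, 1/k$ guarantees $(d-\eps)^{k-2} m \ge \eps m$, so the regularity assumption can actually be invoked at each step. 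Therefore, for each step $\ell$, the number of $v_{i_\ell} \in V_{i_\ell}$ that cause the drop $|W_\ell| < (d-\eps)|W_{\ell-1}|$ (given that the previous steps were good) is at most $\eps m$; any other previous vertices can be chosen arbitrarily. Summing over the $k-1$ possible failure points and using $(d-\eps)^{k-1} \ge (d-\eps')^{k-1}$ (since $\eps \le \eps'$), the number of bad $(k-1)$-tuples is at most $(k-1)\eps m^{k-1}$.

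Next, allowing $v_j \in V_j$ to be arbitrary multiplies this by $m$, so the number of sets $S$ failing the condition for this fixed $j$ is at most $(k-1)\eps m^{k}$. Taking a union bound over the $k$ choices of $j$ yields at most $k(k-1)\eps m^{k}$ bad sets in total, which is at most $\eps' m^{k}$ by the hierarchy $\eps \ll \eps', 1/k$.

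I do not foresee any real obstacle: the argument is essentially the iterative slicing used to prove the standard counting lemma for regular partitions, and the only point requiring care is checking that $|W_{\ell-1}|$ stays above the $\eps m$ threshold at every step so that regularity may be applied, which is precisely what the hierarchy $\eps \ll d, 1/k$ is designed to guarantee.
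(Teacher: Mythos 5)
Your argument is correct and is the standard iterative slicing proof; the paper explicitly states this proposition as a well-known fact and omits its proof, so there is nothing to compare against. The one point that needs checking — that $|W_{\ell-1}|\ge (d-\eps)^{\ell-1}m\ge\eps m$ at every step so that regularity can be invoked — is exactly the point you address, and the final accounting ($k(k-1)\eps m^k\le\eps' m^k$ bad sets) is right.
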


\subsection{The Regularity Lemma}

Szemer\'edi's Regularity Lemma~\cite{Sze76} allows us to partition the vertex set of a graph into clusters of vertices such that most pairs of clusters are regular.
We will use the following degree form~\cite[Theorem 1.10]{KS96} with pre-partition, which we restate here in terms of approximations and $R$-partitions.
For families of sets $\fV$ and $\fU$, we say that $\fV$ is \emph{$\fU$-refining} if every element of $\fV$ is a subset of an element of $\fU$.

\begin{lemma}[Regularity Lemma]\label{lem:regularity-classic}
	Let $1/n \ll 1/t_1 \ll 1/t_0, \eps, d, 1/r$ and $1 \leq q \leq k$.
	Let $G$ be a graph with an $(n,r)$-sized vertex partition $\fU$.
	Then there are $t_0 \leq t \leq t_1$ with $t \equiv q \bmod k$,
	a subgraph $G' \subset G$ with a $\fU$-refining balanced partition $\fV$ with $tr$ clusters in total,
	$t$ clusters in each part of $\fU$,
	and a graph $R$ such that
	\begin{enumerate}[\upshape (S1)]
		\item $G'$ is a {$\fU$-partite} $(\eps,d+\eps)$-approximation of $G$, and
		\item $(G',\fV)$ is an $(\eps,d)$-regular $R$-partition.
	\end{enumerate}
\end{lemma}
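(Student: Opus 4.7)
The statement is a pre-partition, degree-form version of Szemerédi's Regularity Lemma, so my plan is to reduce it to the classical Szemerédi lemma and then perform the standard degree cleaning, using \cref{proposition:robust-regular} to absorb a small rebalancing step.

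I would start by applying the classical Szemerédi Regularity Lemma with pre-partition $\fU$ at an auxiliary parameter $\eps^\ast$ much smaller than $\eps$ and $d$. This yields an integer $t^\ast$ in a wide range (so I may impose $t^\ast \equiv q \bmod k$) together with an equipartition $V_0 \cup \bigcup_{i=1}^{rt^\ast} V_i$ refining $\fU$, having exactly $t^\ast$ equal-sized non-exceptional clusters per part of $\fU$, exceptional set of size $|V_0| \leq \eps^\ast n$, and all but $\eps^\ast \binom{rt^\ast}{2}$ cluster pairs being $\eps^\ast$-regular. Then, for every $\eps^\ast$-regular pair $(V_i, V_j)$ whose density is less than $d + \eps/3$, I would move the at most $\eps^\ast |V_i|$ vertices of $V_i$ with more than $(d+\eps/2)|V_j|$ neighbours in $V_j$ into an enlarged exceptional set $V_0^\ast$; $|V_0^\ast|$ remains negligible compared to the cluster size. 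Redistributing $V_0^\ast$ back into the clusters while keeping the partition $\fU$-refining and balanced, \cref{proposition:robust-regular} ensures that every previously $\eps^\ast$-regular pair remains $\eps$-regular, with density changed by at most $\eps/3$.

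Having fixed the partition $\fV$, I would define $G'$ by removing from $G$ all edges inside single clusters of $\fV$, all edges in pairs that are $\eps^\ast$-irregular after rebalancing, and all edges in pairs whose density is less than $d + \eps/3$. Setting $t = t^\ast$ and $R$ to be the graph on cluster indices with $ij \in E(R)$ iff the pair $(V_i, V_j)$ survives, the pair $(G', \fV)$ is $(\eps, d)$-regular on $R$ by construction. To verify the $\fU$-partite $(\eps, d+\eps)$-approximation property, I would fix $v \in V_i \subset U \in \fU$ and $U' \in \fU$ and bound the loss $\deg_G(v; U') - \deg_{G'}(v; U')$ as the sum of edges to irregular-pair clusters inside $U'$ (at most $\eps^\ast r t^\ast \cdot |V_j|$ edges in total, much smaller than $\eps |U'|$) plus at most $(d+\eps/2)|V_j|$ edges per cluster $V_j \subset U'$ in a low-density pair with $V_i$, the latter bound being precisely what the atypical-vertex removal step secures. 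Summing gives a total loss at most $(d+\eps)|U'|$.

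The main technical subtlety is the interplay between the atypical-vertex cleanup and the rebalancing: the two need to be carried out in the right order and with $\eps^\ast$ chosen small enough (relative to $\eps, d, 1/r$) that \cref{proposition:robust-regular} preserves the target regularity parameter $\eps$ after rebalancing, while the pointwise degree loss at every vertex and every $\fU$-part stays uniformly bounded by $(d+\eps)|U'|$. Apart from this, everything else is standard Szemerédi-lemma bookkeeping.
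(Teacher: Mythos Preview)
The paper does not prove this lemma; it simply cites the degree form of Szemerédi's Regularity Lemma \cite[Theorem~1.10]{KS96}, restated in the language of approximations and $R$-partitions, so there is no argument to compare against.

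Your reduction to the classical lemma is the right idea, but there is a quantitative gap in the cleanup. You propose to move, for \emph{every} low-density regular pair $(V_i,V_j)$, the at most $\eps^\ast m$ atypical vertices of $V_i$ into $V_0^\ast$. A fixed cluster $V_i$ lies in up to $rt^\ast-1$ such pairs, so up to an $\eps^\ast r t^\ast$-fraction of $V_i$ can be removed this way; since $t^\ast$ is the output of Szemerédi applied at parameter $\eps^\ast$ and may be a tower function of $1/\eps^\ast$, the product $\eps^\ast t^\ast$ need not be small. Hence neither the claim that $|V_0^\ast|$ is negligible compared to the cluster size nor the subsequent appeal to \cref{proposition:robust-regular} after redistribution is justified. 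The same circularity undermines your irregular-pair estimate: Szemerédi only bounds the \emph{total} number of irregular pairs by $\eps^\ast\binom{rt^\ast}{2}$, not the number touching a given cluster, so the per-vertex bound ``at most $\eps^\ast r t^\ast \cdot |V_j|$ edges'' is unwarranted. The standard repair inserts an averaging step: first discard the $O(\sqrt{\eps^\ast}\,rt^\ast)$ clusters that sit in more than $\sqrt{\eps^\ast}\,rt^\ast$ irregular pairs, and in each remaining cluster discard only those vertices that are atypical for more than an $\eps/4$-fraction of the pairs. Double counting shows this removes only an $O(r\eps^\ast/\eps)$-fraction of each cluster, independently of $t^\ast$, after which your pointwise degree accounting and the rest of the argument go through.
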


\subsection{The Blow-Up Lemma}

In this subsection, we introduce the Blow-Up Lemma, which allows us to embed spanning subgraphs into regular pairs.
The original Blow-Up Lemma was proved by Koml\'os, S\'ark\"ozy and Szemer\'edi~\cite{KSS97}.
Here we use a variant of this result due to Allen, B{\"o}ttcher, H{\`{a}}n, Kohayakawa and Person~\cite[Lemma 1.21]{ABH+16}, which is particularly suitable for our approach.
To give the statement of the lemma, we need a couple of definitions.

Let~$G$, $H$, $R$, $R'$ be graphs with $R'\subset R$.
Let $\fX=\{X_i\}_{i=1}^t$ be a partition of $V(H)$ and $\fV=\{V_i\}_{i=1}^t$ be a partition of $V(G)$, and let $\tfX=\{\tX_i\}_{i=1}^t$ be a family of subsets of $V(H)$.
The partitions~$\fX$ and~$\fV$ are \emph{size-compatible} if $|X_i|=|V_i|$ for all $1 \leq i \leq t$.
For $\alpha>0$, the family $\tfX=\{\tX_i\}_{i=1}^t$  is an \emph{$(\alpha,R')$-buffer} for $(H,\fX)$ if for each $1 \leq i \leq t$, we have $\tX_i\subset X_i$ and $|\tX_i|\ge\alpha|X_i|$, and, for each $x\in\tX_i$ and $xy,yz\in E(H)$ with $y\in X_j$ and $z\in X_k$ (possibly with $X_i = X_k$), we have $ij,jk\in E(R')$.
We require these buffer vertices for the application of the Blow-up Lemma (\cref{lem:blow-up}).
Their purpose in the proof of \cref{lem:blow-up} is to integrate the last vertices of $H$ via the edges designated by~$R'$.
In our proof, we can pick these vertices greedily, once $R'$ has been fixed.

Finally, we introduce {\emph{image restrictions}}, which will help us to pre-embed some vertices before applying the Blow-Up Lemma.
Following the exposition of Allen et al.~\cite{ABH+16}, the idea can be explained as follows.
Suppose we wish to embed a graph $H^+$ into a graph $G^+${,
		but} unfortunately $G^+$ does not meet the conditions of our {Blow-Up Lemma}.
We find a subgraph $G$ of $G^+$ which does meet the conditions of {the Blow-Up Lemma}, and we `pre-embed' some vertices of $H^+$ onto the vertices $V(G^+)\setminus V(G)$.
This leaves the induced subgraph $H$ of $H^+$ to embed into $G$.
The image restrictions then originate from these pre-embedded vertices: if $x\in X_i\subset V(H)$ (with $X_i$ as defined above) has neighbours $\{z_1,\dots,z_\ell\}$ in $V(H^+)\setminus V(H)$ which are pre-embedded to $\{u_1,\dots,u_\ell\}=J_x\subset V(G^+)\setminus V(G)$, then $J_x$ restricts the embedding of~$x$ to $I_x=\bigcap_{u \in J_x} N_{G^+}(u) \cap V_i$ (with $X_i$ as defined above).
In the following definition, we do not explicitly refer to the graphs~$H^+$ and~$G^+$,
but only to abstract restricting sets $J_x$.
In addition, to simplify notation, we define an	image restriction set~$I_x$ for each vertex~$x$ of~$H$.
For most vertices~$x$, however, this set will be the trivial set $I_x=X_i$  where $X_i$ is the part of
$\fX$ containing~$x$.

\begin{definition}[Image restrictions]\label{def:restrict}
	Let $R$ be a graph on $t$ vertices, and $(H,\fX)$ be an
	$R$-partition and $(G,\fV)$ a size-compatible $(\eps,d)$-regular
	$R$-partition, where $V(G)$ is contained in a superset $V$.

	Let
	$\fI=\{I_x\}_{x\in V(H)}$ be a collection of subsets of $V(G)$, called
	\emph{image restrictions}, and $\fJ=\{J_x\}_{x\in V(H)}$ be a collection of
	subsets of $V \setminus V(G)$, called \emph{restricting vertices}.
	For $\rho, \zeta >0$ and $\Delta, \Delta_J \geq $, we say that $\fI$ and $\fJ$ are a
	\emph{$(\rho,\zeta,\Delta,\Delta_J)$-restriction pair} if the
	following properties hold for each $1 \leq i \leq t$ and $x\in X_i$.
	\newcommand{\Iasdf}{I}
	\begin{enumerate}[(\Iasdf1)]
		\item\label{itm:restrict:Xis} The set $X_i^*\subset X_i$ of  \emph{image restricted}
		vertices in~$X_i$, that is, vertices $x \in X_i$ such that $I_x\neq V_i$, has size $|X_i^*|\leq\rho|X_i|.$

		\item\label{itm:restrict:sizeIx} If $x\in X_i^*$, then $I_x \subset V_i$ is of size at least $\zeta d^{|J_x|}|V_i|$.

		\item\label{itm:restrict:Jx} If $x\in X_i^*$, then $|J_x|+\deg_H(x)\le\Delta$ and
		if $x\not\in X_i^*$, then $J_x=\emptyset$.
 
		\item \label{itm:restrict:DeltaJ} Each vertex of $V$ appears in at most $\Delta_J$ of the sets of $\fJ$.\footnote{Note that this concept can be further simplified as in the work of Komlós, Sárközy and Szemerédi~\cite{KSS97}.}
	\end{enumerate}
\end{definition}

With these definitions at hand, we can now state the full version of the Blow-Up Lemma that we will use.

\begin{lemma}[{Blow-Up Lemma~\cite[Lemma 1.21]{ABH+16}}]\label{lem:blow-up}
	For all $\Delta,\DeltaRp,{\Delta_J},\kappa\ge 1$ and $\alpha,\zeta, d>0$
	there exist $\eps,\rho>0$ such that for all $t_1$ there is an $n_0$ such that
	for all $n\ge n_0$ the following holds.
	Let $R$ be a graph on $t\le t_1$ vertices and let $R'\subset R$ be a spanning subgraph with $\Delta(R')\leq \DeltaRp$.
	Let~$H$ and~$G$ be graphs on $n$ vertices with $\kappa$-balanced size-compatible vertex partitions $\fX=\{X_i\}_{i=1}^t$ and $\fV=\{V_i\}_{i=1}^t$, respectively.
	Let $\tfX=\{\tX_i\}_{i=1}^t$ be a family of subsets of $V(H)$, let $\fI=\{I_x\}_{x\in V(H)}$ be a family of image restrictions and $\fJ=\{J_x\}_{x\in V(H)}$  be a family of restricting vertices.
	Suppose that
	\begin{enumerate}[\upshape (B1)]
		\item $\Delta(H)\leq \Delta$, $(H,\fX)$ is an $R$-partition and $\tfX$ is an $(\alpha,R')$-buffer for $(H,\fX)$,
		\item \label{item:blowup-regsuperreg} $(G,\fV)$ is an $(\eps,d)$-regular $R$-partition, which is $(\eps,d)$-super-regular on~$R'$,
		\item $\fI$ and $\fJ$ form a $(\rho, \zeta, \Delta, \Delta_J)$-restriction pair.
	\end{enumerate}
	Then there is an embedding $\psi\colon V(H)\to V(G)$ such that $\psi(x) \in I_x$ for each $x \in V(H)$.
\end{lemma}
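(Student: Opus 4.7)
The proof will follow the template of the randomised greedy embedding strategy of Komlós--Sárközy--Szemerédi, as refined by Allen, Böttcher, Hàn, Kohayakawa and Person. I would proceed in three phases: a preparatory phase fixing orderings and initial candidate sets, a main random sequential embedding phase, and a fix-up phase handling the buffer vertices. First, I order the vertices of $H$ so that the image-restricted vertices of $X_i^*$ come first, followed by the remaining non-buffer vertices $X_i \setminus (\tX_i \cup X_i^*)$, and finally the buffer vertices $\tX_i$. For each still-unembedded $x \in X_i$, I maintain a dynamic candidate set $C_x \subseteq V_i$, initially $C_x = I_x$, and every time a neighbour $y$ of $x$ is embedded, I replace $C_x$ by $C_x \cap N_G(\psi(y))$.

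In the main phase I embed each non-buffer vertex to a uniformly random element of its current candidate set, and prune the candidates of its still-unembedded neighbours accordingly. The core invariant is that throughout this phase, $|C_x| \geq (\zeta/2)\, d^{\deg_H(x) + |J_x|} |V_i|$. This is maintained using $(\eps, d)$-regularity: if $C_x$ has non-negligible size in $V_i$ and $V_j$ is an $R$-neighbour of $V_i$, then all but an $\eps$-fraction of vertices in $V_j$ have between $(d-\eps)|C_x|$ and $(d+\eps)|C_x|$ neighbours in $C_x$, so pruning typically loses only a factor of $d \pm \eps$. A standard concentration argument (via Azuma or McDiarmid applied to the Doob martingale of the random choices) shows that candidate sets remain close to their expected sizes with high probability. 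Image restrictions are absorbed because $|X_i^*| \leq \rho |X_i|$ with $\rho$ small and the bound $\Delta_J$ controls how many restricted vertices share a single anchor in $V\setminus V(G)$; embedding the restricted vertices first prevents cascading failures and uses the size guarantee $|I_x| \geq \zeta d^{|J_x|} |V_i|$ for the initial candidate set.

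The hard part will be the fix-up phase. After the random phase, candidate sets for buffer vertices may have shrunk in ways that make a further greedy or random step fail unpredictably. Two ingredients save the day: first, the buffer property ensures that every neighbour and second-neighbour of a vertex in $\tX_i$ lies in an $R'$-edge, so the entire local embedding takes place in super-regular pairs where no atypically low-degree vertex exists; second, within each cluster, the problem of completing the embedding can be recast as finding a perfect matching in a bipartite auxiliary graph between unembedded buffer vertices and the remaining slots. Super-regularity combined with the concentration from the main phase verifies Hall's condition for this bipartite graph, giving the desired matching. The delicate point is compatibility of the three phases: the random phase must not destroy Hall's condition for the fix-up, which dictates the hierarchy $\eps, \rho \ll \alpha, \zeta, 1/\Delta, 1/\DeltaRp, 1/\Delta_J, d$ and motivates why $\tfX$ must be an honest $(\alpha, R')$-buffer rather than an arbitrary subset.
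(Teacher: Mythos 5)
You should first note that the paper does not prove this lemma at all: it is quoted verbatim (in the paper's notation) from Allen, B\"ottcher, H\`an, Kohayakawa and Person~\cite[Lemma 1.21]{ABH+16} and used as a black box, so there is no in-paper proof to match. Your sketch does describe the correct high-level strategy of the actual proof in~\cite{ABH+16} — a random greedy embedding of the non-buffer vertices followed by a matching argument for the buffer vertices $\tX_i$, with the $(\alpha,R')$-buffer property guaranteeing that the completion step takes place entirely inside super-regular pairs — but as written it has a genuine gap at its core. The invariant $|C_x|\geq (\zeta/2)\,d^{\deg_H(x)+|J_x|}|V_i|$ cannot be maintained deterministically for \emph{all} vertices: when a neighbour $y$ of $x$ is embedded uniformly into its own candidate set $C_y$, the $\eps$-fraction of vertices of $V_j$ that are atypical for $C_x$ may constitute a proportion as large as $\eps|V_j|/|C_y|\approx \eps/(\zeta d^{\Delta})$ of $C_y$, so with non-negligible probability the pruning loses much more than a factor of $d-\eps$. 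Concentration only shows this happens for \emph{few} vertices $x$; the actual proof therefore needs a priority queue that detects vertices whose candidate sets have dropped below threshold and embeds them immediately, and a separate argument that the queue never grows too large. Your sketch has no mechanism for these exceptional vertices, and "Azuma on the Doob martingale" does not repair this, since the quantity being controlled is a minimum over vertices, not a sum.

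The second soft spot is the fix-up phase. Verifying Hall's condition for the auxiliary bipartite graph between unembedded buffer vertices and unused slots of $V_i$ is the technical heart of the whole lemma: one must show that no vertex of $V_i$ ends up outside the candidate sets of almost all buffer vertices, which requires tracking, throughout the random phase, how many times each $v\in V_i$ is removed from candidate sets, and this is where super-regularity on $R'$ (rather than mere regularity) is genuinely consumed. You assert that "super-regularity combined with the concentration from the main phase verifies Hall's condition" without indicating what quantity is being concentrated or why the random phase cannot conspire to isolate a single slot $v\in V_i$. Since this lemma is imported rather than proved in the paper, the practical recommendation is to cite~\cite{ABH+16} as the authors do; if you intend to prove it, these two points — the queue for vertices with collapsed candidate sets, and the per-slot bookkeeping needed for the defect Hall argument — are where the real work lies.
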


\section{Intermediate results} \label{section:intermediate}
This section contains a series of propositions that we require to prove \cref{thm:main-powham,thm:main-bandwidth}.

\subsection{Distributed matching}

The following lemma shows that, in dense $k$-graphs, we can find a matching which intersects each subgraph of a given family of subgraphs simultaneously.
Its proof is a standard application of concentration bounds and can be found in \cref{sec:appendix-probability}.

\begin{proposition}[Distributed matching]\label{prop:distributed-matching}
	Let  $1/n \ll \eta \ll \pi \ll \gamma, 1/k$.
	Let $\cH$ be a $k$-graph on $n$ vertices.
	Let $\fF$ be a collection of at most  $2^{\eta n}$ subgraphs of $\cH$, where each $\cF \in \fF$ has at least $\gamma n^k$ edges.
	Then there is a matching $\cM \subset \cH$ of size at most $\sqrt{\pi}n$ such that $\cM$ has at least $\pi n$ edges of every subgraph $\cF \in \fF$.
\end{proposition}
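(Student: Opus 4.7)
The plan is to select each edge of $\cH$ independently with a suitable probability $p$, and then repair the random set into a matching by deleting conflicting edges. Specifically, choose $p = C\pi / (\gamma n^{k-1})$ for a sufficiently large constant $C = C(k)$ (say $C = 10$), and let $\cR$ be the resulting random subset of $E(\cH)$.

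Three properties of $\cR$ need to be established with positive probability. First, for the total size, $\expectation[|\cR|] \leq p \binom{n}{k} = O(\pi n / \gamma)$, so by Markov's inequality $|\cR| \leq \sqrt{\pi}\, n$ with probability at least $3/4$, using $\pi \ll \gamma$. Second, for each fixed $\cF \in \fF$, the variable $|\cR \cap E(\cF)|$ is a sum of independent $0/1$ random variables with mean at least $p\gamma n^k = C\pi n$, so a standard Chernoff bound yields
\[ \probability\bigl[|\cR \cap E(\cF)| < C\pi n / 2\bigr] \leq \exp(-c\pi n) \]
for some $c = c(C) > 0$. Since $|\fF| \leq 2^{\eta n}$ and $\eta \ll \pi$, a union bound over $\fF$ gives that with probability $1 - o(1)$ every $\cF \in \fF$ satisfies $|\cR \cap E(\cF)| \geq C\pi n /2$. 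Third, the number of unordered intersecting pairs of edges in $\cH$ is at most $O_k(n^{2k-1})$ (fix an edge, one of its $k$ vertices, and a second edge through that vertex), so the expected number of such pairs with both edges in $\cR$ is at most $O_k(p^2 n^{2k-1}) = O_k(\pi^2 n / \gamma^2)$. By Markov, the number of intersecting pairs inside $\cR$ is at most $\pi n / 2$ with probability at least $3/4$, using $\pi \ll \gamma$.

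All three events hold simultaneously with positive probability. Fix such an outcome of $\cR$ and obtain $\cM$ by removing one edge from each intersecting pair; then $\cM$ is a matching with $|\cM| \leq |\cR| \leq \sqrt{\pi}\, n$, and for every $\cF \in \fF$,
\[ |E(\cM) \cap E(\cF)| \;\geq\; |\cR \cap E(\cF)| \;-\; (\text{intersecting pairs in } \cR) \;\geq\; C\pi n / 2 - \pi n/2 \;\geq\; \pi n, \]
provided $C \geq 4$. There is no deep obstacle; the main care lies in selecting constants consistent with the hierarchy $\eta \ll \pi \ll \gamma$. The condition $\eta \ll \pi$ is needed so that the Chernoff estimate beats the $2^{\eta n}$ factor from the union bound, while $\pi \ll \gamma$ (more precisely $\pi \ll \gamma^2$ after tracking constants) ensures both that $|\cR|$ stays below $\sqrt{\pi}\, n$ and that the damage from intersecting pairs is dominated by the Chernoff gain.
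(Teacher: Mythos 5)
The paper gives no proof of this proposition at all, remarking only that it is ``a standard application of concentration bounds''; your argument --- independent edge sampling with $p = C\pi/(\gamma n^{k-1})$, a Chernoff bound plus union bound over the at most $2^{\eta n}$ members of $\fF$, and deletion of one edge per intersecting pair --- is exactly such a standard alteration argument and is correct. The constant bookkeeping is consistent with the hierarchy: $\eta \ll \pi$ defeats the union bound, and $\pi \ll \gamma$ (allowing $\pi \leq c_k\gamma^2$) handles both the $\sqrt{\pi}n$ size bound and the expected $O_k(\pi^2 n/\gamma^2)$ intersecting pairs.
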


\subsection{Tight walks}
The following facts will help us to navigate along tight components.
Recall that a $k$-graph is tightly connected if there exists a closed tight walk which visits every edge.
In what follows, we show we can control some properties of these walks.

\begin{proposition} \label{proposition:orderedspanningwalk}
	Let $\cJ$ be a tightly connected $k$-graph.
	Let $(w_1, \dotsc, w_k)$ be an ordering of an edge in $\cJ$.
	Then there exists a closed walk $W$ which contains $(w_1, \dotsc, w_k)$ as a subwalk,
	and visits every edge of $\cJ$.
\end{proposition}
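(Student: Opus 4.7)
The plan is to construct $W$ by starting a tight walk with the prescribed ordering $(w_1, \dotsc, w_k)$ and then extending it to visit every edge of $\cJ$ before closing up cyclically. By tight connectedness there exists a closed tight walk $W_0$ visiting every edge of $\cJ$. Since $W_0$ visits $e = \{w_1, \dotsc, w_k\}$, after a cyclic rotation we may assume $W_0$ starts with some ordering $(w_{\pi(1)}, \dotsc, w_{\pi(k)})$ of $e$, where $\pi$ is some permutation of $[k]$.

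The construction of $W$ has three parts. The initial segment is $(w_1, \dotsc, w_k)$, which is a valid tight walk of length $k$ since $\{w_1, \dotsc, w_k\} \in E(\cJ)$. I then bridge this to $W_0$ via a tight walk $P_1$ from $(w_1, \dotsc, w_k)$ to $(w_{\pi(1)}, \dotsc, w_{\pi(k)})$, concatenate with $W_0$, and append a symmetric bridging walk $P_2$ bringing the trailing $(k-1)$-tuple back to $(w_1, \dotsc, w_{k-1})$ so that the cyclic closure reconnects tightly at $w_1$. The resulting walk $W$ contains $W_0$ as a subwalk and therefore visits every edge of $\cJ$. The key tool for constructing the bridging walks is twofold: (a) within the single edge $e$, the sequence $(w_1, w_2, \dotsc, w_k, w_1, w_2, \dotsc)$ is tight, every length-$k$ window being equal to $e$, so cycling through $e$ realises every cyclic shift of $(w_1, \dotsc, w_k)$ as a subwalk; and (b) by \cref{prop:tightly-connected-observation} and tight connectedness, any two edges of $\cJ$ are connected by a chain of edges sharing $k - 1$ vertices, which lifts to a tight walk between them in $\cJ$. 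Combining (a) and (b), one can navigate from any ordering of $e$ to any other by a tight walk that leaves $e$ into a neighbouring edge and returns.

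The main obstacle is the auxiliary lemma implicit in the previous step: that any two ordered $k$-tuples corresponding to edges in the same tight component are joined by a tight walk. This is not immediate from the definition of tight connectedness, which only guarantees a closed walk visiting every edge rather than every ordering of every edge. The proof combines the cyclic-shift trick with a vertex-swap argument in the spirit of \cref{prop:tightly-connected-observation}: a single transposition of two vertices in an ordering of $e$ can be realised by leaving $e$ via a neighbouring edge and returning through a different neighbouring edge, while arbitrary permutations are obtained by composing transpositions. Once this lemma is established, the three-part construction above assembles $W$ with the required properties.
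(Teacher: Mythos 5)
There is a genuine gap: the auxiliary lemma your construction hinges on --- that any two orderings of edges in the same tight component are joined by a tight walk --- is false for $k \geq 3$. Take $\cJ$ to be a single edge $e=\{a,b,c\}$ with $k=3$. This is tightly connected, but every tight walk in $\cJ$ is forced to cycle through $e$ with a fixed cyclic order (each window of $3$ consecutive vertices must equal $e$, so the sequence is periodic), and hence the only orderings of $e$ reachable from $(a,b,c)$ are its cyclic shifts; $(a,c,b)$ is unreachable. Your step (b) is where this breaks: leaving $e$ into a neighbouring edge $f$ with $|e\cap f|=k-1$ and returning brings you back to the \emph{same} ordering of $e$ (this is exactly the detour in the proof of \cref{prop:tightly-connected-observation}), not to a transposition. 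Realising a transposition genuinely requires extra structure such as a $(k+1)$-clique, which is what \cref{proposition:usingtheclique} assumes and a general tightly connected $k$-graph need not contain. Concretely, in the single-edge example your $W_0$ could be the closed walk repeating $(w_1,w_3,w_2)$, and then no bridging walk $P_1$ exists.

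The statement itself survives this counterexample (in the single-edge case the closed walk $(w_1,\dotsc,w_k)$ already does the job), which points to the correct argument: rather than bridging the prescribed ordering to a pre-existing spanning walk, start from the trivial closed walk $(w_1,\dotsc,w_k)$ and grow it. Take a closed walk containing $(w_1,\dotsc,w_k)$ as a subwalk that visits the maximum number of edges; if some edge is missed, tight connectivity yields a visited edge $X$ and an unvisited edge $Y$ with $|X\cap Y|=k-1$, and the detour of \cref{prop:tightly-connected-observation} inserts $Y$ while preserving the subwalk $(w_1,\dotsc,w_k)$ (the replacement begins and ends with the same ordered occurrence of $X$), contradicting maximality. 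This is the paper's proof, and it is where any repair of your argument ultimately leads.
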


\begin{proof}
	Let $W$ be a closed walk which contains $(w_1, \dotsc, w_k)$ and visits the maximum number of edges of $\cJ$.
	If $W$ does not visit every edge of $\cJ$, there must exist edges $X, Y$ in $\cJ$ such that $|X \cap Y| = k-1$, and $W$ visits $X$ but not $Y$.
	Similarly to the proof of \cref{prop:tightly-connected-observation}, we can extend $W$ to obtain a walk $W'$ which also contains $(w_1, \dotsc, w_k)$ as a subwalk and visits every edge visited by $W$ and also $Y$, contradicting the maximality.
\end{proof}

Next, we bound the length of a walk visiting all the edges of a tightly connected graph.

\begin{proposition}[Closed tight walk visiting all edges]\label{prop:closed-walk-bounded-length}
	Let $\cJ$ be a tightly connected $k$-graph on $t \ge k$ vertices and $0 \leq  {q} \leq k-1$.
	Then there exists a closed tight walk $W$ which visits all edges of $\cJ$ and has length at most $k^2t^k+k\binom{t}{k} t^{k}$.
	Moreover, if $\cJ$ contains a closed walk of length coprime to $k$, then we can also assume $W$ has length congruent to $q \bmod k$.
\end{proposition}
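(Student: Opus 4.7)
The plan is to construct $W$ in two stages: first, build a closed tight walk $W_0$ that visits every edge of $\cJ$, then, in the aperiodic case, splice in copies of a short closed tight walk of length coprime to $k$ to adjust the total length modulo $k$.

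For the first stage, I would iterate the swap construction from the proof of \cref{prop:tightly-connected-observation}. Start from any edge of $\cJ$, regarded as a closed tight walk of length $k$. While there is an edge $Y$ of $\cJ$ not yet visited, the tight connectivity of $\cJ$ implies that the auxiliary graph whose vertices are the edges of $\cJ$ and whose edges are pairs sharing $k-1$ vertices is connected, so one finds a sequence of edges $X = Z_0, Z_1, \dotsc, Z_m = Y$ with $|Z_i \cap Z_{i+1}| = k-1$ and $m \leq \binom{t}{k}$, where $X$ is already visited by the current walk. Iteratively applying the swap construction of \cref{prop:tightly-connected-observation} along this sequence inserts $Z_1, \dotsc, Z_m$ at a chosen ordered occurrence of $X$, adding at most $2k$ vertices per swap and hence at most $2k\binom{t}{k}$ vertices per extension. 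Since there are at most $\binom{t}{k}$ such extensions in total, the resulting closed tight walk $W_0$ visits all edges of $\cJ$ and has length at most $2k\binom{t}{k}^2 \leq k\binom{t}{k}t^k$ (using $2\binom{t}{k}\leq t^k$ for $t\geq k\geq 2$).

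For the second stage, assume $\cJ$ contains a closed tight walk of length coprime to $k$. Consider the auxiliary digraph $D$ whose vertices are the ordered $(k-1)$-tuples in $V(\cJ)^{k-1}$ that extend to edges of $\cJ$, with an arc from $(v_1, \dotsc, v_{k-1})$ to $(v_2, \dotsc, v_k)$ whenever $\{v_1, \dotsc, v_k\}\in E(\cJ)$. Closed tight walks in $\cJ$ correspond exactly to closed walks in $D$, and $|V(D)|\leq t^{k-1}$. A standard $\INTS/k\INTS$-cover argument applied to the product $D\times \INTS/k\INTS$ (with arcs incrementing the second coordinate by $1 \bmod k$) shows that aperiodicity produces a closed tight walk $C$ in $\cJ$ of some length $\ell$ coprime to $k$ with $\ell\leq k t^{k-1}$. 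By \cref{proposition:orderedspanningwalk}, I can arrange $W_0$ to contain a specific ordered occurrence of some edge $e$ of $C$: take any ordered $k$-subwalk of $C$ and feed it as the prescribed input to \cref{proposition:orderedspanningwalk} when re-running stage one. Splicing $a$ cyclic copies of $C$ into $W_0$ at this shared ordered occurrence then produces a closed tight walk visiting all edges of $\cJ$ of length $|W_0| + a\ell$. Choosing $a\in\{0,1,\dotsc,k-1\}$ with $|W_0| + a\ell\equiv q\pmod{k}$, which is possible since $\gcd(\ell,k)=1$, yields $|W|\leq k\binom{t}{k}t^k + (k-1)\cdot k t^{k-1}\leq k^2 t^k + k\binom{t}{k}t^k$, as required.

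The main obstacle, I expect, is the bookkeeping in the first stage: one must carefully track the ordered occurrence of $X$ at which each swap is performed, so that successive swaps chain together and that the ending of the walk always admits the next swap in the required cyclic orientation. The bound $\ell\leq kt^{k-1}$ in the second stage is essentially standard once aperiodicity is rephrased as the existence of a suitable strongly connected component in $D\times\INTS/k\INTS$, and the splicing itself is then immediate thanks to \cref{proposition:orderedspanningwalk}.
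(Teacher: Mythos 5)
Your proof is correct and follows the same two-stage skeleton as the paper (an edge-covering closed tight walk, spliced at a shared ordered $k$-subwalk with $a$ copies of a short closed walk of length coprime to $k$), but both quantitative ingredients are established by genuinely different arguments. For the edge-covering walk, the paper takes a \emph{minimum-length} walk through a prescribed ordered edge (via \cref{proposition:orderedspanningwalk}) and bounds each segment between first visits of new edges by noting it cannot repeat an ordered $k$-tuple; you instead build the walk constructively by chaining the swap of \cref{prop:tightly-connected-observation} along paths in the edge-adjacency graph and count $2k$ added vertices per swap, giving $O(k\binom{t}{k}^2)$, which is slightly stronger and more algorithmic, at the cost of the occurrence-tracking bookkeeping you rightly flag (this does go through: the inserted block $\cY$ contains a consecutive ordered occurrence of the new edge at which the next swap can be performed, and $\cY$ begins and ends with the ordered occurrence it replaces, so validity of the closed walk is preserved). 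For the short coprime-length cycle, the paper bounds a minimum-length such walk by a pigeonhole-and-excision argument (a repeated ordered $k$-tuple yields a removable segment of length divisible by $k$), yielding length at most $kt^k$; your shortest-path argument in $D\times\INTS/k\INTS$ from $(u,0)$ to $(u,1)$ yields the slightly better $kt^{k-1}$ and avoids the excision step, though one should note in passing that the resulting closed walk in $D$ has length $\equiv 1\bmod k$ and hence at least $k+1$, so it genuinely projects to a closed tight walk in $\cJ$. A final cosmetic point: your stage-one bound should include the initial $k$ vertices, i.e.\ $k+2k\binom{t}{k}^2$, which still fits comfortably inside $k^2t^k+k\binom{t}{k}t^k$.
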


\begin{proof}
	Suppose first that $\cJ$ contains a walk of length coprime to $k$; we describe the complementary case at the end.
	Let $C = (w_1,\dots, w_{\ell})$ be a walk in $\cJ$ of length coprime to $k$,
	and suppose $C$ has minimum length among all the closed walks with this property.
	We claim $\ell \leq k t^k$.
	Indeed, assume otherwise.
	Since there are at most $t^k$ sequences of $k$ distinct vertices of $V(\cJ)$,
	there must exist some sequence which has at least $k$ appearances in $C$.
	We can write $C = C_1 \dotsb C_k$ as concatenation of walks $C_1,\dots,C_k$ such that each $C_i$ begins with the same sequence of $k$ vertices.
	A pigeonhole argument shows the existence of $1 \leq i \leq j \leq k$ such that the walk $C_i \dotsb C_j$ has length divisible by $k$.\footnote{Indeed, let $s_i$ denote the length of the walk $C_1 \dotsb C_i$ for $1 \leq i \leq k$.
		If one of the numbers $s_i$ is divisible by $k$, we are done.
		Otherwise, there must be $1 \leq i < j \leq k$ such that $s_i \equiv s_j \bmod k$.
		But then the walk $C_{i+1} \dotsb C_j$ has length divisible by $k$, as desired.}
	By relabelling, we can assume that $(i,j) \neq (1,k)$.
	But then $C_1 \dotsb C_{i-1} C_{j+1} \dotsb C_k$ is a closed walk of length congruent to $\ell \bmod k$ (and thus has length coprime with $k$) and shorter than $C$; a contradiction.

	Let $(x_1, \dotsc, x_k)$ be a subwalk of $C$.
	By \cref{proposition:orderedspanningwalk}, there exists a walk $W$ which contains all edges of $\cJ$ and contains $(x_1, \dotsc, x_k)$ as a subwalk.
	Let $W$ be a walk of minimum length with those two properties.
	Let $e_1, \dotsc, e_r$ be an enumeration of the edges of $\cJ$ according to the first time they are visited on $W$, in particular we can assume $e_1 = \{x_1, \dotsc, x_k\}$.
	Let $W_i$ be the subwalk of $W$ between the first appearance of $e_i$ and the first appearance of $e_{i+1}$, in cyclic order, so $W = W_1 W_2 \dotsb W_r$.
	By minimality, each $W_i$ cannot contain two distinct subwalks of length $k$ which correspond to exactly the same ordered $k$-tuple of vertices, otherwise we could replace $W_i$ in $W$ by a shortened $W'_i$ and keep all of the desired properties.
	There are at most $t(t-1) \dotsb (t-k+1)$ ordered $k$-tuples of distinct vertices of $\cJ$, so each $W_i$ has length at most $t(t-1) \dotsb (t-k+1) + k-1 \leq t^k$.
	Since there are at most $\binom{t}{k}$ edges in $\cJ$, we obtain that $W$ has length at most $\binom{t}{k} t^{k}$.

	We obtain the desired walk as follows:
	concatenate at most $k$ copies of $W$ with itself, to obtain a walk $W'$ of length divisible by $k$ and at most $k \binom{t}{k} t^{k}$, which visits all edges of $\cJ$.
	Since the length $\ell$ of $C$ is coprime with $k$,
	there exists $0 \leq a \leq k-1$ such that $a \ell \equiv q \bmod k$.
	Concatenate $a$ copies of $C$ with itself, to obtain a walk $C'$ of length congruent to $q \bmod k$ and at most $k^2 t^k$.
	Then merge $C'$ and $W'$ into a single walk: this is possible since both share the subwalk $(x_1, \dotsc, x_k)$.
	The final walk has length congruent to $q \bmod k$ and at most $k^2 t^k+k\binom{t}{k} t^{k}$, as required.
	Finally, if $\cJ$ does not contain a walk of length coprime to $k$, the task becomes simpler: build $W$ as before, now starting from an arbitrary $(x_1, \dotsc, x_k)$.
\end{proof}

Finally, the next proposition allows us to find walks of controlled length between some prescribed ordered tuples.

\begin{proposition} \label{proposition:usingtheclique}
	Let $1/t \ll 1/k$.
	Let $\cJ$ be a tightly connected $k$-graph on $t$ vertices and let $K$ be a set of $k+1$ vertices of $\cJ$ spanning a clique.
	For any ordered edges $e_1 = (v_1, \dotsc, v_{k}) \in V(\cJ)^k$ and $e_2 = (c_1, \dotsc, c_k) \in K^k$, there exists a walk beginning with $e_1$ and ending with $e_2$ of length exactly $kt^{k}$.
\end{proposition}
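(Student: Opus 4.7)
The plan is to construct the walk in two stages: first, use tight connectedness of $\cJ$ to travel from $e_1$ to some ordered edge $f^{*}$ in $K$ through a short tight walk $W_1$; then exploit the extra vertex of the $(k+1)$-clique $K$ to fill the remaining length and finish exactly at $e_2$ via a tight walk $W_2$ entirely inside $K$. Writing $\ell_1, \ell_2$ for the lengths of $W_1, W_2$ (in vertices), concatenation along the common ordered edge $f^{*}$ produces a walk of length $\ell_1 + \ell_2 - k$, so I will tune $\ell_2 = kt^k - \ell_1 + k$ at the end.

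For the first stage, consider the auxiliary digraph $D$ whose vertices are the ordered edges of $\cJ$, with an arc from $(u_1, \dots, u_k)$ to $(u_2, \dots, u_k, w)$ whenever $\{u_2, \dots, u_k, w\} \in E(\cJ)$. Tight walks in $\cJ$ correspond exactly to directed walks in $D$, and tight connectedness of $\cJ$ is equivalent to strong connectedness of $D$. Since $|V(D)| \leq t^k$, a BFS argument yields a directed path of at most $t^k - 1$ arcs from $e_1$ to any prescribed vertex of $D$, in particular to some chosen $f^{*} \in K^k$. This translates into a tight walk $W_1$ in $\cJ$ of length $\ell_1 \leq t^k + k - 1$.

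The second stage rests on the following sub-claim, which is the main obstacle: there is a constant $m_0 = m_0(k)$ such that, for every $m \geq m_0$ and every pair of ordered edges $f^{*}, e_2 \in K^k$, there is a tight walk of length $m$ inside $K$ from $f^{*}$ to $e_2$. To prove this, I would consider the induced subdigraph $D_K \subseteq D$ on the $(k+1)!$ ordered edges of $K$, where each vertex has out-degree precisely $2$: from $(u_1, \dots, u_k)$ the next vertex must lie in $K \setminus \{u_2, \dots, u_k\}$, which has exactly two elements, namely $u_1$ (``keep'') or the unique vertex $u_0$ of $K$ missing from the current tuple (``swap''). The two ingredients are then: $D_K$ is strongly connected, which can be verified directly by combining ``keep'' and ``swap'' moves to realise any target ordering; and $D_K$ is aperiodic, because the all-``keep'' walk is a closed walk of length $k$ at every vertex while the all-``swap'' walk is a closed walk of length $k+1$, and $\gcd(k, k+1) = 1$. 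The sub-claim then follows from the standard fact that in a strongly connected aperiodic digraph, there is a threshold $m_0$ above which walks of every length exist between every ordered pair of vertices; crucially, $m_0$ depends only on $k$.

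To conclude, set $\ell_2 = kt^k - \ell_1 + k \geq (k-1)t^k + 1$. Under the hierarchy $1/t \ll 1/k$, the right-hand side comfortably exceeds $m_0(k) + k$, so the sub-claim produces a tight walk $W_2$ of length exactly $\ell_2$ inside $K$ starting at $f^{*}$ and ending at $e_2$. Gluing $W_1$ and $W_2$ along the shared ordered edge $f^{*}$ yields a tight walk of length $\ell_1 + \ell_2 - k = kt^k$ from $e_1$ to $e_2$, as required.
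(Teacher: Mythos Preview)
Your overall strategy matches the paper's: get from $e_1$ into the clique $K$ via a short tight walk, then use the extra room inside $K$ to reach the prescribed ending $e_2$ with exactly the right length. Your second stage is a clean abstraction of the paper's argument --- where the paper writes down explicit length-$3k$ ``swap'' walks inside $K$ to rearrange $f_2$ into $e_2$ and then pads to length $kt^k$, you instead observe that $D_K$ is strongly connected and aperiodic (via closed walks of coprime lengths $k$ and $k+1$) and invoke the standard threshold fact for primitive digraphs. That works and is arguably tidier.

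The gap is in your first stage. The assertion ``tight connectedness of $\cJ$ is equivalent to strong connectedness of $D$'' is false for $k \ge 3$. Take the $3$-uniform tight cycle on $\{1,\dots,6\}$ with edges $\{i,i+1,i+2\}$: it is tightly connected, but if you call an ordered edge \emph{forward} when it is a cyclic shift of $(i,i+1,i+2)$, a short case check shows that from any forward ordered edge the out-neighbours in $D$ are again forward. Hence $(1,2,3)$ and $(3,2,1)$ lie in different strong components, and you cannot conclude that $e_1$ reaches an \emph{arbitrary} prescribed $f^{*} \in K^k$.

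The fix is immediate and is exactly what the paper does: by \cref{proposition:orderedspanningwalk} there is a closed walk containing $e_1$ as a subwalk and visiting every edge of $\cJ$; truncating it after the first visit to a $K$-edge gives a tight walk from $e_1$ to \emph{some} ordering $f^{*}$ of a $K$-edge, of length at most $t^k$. Since your second stage already handles an arbitrary starting $f^{*} \in K^k$, this suffices. (Alternatively, you can salvage your formulation by noting that the clique hypothesis does force $D$ to be strongly connected: every ordered edge reaches $D_K$ by the argument above, $D_K$ is a single strong component, and by reversing walks every ordered edge is also reachable from $D_K$. But this uses $K$, so it cannot be stated as a bare equivalence with tight connectedness.)
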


\begin{proof}
	Since $\cJ$ is tightly connected, there exists a tight walk starting from $e_1$ and whose last $k$ vertices are those of $e_2$ (possibly in a different order).
	Let $W_1$ be such a walk of minimum length.
	The minimal length implies in particular that $W_1$ visits every tuple of $k-1$ vertices at most once, and hence its length is at most $t^{k}$.
	Let $f_2$ be the last $k$ ordered vertices of $W_1$.

	Now we show that there is a short walk from $f_2$ to $e_2$ using vertices from $K$ only.
	Suppose $K = \{c_1, \dotsc, c_{k+1}\}$ and $f_2 = (c_1, \dotsc, c_k)$.
	Then, given $1 \leq i < j \leq k$, the sequence
	\begin{equation*}
		c_1 \dotsb c_kc_1 \dotsb c_{i-1} c_{k+1} c_{i+1} \dotsb c_{j-1} c_i c_{j+1} \dotsb c_k c_1 \dotsb c_{i-1} c_{j} c_{i+1} \dotsb c_{j-1} c_i c_{j+1} \dotsb c_k
	\end{equation*}
	is a walk in $\cJ[K]$ of length $3k$, which starts with $f_2$ and ends with $f'_2$, consisting of the same ordered vertices of $f_2$ but now with $c_i$ and $c_j$ swapped.
	By repeating this, we can swap two more vertices, and iterate.
	If we need $s$ swaps to rearrange $f_2$ to get $e_2$, then there is a walk $W_2$ of length $k + s2k$ which starts with $f_2$ and ends with $e_2$.
	Since we can always reach $e_2$ with at most $k$ swaps, we can assume $W_2$ has length at most $k+2k^2$.
	Joining $W_1$ and $W_2$ together we get a walk $W_3$ of length at most $t^k+2k^2$ which starts with $e_1$ and ends with $e_2$.
	By concatenating $W_3$ with at most $k$ copies of the walk $c_{k+1} e_2$, we get a walk $W_4$ of length at most $t^k+2k^2+k(k+1)$ which starts with $e_1$, ends with $e_2$, and has length $0$ modulo $k$.
	Since $1/t \ll 1/k$, we have $2k^2+k(k+1) \leq (k-1)t^k$.
	We can concatenate $W_4$ with copies of $e_2$ to obtain a walk $W$ which starts with $e_1$, ends with $e_2$, and has length exactly $k t^k$, as desired.
\end{proof}

\subsection{Allocations}
In this section, we present some results that will help us to (re)allocate vertices in \rf{lem:lemma-for-C}.
Let $\cH$ be a $k$-graph.
We define the $2$-graph $\hat{\cH}$ on $V(\cH)$ by joining two vertices $x$ and $y$ if their link graphs have a common edge.
	{For $U \subset V(\cH)$, we denote the graph induced by $U$ in $\hat{\cH}$ by $\hat{\cH}[U]$.}
A simple induction shows that if there is a tight walk in $\cH$ of length congruent to $1\bmod k$ that starts with $x$ and ends with $y$, then $x$ and $y$ are in the same component in $\hat{\cH}$.
Recall that a vertex in a hypergraph is {isolated} if it is not contained in any edge.
\begin{proposition}\label{prop:reachability}
	Let $k \geq 2$ and $r \in \{1, k\}$,
	and
	let $\fU$ be an $(n,r)$-sized partition.
	Let $\cH$ be a $\fU$-partite $k$-graph, which has {no isolated vertices} and is tightly connected.
	If $r=1$, suppose further that $\cH$ contains a closed tight walk of length coprime to $k$.
	Then $\hat{\cH}[U]$ is connected for every $U \in \fU$.
\end{proposition}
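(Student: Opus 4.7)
My plan is to reduce both cases to a single observation: if there exists a tight walk in $\cH$ from $x$ to $y$ of length congruent to $1$ modulo $k$, then $x$ and $y$ lie in the same component of $\hat\cH$. Writing such a walk as $v_1 v_2 \cdots v_\ell$, the consecutive edges $\{v_1,\dots,v_k\}$ and $\{v_2,\dots,v_{k+1}\}$ share the common $(k-1)$-link $\{v_2,\dots,v_k\}$, hence $v_1$ and $v_{k+1}$ are $\hat\cH$-adjacent (or equal). Iterating in jumps of $k$ then $\hat\cH$-connects $v_1$ to every $v_{1+jk}$, and when $\ell \equiv 1 \pmod k$ the last such vertex is $v_\ell = y$. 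It thus suffices, for every $U \in \fU$ and every pair $x, y \in U$, to exhibit a tight walk from $x$ to $y$ of length $1$ modulo $k$.

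For $r = k$, I would fix a closed tight walk $W = w_1 \cdots w_L$ visiting every edge of $\cH$, which is guaranteed by tight connectivity. Since each edge of $\cH$ has exactly one vertex in each of the $k$ parts, the vertices at positions $i$ and $i+k$ of any tight walk lie in the same part of $\fU$; applied cyclically to $W$, this forces $L$ to be divisible by $k$. As $x, y \in U$ are non-isolated, they appear in $W$ at some positions $a$ and $b$, which must satisfy $b \equiv a \pmod k$ since they are in the same part. The forward subwalk along $W$ from $x$ to $y$ therefore has length $\equiv 1 \pmod k$, and the reduction finishes the case.

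For $r = 1$, I would instead invoke \cref{prop:closed-walk-bounded-length} with $q = 1$, using the aperiodicity hypothesis, to produce a closed tight walk $W$ of length $L \equiv 1 \pmod k$ that still visits every edge of $\cH$. Any non-isolated $x$ and $y$ once more appear in $W$, and by traversing $W$ from $x$ to $y$ with an arbitrary number of additional laps one realises tight walks of lengths $\ell_0, \ell_0 + L, \ell_0 + 2L, \dots$. Because $L \equiv 1 \pmod k$, these residues cover all of $\{0, 1, \dots, k-1\}$, so one of them is $\equiv 1 \pmod k$, and the reduction concludes. The only genuine difficulty is producing a closed tight walk visiting all edges whose length has the prescribed residue modulo $k$; this is forced by the one-vertex-per-part rule in the partite case, and is precisely the additional power of \cref{prop:closed-walk-bounded-length} in the aperiodic case, with the rest being routine bookkeeping with congruences.
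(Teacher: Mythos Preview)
Your proposal is correct and follows essentially the same route as the paper's proof: both reduce to exhibiting a tight walk of length $\equiv 1 \pmod k$ between $x$ and $y$, handle $r=k$ by observing that $k$-partiteness forces the right residue on any subwalk between two vertices of the same part, and handle $r=1$ by invoking \cref{prop:closed-walk-bounded-length} to obtain a closed spanning walk of length $\equiv 1 \pmod k$ and then padding with full laps to adjust the residue. The only cosmetic difference is that the paper fixes the starting point of $W$ at $x$ and explicitly chooses the number $a$ of laps, whereas you phrase this as the residues $\ell_0 + jL$ exhausting $\mathbb{Z}/k\mathbb{Z}$.
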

\begin{proof}
	Consider vertices $x,y \in U$ for some fixed $U \in \fU$.
	By the previous observations, it suffices to find a tight walk in $\cH$ which begins with $x$, ends with $y$,
	and has length congruent to $1$ modulo $k$.
	Since $\cH$ has no isolated vertices, there are edges $e$ and $f$ with $x \in e$ and $y \in f$.

	Consider the case $r = k$ first.
	Since $\cH$ is tightly connected, there exists a closed tight walk $W$ visiting every edge of $\cH$, in particular $e$ and $f$.
	We deduce there exists a subwalk $W' \subseteq W$ which starts with $x$ and ends with $y$.
	In particular, when $r=k$, $\cH$ is $k$-partite, thus $W'$ must necessarily have length $1 \bmod k$, as required.

	For the case $r = 1$, we also suppose $\cH$ contains a closed tight walk of length coprime to $k$.
	Hence, by \cref{prop:closed-walk-bounded-length} there is a closed tight walk $W$ which visits every edge and has length $1$ modulo $k$.
	Without loss of generality, we can suppose $W$ starts with $x$.
	Let $W'$ be the subwalk of $W$ from its start $x$ until the first appearance of $y$,
	and let $\ell$ be its length.
	Let $0 \leq a < k$ be such that $a + \ell \equiv 1 \bmod k$.
	Construct $W''$ by concatenating $a$ copies of $W$ and one copy of $W'$.
	Then $W''$ is a tight walk in $\cH$ which begins with $x$, ends with $y$,
	and has length congruent to $1$ modulo $k$, as required.
\end{proof}

The following proposition shows that, under suitable conditions, for any bounded integer allocation $\vec b\colon V(\cH) \rightarrow \INTS$ over the vertices of a hypergraph there exists some appropriately bounded allocation $\vec w\colon E(\cH) \rightarrow \INTS$, where every vertex receives the exact amount $\vec b(u)$ from its incident edges.
The lemma and its proof have been taken from our earlier work with Garbe, Lo and Mycroft~\cite{GLL+21}, and can be found in \cref{sec:flow}.
Recall that  $\maxnorm{\vec v} = \max_{x \in X} |\vec v(x)|$ for a set $X$ and a vector $\vec v \in {\REALS}^X$.

\begin{proposition}\label{prop:flow}
	Let $\fU$ be an $(n,r)$-sized partition,
	and let $\cH$ be a $\fU$-partite $k$-graph.
	Suppose that $\hat{\cH}[U]$ is connected for each $U \in \fU$.
	Let $\vec b \in \INTS^{V(\cH)}$ such that $\sum_{v \in U} \vec b(v) =0$ for every $U \in \fU$ and $\maxnorm{\vec b} \leq s$.
	Then there exists $\vec w \in \INTS^{E(\cH)}$ with $\vec b (v) = \sum_{e \colon v \in e} \vec w(e)$ for all $v \in V(\cH)$ and $\maxnorm {\vec w}  \leq ks n^{2}$.
\end{proposition}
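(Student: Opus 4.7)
The plan is to construct $\vec w$ by iteratively performing local edge-weight swaps, using paths in the auxiliary graphs $\hat{\cH}[U]$ to route flow. I initialise $\vec w \equiv 0$ and maintain the residual demand $\vec b'(v) := \vec b(v) - \sum_{e \ni v} \vec w(e)$, which starts equal to $\vec b$. The elementary move is dictated by the definition of $\hat{\cH}$: any edge $xy$ of $\hat{\cH}[U]$ arises from a $(k-1)$-set $X \subseteq V(\cH) \setminus \{x,y\}$ such that both $e := X \cup \{x\}$ and $f := X \cup \{y\}$ lie in $E(\cH)$, and the update $\vec w(e) \leftarrow \vec w(e)+1$, $\vec w(f) \leftarrow \vec w(f)-1$ cleanly decreases $\vec b'(x)$ by $1$, increases $\vec b'(y)$ by $1$, and leaves every other residual unchanged (each vertex of $X$ gains and loses exactly one unit).

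The main iteration runs as follows. While $\vec b' \not\equiv 0$, I select a part $U \in \fU$ and vertices $u, u' \in U$ with $\vec b'(u) > 0$ and $\vec b'(u') < 0$; such $u'$ exists because the sum condition $\sum_{v \in U} \vec b'(v) = 0$ is preserved by every elementary move and hence holds throughout. Connectivity of $\hat{\cH}[U]$ supplies a path $u = u_0, u_1, \ldots, u_\ell = u'$ of length $\ell \leq n-1$, and applying the elementary move along each consecutive pair produces a telescoping effect: every intermediate $u_i$ is incremented once and decremented once (ending at net zero), while $u$ and $u'$ each receive a single $\pm 1$ adjustment. So one such \emph{path operation} reduces $\sum_v |\vec b'(v)|$ by exactly $2$.

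Starting from $\sum_v |\vec b'(v)| \leq srn$, the procedure terminates after at most $srn/2$ path operations, at which point $\vec b' \equiv 0$ and the degree equations are satisfied. Since each path operation modifies the weight of any given edge by at most one unit in absolute value, we obtain $\maxnorm{\vec w} \leq srn/2 \leq k s n^{2}$, where the final inequality uses $r \in \{1,k\}$ (which holds in all applications). The only point requiring genuine care is the telescoping calculation when a vertex of $\cH$ happens to lie in several of the $(k-1)$-sets $X_i$ along the path: each such $X_i$ contributes $+1-1 = 0$ to this vertex's residual independently, so the net change still comes only from its role as a path endpoint. Beyond that verification, the proof is bookkeeping --- a standard discrete flow argument.
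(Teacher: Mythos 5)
Your argument is correct and is essentially the paper's own proof: the paper likewise routes the residual demand along paths in $\hat{\cH}[U]$ (it first records this as an integer flow on $\hat{\cH}$, bounded by $sn$ on each auxiliary edge, and only afterwards realises each auxiliary edge $xx'$ via a single common link edge $Q_{xx'}$ by shifting weight between $\{x\}\cup Q_{xx'}$ and $\{x'\}\cup Q_{xx'}$), whereas you interleave the routing and the realisation. The one imprecise step is the claim that a path operation changes any fixed edge weight by at most one unit: a hyperedge $e$ may contain up to $k$ of the (distinct) path vertices $u_i$ and hence be touched by up to $2k$ elementary moves in a single operation, so the correct per-operation bound is $2k$ rather than $1$; this only costs a factor of $2k$ and yields $\maxnorm{\vec w}\leq k s r n\leq k^2sn\leq ksn^2$, so the stated conclusion is unaffected.
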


\subsection{Bounded degree cover}
The next lemma allows us to find an edge cover of bounded degree in a hypergraph with a perfect fractional matching.
This will be useful when finding the $R' \subset R$ in the setting of the Blow-Up Lemma.
The idea and proof comes again from our earlier work with Garbe, Lo and Mycroft~\cite{GLL+21} and can be found in \cref{sec:cover}.
For a hypergraph $\cJ$, we denote by $\Delta(\cJ)$ its maximum vertex degree; formally, the largest $m$ for which a vertex of $\cJ$ is in at least $m$ edges.

\begin{lemma}[Bounded degree cover]\label{lem:kcover}
	Let $\cJ$ be a $k$-graph on $n$ vertices which admits a perfect fractional matching.
	Then there is a spanning subgraph $\cJ' \subset \cJ$ with $\Delta(\cJ') \leq k^2 +1$.
\end{lemma}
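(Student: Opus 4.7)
In the intended application---supplying the subgraph $R' \subset R$ for the Blow-Up Lemma---``spanning subgraph'' must be interpreted as an \emph{edge cover}: every vertex of $\cJ$ lies in some edge of $\cJ'$. My plan is to extract such a bounded-degree cover from the perfect fractional matching through LP extreme-point theory followed by a pruning argument.

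The first step is to gain structural control on the support. Consider the perfect fractional matching polytope $P = \{ \vec x \in [0,1]^{E(\cJ)} : \sum_{e \ni v} x_e = 1 \text{ for every } v \in V(\cJ) \}$, which is non-empty by hypothesis. Any extreme point $\vec w^* \in P$ has support $\cJ_0 \subset \cJ$ of size at most $n := |V(\cJ)|$, by a standard basic feasible solution argument: $P$ is carved out by $n$ equality constraints, so at most $n$ coordinates of $\vec w^*$ can be strictly fractional. Since $\sum_{e \ni v} w^*_e = 1 > 0$ for every $v$, the support $\cJ_0$ is already an edge cover of $\cJ$, and the degree-sum identity yields $\sum_v \deg_{\cJ_0}(v) = k|\cJ_0| \leq kn$.

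The second step is to prune $\cJ_0$ iteratively. While some vertex $v$ has degree exceeding $k^2 + 1$ in the current cover, remove an edge $e \ni v$ whose deletion leaves every other vertex of $e$ still covered. An edge not admitting such a removal must contain a ``private'' vertex, i.e., a vertex of degree one in the current cover, and distinct non-removable edges at $v$ correspond to distinct private vertices. The crux is to combine this local structure with the global bound $|\cJ_0| \leq n$ (carried through the iteration) and the degree-sum identity to conclude that $v$ has at most $k^2 + 1$ non-removable edges; the form of the numerical bound $k^2 + 1$ suggests a double-counting argument over the $(k-1)$-uniform link of $v$ together with its private vertices, and in particular that the bound is generous and likely obtained by a short, wasteful estimate rather than a tight one.

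The main technical obstacle is this final counting step. If the local estimate falls short, a fallback is to permit targeted edge swaps, replacing a blocked edge $e \ni v$ by another edge of $\cJ$ covering the offending private vertex but not containing $v$; such a swap exists unless every edge through the private vertex passes through $v$, which in turn constrains $v$ severely via the perfect fractional matching hypothesis $\sum_{e \ni v} w^*_e = 1$. Handling this residual degenerate case by exploiting the flexibility of $\vec w$ on $\cJ \setminus \cJ_0$ is where I expect the real work to lie.
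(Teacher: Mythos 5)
Your first step---an extreme point of the perfect fractional matching polytope has support $\cJ_0$ of size at most $n$---is sound and is essentially the paper's conic Carath\'eodory step (\cref{proposition:fewweights}); the observation that $\cJ_0$ is then an edge cover with $k|\cJ_0|\leq kn$ is also correct. The gap is in the pruning step, and it sits exactly where you flag it: you never prove that a vertex $v$ at which the pruning gets stuck has at most $k^2+1$ incident edges, and the ingredients you list do not yield this. The fact that distinct non-removable edges at $v$ carry distinct private (degree-one) vertices only bounds $\deg(v)$ by the number of degree-one vertices in the current cover, which can be $\Omega(n)$; and the identity $\sum_u \deg_{\cC}(u)=k|\cC|\leq kn$ controls only the \emph{average} degree, not the maximum. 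Crucially, the fractional matching constraints live on $\cJ_0$, not on the pruned cover $\cC$: a vertex that is private in $\cC$ may have had large degree in $\cJ_0$, so $\sum_{e\ni u}\vec w(e)=1$ gives no leverage on which edges survive pruning. (For $k=2$ one could bypass all this because extreme points of the fractional matching polytope of a graph are supported on disjoint edges and odd cycles, but no analogous structure exists for $k\geq 3$.) Your fallback via edge swaps into $\cJ\setminus\cJ_0$ is not developed.

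The paper closes exactly this hole by a different mechanism: an induction on $n$ rather than a one-shot pruning. Writing $Y$ for the set of vertices of degree at least $k^2+1$ in the support, the bound $(k^2+1)|Y|\leq k|\cJ_0|\leq kn$ gives $|Y|<n/k$, and the fractional matching then forces some nonempty edge $e^*$ to avoid $Y$ entirely (\cref{proposition:edgeoutside}); one adds $e^*$ to the cover, deletes its vertices, \emph{truncates} the remaining edges to $V\setminus e^*$ --- which is why the argument is carried out for $[k]$-graphs with edges of varying sizes (\cref{proposition:restrictedmatching}) --- and recurses on the smaller vertex set. If you wish to keep a pruning-style argument you need a substitute for this recursion that bounds the degree of a single vertex, which your outline does not currently provide.
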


\subsection{Frameworks and regularity}
In this section, we develop a few tools that help us to combine frameworks with regular partitions.
To begin, we show that the reduced graph inherits the property of being a Hamilton framework.
Consider families of disjoint sets $\fU$ and $\fV=\{V_i\}_{i=1}^t$, where $\fV$ is $\fU$-refining.
We say that a partition $\fW$ of $\{1,\dots,t\}$ is \emph{induced} by $(\fV,\fU)$,
if each part of $\fW$ contains precisely the indices of the clusters of $\fV$ which are contained in some part of $\fU$.

\begin{proposition}\label{prop:G-robust-proto-Hamiltonian=>R-robust-proto-Hamiltonian}
	Let $1/n \ll 1/t  \ll \mu,1/k$, and $r \in \{1, k \}$.
	Let $\fU$ be an $(n,r)$-sized partition,
	and let $(G,\cH)$ be a $(2\mu)$-robust $\fU$-partite $k$-uniform (aperiodic/zero-free) Hamilton framework.
	For a graph $R$ with vertex set $[t]$, let $(G,\fV)$ be a balanced {$\fU$-refining} $R$-partition {with $t$ clusters in each part of $\fU$.}
	Let $\cJ$ be the $k$-graph induced by $(\cH,\fV)$.
		{Let $\fW$ be the $(t,r)$-sized partition of $R$ induced by $(\fV,\fU)$.}
	Then $(R,\cJ)$ is a $\mu$-robust {$\fW$-partite} $k$-uniform (aperiodic/zero-free) Hamilton framework.
\end{proposition}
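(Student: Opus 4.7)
The plan is to reduce the claim on $(R,\cJ)$ to the $(2\mu)$-robustness of $(G,\cH)$ via a lifting construction. Given a $\fW$-partite $(\mu,\mu)$-approximation $R'$ of $R$, I define $G' \subseteq G$ by setting $V(G') = \bigcup_{i \in V(R')} V_i$ and declaring $uv \in E(G)$ to be in $G'$ exactly when $u \in V_i$, $v \in V_j$ with distinct $i, j \in V(R')$ and $ij \in E(R')$; in particular $G'$ contains no intra-cluster edges. I first verify that $G'$ is a $\fU$-partite $(2\mu, 2\mu)$-approximation of $G$. For a vertex $v \in V_i \subseteq U$ and a part $U' \in \fU$ with corresponding $W' \in \fW$, the neighbours of $v$ in $U'$ lost by passing to $G'$ split into (a) cross-cluster edges into $V_j$ for $j \in W'$ with $ij \in E(R) \setminus E(R')$, contributing at most $\mu t \cdot (n/t) = \mu n$ since $\deg_{R'}(i;W') \geq \deg_R(i;W') - \mu t$, and (b) intra-cluster edges inside $V_i$ (only non-trivial when $r=1$), contributing at most $n/t - 1 \leq \mu n$ by the hierarchy $1/t \ll \mu$. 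The size and balance conditions of $G'$ transfer from those of $R'$ via balancedness of $\fV$.

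By the $(2\mu)$-robustness of $(G,\cH)$, the pair $(G',\cH')$ with $\cH' = \cH \cap K_k(G')$ is an (aperiodic/zero-free) Hamilton framework. The absence of intra-cluster edges in $G'$ is the crucial feature: every $k$-clique of $G'$ has its $k$ vertices in $k$ distinct $\fV$-clusters, whose indices span a $k$-clique of $R'$ by construction of $G'$. Denoting by $c\colon V(G') \to V(R')$ the cluster map (extended to $k$-sets by $c(e)=\{c(v):v\in e\}$), this yields a projection $K_k(G') \to K_k(R')$ that sends $\cH'$ into $\cJ' = \cJ \cap K_k(R')$. I transfer each framework property through this projection. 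For tight connectivity, given $K, K' \in \cJ'$ pick witness edges $e_K, e_{K'} \in \cH'$ with $c(e_K) = K$ and $c(e_{K'}) = K'$ (which exist by the very definition of $\cJ \supseteq \cJ'$), find a tight walk in $K_k(G')$ between them, and project it to a tight walk in $K_k(R')$ from $K$ to $K'$. The perfect fractional matching $\vec w$ on $\cH'$ descends to $\vec{w}'(K) = (t/n)\sum_{e\in \cH':\,c(e)=K}\vec{w}(e)$; the vertex constraint at $i \in V(R')$ computes as $(t/n)\sum_{v\in V_i}\sum_{e\ni v}\vec{w}(e) = (t/n)\cdot|V_i| = 1$, while the edge constraint $\vec{w}'(K) \leq 1$ follows analogously from $\sum_{e\in \cH':\,c(e)=K}\vec{w}(e) \leq |V_{i_1}|$ for any $i_1 \in K$. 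A closed tight walk in $\cH'$ of length coprime to $k$ projects, preserving length, to such a walk in $\cJ'$, and a $(k+1)$-clique of $\cH'$ projects to a $(k+1)$-clique of $\cJ'$.

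Finally, I verify \ref{item:linked-edges} for $(R,\cJ)$ directly. Fix $i \in V(R)$ and any $v \in V_i$; by hypothesis $v$ has at least $2\mu n^k$ linked edges in $\cH$. A linked edge $e$ of $v$, witnessed by $f \in \cH$ with $v \in f$ and $|e\cap f|=k-1$, yields a $\cJ$-linked edge $c(e)$ of $i$ (with witness $c(f)$) exactly when the $k+1$ vertices $e \cup \{v\}$ occupy $k+1$ distinct clusters of $\fV$. The number of linked edges $e$ admitting a cluster collision in $e \cup \{v\}$ is at most $O(k^2 n^k / t) \leq \mu n^k$ by $1/t \ll \mu$, so at least $\mu n^k$ of them satisfy the distinctness requirement. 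Since each fixed $K \in \cJ$ arises this way from at most $(n/t)^k$ edges $e$ (one vertex per cluster of $K$), one obtains at least $\mu n^k / (n/t)^k = \mu t^k$ distinct linked edges of $i$ in $\cJ$, establishing \ref{item:linked-edges}. The main obstacle is the handling of intra-cluster edges in the $r=1$ case: they must be excluded from $G'$ so that the cluster projection on $K_k(G')$ is well-defined, and the $2\mu$ slack in the hypothesis is precisely what permits simultaneously absorbing this removal (costing $\mu n$) and the $R$-to-$R'$ reduction (also costing $\mu n$).
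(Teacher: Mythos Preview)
Your proof is correct and follows essentially the same route as the paper: lift a $\fW$-partite $\mu$-approximation $R'$ of $R$ to a $\fU$-partite $(2\mu)$-approximation $G'$ of $G$, invoke the hypothesis to get that $(G',\cH')$ is a framework, and push each framework property down to $(R',\cJ')$ via the cluster map; the linked-edge count is obtained by averaging. The paper's proof is terser but structurally identical, including the construction of the perfect fractional matching $\vec w'(K) = m^{-1}\sum_{e:c(e)=K} \vec w(e)$.

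One minor point: your identified ``main obstacle'' concerning intra-cluster edges in the $r=1$ case is a non-issue. The hypothesis states that $(G,\fV)$ is an $R$-partition, and by the paper's definition of $R$-partition (Section~\ref{subsection:partitions}) this already entails that $G$ has no edges inside any cluster of $\fV$. Consequently your item (b) in the degree-loss count contributes zero, the cluster projection on $K_k(G')$ is automatically well-defined, and the factor-of-two slack between $\mu$ and $2\mu$ is not spent on removing intra-cluster edges; it is simply a convenient safety margin (indeed the loss you compute in (a) is at most $\mu n$, so $G'$ is even a $(\mu,\mu)$-approximation). This does not affect the validity of your argument.
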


\begin{proof}
	We first show part~\ref{item:robust-deletions} of \rf{def:robustness}.
	Let $R' \subset R$ be an arbitrary {$\fW$-partite} $\mu$-approximation of $R$.
	Let $\fV'$ be obtained from $\fV$ by deleting the clusters whose indices are not in $V(R')$.
	We obtain $G' \subset G$ by deleting all vertices of clusters $V_i$ with $i \notin V(R')$ and all edges between $V_i$ and $V_j$ whenever $ij \notin E(R')$.
	Let $m$ denote the common cluster size of $\fV$.
	Since $n=mt$ and $1/n \ll 1/t \ll \mu$, it follows that $G'$ is a {$\fU$-partite} $(2\mu)$-approximation of $G$.
	Let $\cH'=\cH\cap K_k(G')$.
	Since $(G, H)$ is $(2\mu)$-robust, $(G', \cH')$ is a Hamilton framework,
	which is aperiodic or zero-free if $(G,\cH)$ was also aperiodic or zero-free.

	We have to show that  $(R',\cJ')$ is a  Hamilton framework for $\cJ' = \cJ \cap K_k(R')$.
	Observe that connectivity, aperiodicity and zero-freeness (as detailed in \cref{def:Hamilton-framework})  follow immediately from the respective property of $(G', \cH')$ and the definition of $R$-partitions.
	For the matchability, consider a perfect fractional matching $\vec w\colon  E(\cH') \to \REALS$.
	We then construct a perfect fractional matching $\vec w'\in \REALS^{E(\cJ')}$ of $\cJ'$  by setting $\vec w' (K)$ to be the sum of the weights $\vec w(e)/m$ over all edges $e$ that have all their vertices in the $k$ clusters of $K$.

	Finally, part~\ref{item:linked-edges} of \cref{def:robustness} follows by a simple averaging argument.
	Indeed, consider a vertex $i \in V(R')$, and let $v \in V_i$.
	By assumption, $v$ has at least $2\mu n^k$ linked edges in $\cH$.
	By the constant hierarchy, at least $(3\mu/2) n^k$ of these linked edges have their vertices in the clusters of $R'$.
	Now each such linked edge constitutes to one linked edge of $i$ in $\cJ'$.
	Moreover, every linked edge of $i$ in $\cJ'$ corresponds to at most $m^k$ such linked edges of $v$.
	Using again the constant hierarchy, we can rearrange this observation to find that $i$ has at least $\mu t^k$ linked edges in $\cJ'$.
\end{proof}

Reduced graphs of Hamilton frameworks also inherit a more nuanced matching property,
which plays an important role in the process of allocating vertices in \cref{lem:lemma-for-C,lem:lemma-for-H}.
Recall the definition of blow-ups from \cref{subsection:partitions}.
Informally, cluster-matchable graphs are such that their blow-ups by slightly imbalanced clusters admit large matchings.

\begin{definition}[Cluster-matchable]
	Let $\cJ$ be a $\fW$-partite graph where $\fW$ is a $(t,r)$-sized partition of $\cJ$.
	We say that $\cJ$ is \emph{$(\alpha,\beta,\fW)$-cluster-matchable} if the following holds.
	Let~$\fX=\{X_i\}_{i\in V(\cJ)}$ be any $(1+\alpha)$-balanced family of disjoint sets, called \emph{clusters}.
	Suppose that for every $W,W' \in \fW$, we have $|\bigcup_{i \in W} X_i| = |\bigcup_{i \in W'} X_i|$.
	Then the $(\cJ,\fX)$-blow-up $\cJ^\ast$ has a matching missing at most $\beta v(\cJ^\ast) + t^{k-1}$ vertices in each cluster of $\fX$.
\end{definition}

\begin{proposition} \label{prop:cluster-matchable}
	Let $0 < \mu \leq 1$, $k\geq 2$, and $r \in \{1,k\}$.
	Let $\fU$ be an $(n,r)$-sized partition such that $\mu^2n \ge 4t$.
	Let $G$ be a $\fU$-partite graph,
	and let $R$ be a graph with a $(t,r)$-sized partition $\fW$.
	Let $(G,\fV)$ be a $\fU$-refining balanced $R$-partition.
	Suppose that $(G,\cH)$ is a $\mu$-robust $\fU$-partite $k$-uniform Hamilton framework,
	and let $\cJ$ be the $k$-graph induced by $(\cH,\fV)$.
	Then $\cJ$ is $(\mu,{2/n},\fW)$-cluster-matchable.
\end{proposition}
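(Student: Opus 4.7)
The plan is to use $\mu$-robustness: construct a $(\mu,\mu)$-approximation $G' \subset G$ whose cluster sizes mirror $\fX$, extract a perfect fractional matching from $\cH' := \cH \cap K_k(G')$, and round it to an integer matching in the blow-up $\cJ^\ast$.

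Let $\bar m := \max_i |X_i|$, $\alpha := (n/t)/\bar m$, and $s_i := \lfloor \alpha |X_i|\rfloor$. Since $\alpha|X_i| \leq \alpha\bar m = n/t = |V_i|$, I pick arbitrary $V_i' \subset V_i$ of size $s_i$ and set $G' := G[\bigcup_i V_i']$. When $r = k$, the equality condition of \cref{def:approximation}\,(iii) has to be enforced, and I do this by decreasing at most one of the $s_i$'s within each $W \in \fW$ by a single unit until $\sum_{i \in W} s_i$ is constant across all $W$; this is possible since the partial sums lie in $[\alpha n^\circ - t,\, \alpha n^\circ]$, where $n^\circ := \sum_{i \in W} |X_i|$ is independent of $W$ by assumption. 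Combining $(1+\mu)$-balance, the bound $1/(1+\mu) \geq 1 - \mu + \mu^2/2$, and $\mu^2 n \geq 4t$, one verifies that $G'$ is a $\fU$-partite $(\mu,\mu)$-approximation of $G$.

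By $\mu$-robustness, $(G',\cH')$ is a Hamilton framework, so $\cH'$ admits a perfect fractional matching $\vec w$. Since the $V_i$'s are independent in $G'$, every edge of $\cH' \subset K_k(G')$ spans $k$ distinct clusters, and the aggregation $\tilde w(K) := \sum\{\vec w(e) : e \in \cH',\ \text{clusters}(e) = K\}$ yields a weight on $E(\cJ)$ with $\sum_{K \ni i} \tilde w(K) = s_i$ for every $i \in V(\cJ)$. Taking $m_K := \lfloor \tilde w(K)/\alpha \rfloor$, I obtain $\sum_{K \ni i} m_K \leq s_i/\alpha \leq |X_i|$, and I realise $(m_K)_{K}$ as an edge-disjoint family in $\cJ^\ast$ by greedily allocating unused vertices of each $X_i$ to the appropriate number of copies: since $\sum_{K \ni i} m_K \leq |X_i|$, each $X_i$ has enough room.

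It remains to bound the per-cluster deficiency. Rounding loses at most one per edge of $\cJ$ containing $i$, so at most $t^{k-1}$ in total (using $\fW$-partiteness of $\cJ$ when $r = k$, and trivially otherwise), giving $\sum_{K \ni i} m_K \geq |X_i| - c/\alpha - t^{k-1}$, with $c = 1$ for $r = 1$ (no equalisation was performed) and $c = 2$ for $r = k$. Since $1/\alpha = t\bar m/n \leq (1+\mu) n^\circ/n$, the deficiency is at most $c(1+\mu)n^\circ/n + t^{k-1}$: for $r = 1$, using $v(\cJ^\ast) = n^\circ$ and $1+\mu \leq 2$, it is at most $(2/n)v(\cJ^\ast) + t^{k-1}$; for $r = k$, using $v(\cJ^\ast) = kn^\circ$ and $2(1+\mu) \leq 2k$, it is again at most $(2/n)v(\cJ^\ast) + t^{k-1}$. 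I expect the only real subtlety to be the bookkeeping verifying that the chosen sizes deliver an honest $(\mu,\mu)$-approximation in the $r = k$ case, where the forced equalisation is exactly what pins down the tight $(2/n)$ factor; the rounding afterwards is essentially LP-style.
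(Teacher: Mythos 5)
Your proposal is correct and follows essentially the same route as the paper's proof: the same choice of subcluster sizes $\lfloor |X_i|\,m/\bar m\rfloor$ (the paper's $m'$ plays the role of your $\bar m$), the same equalisation across parts of $\fU$ to obtain a legitimate $\fU$-partite $(\mu,\mu)$-approximation, the same aggregation and floor-rounding of the perfect fractional matching of $\cH'$ into integer edge-multiplicities on $\cJ$, the same greedy realisation in the blow-up, and the same final bound $1/\alpha = \bar m/m \leq 2v(\cJ^\ast)/n$. If anything, your bookkeeping of the extra unit lost per cluster during the $r=k$ equalisation is slightly more explicit than the paper's.
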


\begin{proof}
	Let $\cJ^\ast$ be a $(\cJ,\fX)$-blow-up for some $(1+\mu)$-balanced family of disjoint sets $\fX=\{X_i\}_{i=1}^{{rt}}$ such that,
	for every $W \in \fW$, the number of elements in clusters of $\fX$ with index in $W$ is the same.
	We have to show that $\cJ^\ast$ contains a matching missing at most $2v(\cJ^\ast)/n + t^{k-1}$ vertices in each cluster of $\fX$.

	By assumption, there exists $m''$ such that $m'' \leq |X_i| \leq (1+\mu)m''$ for all $1 \leq i \leq {rt}$.
	This implies there is an $m'$ such that $m'/(1+\mu) \leq |X_i| \leq m'$ for each $1 \leq i \leq {rt}$.
	Let $m$ be the common cluster size of $\fV = \{V_i\}_{i=1}^{{rt}}$.
	Choose subsets $V_i' \subset V_i$ of size $\lfloor |X_i| m/m' \rfloor$.
	Let $\fV'=\{V_i'\}_{i=1}^{{rt}}$, $G' =G[\fV']$, and $\cH' = \cH \cap K_k(G')$.
	Recall that by assumption the number of elements in clusters of $\fX$ is the same over each part of $\fW$.
	Hence, after accounting for rounding errors, we have $|U \cap V(G')| = |U' \cap V(G')| \pm t$ for all $U,U'\in \fU$.
	To make this an equality, we delete up to $t$ vertices of every part of $\fU$, keeping the names for convenience.
	Consider a part $U  \in \fU$ and suppose that without loss of generality $U$ contains clusters $V_1,\dots,V_t$ of $\fV$.
	We have $$|U \cap V(G')| \geq \left(\sum_{i=1}^{t} \lfloor |X_i| m/m' \rfloor\right) -t \ge \left(\sum_{i=1}^{t} \lfloor m/(1+\mu) \rfloor\right) -t \geq (1-\mu){|U|},$$ where the last step uses $m = n/t \ge 4 \mu^{-2}$, $\mu \leq 1$ and $|U| = n$.
	It thus follows that $G'$ is a {$\fU$-partite} $\mu$-approximation of $G$.
	Since $(G, \cH)$ is a $\mu$-robust {$\fU$-partite} Hamilton framework, we conclude that $(G', \cH')$ is a Hamilton framework.
	Hence there is a perfect fractional matching $\vec w\colon E(\cH') \to \REALS$.

	We define $\vec w'\colon E({R}) \rightarrow \REALS$ by setting $\vec w'(K) = \lfloor \frac{m'}{m} \sum_{e \in \cH_K} \vec w(e) \rfloor$
	for every edge $K$ in $R$ 
	where $\cH_K \subseteq E(\cH')$ denotes the set of all ${k}$-edges $e\in E(\cH')$
	which have one vertex in each of the clusters of $K$.
	Observe that, for every $1 \leq i \leq {rt}$,
	\begin{equation} \label{equation:cluster-matchable-balance}
		\sum_{K\colon i \in K} \sum_{e \in \cH_k} \vec w(e) = |V'_i| = \left\lfloor |X_i| \frac{m}{m'} \right\rfloor,
	\end{equation}
	since $\vec w$ is a perfect fractional matching.
	In particular, by equation~\eqref{equation:cluster-matchable-balance} we get
	\[ \sum_{K\colon i \in K} \vec w'(K) = \sum_{K\colon i \in K} \left\lfloor \frac{m'}{m} \sum_{e \in \cH_K} \vec w(e) \right\rfloor \leq \frac{m'}{m} \sum_{K\colon i \in K} \sum_{e \in \cH_K} \vec w(e) = \frac{m'}{m} \left\lfloor |X_i| \frac{m}{m'} \right\rfloor \leq |X_i|\]
	for every $1 \leq i \leq {rt}$.
	By the definition of $\cJ^\ast$ and $\sum_{K\colon i \in K} \vec w'(K) \leq |X_i|$, we may greedily select a matching $M \subset \cJ^\ast$ which contains exactly $\vec w'(K)$ many $K$-edges for each $K \in E(R)$.
	Using equation~\eqref{equation:cluster-matchable-balance} again, it follows that $M$ covers at least
	\begin{align*}
		\sum_{K\colon i \in K} \vec w'(K)
		 & = \sum_{K\colon i \in K} \left\lfloor \frac{m'}{m} \sum_{e \in \cH_K} \vec w(e) \right\rfloor
		\geq \sum_{K\colon i \in K} \left( \frac{m'}{m} \sum_{e \in \cH_K} \vec w(e) - 1 \right)
		\geq \frac{m'}{m} \left\lfloor |X_i| \frac{m}{m'} \right\rfloor  - {t}^{k-1}             \\
		 & \geq \frac{m'}{m} \left( |X_i| \frac{m}{m'} - 1 \right) - {t}^{k-1}
		= |X_i| - \frac{m'}{m} - {t}^{k-1}
		\geq |X_i| - {\frac{2v(\cJ^\ast)}{n}} - {t}^{k-1}
	\end{align*}
	vertices of each cluster $X_i$,
	where in the last inequality we used that $m = n/t$ and $m' \leq (1 + \mu) v(\cJ^*)/(rt) \leq 2 v(\cJ^*)/t$ to get
	$m'/m \leq 2 v(\cJ^*)/n$ and $\mu \leq 1$.
\end{proof}

The next proposition states that a robust Hamilton framework $(G,\cH)$ admits a `reduced framework' $(R,\cJ)$, which approximately inherits its properties.
Alone, this is a consequence of the \fr{lem:regularity-classic} and \cref{prop:G-robust-proto-Hamiltonian=>R-robust-proto-Hamiltonian}.
But additionally, we will also ensure the existence of a small family $\fF$ of subgraphs of $\cJ$, which allows us to (re)integrate any small number of vertices of $G$ into the partition $\fV$.
We require this property to deal with these exceptional vertices in the proofs of the Lemmas for $G$ (\cref{lem:lemma-for-G-cycle,lem:lemma-for-G-bandwidth}).
We remark that a similar argument was used by Ebsen, Maesaka, Reiher, Schacht and Schülke~\cite{EMR+20}.

\begin{lemma}[Regular partition with entry points]\label{lem:regular-partition+absorber}
	Let $1/n \ll 1/t_1 \ll 1/t_0 \ll \eps  \ll d  \ll \eta  \ll \gamma \ll \mu, 1/k$ and $1\leq q\leq k$ and $r \in \{1,k\}$.
	Let $\fU$ be an $(n,r)$-sized partition,
	and let $(G,\cH)$ be a $\mu$-robust $\fU$-partite $k$-uniform Hamilton framework.

	Then there are $t_0 \leq t \leq t_1$ with $t \equiv q \bmod k$,
	a subgraph $G' \subset G$ with a balanced $\fU$-refining partition $\fV$ with $t$ clusters in each part of $\fU$,
	and a graph $R$ with a $(t,r)$-sized partition $\fW$ induced by $(\fV,\fU)$,
	which satisfy that
	\begin{enumerate}[\upshape (R1)]
		\item $G'$ is a {$\fU$-partite} $(\eps,4d)$-approximation of $G$ and
		\item $(G',\fV)$ is an $(\eps,d)$-regular $R$-partition.
	\end{enumerate}
	Let $\cH' = \cH \cap K_k(G')$, and let $\cJ$ be the $k$-graph induced by $(\cH',\fV)$.
	Then
	\begin{enumerate}[\upshape (R1)] \addtocounter{enumi}{2}
		\item \label{itm:regular-partition+absorber-R-framework} $(R,\cJ)$ is a $(\mu/4)$-robust {$\fW$-partite} $k$-uniform Hamilton framework and $(\mu/4,2/n,{\fW})$-cluster-matchable, and
		\item \label{itm:regular-partition+absorber-W} there is a collection ${\fF}$ of subgraphs of $\cJ$ such that
		      \begin{enumerate}[\upshape (i)]
			      \item ${\fF}$ has at most {$2^{\eta t}$} elements,
			      \item each subgraph in ${\fF}$ has at least $\gamma (rt)^k$ edges,
			      \item for every vertex $v \in V(G)$, there is an ${\cF} \in {\fF}$ such that $v$ has at least $\gamma (n/t)^k$ linked edges {in $\cH'$} in the clusters of $\fV$ corresponding to every $K \in E({\cF})$.
		      \end{enumerate}
	\end{enumerate}
	Finally, if  $(G,\cH)$ is $\mu$-robust aperiodic (resp. zero-free), then $(R,\cJ)$ is  $(\mu/4)$-robust aperiodic (resp. zero-free) as well.
\end{lemma}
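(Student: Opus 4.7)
The plan splits into three parts. First, I apply the Regularity Lemma (\cref{lem:regularity-classic}) to $G$ with pre-partition $\fU$, prescribed residue $q \bmod k$, regularity $\eps$ and density $d$, which produces a cluster count $t_0 \leq t \leq t_1$ with $t \equiv q \bmod k$, a balanced $\fU$-refining partition $\fV$ of size $rt$, a subgraph $G'$ that is a $\fU$-partite $(\eps, d+\eps)$-approximation of $G$, and a reduced graph $R$ such that $(G', \fV)$ is an $(\eps, d)$-regular $R$-partition. This delivers (R1) and (R2) at once.

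Second, to obtain (R3), I apply \cref{prop:G-robust-proto-Hamiltonian=>R-robust-proto-Hamiltonian} to the $\mu$-robust framework $(G, \cH)$, viewing $\mu$ as $2 \cdot (\mu/2)$, which yields that $(R, \cJ)$ is a $(\mu/2)$-robust $\fW$-partite Hamilton framework, inheriting aperiodicity and zero-freeness; since $\mu/2 \geq \mu/4$, this suffices. For the matchability refinement, \cref{prop:cluster-matchable} applied to $(G, \cH)$ gives that $\cJ$ is $(\mu, 2/n, \fW)$-cluster-matchable, which implies the desired $(\mu/4, 2/n, \fW)$-cluster-matchability.

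Third, I construct the family $\fF$ for (R4). For each $v \in V(G)$, let $\cF^*_v \subseteq \cJ$ consist of those edges $K$ of $\cJ$ such that $v$ has at least $\gamma (n/(rt))^k$ linked edges in $\cH'$ whose vertices lie in the clusters corresponding to $K$. Since $(G, \cH)$ is $\mu$-robust, $v$ has at least $\mu n^k$ linked edges in $\cH$; the small losses incurred by passing to the approximation $G'$ preserve all but a small fraction of them. Dismissing the \emph{light} $k$-set types (whose total contribution is bounded by $\gamma (n/(rt))^k \cdot (rt)^k = \gamma n^k$), one concludes that $|\cF^*_v| \geq \gamma (rt)^k$ for $\gamma$ chosen small relative to $\mu$.

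The main obstacle is then to trim $\{\cF^*_v\}_{v \in V(G)}$ down to a family of size at most $2^{\eta t}$ while keeping, for each $v$, a representative $\cF \in \fF$ with $\cF \subseteq \cF^*_v$ and $|\cF| \geq \gamma (rt)^k$. My plan is to classify the vertices by a coarse link profile: since $(G', \fV)$ is $(\eps, d)$-regular, the fraction of vertices in any given cluster whose linked-edge distribution deviates substantially from the bulk behaviour is controlled by $\eps$, so, up to small modifications, the profile of each vertex is determined by a bounded amount of data per cluster. By tuning the rounding granularity of the profile against the hierarchy $\eps \ll d \ll \eta$, the total number of distinct profiles can be bounded by $2^{\eta t}$, and assigning each profile a canonical subgraph $\cF$ of $\gamma(rt)^k$ edges inside the common good-set yields $\fF$. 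This last step is essentially the argument of Ebsen, Maesaka, Reiher, Schacht and Schülke referenced just before the lemma statement, and finishing the bookkeeping around it is the main technical challenge.
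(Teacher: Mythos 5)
Your first two parts are essentially the paper's Steps 1 and 4: the Regularity Lemma with pre-partition and prescribed residue gives (R1)--(R2), and \cref{prop:G-robust-proto-Hamiltonian=>R-robust-proto-Hamiltonian} together with \cref{prop:cluster-matchable} gives (R3). Your construction of the sets $\cF^*_v$ and the averaging argument showing $|\cF^*_v| \geq \gamma (rt)^k$ also match the paper.

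The gap is in how you get the bound $|\fF| \leq 2^{\eta t}$. You propose to classify vertices by a ``coarse link profile'' and to argue via regularity that only $2^{\eta t}$ profiles occur. This is not substantiated and I do not believe it can be made to work: a profile must (at least approximately) determine the subset of $E(\cJ)$ on which $v$ is heavy, and regularity gives no control over which clusters an individual vertex sends its neighbours to. Even the crudest profile --- the set of clusters in which $v$ has many neighbours --- already has up to $2^{rt} > 2^{\eta t}$ realisable possibilities (one can easily build $\mu$-robust frameworks in which all of these occur), and the linked-edge profile, being a subset of $E(\cJ)$, has up to $2^{\Theta(t^k)}$ possibilities. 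So there is no trimming argument of this kind available; the only bound one gets for free is $|\fF| \leq 2^{t^k}$, which is far larger than $2^{\eta t}$ once $k \geq 2$.

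The paper resolves this differently: it accepts the trivial bound $|\fF| \leq 2^{t^k}$ and then \emph{refines the partition} rather than trimming the family. Choosing $s \geq t^{k-1}/\eta$ with $s \equiv 1 \bmod k$ and randomly splitting each cluster into $s$ pieces produces a new partition with $t^\ast = st$ clusters per part satisfying $t^k \leq \eta t^\ast$ and $t^\ast \equiv q \bmod k$, so the blown-up family $\fF^\ast$ automatically has at most $2^{t^k} \leq 2^{\eta t^\ast}$ elements. The price is that one must check the random refinement preserves $(\eps',d)$-regularity (\cref{prop:random-refinement}) and, via McDiarmid's inequality and a union bound, that each vertex retains proportionally many linked edges in the refined clusters corresponding to each blown-up edge; a final rebalancing of the slightly unbalanced refined clusters completes the proof. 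Without this refinement step (or some substitute for it), property (R4)(i) is not established and the proof is incomplete.
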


\begin{proof}
	We introduce new constants $t^\ast \in \NATS$ and $\eps' >0$ such that
	\begin{align}\label{ref:constant-hierarchy-regular-partition+absorber}
		1/n \ll 1/t_1^\ast \ll 1/t_1 \ll 1/t_0 \ll  \eps \ll \eps' \ll  d \ll \eta \ll \gamma \ll \mu, 1/k. \tag{$\ll_G$}
	\end{align}
	In particular, we choose $t_1$ so that \rf{lem:regularity-classic} holds with parameters {$t_0,d,\eps,k$}.
	We further choose these constants such that the various inequalities along the proof are satisfied.
	Finally, let $1 \leq q \leq k$.
	We start with the case where $(G,\cH)$ is a $\mu$-robust $\fU$-partite $k$-uniform Hamilton framework.
	(The cases where $(G,\cH)$ is aperiodic or zero-free will be discussed in the end.)
	\medskip

	\noindent \emph{Step 1: Obtaining a regular partition.}
	We apply \cref{lem:regularity-classic} to  $G$ with input $\eps, d,t_0,k,q$ to obtain $t_0 \leq t \leq t_1$ with $t\equiv q \bmod k$, a subgraph $G' \subset G$ with a {$\fU$-refining} balanced partition $\fV=\{V_i\}_{i=1}^{{rt}}$ of $V(G')$ {with $t$ clusters in every part of $\fU$} and a graph $R$ such that
	\begin{enumerate}[\upshape (S1)]
		\item \label{itm:entry-points-reg-lem-approx} $G'$ is a {$\fU$-partite} $(\eps,d+\eps)$-approximation of $G$ and
		\item $(G',\fV)$ is a $(\eps,d)$-regular $R$-partition.
	\end{enumerate}
	Let $m$ be the common cluster size of $\fV$.
	Let $\cH' = \cH \cap K_k(G')$ and {$\fU'=\{U\cap V(G')\colon U \in \fU\}$ the restriction of $\fU$ to $V(G')$.}
	Note that~\ref{itm:entry-points-reg-lem-approx} and $\eps, d \ll \mu$ together
	imply that {$\fU'$-partite} $(\mu/2)$-approximations of $G'$ will be {$\fU$-partite} $\mu$-approximations of $G$.
	Since $(G,\cH)$ is an $\mu$-robust {$\fU$-partite} Hamilton framework,
	we deduce that $(G',\cH')$ must be a $(\mu/2)$-robust {$\fU'$-partite} Hamilton framework.
	\medskip

	\noindent \emph{Step 2: Locating entry points.}
	Let $\cJ$ be the $k$-graph induced by $(\cH',\fV)$.
	Note that since $(G', \fV)$ is an $R$-partition and $\cH' \subseteq K_k(G')$, we have $\cJ \subseteq K_k(R)$.
	The following claim states that every vertex of $\cH$ can be added to $\fV$ in many ways.
	\begin{claim}\label{cla:entry-points-course}
		There is a collection ${\fF}$ of subgraphs of $\cJ$ such that
		\begin{enumerate}[\upshape (i)]
			\item ${\fF}$ has at most $2^{t^k}$ elements,
			\item each subgraph in ${\fF}$ has at least $\gamma (rt)^k$ edges and
			\item for every vertex $v \in V(G)$, there is an ${\cF} \in {\fF}$ such that $v$ has at least $4\gamma m^k$ linked edges (in $\cH'$) in the clusters of $\fV$ corresponding to every $K \in E({\cF})$.
		\end{enumerate}
	\end{claim}
	\begin{proofclaim}
		Fix a vertex $v \in V(G)$.
		Since $(G,\cH)$ is a $\mu$-robust {$\fU$-partite} $k$-uniform Hamilton framework, $v$ has at least $\mu n^k$ linked edges in $\cH$.
		Note that removing all edges from $\cH$ which contain a given vertex (resp. a given pair of vertices) eliminates at most $n^{k-1}$ (resp. $n^{k-2}$) linked edges of $v$.
		Since $\cH' = \cH \cap K_k(G')$, $d, \eps \ll \mu$ and~\ref{itm:entry-points-reg-lem-approx}
		imply that at most $r(\eps n^k + (d+\eps)n^k) \leq \mu n^k/2$ linked edges {of $v$} are destroyed in passing from $\cH$ to $\cH'$.
		We deduce that at least $(\mu/2) n^k$ of these linked edges must be contained in~$\cH'$.
		We let ${\cF_v} \subseteq E(\cJ)$ be the set of edges $K$ such that $v$ has at least {$4 \gamma m^k$} linked edges in the clusters corresponding to $K$.
		Hence the number of linked edges is at most $m^k |{\cF_v}|  +  4\gamma m^k \left(e(\cJ) - |{\cF_v}|\right)$.
		Comparing this to the lower bound while taking into account that $1/t \ll \gamma \ll \mu,1/k$ reveals that $|{\cF_v}| \geq \gamma (rt)^k$.

		Let $\fF = \{\cF_v \colon v \in V(G)\}$.
		Since each ${\cF} \in {\fF}$ is a subset of edges of $\cJ$,
		and $\cJ$ has at most $t^k$ edges,
		we have $|{\fF}| \leq 2^{t^k}$, as required.
	\end{proofclaim}

	\medskip
	\noindent \emph{Step 3: Refining the partition.}
	Consider a collection ${\fF}$ as in \cref{cla:entry-points-course}.
	Note that the only difference to property~\ref{itm:regular-partition+absorber-W} in the lemma statement is the size of ${\fF}$ and the fact that part (iii) of the claim is stated in terms of $m$ as opposed to $n/t$.
		We deal with the latter in Step 4 and focus on the size of $\cF$ for now.
	We will adjust this size by finding a suitable refinement of $\fV$ and then considering the  {corresponding} blow-ups of $R$, $\cJ$ and ${\fF}$.

	Let $s$ be the least integer such that $s \geq t^{k-1}/\eta$ and $s \equiv 1 \bmod k$.
	Set $t^\ast = st$ and note that $t^\ast \leq t^\ast_1$.
	Moreover, $t^k \leq \eta t^\ast$ and $t^\ast \equiv t \equiv q \bmod k$.
	Now consider a random refinement $\fV^*$ of $\fV$ obtained as follows.
	For every $1 \leq i \leq {rt}$, we partition $V_i$ into $\{V_{i,j}\}_{j=1}^s$ by placing each vertex of $V_i$ uniformly at random in one of the parts $V_{i,1},\dots,V_{i,s}$.
	Let $m^\ast$ be the minimum cluster size of $\fV^*$.
	Let $R^\ast$ be the graph on ${rt^\ast}$ vertices induced by $(R,\fV^\ast)$.
	By \cref{prop:random-refinement}, with probability at least $2/3$, we have that
	\begin{enumerate}[{\upshape (R1$'$)}] \addtocounter{enumi}{2}
		\item \label{item:regularity-lemma-regular}  $(G',\fV^*)$ is a $(1+\eps')$-balanced $(\eps',d)$-regular $R^\ast$-partition.
	\end{enumerate}

	Let $\cJ^\ast$ be the $s$-blow-up of $\cJ$.
	Let ${\fF}^\ast = \{{\cF}^\ast \colon {\cF} \in {\fF}\}$  where ${\cF}^\ast \subset \cJ^\ast$ is the subgraph spanned by the edge set $$E({\cF}^\ast)=\{\{(i_1,j_1),\dots,(i_k,j_k)\} \colon \{i_1,\dots,i_k\} \in E({\cF}) \text{ and } 1 \leq j_1, \dots,j_k \leq s\}.$$
	
	\begin{claim}\label{cla:refinement-S}
		With probability at least $2/3$, we have:
		\begin{enumerate}[\upshape (i)]
			\item ${\fF}^\ast$ has at most $2^{\eta t^\ast}$ elements,
			\item each subgraph $\cF^\ast$ in ${\fF}^\ast$ has at least $\gamma (r t^\ast)^k$ edges and
			\item for every vertex $v \in V(G)$, there is an ${\cF}^\ast \in {\fF}^\ast$ such that $v$ has at least $2\gamma (m^\ast)^k$ linked edges ({in {$\cH'$}}) in the clusters of $\fV^\ast$ corresponding to every $K \in E({\cF}^\ast)$.
		\end{enumerate}
	\end{claim}

	\begin{proofclaim}
		We have $|{\fF}^\ast| = |{\fF}| \leq 2^{t^k} \leq 2^{\eta t^\ast}$ {deterministically}, by \cref{cla:entry-points-course} and the choice of $s$ and $t^\ast$.
		Analogously, for any $\cF^\ast \in \fF^\ast$, we have $|\cF^\ast| = |\cF|s^k \geq \gamma (rt^\ast)^k$, as required.

		Now consider a vertex $v \in V(G)$.
		By \cref{cla:entry-points-course}, there is an ${\cF} \in {\fF}$ such that $v$ has at least $4\gamma m^k$ linked edges in {$\cH'$} in the clusters of $\fV$ corresponding {to} every $K \in E({\cF})$.
		Given such $K$, let us write $K=\{i_1,\dots,i_k\}$ and fix $1 \leq j_1, \dots , j_k \leq s$.
		Let $X$ denote the number of linked edges in the clusters of $\fV^*$ corresponding to $\{(i_1,j_1),\dots,(i_k,j_k)\}$.
		So a linked edge of $v$ is counted by $X$ if and only if all of its $k$ vertices are placed into the right cluster in our random partition.
		It follows that $\expectation(X) \geq 4\gamma (m/s)^k$.
		Moreover, changing the outcome of any vertex placement affects $X$ by at most $m^{k-1}$.
		Hence we may apply McDiarmid's inequality (\cref{theorem:mcdiarmid}) with $m^{k-1}$ and $km$ playing the roles of $B$ and $m$ to find that $X \geq 2\gamma (m^\ast)^k$ with probability at least $1-\exp(-\Omega(n))$.
		Since $1/n \ll 1/t \ll 1/k$, taking a union bound over all vertices $v \in {V}$, subgraphs ${\cF} \in {\fF}$, edges $K \in E({\cF})$ and choices of $1 \leq i_1, \dots , i_k \leq s$, yields the desired result.
	\end{proofclaim}

	From now on, fix an outcome of $\fV^*$ (and thus $m^\ast$, $R^\ast$, $\cJ^\ast$ and ${\fF}^\ast$) satisfying \cref{item:regularity-lemma-regular} and the properties of \cref{cla:refinement-S}.

	\medskip
	\noindent \emph{Step 4: Balancing the partition.}
	{Let $\fW$ be the $(t^\ast,r)$-sized partition of $R^\ast$ induced by $(\fV^\ast,\fU')$.}
	Since $(G',\fV^*)$ is an $R^\ast$-partition,
	and $(G',\cH')$ is a $(\mu/2)$-robust {$\fU'$-partite} Hamilton framework,
	it follows that $(R^\ast, \cJ^\ast)$ is a $(\mu/4)$-robust {$\fW$-partite} Hamilton framework by \cref{prop:G-robust-proto-Hamiltonian=>R-robust-proto-Hamiltonian}.
	Moreover, $(R^\ast, \cJ^\ast)$ is also {$(\mu/2,2/n,{\fW})$}-cluster-matchable (and therefore $(\mu/4,2/n,{\fW})$-cluster-matchable) by \cref{prop:cluster-matchable}.

	Finally, since $\fV^*$ is $(1+\eps')$-balanced, we can delete up to $\eps' m^\ast$ vertices of each part of $\fV^*$ in order to ensure that all clusters have common size $m^\ast$.
	We also delete these vertices from $G'$ and $\cH'$.
	Note that the total number of vertices removed this way is bounded by $\eps'  m^\ast {r} t^\ast \leq \eps' n$.
	Since $\eps \ll \eps' \ll d$, $G'$ is still a {$\fU$-partite} $(2\eps',2d)$-approximation of $G$.
	As $\eps' \ll \gamma$, the last property of \cref{cla:refinement-S} still holds with ${(3/2)} \gamma ({n/t^\ast})^k$ in place of $2\gamma (m^\ast)^k$.
	Moreover, $(G',\fV^*)$ is still a $({100 \sqrt{\eps'}},d/2)$-regular $R^\ast$-partition by \rf{proposition:robust-regular}.
	Finally, consider $v \in V(G)$.
	Note that the removal of $\eps' m^\ast$ vertices in the clusters corresponding to $\{(i_1,j_1),\dots,(i_k,j_k)\}$ of $\fV^*$,
	could have destroyed, at most, $k \eps' m^{\ast k} \leq {(1/4) \gamma (n/t^\ast)^k}$ linked edges of $v$ living in those clusters.
	We thus deduce that for every vertex $v \in {V}$, there is an ${\cF} \in {\fF}^\ast$ such that $v$ has at least ${\gamma (n/t^\ast)^k}$ linked edges in the clusters corresponding to every edge $K \in E({\cF})$.
	Hence, we may finish with the parameters $100 \sqrt{\eps'}, t^\ast, t_1^\ast,  \fV^*, R^\ast, {\fF}^\ast, d/2$ playing the roles of $\eps, t,t_1 ,\fV, R, {\fF}, d$, respectively.
		{This finishes Step 4, and the proof, in the case where $(G,\cH)$ is not necessarily aperiodic or zero-free.}
	\medskip

	If $(G,\cH)$ is $\mu$-robust aperiodic or zero-free, we follow the same proof.
	It is easy to see in Step 1 that the property of being aperiodic or zero-free is inherited by $(G',\cH')$.
	In Step 4 the property is again inherited by $(R^\ast, \cJ^\ast)$, this time by \cref{prop:G-robust-proto-Hamiltonian=>R-robust-proto-Hamiltonian}.
	This finishes the proof in all cases.
\end{proof}

\section{Embedding powers of Hamilton cycles}\label{sec:cycle}
In this section, we prove \rf{thm:main-powham}.
In fact, we show a slightly stronger result in which the power of a Hamilton cycle we obtain also contains a predefined set of cliques.
This strengthening will be used in the proof of \rf{thm:main-bandwidth} later.

\begin{theorem}[Distributed Hamiltonicity]\label{thm:cycle-distributed}
	Let $1/n \ll \eta \ll \pi \ll \mu, 1/k$, and let $r \in \{1, k \}$.
	Let $\fU$ be an $(n,r)$-sized partition,
	and let $(G,\cH)$ be a $\mu$-robust $\fU$-partite $k$-uniform Hamilton framework.
	If $r = 1$, suppose in addition that $(G,\cH)$ is $\mu$-robust aperiodic.
	Let ${\fF}$ be a collection of at most $2^{\eta n}$ subgraphs of $\cH$, which each contain at least $\mu n^k$ $k$-edges.
	Then $G$ has as a subgraph the $(k-1)$st power of a Hamilton cycle $C$ such that $K_k(C)$ contains at least $\pi rn$ edges of each element of ${\fF}$.
\end{theorem}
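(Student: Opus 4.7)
The plan is to follow the same two-step regularity-plus-allocation strategy used for \cref{thm:main-powham}, but to thread the distributed family $\fF$ through both steps. To begin, I would apply \cref{lem:regular-partition+absorber} to $(G,\cH)$ with parameters chosen so that $1/n \ll 1/t \ll \eta \ll \pi \ll \mu, 1/k$. This produces a $(\mu/4)$-robust $\fW$-partite reduced Hamilton framework $(R,\cJ)$ (aperiodic when $r=1$) which is cluster-matchable, together with a $\fU$-refining balanced partition $\fV$ and a subgraph $G'\subset G$ that is a $(\eps,4d)$-approximation of $G$ on which $(G',\fV)$ is regular.

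Next I would lift the family $\fF$ to a family $\fF_R$ of subgraphs of $\cJ$. Specifically, for each $\cF\in\fF$, let $\cF_R$ be the set of edges $K=\{i_1,\dots,i_k\}$ of $\cJ$ such that $\cF$ has at least $\Omega((n/(rt))^k)$ of its edges with one vertex in each cluster $V_{i_j}$. Since $e(\cF)\ge\mu n^k$ and there are only $(rt)^k$ possible $k$-tuples of clusters, an averaging argument shows $|\cF_R|\ge\Omega(t^k)$. The family $\fF_R$ has at most $2^{\eta n}$ members (too many in terms of $n$ but well under $2^{\eta' t}$ for a slightly enlarged $\eta'$ since we may shrink $\eta$ freely in Step 1). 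I would then apply \cref{prop:distributed-matching} to $\cJ$ with $\fF_R$, augmented by the entry-point subgraphs from \cref{lem:regular-partition+absorber}\ref{itm:regular-partition+absorber-W}, obtaining a small matching $M_R\subset\cJ$ of size $\le\sqrt{\pi}t$ that contains $\Omega(t)$ edges of each $\cF_R$ and of each entry-point subgraph.

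The core of the argument is to embed a $(k-1)$th power of a Hamilton cycle in $G$ whose clique hypergraph, when projected to $R$, uses every clique of $M_R$. To do this I would carry out the Lemma-for-$C$ allocation: using cluster-matchability of $(R,\cJ)$ and (when $r=1$) aperiodicity, allocate a $(k-1)$th power of a cycle $C^*$ into the $\fV$-blow-up $R^*$ that hits each edge of $M_R$ at least once and covers all but a tiny balanced remainder of each cluster, then use aperiodicity/partiteness together with the shifting argument of Section~2.8 to fix the remaining imbalance. Because each $K\in M_R\cap \cF_R$ corresponds to $\Omega((n/(rt))^k)$ edges of $\cF$ supported on the clusters of $K$, and each allocated $K$-clique in $C^*$ becomes an embedded $k$-clique of $G$ via a super-regular blow-up, choosing the underlying $R'\subset R$ via \cref{lem:kcover} and \cref{prop:super-regularizing-R'} and applying the Blow-Up Lemma (\cref{lem:blow-up}) with image restrictions on the $M_R$-cliques delivers a power of a Hamilton cycle $C$ whose $k$-cliques contain at least $\pi n$ edges of each $\cF$.

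The main obstacle is Step~3/4: simultaneously respecting the prescribed cliques of $M_R$ inside the allocation without destroying the balance needed for the Blow-Up Lemma. The matching $M_R$ eats a small but uneven number of vertices from each cluster of the reduced blow-up, and these must be reintegrated into the cyclic structure; cluster-matchability is exactly the property that lets us rebalance after deletion, while aperiodicity (in the non-partite case) provides the freedom to shift the cyclic indexing by arbitrary residues mod $k$. Everything else---the translation of edge-density from $\cH$ to $\cJ$, the use of entry points to handle exceptional vertices exactly as in the proof of \cref{thm:main-powham}, and the final Blow-Up Lemma step---is a direct adaptation of the proof of \cref{thm:main-powham}, now carrying the extra image restrictions coming from $M_R$.
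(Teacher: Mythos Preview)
Your plan has a genuine gap at the step where you apply \cref{prop:distributed-matching} in the reduced hypergraph $\cJ$. The size bound fails: $\fF$ has up to $2^{\eta n}$ members, and neither the trivial bound $|\fF_R|\le|\fF|$ nor $|\fF_R|\le 2^{|E(\cJ)|}\le 2^{(rt)^k}$ gets you below the $2^{\eta' rt}$ required to run the proposition on $rt$ vertices. Your remark that you ``may shrink $\eta$ freely'' does not help: in the hierarchy $1/n\ll\eta$, the constant $\eta$ is fixed before $n$, so $\eta n$ is unbounded as a function of $t$. More fundamentally, even granting such a matching $M_R\subset\cJ$, routing the allocation through the cliques of $M_R$ and applying the Blow-Up Lemma only forces the cycle to pass through the corresponding \emph{clusters}; the resulting $k$-cliques of $G$ need not lie in $\cF$ (or even in $\cH$). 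Image restrictions attached to $M_R$ constrain clusters, not specific vertex $k$-tuples, so you cannot conclude that $K_k(C)$ contains $\pi n$ edges of each $\cF$.

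The paper avoids both problems by applying \cref{prop:distributed-matching} directly in $\cH'$ (where the bound $2^{\eta n}$ is exactly what is needed), obtaining a matching $M\subset\cH'$ of \emph{actual} edges with at least $\pi n$ edges from each $\cF$ (after filtering to edges that are typical with respect to regularity). Crucially, \cref{lem:lemma-for-C} is stated so as to output, for every $K\in E(\cJ)$, a long subpath $P_K$ of the allocated cycle supported on the clusters of $K$. For each $e\in M$ one then crops a short segment out of the relevant $P_K$ and sets up image restrictions on the $2(k-1)$ neighbouring vertices of that segment, pinning them into the common $G'$-neighbourhood of $e$; the Blow-Up Lemma then fills the gap exactly by $e$. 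This guarantees $M\subset K_k(C)$ after embedding, which is the missing mechanism in your sketch.
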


The proof of \cref{thm:cycle-distributed} rests on the following two lemmas, whose proofs are given in \cref{sec:lemma-for-G-cycle,sec:lemma-for-C}, respectively.

\begin{lemma}[Lemma for $G$ -- cycles]\label{lem:lemma-for-G-cycle}
	Let $1/n \ll 1/t_1 \ll 1/t_0 \ll \eps \ll d \ll \mu, 1/k$, and let $r \in \{1,k\}$.
	Let $\fU$ be an $(n,r)$-sized partition,
	and let $(G,\cH)$ be a $\mu$-robust $\fU$-partite $k$-uniform Hamilton framework.
	If $r = 1$, then $(G, H)$ is in addition $\mu$-robust aperiodic.
	Then there are $t_0 \leq t \leq t_1$,
	a spanning subgraph $G' \subset G$,
	a $\fU$-refining $(1 + \eps)$-balanced partition $\fV$ of $G'$ with $t$ clusters inside each cluster of $\fU$,
	and a graph $R$ on vertex set $\fV$
	such that the following holds.
	Let $\cH' = \cH \cap K_k(G')$, and let $\cJ$ be the $k$-graph induced by $(\cH',\fV)$.
	There also exists a spanning subgraph $\cJ' \subset \cJ$ with $\Delta(\cJ') \leq k^2+1$, so that, for $R'=\partial_2 \cJ'$, we have
	\begin{enumerate}[\upshape (G1)]
		\item \label{itm:lemma-for-G-cycle-approx} $e(G') \geq (1-\mu/2)e(G)$,
		\item \label{itm:lemma-for-G-cycle-reg} $(G',\fV)$ is an $(\eps,d)$-regular $R$-partition, which is $(\eps,d)$-super-regular on~$R'$ and
		\item \label{itm:lemma-for-G-cycle-frame} $(R,\cJ)$ is a $(\mu/4)$-robust {$\fW$-partite} $k$-uniform  Hamilton framework and $(\mu/4,2/n,\fW)$-cluster-matchable 
		      where $\fW$ is the $(t,r)$-sized partition induced by $(\fV,\fU)$.
	\end{enumerate}
	Moreover, if $r=1$, then $(R,\cJ)$ is $(\mu/4)$-robust aperiodic as well.
\end{lemma}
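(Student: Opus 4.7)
The strategy combines the reduced-framework construction of \rf{lem:regular-partition+absorber}, the bounded-degree cover \rf{lem:kcover}, and the super-regularization result \rf{prop:super-regularizing-R'}. Choosing auxiliary constants with $\eps_0 \ll \eps$ and $d \leq d_0 \ll \mu, 1/k$, I first apply \rf{lem:regular-partition+absorber} to $(G, \cH)$, discarding the entry-point family $\fF$ (not needed here). The output is an integer $t_0 \leq t \leq t_1$, a $\fU$-partite $(\eps_0, 4 d_0)$-approximation $G_0 \subseteq G$, a balanced $\fU$-refining partition $\fV_0$ of $V(G_0)$ with $t$ clusters in each part of $\fU$, a reduced graph $R$ with $(t,r)$-sized partition $\fW$ induced by $(\fV_0, \fU)$, and the induced $k$-graph $\cJ_0$. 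Moreover, $(G_0, \fV_0)$ is an $(\eps_0, d_0)$-regular $R$-partition and $(R, \cJ_0)$ is a $(\mu/4)$-robust $\fW$-partite Hamilton framework that is $(\mu/4, 2/n, \fW)$-cluster-matchable (and aperiodic if $(G, \cH)$ was).

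Because $\cJ_0$ has a perfect fractional matching (it is a framework), \rf{lem:kcover} yields a spanning subgraph $\cJ' \subseteq \cJ_0$ with $\Delta(\cJ') \leq k^2+1$; set $R' := \partial_2 \cJ'$, for which $\Delta(R') \leq (k-1)(k^2+1)$. I then invoke \rf{prop:super-regularizing-R'} on $(G_0, \fV_0)$ and the bounded-degree skeleton $R'$ (the hierarchy $\eps_0 \ll d_0, 1/\Delta(R')$ holds by choice of constants), obtaining a balanced family $\fV_1$ of subsets of the clusters of $\fV_0$, each of size at least $(1-\sqrt{\eps_0}) n / (rt)$, such that $(G_0[\fV_1], \fV_1)$ is $(2\eps_0, d_0)$-regular on $R$ and $(2\eps_0, d_0)$-super-regular on $R'$. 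A standard greedy reallocation then returns the vertices of $V(G) \setminus \bigcup \fV_1$ to the clusters of $\fV_1$: each vertex previously removed in the super-regularization step is placed back into its original $\fV_0$-cluster, while each vertex from $V(G) \setminus V(G_0)$ is placed into a sub-cluster of its $\fU$-part where its behaviour is typical. By \rf{proposition:robust-regular} the resulting partition $\fV$ is $(1+\eps)$-balanced and retains $(\eps, d)$-regularity on $R$ and $(\eps, d)$-super-regularity on $R'$. Setting $G' := G[\bigcup \fV]$ minus the few edges that violate the $R$-partition produces the desired spanning subgraph $G' \subseteq G$; finally let $\cH' := \cH \cap K_k(G')$ and $\cJ$ be the $k$-graph induced by $(\cH', \fV)$.

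Condition~\ref{itm:lemma-for-G-cycle-reg} is then immediate. For~\ref{itm:lemma-for-G-cycle-approx}, at most $O_k(d_0 + \sqrt{\eps_0}) n^2$ edges of $G$ are lost during this construction, and since $\mu$-robustness of $(G, \cH)$ implies $\delta(G) = \Omega_{\mu, k}(n)$ and hence $e(G) = \Omega_{\mu, k}(n^2)$, this loss stays below $(\mu/2)e(G)$. For~\ref{itm:lemma-for-G-cycle-frame}, $G'$ is a $(\mu/2, \mu/2)$-approximation of $G$, so $(G', \cH')$ is a $(\mu/2)$-robust $\fU$-partite Hamilton framework, and \rf{prop:G-robust-proto-Hamiltonian=>R-robust-proto-Hamiltonian} together with \rf{prop:cluster-matchable} applied to the balanced $R$-partition $(G', \fV)$ deliver the $(\mu/4)$-robustness and cluster-matchability of $(R, \cJ)$. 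Moreover, $\cJ' \subseteq \cJ$: every $K \in \cJ'$ is witnessed by an $\cH$-edge $e \in \cH_0$ whose vertices are kept in the $K$-clusters throughout the reallocation, and the $k$-clique spanned by $e$ lies in $G'$ because all its pairwise edges are between $R$-adjacent clusters. Aperiodicity is preserved at every step.

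The main technical hurdle is the reallocation of exceptional vertices (those in $V(G) \setminus V(G_0)$ together with those removed during super-regularization) without destroying the degree conditions built into super-regularity. I plan to handle this by a standard greedy/probabilistic argument: each exceptional vertex $v$ is placed into a sub-cluster of its $\fU$-part in which $v$ has typical degrees to every $R'$-neighbouring cluster (such a sub-cluster exists because all but an $O(\eps_0)$-fraction of vertices are typical with respect to any given regular pair), and then a single application of \rf{proposition:robust-regular} upgrades the slightly perturbed pairs back to the required $(\eps, d)$-super-regular density.
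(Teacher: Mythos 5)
Your overall skeleton matches the paper's: apply \rf{lem:regular-partition+absorber}, extract a bounded-degree cover of $\cJ$ via \rf{lem:kcover} to define $\cJ'$ and $R'=\partial_2\cJ'$, super-regularise on $R'$ with \rf{prop:super-regularizing-R'}, and then reinsert the exceptional vertices. But the step you yourself identify as the main technical hurdle — reinserting the exceptional vertices while preserving $(\eps,d)$-super-regularity on $R'$ — is exactly where your argument breaks, and the tool you discard at the outset (the entry-point family $\fF$ of \rf{lem:regular-partition+absorber}) is exactly what is needed to fix it.

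Your plan places each exceptional vertex $v$ into a cluster where "its behaviour is typical," justified by the fact that all but an $O(\eps_0)$-fraction of vertices in a regular pair are typical. That statement is about vertices already inside the clusters of the pair; it says nothing about $v$. The exceptional vertices are precisely those discarded by the Regularity Lemma or removed during super-regularisation because they may be atypical, and a $\mu$-robust framework gives no lower bound on $\deg_G(v,V_j)$ for any particular cluster $V_j$ — the only structure guaranteed at $v$ is its $\mu n^k$ linked edges in $\cH$. Even if $v$ happens to have large degree into some clusters, super-regularity on $R'$ requires large degree into \emph{every} $R'$-neighbour of the cluster receiving $v$, and $R'$ has already been fixed by your choice of $\cJ'$; there is no reason these two sets of clusters align. (Putting the super-regularisation rejects "back into their original clusters," as you also propose, reintroduces the atypical vertices that \rf{prop:super-regularizing-R'} removed and directly destroys super-regularity.) The paper resolves this by keeping $\fF$, applying \rf{prop:distributed-matching} to it to obtain a matching $\cM\subset\cJ$ such that every vertex of $G$ has many linked edges sitting in the clusters of many edges of $\cM$, and — crucially — taking $\cJ'=\cM\cup\cQ$ where $\cQ$ is the bounded-degree cover of $\cJ-V(\cM)$. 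Then each exceptional $v$ is assigned a representative index $i_v$ inside an edge $K_v\in\cM$ so that $v$ has $\Omega_\gamma(m)$ neighbours in each cluster of $K_v\setminus\{i_v\}$; since $K_v$ is the unique edge of $\cJ'$ at $i_v$, these are exactly the $R'$-neighbours of $i_v$, and deleting all other edges at $v$ keeps the $R$-partition intact. Without reserving $\cM$ inside $\cJ'$ and without the linked-edge information from $\fF$, the reinsertion cannot be made to respect $R'$, so your proof as written does not go through.
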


\begin{lemma}[Lemma for $C$]\label{lem:lemma-for-C}
	Let $1/n \ll \pi \ll 1/t \ll \eps \ll \alpha \ll \mu, 1/k$, and let $r \in \{1,k\}$.
	
	Let $\fU$ be an $(n,r)$-sized partition and let $G$ be a $\fU$-partite graph.
	Let $\fV$ be a $\fU$-refining $(1+\eps)$-balanced partition with $t$ clusters inside each cluster of $\fU$,
	and let $R$ be a graph on $\fV$ such that $(G, \fV)$ is an $R$-partition.
	Let $\fW$ be the $(t,r)$-sized partition of $R$ induced by $(\fV,\fU)$.
	Let $\cJ \subset K_k(R)$.
	Suppose that $(R,\cJ)$ is a $\mu$-robust {$\fW$-partite} $k$-uniform  Hamilton framework and  $(\mu,2/n,{\fW})$-cluster-matchable.
	If $r = 1$, suppose in addition that $(R,\cJ)$ is aperiodic.
	Let $\cJ' \subset \cJ$ be a spanning subgraph with $\Delta({\cJ'}) \leq k^2+1$.
	
	Then there is a size-compatible (with $\fV$) vertex partition $\fX = \{X_i\}^{rt}_{i=1}$ of the vertex set of the $(k-1)$st power of a cycle $C$, a family of subsets $\tfX = \{ \tX_i \}_{i=1}^{rt}$ of $V(C)$,
	and families of $(k-1)$st powers of paths $\{ P_K \}_{K \in E(\cJ)}$ which are subgraphs of $C$ such that
	\begin{enumerate}[\upshape ({C}1)]
		\item \label{itm:allocate-vertices-partition} $(C,\fX)$ is an $R$-partition,
		\item \label{itm:allocate-vertices-buffer} $\tfX$ is an $(\alpha, R')$-buffer for $(C, \fX)$  where $R' = \partial_2 \cJ'$,
		\item \label{itm:allocate-vertices-paths} for each $K \in E(\cJ)$, ${P}_K$ has length $\pi n$ and ${P}_K \subset C[\bigcup_{i  \in K} X_i]$, and
		\item \label{itm:allocate-vertices-disjoint} the paths $\{ P_K \}_{K \in E(\cJ)}$ are pairwise vertex-disjoint.
	\end{enumerate}
\end{lemma}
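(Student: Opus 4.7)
The plan is to build a closed tight walk $W$ in $\cJ$ of length exactly $n$ in which each vertex $i$ appears exactly $|V_i|$ times, and which contains, as pairwise disjoint subwalks, both designated paths realising the $P_K$ and long $\cJ'$-subwalks for the buffer. Interpreting $W$ as a cyclic ordering of vertices $u_1,\dotsc,u_n$, where $u_j$ is placed in $X_i$ whenever $i$ is the $j$-th vertex of $W$, the $(k-1)$-th power $C$ of this cycle, equipped with $\fX$, is automatically an $R$-partition (since every $k$-window in $W$ is an edge of $\cJ\subseteq K_k(R)$) and size-compatible with $\fV$.

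In Stage~1, I build a short backbone $W_0$. For each $K=\{v_{K,1},\dotsc,v_{K,k}\}\in E(\cJ)$, I construct a subwalk $Q_K$ of length $\pi n+k-1$ by iterating $v_{K,1}v_{K,2}\cdots v_{K,k}$ cyclically, so that every $k$-window of $Q_K$ equals $K$; this $Q_K$ will yield the path $P_K$. For each vertex $i$, using that $\cJ'$ covers every vertex (by \rf{lem:kcover}), I pick some $K'_i\in E(\cJ')$ containing $i$ and form a cycling walk $T_i$ of length $k\lceil\alpha|V_i|\rceil$ iterating around $K'_i$; every $k$-window of $T_i$ is then an edge of $\cJ'$. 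I then glue all $Q_K$ and $T_i$ into a single closed tight walk $W_0$ by inserting short connector walks between them, whose existence follows from tight connectivity of $\cJ$ via \cref{proposition:orderedspanningwalk,prop:closed-walk-bounded-length}. The total length $\ell_0=O(\pi n\,t^k+\alpha n\,kt+t^{2k})$ is much smaller than $n$ by the hierarchy.

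Stage~2 balances the vertex counts. Setting $a_i$ as the multiplicity of $i$ in $W_0$ and $b_i:=|V_i|-a_i$, the choice of constants makes $b_i>0$ for all $i$ and $(b_i)$ a $(1+\mu/4)$-balanced vector; in the $r=k$ case, using that every closed tight walk visits each part of a $k$-partite $\cJ$ equally often, $\sum_{i\in U}b_i$ is constant across $U\in\fU$ (trivial for $r=1$). I then invoke the $(\mu/4,2/n,\fW)$-cluster-matchability of $(R,\cJ)$ with targets $(b_i)$, producing a matching in the blow-up of $\cJ$ realising $(b_i)$ up to defects of $O(t^{k-1})$ per cluster. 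Each matched edge of $\cJ$ is turned into a short tight subwalk and stitched onto $W_0$ via tight connectors. The remaining $O(t^{k-1})$ per-cluster deficits are cleared by \rf{prop:flow}: \cref{prop:reachability} (using aperiodicity when $r=1$, or the $k$-partite structure when $r=k$) guarantees $\hat{\cJ}[U]$ is connected for each $U\in\fU$, so integer edge weights on $\cJ$ realising the deficit exist, and each nonzero weight becomes a short tight subwalk grafted onto the walk. The resulting $W$ has length exactly $n$ and the correct multiplicities.

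Finally, the preserved $Q_K$ subwalks directly yield the pairwise disjoint $(k-1)$-th powers of paths $P_K\subset C$. I set $\tX_i$ to consist of the $u_j$ for which the position $j$ lies strictly inside some $T_{i'}$ (at distance more than $k-1$ from both endpoints of that $T_{i'}$) and at $i$; by construction $|\tX_i|\ge\alpha|X_i|$. For every such $u_j$, every pair of clusters at $C$-distance at most $k-1$ from $X_i$ in the 2-neighbourhood of $u_j$ is a pair of consecutive clusters within $T_{i'}$, which belongs to $\partial_2\cJ'=R'$, verifying the $(\alpha,R')$-buffer property. The main obstacle is ensuring that the many grafting operations of Stage~2 do not disturb the interior of the $Q_K$ or $T_i$ subwalks: this is handled by performing all graftings exclusively through the connectors built into $W_0$, whose length budget of $O(t^{2k})$ amply accommodates all $O(t^k+t^{k-1})$ graftings.
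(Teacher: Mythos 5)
Your outline follows essentially the same route as the paper's proof: a short backbone walk visiting every edge of $\cJ$ with reserved segments for the $P_K$ and for $\cJ'$-confined buffer stretches, a bulk allocation via cluster-matchability, and a final correction via \cref{prop:reachability} and \cref{prop:flow}. (The paper phrases this as a tight Hamilton cycle in the blow-up $\cJ^\ast$ and encodes all lengths in a single vector $\vec w$ with $A(\vec w_1+\vec w)=\vec b$, but that is only a change of bookkeeping.)

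There is, however, one genuine gap: you never arrange the length of the walk modulo $k$, and without this the construction cannot terminate at length exactly $n$ when $r=1$. Every grafting operation in Stage~2 — iterating a matched edge, or realising a weight from \cref{prop:flow} — changes the total length by a multiple of $k$, and \cref{prop:flow} itself requires $\sum_{v\in U}\vec b(v)=0$, i.e.\ the walk must already have the correct \emph{total} count on each part before the flow correction is applied. For $r=k$ this is automatic, but for $r=1$ the residual $n-\ell'$ after your backbone and matching insertions need not be divisible by $k$, and then no sequence of further insertions reaches length $n$. This is precisely where aperiodicity is needed beyond the connectivity of $\hat\cJ$: the paper invokes the ``moreover'' clause of \cref{prop:closed-walk-bounded-length} to choose the backbone with length congruent to $n$ modulo $k$ (equivalently, leaving a number of uncovered blow-up vertices divisible by $k$), and only then zeroes out the total with a single-edge adjustment before applying \cref{prop:flow}. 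You should insert this step into Stage~1. A second, cosmetic point: trimming only $k-1$ positions from each end of $T_{i'}$ does not suffice for the buffer property, since the condition on $x\in\tX_i$ involves vertices $z$ at $C$-distance up to $2(k-1)$ from $x$; trim $2k-2$ positions as in the paper.
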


Assuming the validity of these two lemmas, we now prove the main result of this section.

\begin{proof}[Proof of \cref{thm:cycle-distributed}]
	We introduce $\eta,\pi,\eps,\eps',d,\alpha>0$ and $t_1,t_0 \in \NATS$ to establish the following constant hierarchy
	\begin{align}\label{equ:cycle-distributed-constant-hierarchy}
		1/n \ll  \eta \ll \pi \ll 1/t_1 \ll 1/t_0 \ll  \eps \ll \eps' \ll d,\alpha  \ll  \mu \ll 1/k. \tag{$\ll$}
	\end{align}
	More precisely, given $k\geq 2$, we can assume that $\mu$ is small enough to satisfy the upcoming inequalities.
	Next, choose $d >0$ to satisfy \cref{lem:lemma-for-G-cycle}.
	For $k$ and $\mu/4$, choose $\alpha > 0$ to satisfy \cref{lem:lemma-for-C}.
	We select $\eps' \leq \min\{\mu/2, d(1-(3/4)^{1/(k-1)}) \}$ which is {in addition} sufficiently small so that \cref{prop:regularity-joint degree} is valid when the input is $k$ and $d$.
	Given $k, d, \eps'$, \cref{prop:regularity-joint degree} also outputs a suitable $\eps_0$ (in place of $\eps$).
	Let $\zeta = 1/2$.
	Then select $\eps_1, \rho > 0$, so that \cref{lem:blow-up} (Blow-Up Lemma) can be applied with {$\Delta = 3(k-1)$, $\Delta_J=k-1$}, $\DeltaRp={k^3}$, $\kappa = 2$,
	$\alpha$, $\zeta$ and $d$.
	Select also $\eps_2$ small enough to serve as input for \cref{lem:lemma-for-G-cycle} (given $\mu, k, d$) and $\eps_3$ to serve as an input for \cref{lem:lemma-for-C} (given $\mu/4$, $k$, $2\alpha$ in place of $\mu, k, \alpha$), and let $\eps = \min \{ \eps_0, \eps_1, \eps_2, \eps_3, \mu/4, 1/2 \}$.

	Choose $t_0$ that satisfies \cref{lem:lemma-for-G-cycle} with the above constants.
	Given this $t_0$, the hierarchy in \cref{lem:lemma-for-G-cycle} outputs a $t_1$.
	Handing $t_1$ to \cref{lem:lemma-for-C} (in place of $t$) outputs an appropriate {$\pi_1$} (in place of $\pi$).
	We give $k$ and $\mu/(4 t_1^k)$ as an input (in place of $k$ and $\gamma$) to \rf{prop:distributed-matching} and obtain $\pi_2$ as an output (in place of $\pi$).
	We let $\pi = \min\{(\pi_1/3k)^2, \pi_2, (\rho/(2kt))^2\}$.
	With this choice of $\pi$, the hierarchy in \cref{prop:distributed-matching} outputs $\eta$.
	Finally, we choose $n$ large enough for \cref{lem:blow-up,lem:lemma-for-G-cycle,lem:lemma-for-C} and \cref{prop:distributed-matching} are satisfied (together with the above choices).

	Now suppose that $G$, $\fU$ and $\fF$ meet the conditions of \cref{thm:cycle-distributed}.
	Namely, $(G,\cH)$ is a $\mu$-robust $\fU$-partite $k$-uniform Hamilton framework  {where $\fU$ is an $(n,r)$-sized partition of $G$.}
	In addition, if $r = 1$ then $(G,\cH)$ is $\mu$-robust aperiodic.
	Also ${\fF}$ is a collection of at most $2^{\eta n}$ subgraphs of $\cH$, which each contain at least $\mu n^k$ edges.
	We have to show that $G$ has a tight Hamilton cycle $C$, which contains at least {$\pi n$} edges of each element of ${\fF}$.

	\medskip
	\noindent \emph{Step 1: Applying the \nameref{lem:lemma-for-G-cycle}.}
	By \cref{lem:lemma-for-G-cycle}, there are $t_0 \leq t \leq t_1$,
	a spanning subgraph $G' \subset G$,
	a $\fU$-refining $(1 + \eps)$-balanced partition $\fV=\{V_i\}_{i=1}^{{rt}}$ of $G'$ with $t$ clusters in each part of $\fU$,
	a graph $R$ on $\fV$ such that the following holds.
	Let $\cH' = \cH \cap K_k(G')$, and let $\cJ$ be the $k$-graph induced by $(\cH',\fV)$.
	There also exists a spanning subgraph $\cJ' \subset \cJ$ with $\Delta(\cJ') \leq k^2+1$, so that, for $R'= \partial_2 \cJ'$, we have
	\begin{enumerate}[\upshape (G1)]
		\item $e(G') \geq (1-\mu/2)e(G)$,
		\item $(G',\fV)$ is an $(\eps,d)$-regular $R$-partition, which is $(\eps,d)$-super-regular on~$R'$, and
		\item $(R,\cJ)$ is a $(\mu/4)$-robust {$\fW$-partite} $k$-uniform Hamilton framework and  {$(\mu/4,2/n,\fW)$-cluster-matchable} 
		      where $\fW$ is the $(t,r)$-sized partition induced by $(\fV,\fU)$.
	\end{enumerate}
	Moreover, if $(G,\cH)$ is $\mu$-robust aperiodic, then $(R,\cJ)$ is $(\mu/4)$-robust aperiodic as well.
	Note that $\Delta(R') \leq (k-1)(k^2+1) \leq k^3$.

	\medskip
	\noindent \emph{Step 2: Applying the \nameref{lem:lemma-for-C}.}
	Next, we apply \cref{lem:lemma-for-C} (with $\pi_1$ {and $2\alpha$}  in place of $\pi$ {and $\alpha$}) to obtain a size-compatible (with $\fV$) vertex partition $\fX=\{X_i\}_{i=1}^{{rt}}$ of the vertex set of the $(k-1)$st power of a cycle $C$, a family $\tfX = \{ \tX_i \}_{i=1}^{rt}$, and a family of $(k-1)$st powers of paths $\{ P_K \}_{K \in E(\cJ)}$ which are subgraphs of $C$ such that
	\begin{enumerate}[\upshape ({C}1)]
		\item $(C,\fX)$ is an $R$-partition,
		\item \label{itm:buffer-from-lemmas-for-C} $\tfX$ is a $(2 \alpha, R')$-buffer for $(C, \fX)$,
		\item \label{itm:paths-from-lemmas-for-C} for each $K \in E(\cJ)$, ${P}_K$ has length $\pi_1 n$ and ${P}_K \subset C[\bigcup_{i  \in K} X_i]$, and
		\item  the paths $\{ P_K \}_{K \in E(\cJ)}$ are pairwise vertex-disjoint.
	\end{enumerate}

	\medskip
	\noindent \emph{Step 3: Fixing representatives of $\fF$.}
	Next, we find a matching $M \subset \cH'$ of bounded size, which contains many edges of every subgraph of $\fF$, as follows.
	Fix a subgraph $\cF \in \fF$, and let $\cF_1 = \cF \cap \cH'$.
	Since $\cF$ has at least $\mu n^k$ edges and
	$e(G') \geq (1-\mu/2)e(G)$, it follows that $\cF_1$ retains at least $(\mu/2)n^k$ edges.
	(Here we used that each edge of $G$ is in at most $n^{k-2}$ edges of $\cH$.)
	Moreover, by the pigeonhole principle  and since $\cJ$ has at most $t^k$ edges,  there is an edge $K_{\cF}$ in ${\cJ}$ such that there are at least $(\mu/2) (n/t)^k$ edges $\cF_2 \subset \cF_1$ which are partite to the clusters of $K_{\cF}$.

	Let us call an edge $e$ of $\cF_2$ \emph{typical},
	if every $k-1$ vertices of $e$ share at least $(d-\eps')^{k-1} n/t$ common neighbours in $G'$ in the cluster of $K_{\cF}$ that they miss.
	By \rf{prop:regularity-joint degree}, all but at most $\eps' (n/t)^{k}$ edges of $\cF_2$ are typical.\footnote{
			Formally, \cref{prop:regularity-joint degree} requires a balanced partition.
			Since our partition is $(1+\eps')$-balanced, we may adjust this by deleting a few vertices, and hide the quantitative differences in our choice of constants.
		}
	We write $\cF_3 \subset \cF_2$ for the subgraph of typical edges of $\cF_2$.
	From the choice of $\eps'$ and $t \leq t_1$ we note that $|\cF_3| \ge |\cF_2| - \eps' (n/t)^{k} \ge (\mu/4)(n/t)^k \ge (\mu/4) (n/t_1)^k$.
	Let $\fF' = \{ \cF_3 \colon \cF \in \fF \}$.
	It follows by \rf{prop:distributed-matching} applied with $\eta,\pi r$ and $\mu/(4t_1^k)$ playing the role of $\gamma$ that there is a matching ${M} \subset \cH'$ of size at most $\sqrt{\pi}n$ such that ${M}$ has at least $\pi r n$ edges of every subgraph $\cF_3 \in \fF'$.

	For each $e \in M$, let $K$ be the edge of {$\cJ$} whose clusters of $\fV$ contain~$e$,
	and select a $(k-1)$st power of path $P_e \subseteq P_K \subseteq C$
	of length $3k$.
	This can be done in such a way that the family $\{ P_e \}_{e \in M}$ consists of pairwise vertex-disjoint powers of paths.
	Indeed, by the choice of $\pi$, we have $\pi_1 n \ge 3 k \pi^{1/2} n$;
	thus property~\ref{itm:paths-from-lemmas-for-C} implies that for each $K \in E({\cJ})$, the associated power of path $P_K$ has length at least $3k \pi^{1/2}n \geq 3 k e(M)$,
	where the latter inequality follows from $e(M) \leq \sqrt{\pi} n$.

	\medskip
	\noindent \emph{Step 4: Applying the Blow-Up Lemma.}
	We finish the proof with an application of the Blow-Up Lemma (\cref{lem:blow-up}).
	Recall that $\{\tilde{X}_i\}_{i=1}^{rt}$ is a $(2\alpha, R')$-buffer for $C$.
	To finish, we define a subgraph $C' \subset C$, a family of image restrictions $\fI=\{I_x\}_{x\in V(C)}$ and a family of restricting vertices $\fJ=\{J_x\}_{x\in V(C)}$.
	The idea is to `crop out' the middle part of each power of a path $P_e$,
	and ensure that the gap will be mapped on the edge $e \in E (M)$ via an image restriction.

	More precisely, for every edge $e=\{v_1,\dots,v_k\}$ in $M$ whose corresponding power of a path $P_e \subseteq C$ is given by $P_e=(x_1,\dots,x_k,y_1,\dots,y_k,x_{k+1},\dots,x_{{2k}})$, let $Y_e = \{y_1, \dotsc, y_k\} \subseteq V(P_e)$.
	We define $C' \subseteq C$ by $C' = C - \bigcup_{e \in {E(M)}} Y_e$.
	Given this, the restriction pairs for $x_1,\dots,x_k$ are defined by setting $J_{x_j} = e \sm \{v_j\}$ and $I_{x_j} = \bigcap_{v \in J_{x_j}} N_{G'}(v) \cap V_i$ for $1 \leq j \leq k$ where $i$ is the index of the cluster of $\fX$ that contains $x_j$.
	Note that since $e$ is typical, we have $|I_{x_j}| \geq (d-\eps')^{k-1} n/t \geq \zeta d^{k-1} |V_i|$, where in the inequality we used that $\zeta = 1/2$, that $\fV$ is $(1+\eps)$-balanced, $\eps \leq 1/2$ and the choice of~$\eps'$.
	The restriction pairs for $x_{k+1},\dots,x_{2k}$ are defined analogously by setting $J_{x_{k+j}} = e \sm \{v_j\}$ and $I_{x_{k+j}} = \bigcap_{v \in J_{x_{k+j}}} N_{G'}(v) \cap V_{i}$ for $1 \leq j \leq k$ where $i$ is the index of the cluster of $\fX$ that contains $x_{k+j}$.
	For the vertices $x \in V(C')$ not covered by any of the paths $P_e$ with $e \in M$, we simply set $J_x=\es$ and  $I_x=V_{i}$  where $i$ is the index of the cluster of $\fX$ that contains $x$.
	
	Let $\fX'$ be obtained from $\fX$ via restriction to $V(C')$.
	So $\{\tilde{X}_i\}_{i=1}^{rt}$ is still a $(\alpha, R')$-buffer for $(C',\fX')$ by choice of $\pi$ and $\alpha$.
	We claim that $\fI$ and $\fJ$ form a $(\rho, \zeta, \Delta, \Delta_J)$-restriction pair for $(C',\fX')$ as formulated in \cref{def:restrict}.
	Indeed, part~\ref{itm:restrict:Xis} is satisfied because the total number of image restricted vertices is $2|v(M)| \leq 2k \sqrt{\pi} n$, and 
	thus certainly there are at most {$2 k \sqrt{\pi} n  \leq \rho |X_i'|$} image restricted vertices in each $X_i'$ (the inequality follows from the choice of $\pi$).
	Part~\ref{itm:restrict:sizeIx} has already been verified above using the typicality of the edges of $M$.
	For part~\ref{itm:restrict:Jx}, it suffices to note that $|J_x|+\deg_{C'}(x) \leq 3(k-1) = \Delta$.
	Finally, part~\ref{itm:restrict:DeltaJ} is also satisfied since each vertex of $V(C')$ that is covered by $M$ appears in $k-1$ of the sets of $\fJ$, and vertices not covered by $M$ appear in none of them.
	Hence $\fI$ and $\fJ$ present a $(\rho, \zeta, \Delta, \Delta_J)$-restriction pair as claimed.

	Finally, we apply \cref{lem:blow-up} (Blow-Up Lemma) with $G=G' - V(M)$ and $H=C'$ together with the above chosen parameters (note that $\kappa \ge 1+ \eps$) to obtain an embedding $\psi\colon V(C')\to V(G)$ such that $\psi(x) \in I_x$ for each $x \in V(\cH)$.
	It follows that $G$ contains the $(k-1)$st power of a Hamilton cycle $C''$, where $K_k(C'')$ contains all edges of $M$.
	By the construction of $M$, we deduce $K_k(C'')$ contains at least $\pi r n$ edges of each element of $\fF$, as required.
\end{proof}

\subsection{Proof of the \texorpdfstring{\nameref{lem:lemma-for-G-cycle}}{Lg}}\label{sec:lemma-for-G-cycle}
In the following, we show \cref{lem:lemma-for-G-cycle}.

\begin{proof}[Proof of~\cref{lem:lemma-for-G-cycle}]
	We establish the following constant hierarchy for the remainder of the proof{.}
	\begin{align}\label{ref:constant-hierarchy}
		1/n \ll 1/t_1 \ll 1/t_0 \ll  \eps \ll \eps' \ll \eps''\ll d'' \ll d' \ll d \ll \eta  \ll \pi \ll \gamma \ll \mu, 1/k. \tag{$\ll_G$}
	\end{align}
	In particular, we choose $\gamma,\pi,\eta,d,\eps,t_0,t_1,n$ so that we can apply \cref{prop:distributed-matching} (with $rt$ and $\gamma$ playing the roles of $n$ and $\gamma$),  \cref{prop:super-regularizing-R'} (with $t_0 \leq t \leq t_1$ and $\Delta_{R'}=k^3$) and \cref{lem:regular-partition+absorber}.
	We also choose these constants such that the various inequalities along the proof are satisfied.
	Now let $\fU$ be an $(n,r)$-sized partition, and let $(G,\cH)$ be a $\mu$-robust $\fU$-partite $k$-uniform Hamilton framework.
	If $r = 1$, we assume that $(G, H)$ is $\mu$-robust aperiodic.
	\medskip

	\noindent \emph{Step 1: Establishing a regular partition.}
	We begin by applying \rf{lem:regular-partition+absorber} to $(G,\cH)$ and $\fU$.
	Hence, there are $t_0 \leq t \leq t_1$, a subgraph $G' \subset G$ with a balanced $\fU$-refining partition $\fV=\{V_i\}_{i=1}^{rt}$ with $t$ clusters  in each part of $\fU$ each of size $m$,
	a graph $R$ on $\fV$, such that
	\begin{enumerate}[(R1)]
		\item \label{itm:lemma-for-G-cycle-G'} $G'$ is an $(\eps,4d)$-approximation of $G$ and
		\item $(G',\fV)$ is a $(\eps,d)$-regular $R$-partition.
	\end{enumerate}
	Moreover, if we let $\cH' = \cH \cap K_k(G')$,
	we let $\cJ$ be the $k$-graph induced by $(\cH',\fV)$,
	and we let $\fW$ be the $(t,r)$-sized partition induced by $(\fV,\fU)$, then we also have that
	\begin{enumerate}[(R1),resume]
		\item \label{itm:lemma-for-G-cycle-RJ} $(R,\cJ)$ is a $(\mu/4)$-robust $\fW$-partite Hamilton framework,
		\item $(R,\cJ)$ is $(\mu/4,2/n,\fW)$-cluster-matchable, and
		\item if $r = 1$, $(R,\cJ)$ is a $(\mu/4)$-robust aperiodic $k$-uniform  Hamilton framework.
	\end{enumerate}
	Moreover, there is a collection ${\fF}$ of subgraphs of $\cJ$ such that
	\begin{enumerate}[(R1),resume]
		\item \label{itm:lemma-for-G-cycle-W}
		      \begin{enumerate}[(i)]
			      \item ${\fF}$ has at most {$2^{\eta t}$} elements,
			      \item each subgraph in $\fF$ has at least $\gamma (rt)^k$ edges,
			      \item for every vertex $v \in V({G})$, there is an $\cF \in {\fF}$ such that $v$ has at least $\gamma (n/t)^k$ linked edges {in $\cH'$} in the clusters of $\fV$ corresponding to every $K \in E({\cF})$.
		      \end{enumerate}
	\end{enumerate}

	We then apply \rf{prop:distributed-matching} to $\cJ$ and $\fF$.
	By the constant hierarchy~\eqref{ref:constant-hierarchy}, we obtain
	\begin{enumerate}[\upshape (R1$'$)]\addtocounter{enumi}{3}
		\item \label{itm:proof-lfg-cycle-absorbing-matching} a matching $\cM\subset \cJ$ of size at most $\sqrt{\pi}rt \leq {(\mu/(8k))} t$ such that $\cM$ has at least $\pi rt$ edges of every subgraph ${\cF} \in {\fF}$.
	\end{enumerate}

	\medskip
	\noindent \emph{Step 2: Finding $R'$.}
	We start by selecting a subhypergraph $\cQ \subset \cJ-V(\cM)$ of bounded maximum degree.
	Recall that $(R,\cJ)$ is a $(\mu/4)$-robust {$\fW$-partite} $k$-uniform  Hamilton framework by \cref{itm:regular-partition+absorber-R-framework}.
	Since $\cM$ has size at most $\mu t/(8k)$, we deduce $|V(\cM)| \leq \mu t / 8$.
	Therefore, we get that $(R-V(\cM), \cJ - V(\cM))$ is a $(\mu/8)$-robust {$\fW'$-partite} Hamilton framework, {where $\fW'$ is obtained by deleting $V(\cM)$ from the parts of $\fW$.}
	In particular, the $k$-graph $\cJ - V(\cM)$ must have a perfect fractional matching.
	By \rf{lem:kcover}, $\cJ-V(\cM)$ contains a spanning subhypergraph $\cQ \subset \cJ-V(\cM)$ such that each vertex of $\cQ$ is in at most $k^2+1$ edges.

	Let $\cJ' = \cM \cup \cQ$ and $R' = \partial_2 \cJ'$.
	Note that $\Delta(R') \leq (k^2+1)(k-1) \leq k^3$.

	\medskip
	\noindent \emph{Step 3: Super-regularising $R'$.}
	Next, we find a family of subsets in the partition $\fV$, which has suitable super-regular properties.
	We apply \rf{prop:super-regularizing-R'} with $\eps$, $d$ and $R'$, $G'$, $k^3$ playing the roles of $R'$, $G$, $\Delta_{R'}$ respectively.
	This gives a balanced family $\fV'=\{V_i'\}_{i=1}^{{rt}}$ of subsets $V_i' \subset V_i$ of size $m' =\lceil (1 - {\sqrt{\eps}})m \rceil$ such that $(G'[\cV'],\fV')$ is a $({2\eps,d})$-regular $R$-partition, which is $({2\eps,d})$-super-regular on~$R'$.
	Let $V_0 = V(G) \sm \bigcup_{i=1}^{{rt}}   V'_i$ denote the set of `exceptional vertices'.
	Note that $|V_0| \leq \eps r n + rt \lceil {\sqrt{\eps}} m \rceil \leq 3 {\sqrt{\eps}} {rn}$, where the first inequality follows from property~\ref{itm:lemma-for-G-cycle-G'} and the choice of $\fV'$.

	In passing from $\fV$ to $\fV'$, no more than $\lceil {\sqrt{\eps}} m \rceil \leq 2 {\sqrt{\eps}} m$ vertices were removed in each cluster,
	and each of these removals can affect at most $m^{k-1}$ of the linked edges of a vertex which are located in an edge $K \in E(\cM)$.
	These observations, together with properties~\ref{itm:lemma-for-G-cycle-W}, \ref{itm:proof-lfg-cycle-absorbing-matching} and $\eps \ll \gamma$, imply that
	\begin{enumerate}[\upshape (R1$'$)]\addtocounter{enumi}{4}
		\item \label{itm:proof-lemmaforGcycle-secondneighboursfinal}
		      for every $v \in V(G)$,
		      there are at least $\pi r t$ edges $K \in E(\cM)$,
		      such that $v$ has at least $(\gamma/2) (m')^k$ linked edges in $\cH$ in the clusters of $\fV'$ corresponding to $K$.
	\end{enumerate}

	\medskip
	\noindent \emph{Step 4: The return of the exceptional vertices.}
	To finish, we have to add the exceptional vertices $V_0$ back to the partition $\fV'$.
	We will use the linked edges for this, for which we introduce some terminology.
	Recall that a linked edge of $v$ corresponds to an edge $e \in \cH$ such that there exists $f \in {E(\cH)}$ with $v \in f$ and $|e \cap f| = k-1$.
	Say the unique vertex in $e \setminus f$ is the \emph{representative} of $v$ in $e$.
	Fix $v \in V_0$, and some $K \in E(\cM)$ such that $v$ has at least $(\gamma/2) (m')^k$ linked edges which are located in the $k$ clusters of $\fV'$ corresponding to $K$.
	Since $v$ has at least $(\gamma/2) (m')^k$ linked edges in the clusters of $\fV'$ corresponding to $K$,
	by averaging there exists $i_v \in V(K)$ such that $v$ has at least $(\gamma/2k) (m')^k$ linked edges whose representative lies in the cluster $V'_{i_v}$.
	Say such $i_v$ is a \emph{representative index} of $v$.
	In particular, note that this implies $v$ has at least $(\gamma/2k) m' $ neighbours in each of the clusters corresponding to the vertices of $K$ which are not $V'_{i_v}$.

	The previous argument and property~\ref{itm:proof-lemmaforGcycle-secondneighboursfinal} implies that for each $v \in V_0$ there are at least $\pi r t$ representative indices.
	Hence, we can assign, for each vertex $v \in V_0$, a representative index $i_v$ in such a way that no index is assigned to more than $|V_0|/(\pi r t) \leq (3{\sqrt{\eps}} {rn}) / (\pi r t) \leq \eps'' {n/t}$ vertices of $V_0$, where the inequality follows from the constant hierarchy~\eqref{ref:constant-hierarchy}.

	For each $1 \leq i \leq {rt}$, let $V'''_i$ be obtained from $V''_i$ by adding the vertices $v$ from $V_0$ such that $i = i_v$, and let $\fV''' = \{ V'''_i \colon 1 \leq i \leq {rt}\}$.
	Note that $\fV'''$ partitions $V(G)$.
	For each $v \in V_0$, write $K_v$ for the edge $K \in E(\cM)$ which contains the representative index $i_v$.
	Let $G''$ be the graph obtained from $G$ by deleting, for every vertex $v \in V_0$, all edges emanating from $v$ which are not in the clusters corresponding to $K_v \setminus \{i_v\}$.
	As argued before,
	$v$ has at least $(\gamma/2k) m'$ neighbours in each of the clusters corresponding to the vertices of $K_v \setminus \{i_v\}$.
	It follows by the constant hierarchy~\eqref{ref:constant-hierarchy}, the choice of $R'$ and \cref{proposition:robust-regular} that $(G'',\fV'')$ is a $(1+\eps'')$-balanced $(\eps'',d'')$-regular $R$-partition, which is $(\eps'',d'')$-super-regular on~$R'$.
	(The super-regularity comes from the fact that each vertex $v \in V_0$ has an edge $K_v$ in $\cM \subset \cJ'$ and $R' = \partial_2 \cJ'$.)
	Moreover, $e(G'') \geq (1-\mu/2)e(G)$ since $|V_0| \leq {3\sqrt{\eps} r n}$, property~\ref{itm:lemma-for-G-cycle-G'}, $\eps,d \ll \mu,1/k$ and by construction of $G'[\cV']$.
	Hence, we may finish the proof with $G''$, $\fV''$, $\eps''$, $d''$ playing the roles of $G'$, $\fV$, $\eps$, $d$.
\end{proof}

\subsection{Proof of the \texorpdfstring{\nameref{lem:lemma-for-C}}{Lg}.}\label{sec:lemma-for-C}
For a $k$-graph $\cJ$, we define the \emph{incidence matrix} $A_{\cJ} \in \NATS^{V(\cJ)\times E(\cJ)}$ by setting for all $v \in V(\cJ)$ and $e \in E(\cJ)$,
\begin{align*}
	A(v,e) =
	\begin{cases}
		1 & \text{if $v \in e$}\\
		0 & \text{otherwise}.
	\end{cases}
\end{align*}

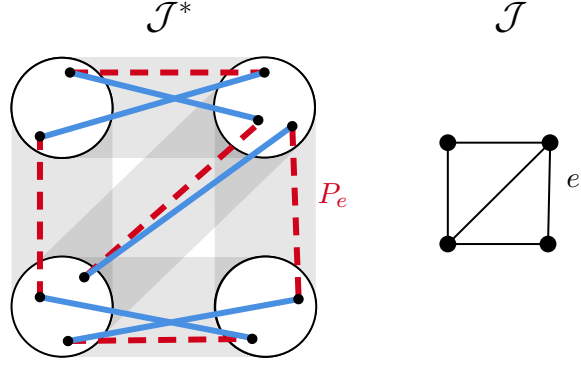
\begin{figure}

	\tikzset{every picture/.style={line width=0.75pt}} 

	\tikzset{every picture/.style={line width=0.75pt}} 
	
	\begin{tikzpicture}[x=0.75pt,y=0.75pt,yscale=-1,xscale=1]
		
		\draw  [fill={rgb, 255:red, 255; green, 255; blue, 255 }  ,fill opacity=1 ] (51.6,95.2) .. controls (51.6,81.28) and (62.88,70) .. (76.8,70) .. controls (90.72,70) and (102,81.28) .. (102,95.2) .. controls (102,109.12) and (90.72,120.4) .. (76.8,120.4) .. controls (62.88,120.4) and (51.6,109.12) .. (51.6,95.2) -- cycle ;
		\draw   (153,95) .. controls (153,81.19) and (164.19,70) .. (178,70) .. controls (191.81,70) and (203,81.19) .. (203,95) .. controls (203,108.81) and (191.81,120) .. (178,120) .. controls (164.19,120) and (153,108.81) .. (153,95) -- cycle ;
		\draw  [fill={rgb, 255:red, 255; green, 255; blue, 255 }  ,fill opacity=1 ] (52.03,195.18) .. controls (52.03,181.38) and (63.23,170.18) .. (77.03,170.18) .. controls (90.84,170.18) and (102.03,181.38) .. (102.03,195.18) .. controls (102.03,208.99) and (90.84,220.18) .. (77.03,220.18) .. controls (63.23,220.18) and (52.03,208.99) .. (52.03,195.18) -- cycle ;
		\draw  [fill={rgb, 255:red, 255; green, 255; blue, 255 }  ,fill opacity=1 ] (153.02,195.19) .. controls (153.02,181.27) and (164.3,169.98) .. (178.23,169.98) .. controls (192.15,169.98) and (203.44,181.27) .. (203.44,195.19) .. controls (203.44,209.12) and (192.15,220.4) .. (178.23,220.4) .. controls (164.3,220.4) and (153.02,209.12) .. (153.02,195.19) -- cycle ;
		\draw  [draw opacity=0][fill={rgb, 255:red, 0; green, 0; blue, 0 }  ,fill opacity=0.1 ] (78,70) -- (178,70) -- (178,120.4) -- (78,120.4) -- cycle ;
		\draw  [draw opacity=0][fill={rgb, 255:red, 0; green, 0; blue, 0 }  ,fill opacity=0.1 ] (77.03,170.4) -- (178.44,170.4) -- (178.44,220.4) -- (77.03,220.4) -- cycle ;
		\draw  [draw opacity=0][fill={rgb, 255:red, 0; green, 0; blue, 0 }  ,fill opacity=0.1 ] (153.03,195) -- (153,95) -- (203.4,94.98) -- (203.44,194.98) -- cycle ;
		\draw  [draw opacity=0][fill={rgb, 255:red, 0; green, 0; blue, 0 }  ,fill opacity=0.1 ] (51.63,195.2) -- (51.6,95.2) -- (102,95.18) -- (102.03,195.18) -- cycle ;
		\draw  [draw opacity=0][fill={rgb, 255:red, 0; green, 0; blue, 0 }  ,fill opacity=0.1 ] (59.11,177.8) -- (160.96,78.52) -- (195.51,113.96) -- (93.66,213.24) -- cycle ;
		\draw  [fill={rgb, 255:red, 255; green, 255; blue, 255 }  ,fill opacity=1 ] (51.6,95.2) .. controls (51.6,81.28) and (62.88,70) .. (76.8,70) .. controls (90.72,70) and (102,81.28) .. (102,95.2) .. controls (102,109.12) and (90.72,120.4) .. (76.8,120.4) .. controls (62.88,120.4) and (51.6,109.12) .. (51.6,95.2) -- cycle ;
		\draw  [fill={rgb, 255:red, 255; green, 255; blue, 255 }  ,fill opacity=1 ] (152.6,95) .. controls (152.6,81.08) and (163.88,69.8) .. (177.8,69.8) .. controls (191.72,69.8) and (203,81.08) .. (203,95) .. controls (203,108.92) and (191.72,120.2) .. (177.8,120.2) .. controls (163.88,120.2) and (152.6,108.92) .. (152.6,95) -- cycle ;
		\draw  [fill={rgb, 255:red, 255; green, 255; blue, 255 }  ,fill opacity=1 ] (51.63,195.18) .. controls (51.63,181.27) and (62.91,169.98) .. (76.83,169.98) .. controls (90.75,169.98) and (102.03,181.27) .. (102.03,195.18) .. controls (102.03,209.1) and (90.75,220.39) .. (76.83,220.39) .. controls (62.91,220.39) and (51.63,209.1) .. (51.63,195.18) -- cycle ;
		\draw  [fill={rgb, 255:red, 255; green, 255; blue, 255 }  ,fill opacity=1 ] (153.24,195.2) .. controls (153.24,181.28) and (164.52,170) .. (178.44,170) .. controls (192.36,170) and (203.64,181.28) .. (203.64,195.2) .. controls (203.64,209.12) and (192.36,220.4) .. (178.44,220.4) .. controls (164.52,220.4) and (153.24,209.12) .. (153.24,195.2) -- cycle ;
		\draw [color={rgb, 255:red, 208; green, 2; blue, 27 }  ,draw opacity=1 ][line width=2.25]  [dash pattern={on 6.75pt off 4.5pt}]  (80.8,77.4) -- (177.8,77.4) ;
		\draw [color={rgb, 255:red, 208; green, 2; blue, 27 }  ,draw opacity=1 ][line width=2.25]  [dash pattern={on 6.75pt off 4.5pt}]  (79.8,212.51) -- (173.8,211.4) ;
		\draw [color={rgb, 255:red, 208; green, 2; blue, 27 }  ,draw opacity=1 ][line width=2.25]  [dash pattern={on 6.75pt off 4.5pt}]  (194.8,189.3) -- (191.69,104.4) ;
		\draw [color={rgb, 255:red, 208; green, 2; blue, 27 }  ,draw opacity=1 ][line width=2.25]  [dash pattern={on 6.75pt off 4.5pt}]  (65.8,107.4) -- (65.8,190.4) ;
		\draw [color={rgb, 255:red, 208; green, 2; blue, 27 }  ,draw opacity=1 ][line width=2.25]  [dash pattern={on 6.75pt off 4.5pt}]  (87.9,180.4) -- (176.8,101.4) ;
		\draw [color={rgb, 255:red, 74; green, 144; blue, 226 }  ,draw opacity=1 ][line width=2.25]    (80.8,77.4) -- (176.8,101.4) ;
		\draw [color={rgb, 255:red, 74; green, 144; blue, 226 }  ,draw opacity=1 ][line width=2.25]    (87.9,180.4) -- (191.69,104.4) ;
		\draw [color={rgb, 255:red, 74; green, 144; blue, 226 }  ,draw opacity=1 ][line width=2.25]    (79.8,212.51) -- (194.8,191.4) ;
		\draw [color={rgb, 255:red, 74; green, 144; blue, 226 }  ,draw opacity=1 ][line width=2.25]    (65.8,190.4) -- (173.8,211.4) ;
		\draw [color={rgb, 255:red, 74; green, 144; blue, 226 }  ,draw opacity=1 ][line width=2.25]    (67.9,109.51) -- (177.8,77.4) ;
		\draw  [fill={rgb, 255:red, 0; green, 0; blue, 0 }  ,fill opacity=1 ] (78.69,77.4) .. controls (78.69,76.24) and (79.64,75.3) .. (80.8,75.3) .. controls (81.96,75.3) and (82.9,76.24) .. (82.9,77.4) .. controls (82.9,78.56) and (81.96,79.51) .. (80.8,79.51) .. controls (79.64,79.51) and (78.69,78.56) .. (78.69,77.4) -- cycle ;
		\draw  [fill={rgb, 255:red, 0; green, 0; blue, 0 }  ,fill opacity=1 ] (175.69,77.4) .. controls (175.69,76.24) and (176.64,75.3) .. (177.8,75.3) .. controls (178.96,75.3) and (179.9,76.24) .. (179.9,77.4) .. controls (179.9,78.56) and (178.96,79.51) .. (177.8,79.51) .. controls (176.64,79.51) and (175.69,78.56) .. (175.69,77.4) -- cycle ;
		\draw  [fill={rgb, 255:red, 0; green, 0; blue, 0 }  ,fill opacity=1 ] (192.69,191.4) .. controls (192.69,190.24) and (193.64,189.3) .. (194.8,189.3) .. controls (195.96,189.3) and (196.9,190.24) .. (196.9,191.4) .. controls (196.9,192.56) and (195.96,193.51) .. (194.8,193.51) .. controls (193.64,193.51) and (192.69,192.56) .. (192.69,191.4) -- cycle ;
		\draw  [fill={rgb, 255:red, 0; green, 0; blue, 0 }  ,fill opacity=1 ] (77.69,212.51) .. controls (77.69,211.35) and (78.64,210.4) .. (79.8,210.4) .. controls (80.96,210.4) and (81.9,211.35) .. (81.9,212.51) .. controls (81.9,213.67) and (80.96,214.61) .. (79.8,214.61) .. controls (78.64,214.61) and (77.69,213.67) .. (77.69,212.51) -- cycle ;
		\draw  [fill={rgb, 255:red, 0; green, 0; blue, 0 }  ,fill opacity=1 ] (189.59,104.4) .. controls (189.59,103.24) and (190.53,102.3) .. (191.69,102.3) .. controls (192.86,102.3) and (193.8,103.24) .. (193.8,104.4) .. controls (193.8,105.56) and (192.86,106.51) .. (191.69,106.51) .. controls (190.53,106.51) and (189.59,105.56) .. (189.59,104.4) -- cycle ;
		\draw  [fill={rgb, 255:red, 0; green, 0; blue, 0 }  ,fill opacity=1 ] (169.59,211.4) .. controls (169.59,210.24) and (170.53,209.3) .. (171.69,209.3) .. controls (172.86,209.3) and (173.8,210.24) .. (173.8,211.4) .. controls (173.8,212.56) and (172.86,213.51) .. (171.69,213.51) .. controls (170.53,213.51) and (169.59,212.56) .. (169.59,211.4) -- cycle ;
		\draw  [fill={rgb, 255:red, 0; green, 0; blue, 0 }  ,fill opacity=1 ] (85.8,180.4) .. controls (85.8,179.24) and (86.74,178.3) .. (87.9,178.3) .. controls (89.06,178.3) and (90,179.24) .. (90,180.4) .. controls (90,181.56) and (89.06,182.51) .. (87.9,182.51) .. controls (86.74,182.51) and (85.8,181.56) .. (85.8,180.4) -- cycle ;
		\draw  [fill={rgb, 255:red, 0; green, 0; blue, 0 }  ,fill opacity=1 ] (172.59,101.4) .. controls (172.59,100.24) and (173.53,99.3) .. (174.69,99.3) .. controls (175.86,99.3) and (176.8,100.24) .. (176.8,101.4) .. controls (176.8,102.56) and (175.86,103.51) .. (174.69,103.51) .. controls (173.53,103.51) and (172.59,102.56) .. (172.59,101.4) -- cycle ;
		\draw  [fill={rgb, 255:red, 0; green, 0; blue, 0 }  ,fill opacity=1 ] (63.69,109.51) .. controls (63.69,108.35) and (64.64,107.4) .. (65.8,107.4) .. controls (66.96,107.4) and (67.9,108.35) .. (67.9,109.51) .. controls (67.9,110.67) and (66.96,111.61) .. (65.8,111.61) .. controls (64.64,111.61) and (63.69,110.67) .. (63.69,109.51) -- cycle ;
		\draw  [fill={rgb, 255:red, 0; green, 0; blue, 0 }  ,fill opacity=1 ] (63.69,190.4) .. controls (63.69,189.24) and (64.64,188.3) .. (65.8,188.3) .. controls (66.96,188.3) and (67.9,189.24) .. (67.9,190.4) .. controls (67.9,191.56) and (66.96,192.51) .. (65.8,192.51) .. controls (64.64,192.51) and (63.69,191.56) .. (63.69,190.4) -- cycle ;
		\draw  [fill={rgb, 255:red, 0; green, 0; blue, 0 }  ,fill opacity=1 ] (265.6,112.7) .. controls (265.6,110.66) and (267.25,109) .. (269.3,109) .. controls (271.34,109) and (273,110.66) .. (273,112.7) .. controls (273,114.75) and (271.34,116.4) .. (269.3,116.4) .. controls (267.25,116.4) and (265.6,114.75) .. (265.6,112.7) -- cycle ;
		\draw  [fill={rgb, 255:red, 0; green, 0; blue, 0 }  ,fill opacity=1 ] (316.6,112.7) .. controls (316.6,110.66) and (318.25,109) .. (320.3,109) .. controls (322.34,109) and (324,110.66) .. (324,112.7) .. controls (324,114.75) and (322.34,116.4) .. (320.3,116.4) .. controls (318.25,116.4) and (316.6,114.75) .. (316.6,112.7) -- cycle ;
		\draw  [fill={rgb, 255:red, 0; green, 0; blue, 0 }  ,fill opacity=1 ] (265.6,163.7) .. controls (265.6,161.66) and (267.25,160) .. (269.3,160) .. controls (271.34,160) and (273,161.66) .. (273,163.7) .. controls (273,165.75) and (271.34,167.4) .. (269.3,167.4) .. controls (267.25,167.4) and (265.6,165.75) .. (265.6,163.7) -- cycle ;
		\draw  [fill={rgb, 255:red, 0; green, 0; blue, 0 }  ,fill opacity=1 ] (315.6,163.7) .. controls (315.6,161.66) and (317.25,160) .. (319.3,160) .. controls (321.34,160) and (323,161.66) .. (323,163.7) .. controls (323,165.75) and (321.34,167.4) .. (319.3,167.4) .. controls (317.25,167.4) and (315.6,165.75) .. (315.6,163.7) -- cycle ;
		\draw    (320.3,112.7) -- (269.3,163.7) ;
		\draw    (269.3,163.7) -- (269.3,112.7) ;
		\draw    (320.3,112.7) -- (269.3,112.7) ;
		\draw    (319.3,163.7) -- (269.3,163.7) ;
		\draw    (320.3,116.4) -- (319.3,163.7) ;
		
		\draw (291,40.4) node [anchor=north west][inner sep=0.75pt]  [font=\Large]  {$\cJ$};
		\draw (117,40.4) node [anchor=north west][inner sep=0.75pt]  [font=\Large]  {$\cJ^{\ast }$};
		\draw (203,131.4) node [anchor=north west][inner sep=0.75pt]  [color={rgb, 255:red, 208; green, 2; blue, 27 }  ,opacity=1 ]  {$P_{e}$};
		\draw (327,127.4) node [anchor=north west][inner sep=0.75pt]    {$e$};

	\end{tikzpicture}
	
	\caption{
		The short cycle $C_1$ (dashed and straight lines) and the paths $P_e$ (dashed) for $k=2$.
		Once obtained, the remainder of the argument consists in carefully extending the paths $P_e$.
	}
	\label{fig:lemma-for-c}
\end{figure}

\begin{proof}[Proof of \cref{lem:lemma-for-C}]
	We establish the following hierarchy.
	\begin{align}\label{ref:constant-hierarchy-lemma-for-C} \tag{$\ll_C$}
		1/n \ll   \rho \ll \pi  \ll 1/t  \ll \eps \ll \alpha   \ll \mu, 1/k.
	\end{align}
	Now, consider $\fU$, $G$, $\fV$, $R$, $\cJ$, $\cJ'$ as in the statement.
	It will be convenient to formulate the proof in terms of tight cycles instead of powers of cycles.
	Denote by $\cJ^\ast$ the $(\cJ,\fV)$-blow-up $k$-graph, where the $i$th vertex of $\cJ$ corresponds to a set of size $|V_i|$.
	Let $\fX$ be the corresponding, size-compatible with $\fV$, vertex partition of $V(\cJ^*)$.
	Recall that the $(k-1)$st power of a Hamilton cycle can be regarded as a tight cycle in the corresponding $k$-clique hypergraph.
	Hence the statement of the lemma follows if $\cJ^\ast$ has a tight Hamilton cycle, which satisfies (the hypergraph analogues of) properties~\ref{itm:allocate-vertices-partition} to~\ref{itm:allocate-vertices-disjoint}.

	\medskip
	\noindent \emph{Step 1: Sketching a cycle.}
	To begin, we set up a (comparatively) short tight cycle $C_1$ in $\cJ^\ast$, which `visits' every edge of $\cJ$ and leaves the number of uncovered vertices in $\cJ^\ast$ divisible by~$k$.
	The latter is trivial when $r=k$, because every tight cycle will satisfy that property;
	for $r=1$, we ensure this condition using the aperiodicity of $(R,\cJ)$.
	For an illustration, see \cref{fig:lemma-for-c}.
	
	\begin{claim}
		There is a tight cycle $C_1\subset \cJ^\ast$ which contains a tight path $P_e$ of length $k$ as subpath for every edge $e \in E(\cJ)$, whose vertices are in the $k$ clusters corresponding to $e$.\footnote{{To be clear, at this point $P_e$ is just a single edge whose vertices follow the ordering of $C_1$. In Step (3), we replace some of the paths $P_e$ with a longer paths.}}
		Moreover, $C_1$ has length at most $(rt)^{2k}$ and $|V(\cJ^\ast) \sm V(C_1)|$ is divisible by $k$.
	\end{claim}
	\begin{proof}
		Recall that $(R,\cJ)$ is a $k$-uniform  Hamilton framework.
		In particular, $\cJ$ is tightly connected.

		We first assume that $r=k$.
		By \rf{prop:closed-walk-bounded-length}, there exists a closed tight walk $W$ in $\cJ$ of length at most $k^2(rt)^k+k\binom{rt}{k} (rt)^{k} \leq (rt)^{2k}$.
		Since $\cJ$ is $k$-partite, $W$ must have length divisible by $k$.
		Since a	closed tight walk is a homomorphism of a tight cycle and $1/n\ll 1/t$,
		it follows immediately that we can lift the walk $W$ in $\cJ$ to a tight cycle $C_1$ in $\cJ^\ast$ of the same length.
		By construction, $C_1$ includes the required paths $P_e$.

		Now, consider the case $r = 1$.
		By assumption, we also have that $(R,\cJ)$ is an aperiodic Hamilton framework,
		and thus we get that $\cJ$ contains a closed tight walk of length coprime to $k$.
		Now, \cref{prop:closed-walk-bounded-length} yields a closed tight walk in $\cJ$, which visits every edge of $\cJ$, has length at most $k^2t^k+k\binom{t}{k} t^{k} \leq t^{2k}$, and in addition we can assume the length of the walk is congruent to $n$ modulo $k$.
		We then turn the walk in $\cJ$ into a tight cycle in $\cJ^*$ as before.
	\end{proof}
	Let us fix a cycle $C_1$ as in the claim and define $\vec w_1 \in \NATS^{E(\cJ)}$ to be the vector that counts, for every $e \in E(\cJ)$, the number of edges of $C_1$ that are contained in the clusters of $\fX$ corresponding to $e$.

	\medskip
	\noindent \emph{Step 2: Allocations.}
	It will be convenient to formulate the following allocations in terms of matrices.
	Let $A=A_{\cJ} \in \NATS^{V(\cJ)\times E(\cJ)}$ be the incidence matrix of $\cJ$.
	Let $\vec b \in \NATS^{V(\cJ)}$ count the vertices of each cluster of $\fX$, that is $\vec b(i) = |X_i|$.
	Note that $\fV$ is $(1+\eps)$-balanced, thus the cluster sizes are between $m$ and $(1+\eps)m$ for some integer $m$.
	Our goal is now to find an allocation $\vec w \in \NATS^{E(\cJ)}$ such that
	\begin{enumerate}[{\upshape (i)}]
		\item \label{item:allocation-first} $A  (\vec w_1 + \vec w)= \vec b$,
		\item $\vec w (e) \geq {4 \pi n} $ for every edge $e \in E(\cJ)$,
		\item \label{item:allocation-last} $\vec w (e) \geq {3 \alpha m}$ for every edge $e \in E(\cJ')$.
	\end{enumerate}

	{Note that part (i) states that the allocation $\vec w_1 + \vec w$ attains precisely the cluster sizes of $\fX$.
	Part (ii) tells us that every edge of $\cJ$ is allocated some edges of $\cJ^\ast$.
	This is necessary for finding the paths required for part~\ref{itm:allocate-vertices-paths}.
	Finally, part (iii) ensures that edges in $\cJ'$ are allocated many edges.
	We use this to obtain the buffer vertices of part~\ref{itm:allocate-vertices-buffer}.}

	We will find $\vec w$ as the sum of four vectors $\vec w_2, \dotsc, \vec w_5$.
	First, we note that $\maxnorm{A \vec w_1} \leq (rt)^{2k} \leq \alpha m$.
	{This holds, because $m$ can be crudely bounded from below by $n/(2t)$ and is hence much larger than $(rt)^{2k}/\alpha$ by choice of the constants.}
	We begin by choosing $\vec w_2 \in \NATS^{E(\cJ)}$ and $\vec w_3 \in \NATS^{E(\cJ)}$ as follows.
	We let $\vec w_2(e) = \lceil 3 \alpha m \rceil$ for every edge $e \in E(\cJ')$ and $0$ otherwise,
	and we let $\vec w_3(e) = \lceil 5 \pi n \rceil$ for every edge $e \in E(\cJ)$.
	Note that $\maxnorm{A \vec w_2} \leq (k^2+1) \lceil 3 \alpha m \rceil \leq 4 k^2 \alpha m$.
	{This is because $\Delta(\cJ') \leq k^2 + 1$,}
	and the last inequality follows from the hierarchy~\eqref{ref:constant-hierarchy-lemma-for-C}.
	On the other hand, every vertex is in at most $t^{k-1}$ edges of $\cJ$,
	thus $\maxnorm{A \vec w_3} \leq \lceil 5 \pi n \rceil {t^{k-1}} \leq \alpha m$,
	where again we used the hierarchy~\eqref{ref:constant-hierarchy-lemma-for-C}.

	Next, we allocate most of the remaining vertices.
	\begin{claim}
		There is $\vec w_4 \in \NATS^{E(\cJ)}$ such that  $ 0\leq \vec b -A  \left( \vec w_1 + \vec w_2 + \vec w_3 + \vec w_4 \right)\leq {\rho n}$.
	\end{claim}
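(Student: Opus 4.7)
The plan is to interpret $\vec b' := \vec b - A(\vec w_1 + \vec w_2 + \vec w_3)$ as the sizes of a new family of clusters $\fX'$ and apply the cluster-matchability of $(R,\cJ)$ to produce $\vec w_4$. First, I would check that $\vec b'$ is well-behaved by bounding each perturbation: $\maxnorm{A\vec w_1} \leq k|E(C_1)| \leq k(rt)^{2k}$, $\maxnorm{A\vec w_2} \leq (k^2+1)\lceil 3\alpha m\rceil$, and $\maxnorm{A\vec w_3} \leq \lceil 5\pi n\rceil\,t^{k-1}$; by the hierarchy~\eqref{ref:constant-hierarchy-lemma-for-C} all three are $o(m)$. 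Since every $\vec b(i) \in [m,(1+\eps)m]$, this gives $\vec b'(i) \in [m/2, (1+\eps)m]$, so $\vec b'$ is componentwise positive and $(1+\mu)$-balanced (using $\alpha,\eps \ll \mu$).

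Second, I would verify that $\sum_{i \in W}\vec b'(i)$ is independent of $W \in \fW$. Since $\fV$ is a $\fU$-refining partition and every part of $\fU$ contains the same number of vertices, $\sum_{i\in W}\vec b(i)$ takes the same value over all $W$. For each $j \in \{1,2,3\}$,
\[ \sum_{i\in W}(A\vec w_j)(i) \;=\; \sum_{e \in E(\cJ)} \vec w_j(e)\,|e \cap W|, \]
and because $\cJ$ is $\fW$-partite the quantity $|e \cap W|$ is the same for every edge $e$ and every $W$ (namely $1$ if $r=k$, and $k$ if $r=1$). Hence $\sum_{i\in W}(A\vec w_j)(i)$ is independent of $W$, and therefore so is $\sum_{i\in W}\vec b'(i)$.

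Third, I would define $\fX' = \{X'_i\}_{i \in V(\cJ)}$ with $|X'_i| = \vec b'(i)$. The previous two paragraphs show $\fX'$ satisfies the hypotheses for $(\mu,2/n,\fW)$-cluster-matchability. The resulting matching $M^\ast$ in the $(\cJ,\fX')$-blow-up misses at most $(2/n)v(\cJ^\ast) + t^{k-1} \leq 4k + t^{k-1} \leq \rho n$ vertices in each cluster, using $v(\cJ^\ast) = \sum_i \vec b'(i) \leq 2rn$ and $1/t \ll \rho$. Setting $\vec w_4(e)$ to be the number of edges of $M^\ast$ of cluster-type $e$, the vector $A\vec w_4$ counts covered vertices per cluster, so $0 \leq \vec b'(i) - (A\vec w_4)(i) \leq \rho n$ for every $i$, as required.

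The main obstacle will be verifying the balance condition $\sum_{i\in W}\vec b'(i) = \text{const}$, since it is precisely this property that unlocks cluster-matchability. It hinges on the $\fW$-partite structure of $\cJ$ and on the fact that the $\fU$-refining structure of $\fV$ is preserved through Step 4 of \rf{lem:lemma-for-G-cycle}; given these the claim reduces essentially mechanically to the cluster-matchable hypothesis supplied by \cref{itm:lemma-for-G-cycle-frame}.
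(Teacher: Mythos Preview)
Your proposal is correct and follows essentially the same approach as the paper: interpret the residual vector $\vec b' = \vec b - A(\vec w_1+\vec w_2+\vec w_3)$ as cluster sizes of a new family $\fX'$, verify that $\fX'$ is $(1+\mu)$-balanced and has equal totals over each $W\in\fW$, then invoke the $(\mu,2/n,\fW)$-cluster-matchability to obtain a large matching whose edge-counts give $\vec w_4$. One minor slip: you justify $4k+t^{k-1}\le\rho n$ by ``$1/t\ll\rho$'', but the hierarchy~\eqref{ref:constant-hierarchy-lemma-for-C} actually has $\rho\ll 1/t$; the desired inequality follows instead from $1/n\ll\rho$ with $t,k$ already fixed.
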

	\begin{proofclaim}
		Let $\vec w' = \vec w_1 + \vec w_2 + \vec w_3 $ and note that $\maxnorm{ A \vec w '} \leq \maxnorm{ A \vec w_1} + \maxnorm{ A \vec w_2} + \maxnorm{ A \vec w_3} \leq (4k^2 + 2) \alpha m$.
		Let $\fX'$ be obtained from $\fX$, by deleting, for every $K \in E(\cJ)$, an amount of $\vec w' (K)$ vertices of every cluster corresponding to $K$.
		Note that, for every part $W \in \fW$, the number of elements in clusters of $\fX'$ with index in $W$ is the same.
		For $r=1$, this is trivial.
		For $r=k$, it follows because $\cJ$ is $\fW$-partite and because the property {(of having the same number of clusters in each part)} was true for~$\fV$, which is size-compatible with $\fX$.

		Since {$(4 k^2 + 2) \alpha \leq \mu/2$},
		we deduce $\fX'$ is $(1+\mu/2)$-balanced.
		Since ${(R,\cJ)}$ is $(\mu/2,2/n,{\fW})$-cluster-matchable, the $(\cJ,\fX')$-blow-up contains a matching $\cM$ that covers all but at most $2r + t^{k-1} \leq \rho n$ vertices in each cluster of $\fX'$, the inequality following from \eqref{ref:constant-hierarchy-lemma-for-C}.
		We can then define $\vec w_4 \in \NATS^{E(\cJ)}$ by counting, for each $e \in E(\cJ)$, the number of $K$-edges of $\cM$ for each $K \in E(\cJ)$.
		(Meaning those edges $e$, which have one vertex in each of the clusters of $K$.)
	\end{proofclaim}

	Let $\vec w_4$ be as in the claim.
	Finally, we complete the allocation by finding $\vec w_5$.

	\begin{claim}
		There is $\vec w_5 \in \INTS^{E(\cJ)}$ with $\maxnorm{\vec w_5} \leq \pi n$ such that $A  (\vec w_1 + \vec w_2 + \vec w_3 + \vec w_4 + \vec w_5)= \vec b$.
	\end{claim}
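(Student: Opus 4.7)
The plan is to set $\vec d := \vec b - A(\vec w_1 + \vec w_2 + \vec w_3 + \vec w_4)$ and find $\vec w_5$ satisfying $A\vec w_5 = \vec d$ and $\maxnorm{\vec w_5} \leq \pi n$ by invoking the integer flow result of \cref{prop:flow}. The previous claim guarantees $0 \le \vec d \le \rho n$ coordinatewise, so $\maxnorm{\vec d} \le \rho n$; intuitively, $\vec d$ records the slack in each cluster not yet covered by the matching-type allocations $\vec w_1,\ldots,\vec w_4$, and $\vec w_5$ will redistribute this slack over the edges of $\cJ$.

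To apply \cref{prop:flow} I must verify two preconditions: that $\hat{\cJ}[U]$ is connected for each $U \in \fW$, and that $\sum_{v \in U} \vec d(v) = 0$ for every $U \in \fW$. The connectivity follows from \cref{prop:reachability}: $\cJ$ is tightly connected and has no isolated vertices since $(R,\cJ)$ is a cluster-matchable Hamilton framework (each vertex lies in some edge of a perfect fractional matching), and in the case $r=1$ the assumed aperiodicity furnishes the required closed tight walk in $\cJ$ of length coprime to $k$.

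For the zero-sum condition, a direct bookkeeping using the $\fW$-partiteness of $\cJ$ (so that $|e \cap U|$ depends only on $U$, not on $e$) shows that the per-part sums $s_U := \sum_{v \in U} \vec d(v)$ all take a common value $s$ with $|s| \le \rho n t$. To zero out $s$ I pick any edge $e_0 \in E(\cJ)$ (which exists by the perfect fractional matching of $\cJ$) and set $\vec f \in \INTS^{E(\cJ)}$ supported on $e_0$ with value chosen so that $A\vec f$ exactly cancels the per-part surplus of $\vec d$: value $s$ when $r=k$ and $s/k$ when $r=1$. In the $r=1$ case, the earlier choice of $C_1$ making $|V(\cJ^\ast) \setminus V(C_1)|$ divisible by $k$ forces $\sum_v \vec d(v) \equiv 0 \bmod k$, ensuring $s/k$ is an integer. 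Then $\vec d' := \vec d - A\vec f$ satisfies $\maxnorm{\vec d'} \le 2\rho n t$ and $\sum_{v\in U} \vec d'(v) = 0$ for every $U \in \fW$.

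Applying \cref{prop:flow} to $\vec d'$ (on a vertex set of size $rt$) yields $\vec w' \in \INTS^{E(\cJ)}$ with $A\vec w' = \vec d'$ and $\maxnorm{\vec w'} \le k \cdot 2\rho n t \cdot (rt)^2 \le \pi n / 2$, where the last inequality uses the constant hierarchy $\rho \ll \pi \ll 1/t$ (so $\rho$ may be chosen small depending on $t, k, r, \pi$). Setting $\vec w_5 := \vec w' + \vec f$ gives $A\vec w_5 = \vec d' + A\vec f = \vec d$ and $\maxnorm{\vec w_5} \le \maxnorm{\vec w'} + |s| \le \pi n$, as required. The only mildly delicate issue is verifying the divisibility condition in the $r=1$ case so that the correction term $\vec f$ is integer-valued; everything else is routine bookkeeping once the flow lemma has been set up.
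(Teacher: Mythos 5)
Your proof is correct and follows essentially the same route as the paper's: both zero out the (common) per-part surplus by loading a single edge $e_0$ with weight $\Lonenorm{\vec c}/k$ (equal to your $s$ when $r=k$ and $s/k$ when $r=1$, with the $r=1$ divisibility coming from the choice of $C_1$), and then apply \cref{prop:reachability} followed by \cref{prop:flow} to realise the remaining balanced deficit with $\maxnorm{\cdot}\leq O(kt^3\rho n)\leq \pi n$. Your write-up is in fact slightly more explicit than the paper's on the equality of the per-part sums and on recombining the correction term into the final $\vec w_5$, but these are presentational, not substantive, differences.
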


	\begin{proofclaim}
		Set $\vec c = \vec b- A  (\vec w_1 + \vec w_2 + \vec w_3 + \vec w_4)$.
		By the choice of $\vec w_4$,
		we have $\Lonenorm{\vec c} \leq rt \maxnorm{\vec c} \leq rt \rho n$.
		Further, $\Lonenorm{\vec c} \equiv 0 \bmod k$ by the choice of $\vec w_1$.
		Fix an arbitrary edge $e_0 \in E(\cJ)$ and define $\vec w_5' \in \NATS^{E(\cJ)}$ by setting $\vec w_5'(e_0)= \Lonenorm{\vec c} /k$ and $0$ everywhere else.
		Then let $\vec c' = \vec c - A \vec w_5'$.
		{Note that $\maxnorm{\vec c'} \leq \maxnorm{\vec c} + \Lonenorm{\vec c} \leq \rho n + rt \rho n \leq 2 rt \rho n$.}
		Note also that $\sum_{i \in {W}} \vec c'(v) = 0$ for each $W \in \fW$.
		Recall that $(R,\cJ)$ is a $\fW$-partite $k$-uniform  Hamilton framework.
		It follows that $\cJ$ has no isolated vertices (thanks to having a perfect fractional matching) and is tightly connected.
		Moreover, if $r=1$, then our assumption implies that $\cJ$ also contains a closed tight walk of length coprime to $k$.
		These are precisely the conditions required by \cref{prop:reachability} (applied here with $\fW$ and $\cJ$ in place of $\fU$ and $\cH$) to deduce that the $2$-graph $(\hat{\cJ})[W]$ is connected for every $W \in \fW$.
		Hence, by \cref{prop:flow} (applied here with $\fW$, $\cJ$, $\vec c'$, ${2}t \rho n$ in place of $\fU$, $\cH$, $\vec b$ and $s$), there exists $\vec w''_5 \in \INTS^{E(\cJ)}$ with $A \vec w''_5 = \vec c'$ and {$\maxnorm {\vec w'_5 + \vec w_5''}  \leq 2 k (rt)^3 \rho n + rt \rho n$.}
		By the constant hierarchy~\eqref{ref:constant-hierarchy-lemma-for-C}, we have {${2 k (rt)^3 \rho n} + rt \rho n \leq \pi n$} and the claim follows by setting $\vec w_5 = \vec w'_5 + \vec w''_5$.
	\end{proofclaim}

	By the choice of $\vec w_3$ and $\maxnorm{\vec w_5} \leq \pi n$,
	we have that $\vec w_3 + \vec w_5 \geq 5 \pi n - \pi n \geq 4 \pi n \geq 0$.
	All together, it follows that $\vec w = \vec w_2 + \vec w_3 + \vec w_4 + \vec w_5$
	is such that $\vec w \in \NATS^{E(\cJ)}$ has properties~\ref{item:allocation-first} to~\ref{item:allocation-last}, as desired.

	\medskip
	\noindent \emph{Step 3: Defining $C$.}
	Given the vector $\vec w$, we can easily setup the desired tight Hamilton cycle $\cC$ in ${\cJ^\ast}$.
	We simply replace each $P_e$ with a tight path of length $k(\vec w (e) + 1)$ as illustrated in \cref{fig:replace-paths}, which is possible since ${\cJ^\ast}$ is a $\fV$-blow-up of $\cJ$.
	By the choice of $\vec w$, for every $e \in E(\cJ)$ we have $\vec w(e) \geq 4 \pi n$, and
	for every $e \in E(\cJ')$ we have $\vec w(e) \geq 3 \alpha m$.
	This allows us to find vertex-disjoint subpaths $\{P_e\}_{e \in E(\cJ)}$ of the required lengths {(of at least $\pi n$ and at least $\alpha m$ for edges $e$ in $\cJ'$)}
	in the subgraph of ${\cJ^\ast}$ induced by the clusters of every edge $e \in E(\cJ)$.
	
	\begin{figure}

		\tikzset{every picture/.style={line width=0.75pt}} 

		\tikzset{every picture/.style={line width=0.75pt}} 
		
		\begin{tikzpicture}[x=0.75pt,y=0.75pt,yscale=-1,xscale=1]
			
			\draw   (173,115) .. controls (173,101.19) and (184.19,90) .. (198,90) .. controls (211.81,90) and (223,101.19) .. (223,115) .. controls (223,128.81) and (211.81,140) .. (198,140) .. controls (184.19,140) and (173,128.81) .. (173,115) -- cycle ;
			\draw  [fill={rgb, 255:red, 255; green, 255; blue, 255 }  ,fill opacity=1 ] (173.02,215.19) .. controls (173.02,201.27) and (184.3,189.98) .. (198.23,189.98) .. controls (212.15,189.98) and (223.44,201.27) .. (223.44,215.19) .. controls (223.44,229.12) and (212.15,240.4) .. (198.23,240.4) .. controls (184.3,240.4) and (173.02,229.12) .. (173.02,215.19) -- cycle ;
			\draw  [draw opacity=0][fill={rgb, 255:red, 0; green, 0; blue, 0 }  ,fill opacity=0.1 ] (173.03,215) -- (173,115) -- (223.4,114.98) -- (223.44,214.98) -- cycle ;
			\draw  [fill={rgb, 255:red, 255; green, 255; blue, 255 }  ,fill opacity=1 ] (172.6,115) .. controls (172.6,101.08) and (183.88,89.8) .. (197.8,89.8) .. controls (211.72,89.8) and (223,101.08) .. (223,115) .. controls (223,128.92) and (211.72,140.2) .. (197.8,140.2) .. controls (183.88,140.2) and (172.6,128.92) .. (172.6,115) -- cycle ;
			\draw  [fill={rgb, 255:red, 255; green, 255; blue, 255 }  ,fill opacity=1 ] (173.24,215.2) .. controls (173.24,201.28) and (184.52,190) .. (198.44,190) .. controls (212.36,190) and (223.64,201.28) .. (223.64,215.2) .. controls (223.64,229.12) and (212.36,240.4) .. (198.44,240.4) .. controls (184.52,240.4) and (173.24,229.12) .. (173.24,215.2) -- cycle ;
			\draw [color={rgb, 255:red, 208; green, 2; blue, 27 }  ,draw opacity=1 ][line width=2.25]  [dash pattern={on 6.75pt off 4.5pt}]  (214.8,209.3) -- (211.69,124.4) ;
			\draw  [fill={rgb, 255:red, 0; green, 0; blue, 0 }  ,fill opacity=1 ] (212.69,211.4) .. controls (212.69,210.24) and (213.64,209.3) .. (214.8,209.3) .. controls (215.96,209.3) and (216.9,210.24) .. (216.9,211.4) .. controls (216.9,212.56) and (215.96,213.51) .. (214.8,213.51) .. controls (213.64,213.51) and (212.69,212.56) .. (212.69,211.4) -- cycle ;
			\draw  [fill={rgb, 255:red, 0; green, 0; blue, 0 }  ,fill opacity=1 ] (209.59,124.4) .. controls (209.59,123.24) and (210.53,122.3) .. (211.69,122.3) .. controls (212.86,122.3) and (213.8,123.24) .. (213.8,124.4) .. controls (213.8,125.56) and (212.86,126.51) .. (211.69,126.51) .. controls (210.53,126.51) and (209.59,125.56) .. (209.59,124.4) -- cycle ;
			\draw  [fill={rgb, 255:red, 0; green, 0; blue, 0 }  ,fill opacity=1 ] (185.59,126.4) .. controls (185.59,125.24) and (186.53,124.3) .. (187.69,124.3) .. controls (188.86,124.3) and (189.8,125.24) .. (189.8,126.4) .. controls (189.8,127.56) and (188.86,128.51) .. (187.69,128.51) .. controls (186.53,128.51) and (185.59,127.56) .. (185.59,126.4) -- cycle ;
			\draw  [fill={rgb, 255:red, 0; green, 0; blue, 0 }  ,fill opacity=1 ] (184.59,208.4) .. controls (184.59,207.24) and (185.53,206.3) .. (186.69,206.3) .. controls (187.86,206.3) and (188.8,207.24) .. (188.8,208.4) .. controls (188.8,209.56) and (187.86,210.51) .. (186.69,210.51) .. controls (185.53,210.51) and (184.59,209.56) .. (184.59,208.4) -- cycle ;
			\draw   (300,115) .. controls (300,101.19) and (311.19,90) .. (325,90) .. controls (338.81,90) and (350,101.19) .. (350,115) .. controls (350,128.81) and (338.81,140) .. (325,140) .. controls (311.19,140) and (300,128.81) .. (300,115) -- cycle ;
			\draw  [fill={rgb, 255:red, 255; green, 255; blue, 255 }  ,fill opacity=1 ] (300.02,215.19) .. controls (300.02,201.27) and (311.3,189.98) .. (325.23,189.98) .. controls (339.15,189.98) and (350.44,201.27) .. (350.44,215.19) .. controls (350.44,229.12) and (339.15,240.4) .. (325.23,240.4) .. controls (311.3,240.4) and (300.02,229.12) .. (300.02,215.19) -- cycle ;
			\draw  [draw opacity=0][fill={rgb, 255:red, 0; green, 0; blue, 0 }  ,fill opacity=0.1 ] (300.03,215) -- (300,115) -- (350.4,114.98) -- (350.44,214.98) -- cycle ;
			\draw  [fill={rgb, 255:red, 255; green, 255; blue, 255 }  ,fill opacity=1 ] (299.6,115) .. controls (299.6,101.08) and (310.88,89.8) .. (324.8,89.8) .. controls (338.72,89.8) and (350,101.08) .. (350,115) .. controls (350,128.92) and (338.72,140.2) .. (324.8,140.2) .. controls (310.88,140.2) and (299.6,128.92) .. (299.6,115) -- cycle ;
			\draw  [fill={rgb, 255:red, 255; green, 255; blue, 255 }  ,fill opacity=1 ] (300.24,215.2) .. controls (300.24,201.28) and (311.52,190) .. (325.44,190) .. controls (339.36,190) and (350.64,201.28) .. (350.64,215.2) .. controls (350.64,229.12) and (339.36,240.4) .. (325.44,240.4) .. controls (311.52,240.4) and (300.24,229.12) .. (300.24,215.2) -- cycle ;
			\draw  [fill={rgb, 255:red, 0; green, 0; blue, 0 }  ,fill opacity=1 ] (339.69,211.4) .. controls (339.69,210.24) and (340.64,209.3) .. (341.8,209.3) .. controls (342.96,209.3) and (343.9,210.24) .. (343.9,211.4) .. controls (343.9,212.56) and (342.96,213.51) .. (341.8,213.51) .. controls (340.64,213.51) and (339.69,212.56) .. (339.69,211.4) -- cycle ;
			\draw  [fill={rgb, 255:red, 0; green, 0; blue, 0 }  ,fill opacity=1 ] (336.59,124.4) .. controls (336.59,123.24) and (337.53,122.3) .. (338.69,122.3) .. controls (339.86,122.3) and (340.8,123.24) .. (340.8,124.4) .. controls (340.8,125.56) and (339.86,126.51) .. (338.69,126.51) .. controls (337.53,126.51) and (336.59,125.56) .. (336.59,124.4) -- cycle ;
			\draw  [fill={rgb, 255:red, 0; green, 0; blue, 0 }  ,fill opacity=1 ] (312.59,126.4) .. controls (312.59,125.24) and (313.53,124.3) .. (314.69,124.3) .. controls (315.86,124.3) and (316.8,125.24) .. (316.8,126.4) .. controls (316.8,127.56) and (315.86,128.51) .. (314.69,128.51) .. controls (313.53,128.51) and (312.59,127.56) .. (312.59,126.4) -- cycle ;
			\draw  [fill={rgb, 255:red, 0; green, 0; blue, 0 }  ,fill opacity=1 ] (311.59,208.4) .. controls (311.59,207.24) and (312.53,206.3) .. (313.69,206.3) .. controls (314.86,206.3) and (315.8,207.24) .. (315.8,208.4) .. controls (315.8,209.56) and (314.86,210.51) .. (313.69,210.51) .. controls (312.53,210.51) and (311.59,209.56) .. (311.59,208.4) -- cycle ;
			\draw [color={rgb, 255:red, 245; green, 166; blue, 35 }  ,draw opacity=1 ][line width=2.25]  [dash pattern={on 2.53pt off 3.02pt}]  (313.69,208.4) -- (314.69,128.51) ;
			\draw [color={rgb, 255:red, 245; green, 166; blue, 35 }  ,draw opacity=1 ][line width=2.25]  [dash pattern={on 2.53pt off 3.02pt}]  (313.69,208.4) -- (338.69,124.4) ;
			\draw [color={rgb, 255:red, 245; green, 166; blue, 35 }  ,draw opacity=1 ][line width=2.25]  [dash pattern={on 2.53pt off 3.02pt}]  (341.8,211.4) -- (314.69,128.51) ;
			\draw   (256,153.85) -- (274.48,153.85) -- (274.48,150) -- (286.8,157.7) -- (274.48,165.4) -- (274.48,161.55) -- (256,161.55) -- cycle ;
			
			\draw (223,151.4) node [anchor=north west][inner sep=0.75pt]  [color={rgb, 255:red, 208; green, 2; blue, 27 }  ,opacity=1 ]  {$P_{e}$};
			\draw (357,155.4) node [anchor=north west][inner sep=0.75pt]  [color={rgb, 255:red, 245; green, 166; blue, 35 }  ,opacity=1 ]  {$P_{e}$};

		\end{tikzpicture}
		
		\caption{
			The path $P_e$ is replaced by a longer one for $k=2$ and $\vec w (e) = 1$.
			This is possible since the $k$-graph between the clusters corresponding to $e$ is complete partite.
		}
		\label{fig:replace-paths}
	\end{figure}
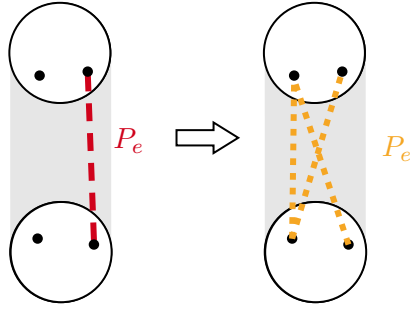

	We consider now the $(k-1)$st power of a cycle $C$ given by $C = \partial_2 \cC$,
	and the $(k-1)$st powers of a path $\partial_2 P_e \subseteq C$.
	We now define the required $(\alpha, R')$-buffer in $C$  where, recall, $R' = \partial_2 \cJ'$.
	Let $1 \leq i \leq rt$ and $V_i \in \fV = V(\cJ)$ be some cluster.
	Since $\cJ'$ is a spanning subgraph of $\cJ$, we can fix an edge $K_i \in E(\cJ')$ such that $i \in V(K_i)$.
	By construction, we have ensured that $\vec w(K_i) \geq 3 \alpha m$.
	This implies that $C$ contains a subpath $P_{K_i}$ of length at least $3 \alpha k m$ completely contained in the clusters of $K_i$.
	We have $|V(P_{K_i}) \cap X_i| \geq 3 \alpha m \geq 2 \alpha |X_i|$, where the latter inequality holds because $\fX$ is $(1+\eps)$-balanced.
	Define $\tilde{X}_i$ from $V({P}_{K_i}) \cap X_i$ by removing the first and last $2k-2$ vertices according to the order of ${P}_{K_i}$.
	We claim that $\{\tilde{X}_i\}_{i=1}^{rt}$ is an $(\alpha, R')$-buffer for $C$.
	Indeed, $\tilde{X}_i \subseteq X_i$, and by construction, $|\tilde{X}_i| \geq 2 \alpha |X_i| - 2k + 2 \geq \alpha |X_i|$.
	Now let $x \in \tilde{X}_i$ and $xy, yz \in E(C)$ such that $y \in X_j$ and $z \in X_k$.
	Since we removed the first and last $2k - 2$ vertices from $V({P}_{K_i}) \cap X_i$, we actually have that $y, z \in V({P}_{K_i})$.
	This implies that $j, k \in K_i$.
	Since $K_i \in E(\cJ')$ and $R' = \partial_2 \cJ'$, we have $ij, jk \in R'$, as required.
\end{proof}

\section{Embedding graphs of sublinear bandwidth}\label{sec:bandwidth}
In this section we give the proof of \cref{thm:main-bandwidth}.
As in the proof of \cref{thm:main-powham}, the proof has two central lemmas which prepare the graphs for the embedding; a `Lemma for $G$' which deals with the host graph,	and a `Lemma for $\cH$' which deals with the guest graph.

We begin by stating the corresponding `Lemma for $G$'.
Note that the differences between the next lemma and the previous \rf{lem:lemma-for-G-cycle} is that $\cJ'$ is a tight Hamilton cycle here (instead of an arbitrary spanning bounded-degree subgraph as before),
that we insist that $t \equiv 1 \bmod k$,
and that we allow for zero-free frameworks.

\begin{lemma}[Lemma for $G$ -- bandwidth] \label{lem:lemma-for-G-bandwidth}
	Let $1/n \ll 1/t_1 \ll 1/t_0 \ll \eps \ll d \ll \mu, 1/k$, and let $r \in \{1, k\}$.
	Let $\fU$ be an $(n,r)$-sized partition, and let $(G,\cH)$ be a $\mu$-robust $\fU$-partite $k$-uniform  Hamilton framework.

	Then there are $t_0 \leq t \leq t_1$,
	a spanning subgraph $G' \subset G$,
	a $\fU$-refining $(1 + \eps)$-balanced partition $\fV$ of $G'$ with $t$ clusters inside each cluster of $\fU$,
	and a graph $R$ on $tr$ vertices such that the following holds.
	Let $\cH' = \cH \cap K_k(G')$, and let $\cJ$ be the $k$-graph induced by $(\cH',\fV)$.
	There also exists a tight Hamilton cycle $\cJ' \subset \cJ$ so that, for $R'= \partial_2 \cJ'$, we have:

	\begin{enumerate}[\upshape (G1)]
		\item \label{itm:lemma-for-G-bandwidth-approx} $e(G') \geq (1-\mu/2)e(G)$,
		\item \label{itm:lemma-for-G-bandwidth-reg} $(G',\fV)$ is an $(\eps,d)$-regular $R$-partition, which is $(\eps,d)$-super-regular on~$R'$ and
		\item \label{itm:lemma-for-G-bandwidth-frame} $(R,\cJ)$ is a $(\mu/4)$-robust $\fW$-partite $k$-uniform  Hamilton framework, $(\mu/4,2/n,\fW)$-cluster-matchable 
		      where $\fW$ is the $(t,r)$-sized partition induced by $(\fV,\fU)$ and
     		\item \label{itm:lemma-for-G-bandwidth-modulo} $t \equiv 1 \bmod k$.
	\end{enumerate}
	Moreover, if $r = 1$ and $(G,\cH)$ is $\mu$-robust aperiodic (resp. zero-free), then $(R,\cJ)$ is $(\mu/4)$-robust (resp. zero-free) as well.
\end{lemma}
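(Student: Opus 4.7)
The plan is to mimic the proof of \cref{lem:lemma-for-G-cycle}, making the two changes dictated by the statement: ensuring $t \equiv 1 \bmod k$, and upgrading the spanning subhypergraph $\cJ'$ from bounded-degree to a tight Hamilton cycle in $\cJ$. The first is a cosmetic adjustment already supported by \cref{lem:regular-partition+absorber} via its residue parameter $q$; the substantive work lies in the second change.

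Concretely, I would begin by applying \cref{lem:regular-partition+absorber} to $(G,\cH)$ and $\fU$ with $q = 1$, obtaining $t \equiv 1 \bmod k$ with $t_0 \leq t \leq t_1$, an approximation $G' \subseteq G$, a balanced $\fU$-refining partition $\fV$ with $t$ clusters per part of $\fU$, the reduced graph $R$ with $(t,r)$-sized partition $\fW$, the induced $k$-graph $\cJ$, and a small collection $\fF$ of entry-point subhypergraphs of $\cJ$. The lemma guarantees that $(R,\cJ)$ is a $(\mu/4)$-robust $\fW$-partite Hamilton framework (inheriting aperiodicity or zero-freeness from $(G,\cH)$) and is $(\mu/4, 2/n, \fW)$-cluster-matchable. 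I would then invoke \cref{prop:distributed-matching} on $\cJ$ and $\fF$ to find a small absorbing matching $\cM \subset \cJ$ hitting every $\cF \in \fF$ in at least $\pi t$ edges; this will handle the exceptional vertices later.

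Next, to construct the tight Hamilton cycle $\cJ' \subseteq \cJ$, I would apply \cref{thm:cycle-distributed} to the reduced framework $(R,\cJ)$, with target family $\fF' = \{\cM\}$ so that the output cycle is forced to pass through every edge of $\cM$. This yields the $(k-1)$th power of a Hamilton cycle $R_\star \subseteq R$ whose associated tight cycle $\cJ' \subseteq \cJ$ of consecutive $k$-tuples is a tight Hamilton cycle in $\cJ$ containing every edge of $\cM$. Setting $R' = \partial_2 \cJ'$, I would super-regularize the partition on $R'$ via \cref{prop:super-regularizing-R'}, and then reabsorb the exceptional set $V_0$ by assigning each vertex a representative index inside an edge of $\cM$ using the entry-point collection $\fF$, restoring regularity and super-regularity via \cref{proposition:robust-regular}, exactly as in Step~4 of the proof of \cref{lem:lemma-for-G-cycle}. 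The count~$t \equiv 1 \bmod k$ is preserved throughout and verifies outcome~\ref{itm:lemma-for-G-bandwidth-modulo}.

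The principal obstacle is to ensure that the tight cycle $\cJ'$ produced actually lies in $\cJ$ and not merely in $K_k(R)$: the direct output of \cref{thm:cycle-distributed} only guarantees that $K_k(C)$ intersects each element of the target family in many edges, which is weaker than containment. I expect to overcome this by exploiting the internal regularity-and-blow-up structure inside the proof of \cref{thm:cycle-distributed} when applied to $(R,\cJ)$: the sub-reduced hypergraph used there is itself a subhypergraph of $\cJ$, so every consecutive $k$-tuple of the embedded cycle corresponds to an edge of that sub-reduced hypergraph, hence to an edge of $\cJ$. The $(\mu/4, 2/n, \fW)$-cluster-matchability of $(R,\cJ)$ and -- in the $r = 1$ case -- the aperiodic closed tight walks it supplies together provide the routing flexibility needed to force the cycle through $\cJ$-edges at every step, while the zero-free variant is handled identically using the $(k+1)$-clique of $\cJ$ in place of the aperiodic walk. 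This is precisely the point where the distributed formulation of \cref{thm:cycle-distributed} is indispensable, in line with the remark in the methodology section that $R'$ must run through a set of predefined edges of $\cJ$.
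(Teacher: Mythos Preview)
Your overall plan matches the paper's, but there is one structural divergence and one genuine gap. The paper does not use \cref{prop:distributed-matching} here at all: it applies \cref{thm:cycle-distributed} directly to $(R,\cJ)$ with the entry-point family $\fF$ from \cref{lem:regular-partition+absorber} as the target family, taking $\gamma$ in place of $\mu$. The resulting $(k-1)$th power of a Hamilton cycle $R'$ has $\cJ' := K_k(R')$ containing at least $\pi t$ edges of every $\cF \in \fF$, and since each $\cF \subseteq \cJ$, these edges directly play the role that $\cM$ played in Steps~3--4 of the proof of \cref{lem:lemma-for-G-cycle}. Your route (first extracting $\cM$ via \cref{prop:distributed-matching} and then feeding $\{\cM\}$ into \cref{thm:cycle-distributed}) breaks down: $\cM$ is a matching of size at most $\sqrt{\pi}\, rt$, far below the $\gamma (rt)^k$ edges that each member of the target family must have, so the theorem does not even apply; and even if it did, it would only guarantee that $K_k(C)$ contains some $\pi' rt$ edges of $\cM$ in total, not that it passes through every edge of $\cM$, and those edges need not be distributed across the various $\cF \in \fF$, so the reabsorption would fail.

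Your worry that $\cJ' \subseteq \cJ$ is not an automatic consequence of \cref{thm:cycle-distributed} is legitimate: the Blow-Up Lemma inside its proof places each run of $k$ consecutive vertices into a $k$-clique of $R$, not necessarily into $\cJ$, and the paper's sketch glosses over this as well. But your proposed fix does not work as stated. The sub-reduced hypergraph $\cJ_0$ produced inside the proof of \cref{thm:cycle-distributed} lives on the clusters of a partition of $V(R)$, not on $V(R)$ itself, so it is not a subhypergraph of $\cJ$; an edge of $\cJ_0$ only certifies that \emph{some} $\cJ$-edge lies in the corresponding $k$-tuple of clusters, not that the particular $k$-tuple selected by the Blow-Up Lemma does.
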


The following is our `Lemma for $\cH$'.
 
\begin{lemma}[Lemma for $H$] \label{lem:lemma-for-H}
	Let $0 < 1/n \ll \beta, 1/z \ll \xi \ll \pi \ll \eps, 1/t, 1/k$ and $\eps \ll \alpha \ll \mu, 1/k, 1/\Delta$, and $t \equiv 1 \bmod k$, and let $r \in \{ 1, k\}$.

	Let $\fU$ be an $(n,r)$-sized partition, and let $G$ be a $\fU$-partite graph.
	Let $\fV$ be a $\fU$-refining $(1+\varepsilon)$-balanced partition with $t$ clusters inside each cluster of $\fU$,
	and let $R$ be a graph on $tr$ vertices such that $(G, \fV)$ is an $R$-partition.
	Let $\fW$ be the $(t,r)$-sized partition of $R$ induced by $(\fV, \fU)$.
	Let $\cJ \subset K_k(R)$.
	Suppose that $(R,\cJ)$ is a $\mu$-robust $\fW$-partite $k$-uniform  Hamilton framework which is $(\mu, 2/n, \fW)$-cluster-matchable.
	Let $\cJ' \subseteq \cJ$ be a tight Hamilton cycle on vertex set $1, 2, \dotsc, tr$.

	Let $H$ be a graph on $n$ vertices with $\Delta(H)\leq \Delta$ which admits an ordering $\sigma$ of its vertices with bandwidth at most $\beta n$.
	Suppose one of the following is true:
	\begin{enumerate}[\upshape (i)]
		\item \label{itm:LH-r=1} $r = 1$, $H$ is $k$-colourable and $(R,\cJ)$ is a $\mu$-robust aperiodic $k$-uniform Hamilton framework,
		\item \label{itm:LH-r=1-zero-free} $r = 1$, $H$ admits a $(z, \beta)$-zero-free $(k+1)$-colouring under $\sigma$, and $(R,\cJ)$ is a $\mu$-robust zero-free $k$-uniform Hamilton framework,
		\item \label{itm:LH-r=k} $r = k$, and $H$ admits an equitable $k$-colouring.
	\end{enumerate}

	Then there is a vertex partition $\fX=\{X_i\}_{i=1}^{rt}$ of $V(H)$,
	and families of subsets $\tfX = \{ \tX_i \}_{i=1}^{rt}$,
	$\fZ = \{ Z_i \}_{i=1}^{rt}$,
	such that
	\begin{enumerate}[\upshape (H1)]
		\item \label{itm:allocate-vertices-buffer-lemmaforH}
		      $(H,\fX)$ is an $R$-partition,
		\item $\tfX$ is an $(\alpha,R')$-buffer for $(H,\fX)$  where $R' = \partial_2 \cJ'$,
		\item \label{itm:lemmaH-forward} for each $1 \leq i \leq rt$, $Z_i \subseteq X_i \setminus \tilde{X}_i$,
		      $|Z_i| \geq \pi |X_i|$,
		      and for each $v \in Z_i$, $N_H(v) \subseteq X_{i-1} \cup \dotsb \cup X_{i-k+1}$, with indices understood modulo $t$,
		\item \label{item:lemmaH-sizes} $|X_i| = |V_i| \pm \xi n$ for each $i \in V(R)$ and
		\item \label{item:lemmaH-partition-size} $\sum_{i \in W} |X_i| = n$ for each $W \in \fW$.
	\end{enumerate}
\end{lemma}

Assuming the validity of the previous two lemmas,
we now give the proof of \cref{thm:main-bandwidth}.
Its proof can be sketched as follows.
Focusing on case~\ref{item:main-bandwidth-aperiodic} of \cref{thm:main-bandwidth}, the plan is to first apply \cref{lem:lemma-for-G-bandwidth,lem:lemma-for-H}.
This results in a setup which comes close to the requirements for embedding $H$ into $G$ via \cref{lem:blow-up} (Blow-Up Lemma), which would allow us to find the required embedding and conclude.
Unfortunately, the sizes of the clusters $\tfX$ (returned by \cref{lem:lemma-for-H}) are not quite compatible with those of $\fV$ (returned by \cref{lem:lemma-for-G-bandwidth}).
There is little we can do about $\tfX$ to change this.
We are therefore challenged to somehow manipulate the sizes of any two given clusters $V_i,V_j \in \fV$ such that $|V_i|$ goes to down by one and $|V_j|$ goes up by one, while all other clusters of $\fV$ retain the size and, crucially, the super-regular properties of $R'$ (returned by \cref{lem:lemma-for-G-bandwidth}) are not affected.
Note that such a `shift' is always possible when $V_i$ and $V_j$ are at distance $k$ in the $k$-uniform cycle $\cJ$ that corresponds to $R'$.
They key fact is now that the cycle $\cJ$ has length coprime to $k$.\footnote{Note that the existence of such a $\cJ$ is ultimately a consequence of the aperiodicity of the Hamilton framework.} 
It follows that one can combine these single `shifts' along the ordering of $\cJ$ to eventually reach any desired cluster.
Exploiting this aperiodic behaviour, we may adjust the cluster sizes of $\fV$ accordingly and finish the proof by applying \cref{lem:blow-up}.
Now come the details.

\begin{proof}[Proof of \cref{thm:main-bandwidth}]
	We establish the following constant hierarchy
	\begin{align}\label{equ:bandwidth-constant-hierarchy}
		1/n \ll \beta, 1/z \ll \xi \ll \pi \ll 1/t_1 \ll 1/t_0 \ll \eps,\rho \ll  d,\alpha  \ll  \mu, 1/k. \tag{$\ll$}
	\end{align}
	We describe the choice of constants with more detail now.
	Given $k,\Delta \geq 1$ and $\mu >0$, choose $d >0$ that satisfies \cref{lem:lemma-for-G-bandwidth}.
	Let $\zeta = 1/2$.
	Then select $\eps_0,\rho > 0$, so that \cref{lem:blow-up} (Blow-Up Lemma) can be applied with $\DeltaRp=2(k-1)$, $\kappa = 2$, $\alpha$, $\zeta$, $d/2$ as well as $\max \{\Delta+1,2(k-1) \}$ playing the role of $\Delta$, and $\Delta$ playing the role of $\Delta_J$.
	Next, pick  $0 < \beta, \xi,\pi, \eps \leq \eps_0/3$ and $t_0,t_1,z \in \NATS$ such that the conditions of \cref{lem:lemma-for-G-bandwidth,lem:lemma-for-H} (where we apply the latter with $\mu/4$ instead of $\mu$) are satisfied with these choices.
	We obtain $n_0$ from \cref{lem:blow-up} with input $3\eps,\rho$ and $t_1$.
	Finally, choose an integer $n \ge n_0$, such that the conditions of \cref{lem:lemma-for-G-bandwidth,lem:lemma-for-H} are satisfied (together with the above choices).

	The proofs of \cref{item:main-bandwidth-aperiodic,item:main-bandwidth-partite,item:main-bandwidth-zerofree} of \cref{thm:main-bandwidth} are very similar and follow the same overall structure.
	We will prove \cref{item:main-bandwidth-aperiodic} with full details,
	and outline the necessary changes to prove \cref{item:main-bandwidth-partite,item:main-bandwidth-zerofree} later.

	To prove \cref{item:main-bandwidth-aperiodic}, assume that $(G,\cH)$ is a $\mu$-robust aperiodic $k$-uniform Hamilton framework.
	So $r = 1$, and $\fU$ is a trivial partition.
	Let $H$ be an arbitrary $k$-chromatic graph on $n$ vertices with $\Delta(H) \leq \Delta$ and bandwidth at most $\beta n$.
	To prove \cref{item:main-bandwidth-aperiodic}, we have to show that $H \subset G$.

	\medskip
	\noindent \emph{Step 1: Applying the Lemma for $G$.}
	By \cref{lem:lemma-for-G-bandwidth}, there are $t_0 \leq t \leq t_1$,
	a spanning subgraph $G' \subset G$,
	a $(1 + \eps)$-balanced partition $\fV=\{V_i\}_{i=1}^t$ of $V(G')$,
	and a graph $R$ on $t$ vertices such that following is true.
	Let $\cH' = \cH \cap K_k(G')$, and let $\cJ$ be the $k$-graph induced by $(\cH',\fV)$.
	Then there exists a tight Hamilton cycle $\cJ' \subset \cJ$ so that, for $R'= \partial_2 \cJ'$,
	\begin{enumerate}[\upshape (G1)]
		\item \label{item:lemmaGbandwidth-proofbandwidth-t} $t \equiv 1 \bmod k$,
		\item $e(G') \geq (1-\mu/2)e(G)$,
		\item $(G',\fV)$ is an $(\eps,d)$-regular $R$-partition, which is $(\eps,d)$-super-regular on~$R'$, and
		\item \label{item:lemmaGbandwidth-proofbandwidth-aperiodic-first}
		      $(R,\cJ)$ is a $(\mu/4)$-robust $\fW$-partite $k$-uniform Hamilton framework which is $(\mu/4,2/n, \fW)$-cluster-matchable where $\fW$ is the $(t,1)$-sized partition induced by $(\fV, \fU)$.
	\end{enumerate}
	Since $r = 1$ and $(G,\cH)$ is $\mu$-robust aperiodic, we are also guaranteed (by the last part of \cref{lem:lemma-for-G-bandwidth}) that $(R,\cJ)$ is $(\mu/4)$-robust aperiodic.
	Since $\fU$ is the trivial partition on $V(G')$,
	then $\fW$ also corresponds to a trivial partition of the clusters of $\fV$.
	From this we deduce
	\begin{enumerate}[\upshape (G4$'$)]
		\item \label{item:lemmaGbandwidth-proofbandwidth-aperiodic-final}
		      $(R,\cJ)$ is a $(\mu/4)$-robust aperiodic $k$-uniform Hamilton framework which is $(\mu/4,2/n,\fW)$-cluster-matchable.
	\end{enumerate}

	Without loss of generality (by relabelling clusters, if necessary) we assume the tight Hamilton cycle $\cJ'$ has cyclic order following the indices $1, 2, \dotsc, t$.

	\medskip
	\noindent \emph{Step 2: Applying the Lemma for $H$.}
	Next, we apply \cref{lem:lemma-for-H} (with $\mu/4$ in place of $\mu$)
	to obtain a 
	vertex partition $\fX=\{X_i\}_{i=1}^t$ of $V(\cH)$
	and two families of subsets, $\tfX= \{ \tilde{X}_i\}_{i=1}^t$ and $\fZ = \{Z_i\}_{i=1}^t$ of $V(\cH)$,
	such that
	\begin{enumerate}[\upshape (H1)]
		\item \label{item:lemmaHbandwidth-proofbandwidth-Rpartition} $(H,\fX)$ is an $R$-partition,
		\item \label{item:lemmaHbandwidth-proofbandwidth-buffer} $\tfX$ is an $(\alpha,R')$-buffer for $(H,\fX)$,
		\item \label{item:lemmaHbandwidth-proofbandwidth-forward} for each $1 \leq i \leq t$, $Z_i \subseteq X_i \setminus \tilde{X}_i$ and $|Z_i| \geq \pi |X_i|$,
		      and for each $v \in Z_i$, $N_H(v) \subseteq X_{i-1} \cup \dotsb \cup X_{i-k+1}$, with indices understood modulo $t$,
		      and
		\item \label{item:lemmaHbandwidth-proofbandwidth-sizes} $|X_i| = |V_i| \pm \xi n$ for each $i \in V(R)$.
	\end{enumerate}

	\medskip
	\noindent \emph{Step 3: Adjusting the partition of $G$.}
	Consider integers $m_1,\dots,  m_t$ such that  $|X_i| = |V_i| + m_i$ for $1 \leq i \leq t$.
	Note that \cref{item:lemmaHbandwidth-proofbandwidth-sizes} implies that $-\xi n \leq m_i \leq \xi n$,
	and we also have $\sum_{i=1}^t m_i = 0$.
	In particular, we have $\lfloor \sqrt{\xi} n\rfloor \geq \sum_{i=1}^t  |m_{i}|$ since $\xi \ll 1/t$.
	Let us now select $U_{i} \subset V_{i}$ for every $1 \leq i \leq t$, as follows.
	Let $1 \leq j \leq t$ such that $i \equiv jk \bmod t$, which exists and is unique since $t$ is coprime to $k$ by \cref{item:lemmaGbandwidth-proofbandwidth-t} and $k < t$.
	Then choose $U_{jk} \subseteq V_{jk}$ such that $|U_{jk}|= \lfloor \sqrt{\xi} n\rfloor - \sum_{\ell=1}^j  m_{\ell k}$ for each $1 \leq j \leq t$, where the indices are evaluated modulo $t$.
	Define $\fV'=\{V_i'\}_{i=1}^t$ by setting $V_{jk}' =  V_{jk} \sm U_{jk}$ for every $1 \leq j \leq t$.
	We remark that $(G',\fV')$ is a $(3\eps,d/2)$-regular $R$-partition, which is $(3\eps,d/2)$-super-regular on~$R'$ by \rf{proposition:robust-regular}.
	Moreover, we have for all $1 \leq j \leq t$,
	\begin{align*}
		|V_{jk}' \cup U_{(j-1)k}| & = |V_{jk} \sm U_{jk}| + |U_{(j-1)k}|
		\\&=|V_{jk}| - \left(\lfloor \sqrt{\xi} n\rfloor - \sum_{\ell=1}^j  m_{\ell k}\right) + \left(\lfloor \sqrt{\xi} n\rfloor - \sum_{\ell=1}^{j-1}  m_{\ell k} \right)
		\\&= |V_{jk}| + m_{jk} =|X_{jk}|.
	\end{align*}

	\medskip
	\noindent \emph{Step 4: Applying the Blow-Up Lemma.}
	To finish, we define a subgraph $H' \subset H$,
	a family of image restrictions $\fI=\{I_x\}_{x\in V(\cH)}$,
	and a family of restricting vertices $\fJ=\{J_x\}_{x\in V(\cH)}$.
	To do this, consider the sets $Z_i \subseteq X_i$ given by \cref{item:lemmaHbandwidth-proofbandwidth-forward}.
	For each $1 \leq i \leq t$, will select a set $Y_i \subset Z_i$ of size $|U_{i-k}|$ such that every pair of distinct vertices in $Y_i$ have distance at least three in $\cH$, which is equivalent to say that they are not adjacent and also their neighbourhoods do not intersect.
	Such a set $Y_i$ can be found greedily.
	Indeed, by \cref{item:lemmaHbandwidth-proofbandwidth-forward} we have $|Z_i| \geq \pi |X_i|$.
	For each $y \in Z_i$ there less than $\Delta^2$ other vertices $z \in Z_i$ whose neighbourhoods are not disjoint with $y$.
	We can then pick available vertices greedily, each new choice will forbid less than $\Delta^2$ new vertices.
	This implies we can select a set inside $Z_i$ of size at least $\pi |X_i|/\Delta^2$, and vertices in this set are pairwise at distance at least three in $G$.
	But such a set has size at least $|U_{i-k}|$, since $|U_{i-k}| \leq \sqrt{\xi} n$ and $\xi \ll 1/t, \pi, 1/\Delta$ per the choice of the constant hierarchy.
	Let $Y=\bigcup_i Y_i$ and $H'=H-Y$.

	Next, define $U = \bigcup_{i=1}^t U_i$ and choose an arbitrary bijection $\psi'\colon Y \to U$ that maps $Y_i$ to {$U_{i-k}$} for all $1 \leq i \leq t$.
	This exists, because $|Y_i| = |U_{i-k}|$ for all $1 \leq i \leq t$.
	Now we define the necessary restriction pairs for all vertices in $V(H')$ to apply the Blow-Up Lemma to embed $H'$ in $G' - U$.
	For vertices $x \in V(H) \setminus (\bigcup_{y\in Y} N_H(y) \cup Y)$, we simply set $J_x=\es$ and $I_x=V'_{j}$ (where $x \in X_{j}$).
	It is left to consider the vertices in $\bigcup_{y\in Y} N_H(y)$.
	Given some $y \in Y_i$ and $x \in N_H(y)$, we will define $J_{x}$ and $I_{x}$ as follows.
	Since $Y_i \subseteq Z_i$, by \cref{item:lemmaHbandwidth-proofbandwidth-forward} we have $x \in X_{i-1} \cup \dotsb \cup X_{i-k+1}$.
	Let $i-k+1 \leq j \leq i-1$ be such that $x \in X_j$.
	Now let $u = \psi'(y) \in U_{i-k}$.
	We define $J_x = \{u\}$ and $I_x = N_{G'}(u) \cap V'_j$.
	We check that this choice ensures $|I_x|$ is sufficiently large: since $\cJ'$ is a tight Hamiltonian cycle,
	the edge $ij$ exists in $R' = \partial_2 \cJ'$.
	Then, since $(G', \fV')$ is $(3 \eps, d/2)$-regular on $R'$,
	we have that $|I_x| = |N_{G'}(u) \cap V'_j| \geq (d/2 - 3 \eps)|V'_j| \geq \zeta d |V'_j|$, by the choice of $\zeta$ at the beginning of the proof.
	Next, notice that each vertex in $Y_i$ can restrict the image of at most $\Delta$ vertices of $V'_{i-k+1}, \dotsc, V'_{i-1}$.
	Therefore, the number of image-restricted vertices in each $V'_i$ is at most $\Delta \sum_{i+1 \leq j \leq i+k-1} |Y_j| \leq \Delta k \sqrt{\xi} n \leq \rho |V'_i|$.
	It follows that $\fI$ and $\fJ$ form a $(\rho, \zeta, \Delta+1, \Delta)$-restriction pair.
	
	Finally, we apply \cref{lem:blow-up} (Blow-Up Lemma) with $G = G' - U$, $\fV=\fV'$, $H=H'$, $\Delta = \max\{\Delta,2(k-1)\}$, $\DeltaRp=2(k-1)$, $\Delta_J = \Delta$ and $1+\eps \leq 2 = \kappa$ to obtain an embedding $\psi\colon V(H')\to V(G)$ such that $\psi(x) \in I_x$ for each $x \in V(H)$.
	It follows that $G$ contains a copy of $H$.
	Since $H$ was arbitrary, this shows that $G$ is $(\beta, \Delta, k)$-Hamiltonian.
	This concludes the proof of part~\ref{item:main-bandwidth-aperiodic} of \cref{thm:main-bandwidth}.

	\medskip
	\noindent \emph{Step 5: The zero-free case.}
	We show part~\ref{item:main-bandwidth-zerofree} of \cref{thm:main-bandwidth}.
	For this, our assumption is that $r = 1$ and $(G,\cH)$ is a $k$-uniform $\mu$-robust zero-free $k$-uniform Hamilton framework.
	Consider an arbitrary graph $H$ on $n$ vertices with $\Delta(H) \leq \Delta$ which admits an ordering with bandwidth $\beta n$ and a $(z, \beta)$-zero-free $(k + 1)$-colouring.
	We need to show $H \subseteq G$.

	The proof follows the same steps as before.
	Step 1 is virtually the same, but now since $(G,\cH)$ is zero-free and not just aperiodic, the application of \cref{lem:lemma-for-G-bandwidth} yields that, instead of \cref{item:lemmaGbandwidth-proofbandwidth-aperiodic-final}, we get that $(R,\cJ)$ is a $(\mu/4)$-robust zero-free $k$-uniform Hamilton framework and $(\mu/4,2/n,\fW)$-cluster-matchable.
	In Step 2, this extra property allows us to apply \cref{lem:lemma-for-H} with $H$ as an input.
	The remainder of the proof is exactly the same.

	\medskip
	\noindent \emph{Step 6: The partite case.}
	To show part~\ref{item:main-bandwidth-partite} of \cref{thm:main-bandwidth},
	again we essentially follow the same steps.
	Now the assumption is that $r = k$ and $(G,\cH)$ is a $k$-uniform $\mu$-robust $\fU$-partite Hamilton framework.
	To show that $G$ is $\fU$-partite $(\beta, \Delta, k)$-Hamiltonian,
	we select an arbitrary graph $H$ on $nk$ vertices with an equitable $k$-colouring, $\Delta(H) \leq \Delta$ and bandwidth at most $\beta k n$; and we need to show $H \subseteq G$.

	In this situation, the application of \cref{lem:lemma-for-G-bandwidth} in Step 1 yields that $(R,\cJ)$ is a $(\mu/4)$-robust $\fW$-partite Hamilton framework and $(\mu/4, 2/n, \fW)$-cluster matchable 
	where $\fW$ is the $(t,r)$-sized partition induced by $(\fV, \fU)$.
	This allows us to apply Lemma for $H$ in Step 2 with $H$ as an input, and obtain a vertex partition $\fX$ of $V(H)$ and families $\tfX$, $\fZ$ satisfying \cref{item:lemmaHbandwidth-proofbandwidth-Rpartition,item:lemmaHbandwidth-proofbandwidth-buffer,item:lemmaHbandwidth-proofbandwidth-forward,item:lemmaHbandwidth-proofbandwidth-sizes,item:lemmaH-partition-size}.

	In Step 3, we proceed similarly but now adjusting the allocation separately in each of the $k$ clusters of $\fU$.
	With more detail, recall that the Hamilton cycle $\cJ' \subseteq \cJ$ is given by $1, \dotsc, tr$ on $V(R)$.
	Define $m_i = |X_i| - |V_i|$ for all $1 \leq i \leq tr$.
	Again, we have $- \xi n \leq m_i \leq \xi n$.
	Note that, for any $W \in \fW$, there exists $1 \leq q \leq k$,
	such that the clusters of $\fV$ contained in $\fW$ correspond exactly to $\{ V_{q+ik} \}_{i=0}^{t-1}$.
	Now fix any $W \in \fW$, we have $\sum_{i \in W} m_i = 0$.
	This allows us to define $U_i, V'_i$ as before, again by considering steps of $k$ clusters,
	and the rest of the proof follows as before.
\end{proof}

\subsection{Proof of the \texorpdfstring{\nameref{lem:lemma-for-G-bandwidth}}{Lg}}
The proof of~\cref{lem:lemma-for-G-bandwidth} is almost identical to the one of \cref{lem:lemma-for-G-cycle}, so we will only sketch the main differences.

\begin{proof}[Proof of \cref{lem:lemma-for-G-bandwidth} (sketch)]
	We begin the proof as the one of \cref{lem:lemma-for-G-cycle} by choosing the constants.
	The only difference is that we choose $\gamma,\pi,\eta,t$ so that \rf{thm:cycle-distributed} can be applied with $\gamma$ in place of $\mu$, which will replace \rf{prop:distributed-matching}.

	We then proceed as before with Step 1 and find a balanced regular $R$-partition $(G',\fV)$ together with a collection $\fF$ of subgraphs of $\cJ$ satisfying properties~\ref{itm:lemma-for-G-cycle-G'} to~\ref{itm:lemma-for-G-cycle-W}.
	In addition, whenever we apply \cref{lem:regular-partition+absorber} we insist that $t \equiv 1 \bmod k$, as required here.
	We also note that if $(G,\cH)$ is $\mu$-robust zero-free (and not just aperiodic) then we can replace \ref{itm:lemma-for-G-cycle-RJ} by
	\begin{enumerate}[\upshape (R3$'$)]
		\item $(R,J)$ is a $(\mu/4)$-robust zero-free Hamilton framework and is also $(\mu/4,2/n,\fW)$-cluster-matchable.
	\end{enumerate}

	Then, in Step 2, we apply \rf{thm:cycle-distributed} (with $\gamma$ playing the roles of $\mu$) to $\cJ$.
	It follows that $R$ has as a subgraph the $(k-1)$st power of a Hamilton cycle $R'$ such that
	\begin{enumerate}[\upshape (R1$'$)]\addtocounter{enumi}{3}
		\item  \label{itm:lemmaG-J}	 $\cJ' := K_k(R')$ contains at least $\pi r t$ edges of each element of $\fF$.
	\end{enumerate}
	Step 3 then follows exactly in the same way as before.
	In Step 4, the only difference is that we can use property \ref{itm:lemmaG-J} to sort in the vertices of $V_0$ without considering a matching $\cM$.\footnote{The fact that the edges of $\cJ'$  do not form a matching does not matter for this argument. The only reason to choose $\cM$ as a matching was to control its maximum degree.}
\end{proof}

\section{Proof of the \texorpdfstring{\nameref{lem:lemma-for-H}}{Lg}} \label{sec:lemmaforH}
In this section, we show \cref{lem:lemma-for-H}.
We first sketch the argument and then proceed to give the necessary auxiliary results in \cref{sec:markov-chains,sec:randomised-allcocation,sec:imbalance-correcting} followed by the proof itself in \cref{sec:proof-lemma-for-H-proof-itself}.

\subsection{Proof overview}\label{sec:lemma-for-H-proof-sketch}
We proceed in a series of steps to prove \cref{lem:lemma-for-H}.
Assume we are given a $(k+1)$-colouring of $H$ with a low-bandwidth ordering, and a zero-free framework~$(R, \cJ)$.
The task is to allocate the vertices of $H$ onto vertices of $R$ (which represent clusters).
Then the set of vertices allocated to any given vertex of $R$ will yield a cluster in the desired partition $\cX$ of $V(H)$.

First we will partition $V(H)$ into runs of consecutive vertices $H_1, \dotsc, H_s$,
where in each $H_i$ few vertices at the beginning are coloured using the colour zero,
and the remaining vertices are $k$-coloured.
For each $i$ we will set $H^-_i, H^+_i \subseteq H_i$ as short initial and final intervals of $H_i$, where in particular $H^-_i$ contains all zero-coloured vertices of $H_i$.
The required allocation for $H$ will consist in two main parts: in an initial pass we will allocate the bulk of the vertices, which lie in $\bigcup_i H_i - (H^-_i \cup H^+_i)$.
They will be allocated using a semi-randomised algorithm, which will be described with more detail shortly.
In the second part, we will complete the allocation by including the remaining vertices in $\bigcup_i H_i^- \cup H_i^+$.
This last part will be done by allocating these vertices `by hand' into $R$, making sure the runs are connected in an appropriate way and we allocate the $(k+1)$-colourable part of $H_i$ in a $(k+1)$-clique of $R$ (which exists if $(R, \cJ)$ is a zero-free framework).

Now we describe the first part of the embedding sketched before.
To describe the allocation assume the graph we need to allocate is $H$,
a $k$-colourable graph on $n$ vertices (if $H$ itself is $k$-colourable then this is exactly what we do, otherwise we use this method considering $\bigcup_i H_i - (H^-_i \cup H^+_i)$, which is $k$-colourable).
There are three steps in our embedding.
Initially, we will show $H$ can be allocated into the blow-up of a long $(k-1)$st power of a path $P$.
This is done deterministically, by simply scanning the vertices using the bandwidth ordering and allocating the vertices into the corresponding coloured vertices of $P$.
However, since the $k$-colouring of $H$ can have colour classes of wildly different sizes, this does not quite give the balanced allocation we want (as to satisfy condition~\ref{item:lemmaH-sizes}).

In a second step, we correct this by allocating the vertices of a $(k-1)$st power of a path $P$ in a $(k-1)$st power of a cycle $C$ by simulating a random walk of $P$ over the vertices of $C$; which in turn gives an allocation of the vertices of $H$ (which were previously allocated to $P$) into $C$ (\cref{lemma:lemma-for-H-balanced}).
We analyse this random allocation using a Markov chain, and under mild conditions we will be able to show this gives a very balanced allocation of $H$ into $C$, with clusters of almost the same size, say up to error $\xi n / |C|$ each.
At this point we could try to use such an allocation using $C = R'$  where $R'$ is the given $(k-1)$st power of a Hamilton cycle in $R$.
However, this is still not quite what we need.
This is because in the statement of \cref{lem:lemma-for-H} (specifically in part~\ref{item:lemmaH-sizes}) we require to find an allocation of $H$ into $t$ clusters whose sizes resemble closely the sizes of the $t$ clusters of $\fV$.
But $\fV$ is a $(1 + \eps)$-balanced allocation instead of a perfectly balanced allocation.
Since $\xi \ll \eps$, our allocation up to this point does not necessarily approximate the cluster sizes of $\fV$ in a good way.

In a final step, we overcome this last obstacle by fixing some large $\ell$ (depending on $\eps, t$ only), finding a homomorphism of a power of a cycle $C'$ on $\ell$ vertices into $R$ such that each vertex $i \in V(R)$ is mapped to a number of times proportional to $|V_i|/(rn)$ (\cref{lemma:lemma-for-C-correcting}).
Thus, finally our desired allocation is found by embedding first $H$ into $C'$, and then $C'$ into $R$.
Since the first embedding is almost balanced, and the second one accounts for the varying sizes of the clusters of $R$, the final allocation will satisfy the required size conditions of the partition $\fV$.

\subsection{Splitting into intervals}\label{sec:splitting-into-intervals}
The first step is to allocate our guest graph $H$ into a (modified) power of a path, which allows us to distinguish between the $k$- and $(k+1)$-colourable cases.
If the given $k$-colouring is also equitable, this will also be captured by the allocation.

Given $k, t$ integers, let $P_{k,t}$ be the $(k-1)$st power of a path on $t$ vertices,
we will consider $V(P_{k,t}) = \{ p_1, \dotsc, p_t \}$ in the natural ordering.
Let $P_{k,t}^+$ be the graph obtained from $P_{k,t}$ by adding an extra \emph{universal} vertex $p^+$,
which is joined to all vertices of $V(P_{k,t})$.
Thus $V(P^+_{k,t}) = \{ p^+ \} \cup V(P_{k,t})$.

Given $a, b > 0$, if $\phi\colon V(G) \rightarrow V(H)$ satisfies $|\phi^{-1}(v)| \leq a$ for all $v \in V(H)$, we say $\phi$ is \emph{at most $a$-to-one}.
If it satisfies $a - b \leq |\phi^{-1}(v)| \leq a + b$, we say $\phi$ is \emph{$(a \pm b)$-to-one}.
By $k\NATS$, we will mean the set of all multiples of $k$.

\newcommand{\blq}{{q}}

\begin{lemma}[Allocating $H$ to a power of a path] \label{lemma:Htopowpath}
	Let	$1/n \ll \beta, 1/k$ and $\blq = \lceil \beta^{-1} \rceil + k - 1$.
	Let $H$ be a graph on $n$ vertices with an ordering of bandwidth at most $\beta n$,
	and suppose $H$ has a $(z, \beta)$-zero-free $(k+1)$-colouring $\chi$ with respect to the given ordering.
	For all $0 \leq j \leq k$, let $H_j = \chi^{-1}(\{j\})$ denote the $j$-coloured vertices of $H$.
	Let $p_1, \dotsc, p_{\blq}$ be the vertex set of $P = P_{k, \blq}$,
	and for each $1 \leq j \leq k$ let $P_j = \{ p_i \colon i \in j + k \mathbb{N} \}$.
	Then there exists a homomorphism $\phi\colon V(H) \rightarrow V(P^{+}_{k,\blq})$ such that
	\begin{enumerate}[{\upshape (P1)}]
		\item \label{item:Hpowpath-P1} $\phi^{-1}(\{ p^+ \}) = H_0$,
		\item for each $1 \leq j \leq k$, $\phi^{-1}(P_j) = H_j$,
		\item \label{item:Hpowpath-P3} $\phi$ restricted to $V(H) \setminus H_0$ is an at most $(k \lceil \beta n \rceil)$-to-one homomorphism to $P_{k,\blq}$, and
		\item \label{item:Hpowpath-P4} there exists $s \ge 0$ and indices $1 \leq i_1 \leq i_2 \leq \dotsb \leq i_s \leq \blq$ such that
		      \begin{enumerate}[{\upshape (a)}]
			      \item $i_{j+1} - i_j \ge z$ for all $1 \leq j < s$,
			      \item for every $v \in V(H)$ adjacent to a vertex in $H_0$, there exists $1 \leq j \leq s$ such that $\phi(v) \in \{ p_{i_j-1}, \dotsc, p_{i_j+k} \}$.
		      \end{enumerate}
	      Further, if $H_0 = \emptyset$, then $s = 0$.
	\end{enumerate}
\end{lemma}

\begin{proof}
	Initially, set $\phi(x) = p^+$ for all $x \in H_0$.
	It thus remains only to allocate the vertices of $H_{-0} = V(H) \setminus H_0$ along $P_{k,\blq}$ to complete the definition of $\phi$.

	Consider the given ordering $v_1, \dotsc, v_n$ of $V(H)$, which satisfies the corresponding bandwidth and zero-free colouring properties.
	Partition $V(H)$ along its consecutive $\beta$-blocks.
	More precisely, we divide $V(H)$ into $\blq-k+1$ intervals $X_1, \dotsc, X_{\blq-k+1}$ of consecutive vertices as follows:
	initially, greedily select intervals of consecutive vertices of size exactly $\lceil \beta n \rceil$, until this is no longer possible.
	So, for instance, $X_1 = \{v_1, \dotsc, v_{\lceil \beta n \rceil}\}$, $X_2 = \{ v_{\lceil \beta n \rceil + 1}, \dotsc, v_{2 \lceil \beta n \rceil} \}$, and so on.
	Then, choose a single interval of size less than $\lceil \beta n \rceil$ covering the remainder of the vertices,
	and finalise by declaring all of the next intervals to be empty.

	For each $1 \leq i \leq \blq-k+1$ and $0 \leq j \leq k$, let $X_{i}^j = H_j \cap X_i$,
	that is, the vertices in $X_i$ which were $j$-coloured under $\chi$.
	Note that $H_{-0} = \bigcup_{1 \leq i \leq \blq-k+1, 1 \leq j \leq k} X^i_j$.
	Certainly, $|X_{i}^j| \leq |X_i| \leq \lceil \beta n \rceil$ for all $i, j$.
	Because of the colouring and the bandwidth condition, a vertex in $X_i^j$ can only have neighbours in $X_{i'}^{j'}$ if $|i - i'| \leq 1$ and $j \neq j'$.

	The idea is to allocate the vertices of $H_{-0}$ in $P$ in a way such that the following two properties are satisfied: \begin{enumerate}[{\upshape (i)}]
		\item $p_1$ only receives $1$-coloured vertices, $p_2$ only receives $2$-coloured vertices, and so on, and this pattern is repeated cyclically; and
		\item the vertices of $X_i \setminus H_0$ are allocated only to $\{ p_i, p_{i+1}, \dotsc, p_{i+k-1} \} \subseteq V(P)$, for each $1 \leq i \leq \blq-k+1$.
	\end{enumerate}
	Formally, this idea is accomplished by the function $\phi\colon H_{-0} \rightarrow P$ which sends all of $X^j_i$ to $p_{k\lfloor(i-j+k-1)/k \rfloor+j}$, for all $1 \leq i \leq \blq-k+1$ and $1 \leq j \leq k$.
	The previous discussion ensures $\phi$ is a graph homomorphism when $H$ is restricted to $H_{-0}$.
	By construction, for each $v \in V(P)$ we have $|\phi^{-1}(v)| \leq k \lceil \beta n \rceil$.
	So $\phi$ satisfies \ref{item:Hpowpath-P1} to \ref{item:Hpowpath-P3}.
	
	It remains to check that $\phi$ satisfies \ref{item:Hpowpath-P4}.
	If $H_0 = \emptyset$ there is nothing to check, to we assume otherwise.
	To verify this, we examine the position of the zero-coloured vertices in the $\beta$-blocks.
	Let $Z = \{ i_1, \dotsc, i_s \} \subseteq \{ 1, \dotsc, \blq-k+1 \}$ be the set of indices $i$ such that $X_i^0 \neq \emptyset$.
	Since $\chi$ is a $(z, \beta)$-zero-free colouring, we obtain $i_{j+1} - i_j \ge z$ for all $1 \leq j < s$.
	Now, let $x \in H_0$ and $1 \leq j \leq s$ such that $x \in X_{i_j}$.
	If $v \in V(H)$ is adjacent to $x$, then $v \in (X_{i_j-1} \cup X_{i_j} \cup X_{i_j+1}) \setminus H_0$.
	By definition of $\phi$, it holds that $\phi(X_i \setminus H_0) \subseteq \{ p_{i}, \dotsc, p_{i+k-1} \}$ for all $1 \leq i \leq \blq - k + 1$.
	We deduce that $\phi(v) \in \phi(X_{i_j-1} \setminus H_0) \cup \phi(X_{i_j} \setminus H_0) \cup \phi(X_{i_j+1} \setminus H_0) \subseteq \{ p_{i_j - 1}, \dotsc, p_{i_j+k} \} $, as required.
\end{proof}

\subsection{Markov chains and concentration}\label{sec:markov-chains}
We will work with finite Markov chains, which are random processes $\{ X_i \}_{i \ge 1}$ taking values in a state space $\cX$.
The trajectories of the random process are determined by a transition matrix $P = (P_{i,j})_{i \in \cX, j \in \cX}$, whose columns and rows are indexed by $\cX$, and each entry is in $[0,1]$.
Here, the entry $P_{i,j}$ will correspond to the probability of evolving from state $i \in \cX$ to state $j \in \cX$ in a single step.
Thus the row of $P$ corresponding to $i \in \cX$ will contain the numbers $\{P_{i,j} \colon j \in \cX\}$, which must satisfy $\sum_{j \in \cX} P_{i,j} = 1$.
A Markov chain is \emph{irreducible} if for all $i, j \in \cX$ there exists $t > 0$ such that $P^{t}_{ij} > 0$ 
where $P^t$ is the $t$th power of the matrix $P$ and $P^t_{ij}$ is the entry in the $i$th row and $j$th column of $P^t$.
The \emph{period} $d_i$ of a state $i \in \cX$ is the greatest common divisor of the numbers in $\{ t > 0 \colon P^{t}_{ii} > 0 \}$, and a Markov chain is \emph{aperiodic} if $d_i = 1$ for all $i \in \cX$.
A probability distribution $\pi$ on $\cX$ (understood as a row vector) is \emph{stationary} for the Markov chain if the equality
$\sum_{i \in \cX} \pi_i P_{ij} = \pi_j$ holds for every $j \in \cX$, or equivalently, if $\pi = \pi P$.
A basic fact from Markov chain theory states that finite and irreducible Markov chains admit a unique stationary distribution.

We recall that, for finite Markov chains, irreducibility can also be expressed in a more combinatorial way in terms of strong connectivity of directed graphs.
Let $D = D(P, \cX)$ be the directed graph on $\cX$ where $a \rightarrow b$ whenever $P_{ab} > 0$.
Then the Markov chain $\{ X_i \}_{i \ge 1}$ is irreducible if and only if $D$ is \emph{strongly connected}, meaning that  there exists a directed walk between any two arbitrary vertices of $D$.
Similarly, an irreducible and finite Markov chain will be aperiodic if $D$ contains two directed closed walks whose lengths are coprime.

The following result is a concentration inequality for Markov-dependent random variables.
The first inequality of this kind was proven by Gillman~\cite{Gillman1998}, although many variations exist (see, e.g., Section 3.5 in~\cite{DubhashiPanconesi2009}).
We will use a recent Hoeffding-type inequality for Markov chains, as proven by Fan, Jiang and Sun~\cite{FJS18} and independently by Rao~\cite{Rao2019}.

We use the following notation: given a function $f\colon \cX \rightarrow \REALS$ and a probability distribution $\pi$ over $\cX$, we write $\expectation_\pi[f] = \sum_{x \in \cX} \pi(x) f(x)$.

\begin{theorem}[{\cite[Theorem 2.1]{FJS18}}] \label{theorem:fanjiangsun}
	Let $\{ X_i \}_{i \ge 1}$ be a finite, irreducible and aperiodic Markov chain with state space $\cX$ and transition matrix $P$.
	There exists $C > 0$, depending on $P$ only, such that the following holds.

	Suppose $X_1$ is distributed according to the stationary distribution $\pi$ of the chain.
	Let $\{ f_i \}_{i \ge 1}$ be functions $f_i \colon \cX \rightarrow [a_i, b_i]$.
	Then, for every $\eps > 0$ and $s \in \mathbb{N}$,
	\begin{align}
		\probability \left[ \left| \sum_{i=1}^s f_i(X_i) - \sum_{i=1}^s \expectation_\pi[f_i] \right| > \eps \right] \leq 2 \exp \left( - \frac{C \eps^2}{2 \sum_{i=1}^s (b_i - a_i)^2 / 4 }  \right). \label{equation:markovhoeffding}
	\end{align}
\end{theorem}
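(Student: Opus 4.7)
The plan is to follow the classical Chernoff--moment-generating-function route, lifted to the Markov setting through the spectral theory of tilted transition operators. Setting $S_r = \sum_{i=1}^{r} f_i(X_i)$ and $\mu = \sum_i \expectation_\pi[f_i]$, Markov's inequality applied to $e^{\lambda(S_r-\mu)}$ with $\lambda>0$ bounds the upper tail of $S_r-\mu$ by $e^{-\lambda\eps}\,\expectation_\pi[e^{\lambda(S_r-\mu)}]$, and the lower tail follows by running the same argument with $-f_i$. The entire problem therefore reduces to proving a sub-Gaussian estimate of the form
\[
M(\lambda) \;:=\; \expectation_\pi\bigl[e^{\lambda S_r}\bigr] \;\le\; \exp\!\Bigl(\lambda\mu \;+\; \tfrac{1}{2C}\,\lambda^{2} \sum_{i=1}^{r}(b_i-a_i)^2/4\Bigr)
\]
for a constant $C>0$ depending only on $\cX$ and $P$, after which optimising $\lambda$ and combining the two tails gives~\eqref{equation:markovhoeffding}.

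The central manipulation rewrites $M(\lambda)$ in operator form. Let $D_i(\lambda)$ be the diagonal matrix on $\cX$ with entries $(D_i)_{xx}=e^{\lambda f_i(x)}$. Stationarity of $X_1$ together with the Markov property yields
\[
M(\lambda) \;=\; \bigl\langle \pi,\; D_1(\lambda)\,P\,D_2(\lambda)\,P\cdots P\,D_r(\lambda)\,\mathbf{1}\bigr\rangle,
\]
so it suffices to control a product of tilted operators of the shape $T_i(\lambda):=D_i(\lambda)^{1/2}\,P\,D_i(\lambda)^{1/2}$ (using a multiplicative symmetrisation if $P$ is not reversible) in the $\ell^2(\pi)$-weighted inner product. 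Irreducibility and aperiodicity supply, via Perron--Frobenius, a simple leading eigenvalue $1$ for $P$ with eigenvector $\pi$ and a spectral gap $1-\lambda_\star>0$; the constant $C$ in the statement is ultimately a function of $\lambda_\star$ only. Analytic perturbation of $T_i(\lambda)$ around $\lambda=0$ then produces an expansion $\log\rho\bigl(T_i(\lambda)\bigr) \;=\; \lambda\,\expectation_\pi[f_i] \;+\; \tfrac{1}{2}\lambda^{2}\,\sigma_i^{2}(\lambda)$ for its Perron root, with a uniform quadratic bound $\sigma_i^{2}(\lambda)\le C^{-1}(b_i-a_i)^2/4$ on a fixed neighbourhood of zero.

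The main obstacle is making this per-factor quadratic bound genuinely \emph{multiplicative} across the product: a naive estimate compounds $O(\lambda)$-errors between factors and delivers only a tail of order $\exp(-\Omega(\eps^{2}/r))$ rather than the sharp $\exp(-\Omega(\eps^{2}/\sum(b_i-a_i)^2))$ we need. Overcoming this requires decomposing each iterate into its $\pi$-component and its orthogonal complement and using the contraction factor $1-\lambda_\star$ on the latter to absorb the linear corrections into a genuinely second-order term. For reversible $P$ this is a direct Rayleigh-quotient calculation; in the general irreducible aperiodic case one instead works with the multiplicative symmetrisation $PP^{\ast}$ (adjoint in $\ell^{2}(\pi)$) and estimates the resolvent $(I-P)^{-1}$ on mean-zero functions. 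Once this uniform quadratic control is in place, telescoping through all $r$ factors gives the sub-Gaussian bound on $M(\lambda)$, and substituting $\lambda = \eps/\bigl(C^{-1}\sum(b_i-a_i)^2/4\bigr)$ together with the symmetric lower-tail estimate yields the Hoeffding-type inequality~\eqref{equation:markovhoeffding}.
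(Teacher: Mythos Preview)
The paper does not prove this theorem at all: it is quoted from Fan, Jiang and Sun~\cite{FJS18} (and independently Rao~\cite{Rao2019}) and used as a black box. The only commentary the paper adds is the remark that the original result is stated for the broader class of chains with positive absolute spectral gap, and that finite, irreducible, aperiodic chains automatically fall into this class. So there is no ``paper's own proof'' to compare your proposal against.

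That said, your outline is a faithful high-level summary of the strategy in the cited references: the exponential Markov inequality, the operator expression for the moment generating function as a product of tilted transition operators, and the use of the spectral gap to control the product uniformly across the $r$ factors. You also correctly identify the key technical point, namely that a naive per-step estimate would lose a factor of $r$ in the exponent, and that the fix is to project onto the mean-zero subspace and exploit the contraction there. What you have written is a plan rather than a proof: statements such as ``a direct Rayleigh-quotient calculation'' and ``estimates the resolvent $(I-P)^{-1}$ on mean-zero functions'' are exactly where the real work lies, and turning these into rigorous bounds (especially in the non-reversible, time-inhomogeneous-$f_i$ case) is the substance of the cited papers. If your goal is simply to justify quoting the theorem, citing \cite{FJS18} as the paper does is appropriate; if your goal is to give a self-contained proof, you would need to flesh out precisely those steps.
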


We remark that the original statement of~\cite[Theorem 2.1]{FJS18} applies to the larger class of (possibly infinite) Markov chains with positive absolute spectral gap. 
We state it only for the particular case we need, which is that of finite, irreducible and aperiodic Markov chains,
it is well-known that these properties imply the Markov chain has a non-trivial spectral gap.

\subsection{Randomised allocation}\label{sec:randomised-allcocation}
In this subsection we will construct an allocation of a guest graph $H$ into a power of a cycle, focusing on the case where $H$ is $k$-colourable.
We account for the aperiodic case (where $r = 1$) and the partite case (where $r = k$) here as well.
In the following, we denote by $C_{k,t}$ the $k$th power of a Hamilton cycle on $t$ vertices.

\begin{lemma}[Randomised allocation for $H$] \label{lemma:lemma-for-H-balanced}
	Let $1/n \ll \beta, 1/q \ll \xi, 1/t, 1/k$ and such that $t$ and $k$ are coprime, and $r \in \{1, k\}$.
	Let $P_{k,q} = (p_1,\dots,p_q)$, $H$ be a graph on $nr$ vertices and $\phi_1 \colon V(H) \rightarrow V(P_{k,q})$ a homomorphism which is at most $(k \lceil \beta r n \rceil)$-to-one.
	Suppose in addition that if $r=k$, then $|\phi^{-1}_1( P_j )| = n$  where $P_j = \{ p_i \colon i \in j + k \mathbb{N} \}$.
	
	Then there exists a homomorphism $\phi_2 \colon V(P_{k,q}) \rightarrow V(C_{k, rt})$ such that the composition 
	$\phi = \phi_2 \circ \phi_1$ is a homomorphism from $H$ to $C_{k,rt}$ which is $((1 \pm \xi)\frac{n}{t})$-to-one.
\end{lemma}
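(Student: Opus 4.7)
The plan is to construct the homomorphism $\phi$ as the composition $\psi \circ \phi_1$, where $\psi \colon V(P_{k,q}) \to V(C_{k,rt})$ is itself a randomised homomorphism built from the trajectory of a Markov chain, and then to use a Hoeffding-type concentration inequality for Markov chains to bound the preimage sizes. In stationarity each vertex of $C_{k,rt}$ will appear as $\psi(p_i)$ with probability $1/(rt)$, so the expected size of $|\phi^{-1}(v)|$ is $n/t$, and a concentration argument then lets me select a deterministic trajectory with the required almost-balanced preimages.

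The heart of the argument is a Markov chain $(M_i)$ whose state space consists of the ordered $k$-tuples $(w_1,\dots,w_k)\in V(C_{k,rt})^k$ such that $\{w_1,\dots,w_k\}$ is a $k$-clique of $C_{k,rt}$ (equivalently, $k$ consecutive vertices on the cycle). A transition drops $w_1$, shifts the remaining entries one position, and appends $w_{k+1}$ uniformly from the valid extensions: either $w_{k+1}=w_1$ (a ``stall'', always allowed, cyclically rotating the window) or $w_{k+1}=w_1\pm k$ (a ``shift'', allowed only when $w_1$ is the leftmost or rightmost element of its window). Reading off $\psi(p_i):=(M_i)_1$ yields a homomorphism $P_{k,q}\to C_{k,rt}$, since for all $i,j$ with $|i-j|\leq k-1$ the images $\psi(p_i),\psi(p_j)$ lie in a common $k$-clique of $C_{k,rt}$. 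Combining stalls and shifts makes the chain irreducible; for $r=1$, pure stalling produces closed walks of length $k$ while going once around $C_{k,t}$ produces a closed walk of length $t$, and $\gcd(t,k)=1$ forces aperiodicity. For $r=k$ the chain has period $k$, but using the partite balance $|\phi_1^{-1}(P_j)|=n$ the trajectory decomposes into $k$ coupled sub-chains indexed by $i\bmod k$, each of which is aperiodic on its colour class of $C_{k,kt}$. The rotational symmetry of $C_{k,rt}$ forces the stationary marginal $M\mapsto (M)_1$ to be uniform on $V(C_{k,rt})$. Starting in stationarity and writing
\[
|\phi^{-1}(v)| \;=\; \sum_{i=1}^{q}|\phi_1^{-1}(p_i)|\cdot\mathbf{1}[\psi(p_i)=v],
\]
the expectation is $rn/(rt)=n/t$. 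Each summand lies in $[0,k\lceil\beta rn\rceil]$ and $\sum_i|\phi_1^{-1}(p_i)|^2\leq k\lceil\beta rn\rceil\cdot rn$, so Theorem~\ref{theorem:fanjiangsun} bounds the deviation probability by $2\exp(-\Omega(\xi^2/(k t^2 r^2\beta)))$ up to a spectral-gap constant depending only on $t,k,r$. By the hierarchy $\beta\ll\xi,1/t,1/k$ this bound is far smaller than $(rt)^{-1}$, so a union bound over $V(C_{k,rt})$ yields a realisation of the chain giving the desired $\phi$.

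The main obstacle I anticipate is designing the Markov chain so that $\psi$ is always a valid homomorphism while simultaneously giving a uniform stationary marginal and a usable spectral gap. A naive $\pm 1$ walk on $V(C_{k,rt})$ fails because it can produce $\psi(p_i)=\psi(p_{i+2})$, collapsing an edge of $P_{k,q}$ to a loop; this is why the chain must carry the entire ordered window and forbid transitions from a non-extremal first coordinate, which considerably complicates the irreducibility and aperiodicity analysis. The partite case $r=k$ is the most delicate point, since the natural chain has period $k$ and one must argue concentration on each colour class separately, ensuring that preimage sizes aggregate correctly. Finally, the spectral-gap constant in Theorem~\ref{theorem:fanjiangsun} depends on $t$ and $k$; however, since $1/t$ and $1/k$ both sit above $\beta$ in the hierarchy, this dependence can be absorbed into the choice of $\beta$.
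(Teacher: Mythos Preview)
Your approach is essentially the same as the paper's: compose $\phi_1$ with a random homomorphism $\psi\colon P_{k,q}\to C_{k,rt}$ generated by a Markov chain, observe that the stationary marginal of $\psi(p_i)$ is uniform, apply the Markov--Hoeffding inequality (the paper's Theorem~\ref{theorem:fanjiangsun}) to each cluster, and take a union bound. The paper uses a more compact chain on the state space $\{1,\dots,rt\}\times\{1,\dots,k-1\}$ (a ``forward or loop-back'' walk), whereas you parametrise states by full ordered $k$-tuples; but the idea and the analysis are the same, including the handling of the periodic case $r=k$ via sub-chains.

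One point needs care: your chain is \emph{not} irreducible on the full state space of $rt\cdot k!$ ordered cliques. A short computation shows that stalls act as cyclic shifts on the underlying permutation, and both kinds of shift keep you within the orbit of cyclic shifts of the identity; you can never reach, e.g., the reversal permutation $(k,k-1,\dots,1)$. So the reachable component has only $rt\cdot k$ states. This is harmless for the argument---restrict the state space to this orbit, start the chain in its (unique) stationary distribution there, and your rotational-symmetry argument for the uniform marginal still applies verbatim since the orbit is closed under rotation. But the sentence ``combining stalls and shifts makes the chain irreducible'' should be amended accordingly.
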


As sketched before, we will prove \cref{lemma:lemma-for-H-balanced} by
embedding $P_{k,q}$ into $C_{k, rt}$ appropriately and combining this with the given $\phi_1$.
We will describe a process which finds a homomorphism from $P_{k,q}$ into $C_{k,rt}$ using a randomised algorithm.
To explain the idea, suppose that we are in the middle of the embedding process, and suppose that vertices $p_1,\dots,p_s$ in $P_{k,q}$ have been embedded, and in particular $p_{s-k+2}, \dotsc, p_s$ were mapped on the vertices $c_{i-k+2}, \dotsc, c_i$ of $C_{k,rt}$.
It is the turn to define the image of the next vertex $p_{s+1}$.
If we were simulating a simple random walk in the cycle, we would then like to map $p_{s+1}$ uniformly on $c_{i+1}$ and $c_{i-1}$, where the index computations are modulo $rt$.
When $k=2$, this is fine.
But for $k>2$, this does not work, because $P_{k,q}$ and $C_{k, rt}$ are the $k$th powers of a path and cycle respectively, and so, we need that each $k$ consecutive vertices of $P_{k,q}$ are mapped to some $k$ vertices which are consecutive in $C_{k,rt}$.
In short, we need to avoid the $k-1$ trailing vertices of $P_{k,q}$ when we embed $p_{s+1}$ in order to obtain a (partial) homomorphism.

We achieve this by `looping around' the last $k-1$ vertices.
This means we embed $p_{s+1}$ instead in a vertex chosen uniformly at random between $c_{i+1}$ and $c_{i-(k-1)}$.
In the latter case, we afterwards map $p_{s+2}, \dots , p_{s+k-2}$ deterministically to $c_{i-(k-2)}, \dots , c_{i-1}$,
before doing the next random choice, and so on.
We formalise this construction as follows.

The above outlined algorithm can be modelled with a Markov chain $X = \{ X_i \}_{i \ge 1}$.
The state space of $X$ is $\cX_{k,rt} := \{1,\dots,rt\} \times \{1,\dots,k-1\}$.
The transition probabilities $P_{(a,b), (a',b')}$ for all $(a,b), (a',b') \in \{1,\dots,rt\} \times \{1,\dots,k-1\}$ are given by
\begin{align*}
	P_{(a, 1), (a+1, 1)}   & = 1/2, &                                       \\
	P_{(a, 1), (a, 2)}     & = 1/2, &                                       \\
	P_{(a, b), (a, b+1)}   & = 1    & \text{if $j \in \{2, \dotsc, k-2\}$,} \\
	P_{(a, k-1), (a-1, 1)} & = 1,   &
\end{align*}
and $0$ otherwise
(sums in the indices are always understood modulo $rt$ or $k-1$ so they make sense).
To recover the intuition from the previous discussion, the reader should picture that the random walk will visit the $i$th vertex of the cycle at step $s$ whenever $X_s = (i,1)$ or whenever $b > 1$ and $X_s = (i + k + 1 - b, b)$.
The second type of visits accounts for the described `loops' which are necessary to maintain the fact that the current partial embedding is a partial homomorphism.

For each $1 \leq j \leq k$, let $\cX_{k,rt}^{(j)} \subseteq \cX_{k, rt}$ consist of the tuples $(a,b)$ such that $a+b-1 \equiv j \bmod k$.
The following lemma encapsulates the Markov-theoretic properties we need.

\begin{lemma}
	Suppose $t$ and $k$ are coprime.
	Then 
	\begin{enumerate}[\upshape (M1)]
		\item \label{item:markov-irreducible} $X$ is irreducible,
		\item \label{item:markov-stationary} the (unique) stationary distribution $\pi$ of $X$ is given, for all $a$, by
		      $\pi_{(a,1)} = 2/(krt)$, and $\pi_{(a,b)} = 1/(krt)$ for $b \neq 1$,
		\item \label{item:markov-aperiodic} if $r = 1$, $X$ is aperiodic, and
		\item \label{item:markov-steps} if $r = k$, for any $1 \leq j \leq k$, the chain $X^{(j)} = \{ X_{j + ik} \}_{i \ge 0}$ attains only values on $\cX_{k,rt}^{(j')}$ for some $1 \leq j' \leq k$, is irreducible and aperiodic, and its stationary distribution $\pi^{(j)}$ is given by $\pi^{(j)}_{(a,1)} = 2/kt$, and $\pi^{(j)}_{(a,b)} = 1/kt$ for $b \neq 1$.
	\end{enumerate}
\end{lemma}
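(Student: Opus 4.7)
The plan is to verify the four claims (M1)--(M4) one by one, after isolating two elementary facts about the chain. The first fact is that every transition increments the quantity $a+b-1 \pmod{k}$ by exactly $1$: checking each of the four transition rules gives this directly (the delicate case is $(a,k-1)\to(a-1,1)$, where the change is $-k+1\equiv 1 \pmod k$). The second fact is that from any state $(a,1)$ we can reach $(a+1,1)$ in one step (a forward) and $(a-1,1)$ in $k-1$ steps (via the full loop $(a,1)\to(a,2)\to\dots\to(a,k-1)\to(a-1,1)$). These two observations power everything that follows.

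For (M1), the second fact shows that from $(a,1)$ every $(a',1)$ is reachable by chaining forwards and loops, and then every $(a',b')$ with $b'\ge 2$ is reached by $b'-1$ further loop steps; from a general $(a,b)$ we first complete the deterministic loop to land at some $(a'',1)$. For (M2), I would verify $\pi P=\pi$ state by state: for $b\ge 3$ the predecessor is unique so the identity $\pi_{(a,b)}=\pi_{(a,b-1)}$ is immediate, for $b=2$ we have $\pi_{(a,2)}=\tfrac12\pi_{(a,1)}$, and for $b=1$ the two incoming transitions from $(a-1,1)$ (probability $\tfrac12$) and $(a+1,k-1)$ (probability $1$) give $\tfrac{2}{krt}=\tfrac12\cdot\tfrac{2}{krt}+\tfrac{1}{krt}$. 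A quick sum $rt\cdot\tfrac{2}{krt}+rt(k-2)\cdot\tfrac{1}{krt}=1$ confirms $\pi$ is a probability distribution.

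For (M3), I would analyse closed walks based at $(1,1)$: each consists of $f$ forwards and $\ell$ complete loops, has length $f+(k-1)\ell$, and produces net displacement $f-\ell$ in the $a$-coordinate, which for closure must be $\equiv 0\pmod{t}$ (since $r=1$). Writing $f=\ell+mt$ with $\ell,m\ge 0$ suitably, the achievable lengths form $\{k\ell+mt\}$, whose gcd is $\gcd(k,t)=1$ by hypothesis, so the period of $X$ is $1$. For (M4) with $r=k$, the first fact above shows $P^k$ preserves each $\cX^{(j')}_{k,rt}$, so $X^{(j)}$ is supported on one such subset with $j'$ determined by $j$ together with the chosen initial state. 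The closed-walk analysis of (M3) now requires $f-\ell\equiv 0\pmod{kt}$, forcing all closed-walk lengths to lie in $k\mathbb N$, so $X$ has period exactly $k$. The standard decomposition theorem for periodic irreducible chains then says that $P^k$ splits into $k$ irreducible aperiodic subchains, one on each $\cX^{(j')}_{k,rt}$, whose stationary distribution is $\pi$ restricted to $\cX^{(j')}_{k,rt}$ and renormalised. Since each $\cX^{(j')}_{k,rt}$ carries total $\pi$-mass $1/k$, the renormalisation multiplies by $k$ and yields the claimed values $2/(kt)$ and $1/(kt)$.

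The main obstacle is the irreducibility half of (M4): one must show that within a single $\cX^{(j')}_{k,rt}$ the iterates of $X^k$ actually connect all $t(k-1)$ states, not merely some invariant subset. Concretely this reduces to showing that combining $k$-step trajectories (forward/forward $k$ times, or combinations of forwards and loops) realises all $a$-shifts compatible with the invariant $a+b-1\equiv j'\pmod k$; this in turn follows from the same Chicken McNugget-style argument as in (M3), using that $\gcd(k,t)=1$ to express every residue class mod $kt$ within $\cX^{(j')}_{k,rt}$ as a nonnegative integer combination of the available $k$-step displacements.
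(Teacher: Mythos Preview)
Your proof is correct and takes essentially the same approach as the paper. One minor difference in presentation: for (M3) the paper simply exhibits two directed cycles of coprime lengths $t$ and $k$ (the all-forwards cycle $(1,1)(2,1)\cdots(rt,1)$ and a single loop $(a,1)(a,2)\cdots(a,k-1)(a-1,1)$), which is more direct than your full closed-walk enumeration; conversely, your treatment of (M4) is more carefully justified than the paper's, which asserts period $k$ and the irreducibility/aperiodicity of $X^{(j)}$ without spelling out the appeal to the cyclic decomposition theorem that you make explicit.
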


\begin{proof}
	We begin by showing that $\cX$ is irreducible.
	As discussed before, it suffices to show that the associated digraph $D = D(P,\cX_{k,rt})$ is strongly connected.
	Note that the directed cycle $C := (1,1)(2,1) \dotsb (rt,1)$ covers all tuples of the form $(a,1)$, and for any $a \in \{1, \dotsc, rt\}$ we have $C_a := (a,1) (a,2) \dotsb (a, k-1)(a-1, 1)$ is a directed cycle which covers all tuples of the type $(a, b)$, for fixed $a$.
	Then it is easy to construct, for any pair of given $(a,b), (a',b')$ a directed walk in $D$ from one to the other; first by traversing $C_a$ to arrive at $(a-1,1)$, then following $C$ to get to $(a', 1)$, and finally using $C_{a'}$ to get to $(a', b')$.

	The proof of~\cref{item:markov-stationary} reduces to checking the matrix equation $\pi = \pi P$ holds.
	Specifically, given $(a, b) \in \cX_{k,rt}$, we need to check that
	\[ \sum_{(a', b') \in \cX_{k,rt}} \pi_{(a', b')} P_{(a', b'), (a,b)} = \pi_{(a, b)}. \]
	If $b = 1$, then $P_{(a', b'), (a,1)} \neq 0$ only if $(a',b') \in \{ (a-1, 1), (a+1, k-1) \}$, so we have
	\begin{align*}
		\sum_{(a', b') \in \cX_{k,rt}} \pi_{(a', b')} P_{(a', b'), (a,b)}
		& = \pi_{(a-1, 1)} P_{(a-1, 1), (a,1)} + \pi_{(a+1, k-1)} P_{(a+1, k-1), (a,1)} \\
		& = \frac{2}{krt} \times \frac{1}{2} + \frac{1}{krt} \times 1 = \frac{2}{krt} = \pi_{(a, 1)},
	\end{align*}
	as desired.
	If $b \neq 1$, then $P_{(a', b'), (a,1)} \neq 0$ only if $(a',b') = (a, b-1)$, and the equality can be checked in a similar way, which we omit.
	We conclude~\cref{item:markov-stationary} holds.

	To see that the chain is aperiodic when $r = 1$, note that $C$ is a directed cycle of length $rt = t$ in $D$,
	and $C_1$ is a directed cycle of length $k$ in $D$.
	Since $k$ and $t$ are coprime, this readily implies the chain is aperiodic.

	Finally, to see \cref{item:markov-steps}, note that when $r = k$ the chain is periodic with period $k$.
	When $r = k$, the sets $\{  \cX_{k,kt}^{(j)} \}_{j=1}^k$ partition the state space.
	Also, for every $i$, note that if $X_i \in (a,b)$ and $X_{i+1} \in (a',b')$, it must hold that $a'+b' = a+b+1 \bmod k$.
	Then we deduce that for any $1 \leq j \leq k$, if $X_i \in \cX_{k,rt}^{(j)}$, then $X_{i+k} \in \cX_{k,rt}^{(j)}$.
	This shows the chain $X^{(j)}$ is irreducible and aperiodic.
	Since the transition matrix of $X^{(j)}$ is given by (a restriction of) $P^k$,
	the stationary distribution is given by the restriction of $\pi$ to $\cX_{k,rt}^{(j)}$ and normalising, which yields the result.
\end{proof}

The algorithm will simulate a Markov chain $\{ X_i \}_{i \ge 1}$ using the transition probabilities which were described before.

\begin{algorithm} \label{algorithm:randomwalk}
	Let $p_1, \dotsc, p_q$ be the ordered vertices of $P_{k,q}$.
	Let $c_1, \dotsc, c_{rt}$ be the cyclically ordered vertices of $C_{k,rt}$.
	Initially, at step $s = 1$ we will choose $X_1 \in \cX_{k,rt}$ according to the stationary distribution $\pi$ of the chain.
	If at step $s \ge 1$ we are given $X_s = (a,b)$ and $b = 1$, then embed $p_s \mapsto c_{a}$,
	otherwise embed $p_s \mapsto c_{a-k+b-1}$ (indices understood modulo $rt$).
\end{algorithm}

\begin{lemma} \label{lemma:algorithmcorrect}
	\cref{algorithm:randomwalk} yields a valid embedding of $P_{k,q}$ into $C_{k,rt}$.
\end{lemma}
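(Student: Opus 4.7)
The plan is to verify that the map $\phi$ produced by \cref{algorithm:randomwalk} is a valid graph homomorphism from $P_{k,q}$ to $C_{k,rt}$. Since edges of $P_{k,q}$ connect vertices $p_i, p_j$ with $|i-j| \leq k-1$ and edges of $C_{k,rt}$ connect vertices at cyclic distance at most $k-1$, it is enough to prove the following invariant: \emph{for every $s \geq 1$, the $k$ images $\phi(p_s), \phi(p_{s+1}), \dotsc, \phi(p_{s+k-1})$ lie in a set of $k$ cyclically consecutive vertices of $C_{k,rt}$}. Any such set forms a clique in $C_{k,rt}$, which immediately gives the homomorphism property.

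I would establish this invariant by analysing the Markov chain during the $k$-step window. The guiding observation is that each loop of the chain, starting at a decision state $(a,1)$ and terminating $k-1$ transitions later at $(a-1, 1)$, produces the $k$ images $c_a, c_{a-k+1}, c_{a-k+2}, \dotsc, c_{a-1}$, which cover exactly the cyclic interval $[a-k+1, a]$; a forward transition simply increments the image index by one.

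The proof then splits into two cases. If $X_s = (a,1)$, let $\tau \geq 0$ denote the number of forward transitions from $X_s$ before the next loop starts. For $\tau \geq k-1$ the $k$ images are $c_a, c_{a+1}, \dotsc, c_{a+k-1}$; for $0 \leq \tau \leq k-2$ the first $\tau + 1$ images are $c_a, \dotsc, c_{a+\tau}$ and the remaining $k-1-\tau$ images are the initial segment of the loop from $(a+\tau, 1)$, contributing $c_{a+\tau-k+1}, \dotsc, c_{a-1}$; in either case a direct calculation shows that the union is a cyclic interval of length $k$. If instead $X_s = (a, b)$ with $b \geq 2$, then the remaining $k-b+1$ states of the current loop produce images in $[a-k+b-1, a-1]$ and end at decision state $(a-1, 1)$, after which the subsequent $b-1$ states begin a fresh analysis with $a-1$ in place of $a$ and window length $b$; merging the two contributions again yields a single cyclic interval of length $k$.

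The main obstacle is the case bookkeeping: when the window straddles the boundary between a loop and a new run, one must verify that two (or three) cyclic intervals --- arising from the tail of the current loop, the new forward segment, and possibly the start of a new loop --- combine into a single cyclic interval of length exactly $k$. This reduces to a handful of elementary arithmetic identities relating $\tau$, $b$, and $k$, and every subcase succeeds because the chain's loop mechanism is designed precisely to shift the decision position by $-1$ while covering the $k-1$ positions immediately preceding the current image, exactly mirroring the window structure of $C_{k,rt}$.
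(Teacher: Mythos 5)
Your proposal is correct and follows essentially the same route as the paper: both reduce the claim to showing that every window of $k$ consecutive vertices of $P_{k,q}$ lands on $k$ cyclically consecutive vertices of the cycle, and both verify this by decomposing the corresponding $k$ consecutive states of the chain into the tail of a loop, a forward run, and the head of a new loop, closing with an arithmetic identity. The paper just packages the case analysis into the single canonical form $X_{a,b}Y_{a-1,a'}Z_{a',b'}$ and the identity $a+b=a'+b'+1$, which is exactly the bookkeeping you describe.
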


\begin{proof}
	For each $s \ge 1$, let $X_s = (a_s, b_s)$ be the states generated by the Markov chain used in~\cref{algorithm:randomwalk}.
	Note that the only admissible transitions of the Markov chain force that once we have entered a state of type $(a, 2)$, we must use $k-1$ steps to arrive at $(a-1,1)$, after only which the option to use $(a-1,2)$ or $(a,1)$ is possible.
	This observation entails that a sequence $(a_{s+1}, b_{s+1}) \dotsb (a_{s+k}, b_{s+k})$ of $k$ consecutive states of the Markov chain must necessarily have the form $X_{a,b} Y_{a-1, a'} Z_{a',b'}$  where $X_{a,b}$ is either empty or has the form $(a, b) (a, b+1) \dotsb (a, k-1)$ for some $a$ and $2 \leq b \leq k-1$, $Y_{a-1, a'} = (a-1, 1) (a, 1) (a+1, 1) \dotsb (a', 1)$, for some $a'$ (possibly $a' = a-1$), and $Z_{a', b'}$ is either empty or has the form $(a', 2) (a', 3) \dotsb (a', b')$ for some $2 \leq b \leq k-1$.

	Thus, given $a, b, a', b'$ which define $X_{a,b} Y_{a-1, a'} Z_{a',b'}$, \cref{algorithm:randomwalk} will embed $k$ consecutive vertices of $P_{k,q}$ in the vertices
	\begin{equation}
		c_{a-k+b-1} c_{a-k+b} \dotsb c_{a-2} c_{a-1} c_{a} \dotsb c_{a'} c_{a'-k+1} c_{a' - k + 2} \dotsb c_{a' - k - 1 + b'} \label{equation:algsequence}
	\end{equation}
	 of $C_{k, t}$, with the indices always understood modulo $t$.
	Since there are $k$ steps, we must have $k = |X_{a,b}| + |Y_{a-1, a'}| + |Z_{a', b'}| = (k-b)+(a'-a+2)+(b'-1) = k-b+a'-a+b'+1$ and thus $a+b = a'+b'+1$.
	Together with \eqref{equation:algsequence}, his implies that the vertices used in the embedding correspond to precisely $k$ consecutive vertices in $C_{k,rt}$, namely those from $c_{a'-k+1}$ to $c_{a'}$.
	Since this is true for any run of $k$ consecutive steps of the algorithm, this implies that the embedding is valid.
\end{proof}

Now we are ready to prove \cref{lemma:lemma-for-H-balanced}.

\begin{proof}[Proof of \cref{lemma:lemma-for-H-balanced}]
	Let $p_1, \dotsc, p_q$ be the ordered vertices of $P_{k,q}$, and let
	$\phi_1\colon V(H) \rightarrow V(P_{k,q})$ be a homomorphism as in the statement.
	In particular, for all $1 \leq i \leq q$, if we let $H_i = \phi^{-1}_1(\{ p_i \})$ then
	\[ |H_i| \leq k \lceil \beta r n \rceil =\colon M. \]
	Now let $\phi_2\colon V(P_{k,q}) \rightarrow V(C_{k,rt})$ be the random function given by \cref{algorithm:randomwalk}.
	By \cref{lemma:algorithmcorrect}, $\phi_2$ is a graph homomorphism.
	Then the composition $\phi = \phi_2 \circ \phi_1\colon V(H) \rightarrow V(C_{k,rt})$ defines a valid allocation from $H$ into $C_{k,rt}$.
	We now check that it satisfies the required properties with non-zero probability.

	Recall that, for $X_s = (a,b)$, \cref{algorithm:randomwalk} maps $p_s \mapsto c_{a}$ when $b=1$ and $p_s \mapsto c_{a-k+b-1}$ when $b>1$.
	So we have $\phi_2(p_s) = c_i$ for $1 \leq i \leq rt$ precisely when $X_s = (i,1)$ or $X_s = (i+k+1-b,b)$ for $2 \leq b \leq k-1$.
	Motivated by this, we define
	\[S_i = \{ (i,1) \} \cup \{ (i+k+1-b,b) \colon 2 \leq  b \leq k-1 \} \subseteq \cX_{k,rt}.\]
	So $\phi_2(p_s) = c_i$ if and only if $X_s \in S_i$.
	Now define functions $f_1, \dotsc, f_q \colon \cX_{k,rt} \rightarrow [0, M]$ such that, for all $1 \leq s \leq q$,
	\[ f_s( (a,b) ) = \begin{cases}
			|H_s| & \text{if $(a,b) \in S_i$}, \\
			0     & \text{otherwise.}
		\end{cases} \]
	Since $H_s$ will be embedded (via $\phi$) into $c_i$ if and only if $X_s \in S_i$, we have
	\begin{equation}
		|\phi^{-1}(c_i)| = \sum_{s = 1}^q f_s(X_s). \label{equation:fisthesum}
	\end{equation}
	As the Markov chain $\{ X_i \}_{i \ge 1}$ in Algorithm~\ref{algorithm:randomwalk} was started from its stationary distribution $\pi$, it follows that $\probability[X_s = (a,b)] = \pi_{(a,b)}$ for all $1 \leq s \leq q$ and $(a,b) \in \cX_{k,rt}$.
	Together with~\cref{item:markov-stationary} we get $\probability[X_s \in S_i] = 1/(rt)$ for all $1 \leq s \leq q$.
	Therefore $\expectation_\pi[f_s] = |H_s|/(rt)$ and thus
	\begin{equation}
		\sum_{s = 1}^r \expectation_\pi[f_s] = \sum_{s=1}^r \frac{|H_s|}{rt} = \frac{n}{t}. \label{equation:ntistheexpectation}
	\end{equation}
	We have $\sum_{s=1}^q |H_s| = rn$ and $|H_s| \leq M = k \lceil \beta r n \rceil$ holds for each $1 \leq s \leq q$.
	By convexity, $\sum_{s=1}^q |H_s|^2$ is maximised when as many $|H_s|$'s as possible take the value $M$, which can happen certainly for at most $\lceil rn/M \rceil$ possible values of $|H_s|$.
	In consequence,
	\begin{equation}
		\sum_{s=1}^q |H_s|^2 \leq \left\lceil \frac{rn}{M} \right\rceil M \leq  2 k \beta r^2 n^2, \label{equation:constraints}
	\end{equation}
	where the extra `$2$' accounts for the removal of the ceilings.

	Now, suppose we are in the case $r = 1$.
	Then the chain $\{X_i\}_{i \ge 1}$ is finite, irreducible and aperiodic by~\cref{item:markov-irreducible} and~\cref{item:markov-aperiodic}.
	Applying
	\cref{theorem:fanjiangsun} gives $C = C(k,t)$ depending on $k, t$ only.
	We apply inequality \eqref{equation:markovhoeffding} with $\xi n/t, |H_s|$ playing the roles of $\eps, b_i-a_i$ respectively, to get
	\begin{align*}
		\probability \left[ \left| |\phi^{-1}(c_i)| - \frac{n}{t}  \right| > \frac{\xi n}{t} \right]
		 & \overset{\eqref{equation:fisthesum}, \eqref{equation:ntistheexpectation}}{=} \probability \left[ \left| \sum_{s=1}^q f_s(X_s) - \sum_{s=1}^q \expectation_\pi[f_s] \right| > \frac{\xi n}{t} \right] \\
		 & \overset{\eqref{equation:markovhoeffding}}{\leq} 2 \exp \left( - \frac{C \xi^2 n^2}{2 t^2 \sum_{s=1}^q |H_s|^2 / 4 }  \right)                                                                        \\
		 & \overset{\eqref{equation:constraints}}{\leq} 2 \exp \left( - \frac{C \xi^2}{t^2 k \beta}  \right)
		< \frac{1}{t},
	\end{align*}
	where we used $\beta \ll \xi, 1/t, 1/k$ in the last step.
	Using a union bound over all the $t$ possible choices of $c_i$,
	we deduce that with positive probability $|\phi^{-1}(c_i)| = (1 \pm \xi)n/t$ for all $c_i \in V(C_{k,t})$, as desired.

	Now suppose $r = k$.
	Fix $j \in \{ 1, \dotsc, k \}$.
	By assumption, we have that $|\phi^{-1}(P_j)| = n$.
	Let $V(C_{k, kt}) = \{c_1, \dotsc, c_{kt}\}$.
	Note that the power of cycle $C_{k,kt}$ is $k$-partite with a unique (up to relabelling) partition, whose $k$ parts are $R_j := \{ c_{j+ik} \colon 0 \leq i < t \}$, for $1 \leq j \leq k$.
	The main point here is that (without loss of generality, after relabelling vertices of the cycle if necessary) the vertices in $P_j$ must always be mapped, via $\phi_2$, to vertices of $C_j$.
	This follows since, by \Cref{lemma:algorithmcorrect}, the (random) allocation $\phi_2$ is a valid homomorphism from $P_{k,q}$ to $C_{k, kt}$, which in particular must respect the $k$-partite structure.
	In terms of the Markov chain $\{X_s\}_{s \geq 0}$, we have that $\phi_2(p_s) \in R_j$ if and only if $X_s \in S_i$ with $i \in \{ j+ik \colon i \geq 0 \}$.
	Unraveling the definition of $S_i$, we deduce that there exists $0 \leq j' < k$ such that $\phi_2(p_s) \in R_j$ if and only if $s \in \{ j'+ik \colon i \geq 0 \}$.
	We thus have that the mapping of vertices into $R_j$ is governed by the chain $\{X_{j+ik}\}_{i \ge 0}$.
	Since the chain $\{X_{j+ik}\}_{i \ge 0}$ is irreducible and aperiodic by~\cref{item:markov-steps},
	we can apply \eqref{equation:markovhoeffding} using this new chain, and conclude in a similar way to the $r = 1$ case.
\end{proof}

\subsection{A cycle to correct imbalances}\label{sec:imbalance-correcting}
Here we find an allocation of a tight cycle into a $k$-graph $\cJ$ such that $(R,\cJ)$ forms a robust Hamilton framework,
with the additional restriction that the number of vertices allocated to each cluster is proportionally very close to the sizes prescribed by a given $(1+\eps)$-balanced partition.
Its proof will be an application of~\rf{lem:lemma-for-C}.

\begin{lemma}[Imbalance-correcting Lemma for $C$] \label{lemma:lemma-for-C-correcting}
	Let 	$1/n \ll \xi \ll \pi \ll 1/t, \eps \ll \alpha \ll \mu, 1/k$ and let $r \in \{1, k\}$.
	Let $\fU$ be an $(n,r)$-sized partition, and
	let $G$ be a $\fU$-partite graph.
	Let $\fV=\{V_i\}_{i=1}^{rt}$ be a $\fU$-refining $(1+\eps)$-balanced partition of $V(G)$,
	with $t$ clusters inside each cluster of $\fU$, such that $G$ is $\fV$-partite.
	Let $R$ be a graph on $tr$ vertices such that $(G, \cV)$ is an $R$-partition.
	Let $\fW$ be the $(t,r)$-sized partition of $R$ induced by $(\fV, \fU)$.
	Let $\cJ \subseteq K_k(R)$.
	Suppose $(R,\cJ)$ is a $(\mu/2)$-robust $\fW$-partite $k$-uniform Hamilton framework and $(\mu/2, 2/n, \fW)$-cluster-matchable.
	If $r = 1$, suppose in addition that $(R,\cJ)$ is $(\mu/2)$-robust aperiodic.
	Let $V(R) = \{1, \dotsc, t\}$ and suppose $\cJ' = (1, 2, \dotsc, t)$ is a tight Hamilton cycle in $\cJ$.

	Then there is $\ell \leq 2 r t^2 \xi^{-1}$ coprime to $k$ and a homomorphism $\phi\colon V(C_{k, r \ell}) \rightarrow V(R)$ such that
	\begin{enumerate}[\upshape (J1)]
		\item \label{item:imbalanced-correcting} for every $i \in V(R)$,
		      \[ \frac{|\phi^{-1}(i)|}{r \ell} = \frac{|V_i|}{r n} \pm \xi. \]
		\item \label{item:imbalanced-buffer} for every $i \in V(R)$ there exists $\tilde{X}_i \subseteq \phi^{-1}(i) \subseteq V(C_{k, r\ell})$ such that $|\tilde{X}_i| \ge \alpha |\phi^{-1}(i)|$ and for each $x \in \tilde{X}_i$ and $xy, yz \in E(C_{k, r\ell})$, with $\phi(y) = j$ and $\phi(z) = l$, we have $il, jl \in E(R')$  where $R' = \partial_2 \cJ'$, and
		\item \label{item:imbalanced-forward} for every $i \in V(R)$ there exists $Z_i \subseteq \phi^{-1}(i) \sm \tX_i $ such that $|Z_i| \ge \pi |\phi^{-1}(i)|$ and for each $x \in Z_i$, $y \in N_{C_{k, r \ell}}(x)$, we have
		      $\phi(y) \in \{ i-k+1, i-k+2, \dotsc, i-1 \}$.
	\end{enumerate}
\end{lemma}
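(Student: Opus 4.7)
The plan is to reduce to \rf{lem:lemma-for-C} applied to an auxiliary setup whose cluster sizes mirror the proportions $|V_i|/n$, and then read off property~\ref{item:imbalanced-forward} by inspecting the cyclic order of the tight paths that build the resulting cycle. First, I pick $\ell$ coprime to $k$ with $1/\xi \le \ell \le 2rt^2\xi^{-1}$, which exists since every range of $k$ consecutive integers contains one coprime to $k$. Define $m_i := \lfloor \ell |V_i|/n \rfloor$ with at most $O(t)$ individual $\pm 1$ adjustments to enforce $\sum_i m_i = r\ell$; since $|V_i|/n \in [(1-\eps)/t, (1+\eps)/t]$ each $m_i$ then stays within $\xi\ell$ of $\ell|V_i|/n$. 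The sizes $\{m_i\}$ form a $(1+2\eps)$-balanced family respecting the partite structure prescribed by $\fW$, so I can build a companion graph $G^\ast$ on $r\ell$ vertices with an $(\ell,r)$-sized partition $\fU^\ast$, a refining partition $\fV^\ast=\{V^\ast_i\}$ with $|V^\ast_i|=m_i$ inducing the same $\fW$ on $V(R)$, and $G^\ast$ equal to the complete $R$-partite graph relative to $\fV^\ast$.

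I then apply \rf{lem:lemma-for-C} to $(G^\ast, \fV^\ast, R, \cJ)$ with the spanning subgraph role played by $\cJ'$ itself (which has $\Delta(\cJ')=k\le k^2+1$). The constant hierarchy survives the switch from $n$ to $\ell$: $(R,\cJ)$ remains $\mu/2$-robust $\fW$-partite and $(\mu/2,2/\ell,\fW)$-cluster-matchable, because the additive error $t^{k-1}$ in the cluster-matchable definition is negligible against $\ell \ge 1/\xi \gg t^{k-1}$. The output provides a vertex partition $\fX^\ast=\{X^\ast_i\}$ of $V(C_{k,r\ell})$ with $|X^\ast_i|=m_i$, a buffer $\tfX^\ast$, and vertex-disjoint tight paths $\{P_K\}_{K\in E(\cJ)}$. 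Setting $\phi(v)=i$ whenever $v\in X^\ast_i$ yields a homomorphism $\phi\colon V(C_{k,r\ell})\to V(R)$. Property~\ref{item:imbalanced-correcting} follows from $|X^\ast_i|=m_i$ together with the choice of $m_i$, and property~\ref{item:imbalanced-buffer} follows by setting $\tilde X_i := \tilde X^\ast_i$ and invoking the buffer guarantee.

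The main obstacle is property~\ref{item:imbalanced-forward}, which is not directly provided by \rf{lem:lemma-for-C} and requires a careful look at how the paths $P_K$ are laid out within the cycle. In Step~3 of the proof of \rf{lem:lemma-for-C} the cycle $\cC\subset \cJ^\ast$ is assembled by replacing each $P_e$ with a tight path whose cyclic order through the clusters of $e$ is arbitrary, subject only to consecutive paths gluing along their $k-1$ shared vertices. I pin down the increasing cyclic order $(j,j+1,\dotsc,j+k-1)$ inside each $P_{K_j}$ for $K_j=\{j,j+1,\dotsc,j+k-1\}\in E(\cJ')$; this is internally consistent because the last $k-1$ vertices of $P_{K_j}$ sit in clusters $\{j+1,\dotsc,j+k-1\}=K_j\cap K_{j+1}$, matching the first $k-1$ clusters of $P_{K_{j+1}}$ under its own increasing order. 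With this ordering, any vertex $v$ in the interior of $P_{K_{i-k+1}}\cap X^\ast_i$ sits at the last position of a full period of the cyclic order, so its $k-1$ predecessors and $k-1$ successors along the tight path sweep through the clusters $\{i-k+1,\dotsc,i-1\}$ once each, meaning every neighbour $y$ of $v$ in $C_{k,r\ell}$ satisfies $\phi(y)\in\{i-k+1,\dotsc,i-1\}$. Taking $Z_i$ to be the collection of these interior vertices in $P_{K_{i-k+1}}\cap X^\ast_i$, the length bound $|P_{K_{i-k+1}}|\ge \pi_0\ell$ from \rf{lem:lemma-for-C} (with an appropriate internal parameter $\pi_0\gg\pi$) yields $|Z_i|\ge \pi_0\ell/k - O(1) \ge \pi m_i$. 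Finally, disjointness of $Z_i$ from $\tilde X_i$ is secured by choosing in the proof of \rf{lem:lemma-for-C} the buffer's base edge for cluster $i$ to be any edge of $\cJ'$ containing $i$ other than $K_{i-k+1}$, placing $\tilde X_i$ and $Z_i$ inside disjoint tight paths.
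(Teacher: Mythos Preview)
Your overall plan---choose $\ell$ coprime to $k$, set up auxiliary cluster sizes $m_i\approx \ell|V_i|/n$, blow up $R$ accordingly, and apply \cref{lem:lemma-for-C} with $\cJ'$ in the role of the bounded-degree spanning subgraph---is exactly what the paper does. Properties~\ref{item:imbalanced-correcting} and~\ref{item:imbalanced-buffer} are then read off exactly as you describe.

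The one place where you diverge is the derivation of~\ref{item:imbalanced-forward}, and here you overcomplicate matters and introduce a gap. Your premise that ``the cyclic order through the clusters of $e$ is arbitrary'' in Step~3 of \cref{lem:lemma-for-C} is not accurate: the order inside each $P_e$ is inherited from the closed walk $W$ chosen in Step~1, and the paths $P_K$ for different $K\in E(\cJ)$ are pairwise \emph{vertex-disjoint} intervals of $C$ (property~\ref{itm:allocate-vertices-disjoint}), not glued along shared $(k-1)$-tuples as you suggest. So your ``pinning down'' and ``internal consistency'' steps do not match the actual construction.

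Fortunately none of this is needed. The paper's argument is a one-liner: for $K=\{i-k+1,\dotsc,i\}\in E(\cJ')$, the path $P_K$ is an interval of $C$ with $V(P_K)\subseteq\bigcup_{j\in K}X_j$. Any $k$ consecutive vertices of $P_K$ form a $k$-clique and hence meet each cluster of $K$ exactly once; in particular the cluster pattern along $P_K$ is automatically periodic with period $k$, regardless of which cyclic order the walk imposed. Thus any interior vertex of $P_K$ lying in $X_i$ has all its $C$-neighbours inside $P_K$ and hence in clusters $K\setminus\{i\}=\{i-k+1,\dotsc,i-1\}$. Taking $Z_i$ to be these interior vertices gives~\ref{item:imbalanced-forward} directly from the black-box output~\ref{itm:allocate-vertices-paths}.

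For the disjointness $Z_i\cap\tilde X_i=\emptyset$, the paper simply invokes \cref{lem:lemma-for-C} with buffer parameter $2\alpha$ instead of $\alpha$ and then replaces $\tilde X_i$ by $\tilde X_i\setminus Z_i$; since $|Z_i|$ is of order $\pi|\phi^{-1}(i)|\ll\alpha|\phi^{-1}(i)|$, this still leaves an $(\alpha,R')$-buffer. This avoids re-entering the proof of \cref{lem:lemma-for-C} altogether.
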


\begin{proof}
	Choose the largest $\ell$ such that $\ell \leq 2 r t^2 \xi^{-1}$ and $\ell$ is coprime with $k$.
	By choice of $\xi$, we can assume that the hierarchy $1/\ell \ll 1/t, \eps, \alpha, \mu, 1/k$ holds.
	In particular, we can assume $\ell \ge r t^2 \xi^{-1}$.
	Next, we choose $\ell_1, \dotsc, \ell_{rt} \ge 0$ such that
	\begin{enumerate}[\upshape (L1)]
		\item If $r = 1$, then $\sum_{i=1}^{t} \ell_i = \ell$,
		\item \label{item:cycleimbalance-r-sum} if $r = k$, then for each $1 \leq j \leq k$, $\sum_{i=0}^{t-1} \ell_{j + ik} = \ell$, and
		\item the quantity $\sum_{i=1}^{rt} \left| \frac{\ell_i}{r\ell} - \frac{|V_i|}{rn} \right|$ is minimised among all the possible choices.
	\end{enumerate}
	We claim that, under this choice, it is not possible that $\frac{\ell_i}{\ell} - \frac{|V_i|}{n} > t/\ell$ for some $i$.
	We argue by contradiction, assuming first the case $r = 1$, and assuming $\frac{\ell_i}{\ell} - \frac{|V_i|}{n} > t/\ell$ holds.
	Since $\sum_{j=1}^t \ell_j = \ell$ and $\sum_{j=1}^t |V_j| = n$, then $\sum_{j=1}^{t} \left( \frac{\ell_j}{\ell} - \frac{|V_j|}{n} \right) = 0$.
	Therefore, $0 > \sum_{j=1}^{t} \left( \frac{\ell_i}{r\ell} - \frac{|V_j|}{rn} \right) = t/\ell + \sum_{j \neq i} \left( \frac{\ell_j}{\ell} - \frac{|V_j|}{n} \right)$, and thus
	$-t/\ell > \sum_{j \neq i} \left( \frac{\ell_j}{\ell} - \frac{|V_j|}{n} \right)$.
	By averaging, there must exist $j \neq i$ such that $-1/\ell > \frac{\ell_j}{\ell} - \frac{|V_j|}{n}$.
	Adding one to $\ell_j$ and substracting one from $\ell_i$ yields a new valid sequence where the objective sum is strictly minimised, a contradiction.
	The case $r = k$ follows in the same fashion, by using \ref{item:cycleimbalance-r-sum}.
	
	An analogous argument shows that $\frac{\ell_i}{\ell} - \frac{|V_i|}{n} \ge -t/\ell$ for all $i$ as well,
	thus we have
	\begin{align}
		\sum_{i=1}^{rt} \left| \frac{\ell_i}{\ell} - \frac{|V_i|}{n} \right| \leq \frac{r t^2}{\ell} \leq \xi.
		\label{equation:lemma-for-C-correcting-balance}
	\end{align}
	Let $G'$ be the graph obtained from $R$ by blowing-up each $i \in V(R)$ by $\ell_i$ vertices,
	let $\fV'$ be the natural associated partition of $V(G')$ where $|V'_i| = \ell_i$ for all $i$;
	and let $\fU'$ be the $(\ell, r)$-partition of $V(G')$ where each cluster $U' \in \fU'$ corresponds to the blown-up vertices of $R$ that all lie in the same cluster of the partition $\fW$ of $V(R)$.

	We check the hypothesis of \cref{lem:lemma-for-C} hold with $G', \fV', \fU', R, r, t, \ell, 2 \eps, \mu/2, 2 \alpha, \pi, \cJ'$ playing the roles of $G, \fV, \fU, R, r, t, n, \eps, \mu, \alpha, \pi, \cJ'$, respectively.
	By construction, $\fV'$ is an $\fU'$-refining partition.
	Since $\fV = \{V_1, \dotsc, V_{rt}\}$ is a $(1+\eps)$-balanced partition, together with \cref{equation:lemma-for-C-correcting-balance} we deduce that $\fV'$ is a $(1 + 2 \eps)$-balanced partition.
	By choice, $(G', \fV')$ is an $R$-partition.
	If $\fW'$ is the $(t,r)$-sized partition of $R$ induced by $(\fV', \fU')$, note that $\fW' = \fW$.
	By assumption, $(R,\cJ)$ is a $(\mu/2)$-robust $\fW$-partite $k$-uniform Hamilton framework which is $(\mu/2, 2/n, \fW)$-cluster-matchable;
	and if $r = 1$ we also assume $(R,\cJ)$ is $(\mu/2)$-robust aperiodic.
	In particular, since $\ell \leq n$, $(R,\cJ)$ is $(\mu/2, 2/\ell, \fW)$-cluster-matchable.
	By assumption, $\cJ' \subseteq \cJ$ is a tight Hamilton cycle, which satisfies $\Delta(\cJ') \leq k \leq k^2+1$.

	The application of \cref{lem:lemma-for-C} yields a size-compatible (with $\fV'$) vertex partition $\fX = \{X_i\}_{i=1}^{rt}$ of the vertex set of the $(k-1)$st power of a Hamilton cycle $C=C_{k, r \ell}$,
	a family of subsets $\tfX = \{\tX_i\}_{i=1}^{rt}$,
	and a family of $(k-1)$st powers of paths $\{ P_K \}_{K \in E(\cJ)}$ which are subgraphs of $C$, such that
	\begin{enumerate}[\upshape ({C}'1)]
		\item \label{itm:allocate-vertices-partition'} $(C,\fX)$ is an $R$-partition,
		\item \label{itm:allocate-vertices-buffer'} $\tfX$ is a $(2\alpha, R')$-buffer for $(C, \fX)$  where $R' = \partial_2 \cJ'$,
		\item \label{itm:allocate-vertices-paths'} for each $K \in E(\cJ)$, ${P}_K$ has length $\pi \ell$ and ${P}_K \subset C[\bigcup_{i  \in K} X_i]$, and
		\item \label{itm:allocate-vertices-disjoint'} the paths $\{ P_K \}_{K \in E(\cJ)}$ are pairwise vertex-disjoint.
	\end{enumerate}
	
	Note that the partition $\fX$ defines a natural homomorphism $\phi\colon V(C) \rightarrow V(R)$, and we can assume (after relabelling) that $\phi^{-1}(i) = X_i$.
	Now \cref{item:imbalanced-correcting} follows from \cref{equation:lemma-for-C-correcting-balance}.
	It remains to obtain \cref{item:imbalanced-buffer} and \cref{item:imbalanced-forward}.
	
	We focus on \cref{item:imbalanced-forward} first, by defining the sets $Z_i$.
	Consider an arbitrary $i\in V(R)$ and note that $K=\{ i-k+1, i-k+2, \dotsc, i \}$ is an edge in $\cJ' \subset \cJ$.
	Hence by \ref{itm:allocate-vertices-paths'}--\ref{itm:allocate-vertices-disjoint'}, there is a $(k-1)$st power of a path ${P}_K \subset C[\bigcup_{i  \in K} X_i]$ of length $\pi \ell \geq 2 k \pi \ell_i = 2 k \pi |\phi^{-1}(i)|$ (in the inequality we used $\ell \geq 2k \ell_i$, which follows from \eqref{equation:lemma-for-C-correcting-balance}).
	We therefore may choose, for each $i$, a set $Z_i \subseteq \phi^{-1}(i) \cap P_K \subseteq V(C_{k,r \ell})$ of size $\pi |\phi^{-1}(i)|$, and in such a way all $Z_i$ are vertex-disjoint.
	For each $x \in Z_i$ and a neighbour $y$ of $x$ in the cycle $C$, we have that $y \in P_K$ and also $\phi(y) \neq i$ (since $\phi(x) = i$, and $x, y$ are neighbours), thus $\phi(y) \in K \setminus \{ i \} = \{ i - k + 1, i - k + 2, \dotsc, i-1 \}$, as required.
	
	Now we show \cref{item:imbalanced-buffer}.
	From the existence of $\tfX$ and \ref{itm:allocate-vertices-buffer'}, we have that the sets $\{ \tX_i \}_{i=1}^{rt}$ form a $(2 \alpha, R')$-buffer.
	We are almost done, except for the fact that \cref{item:imbalanced-forward} requires that each set $Z_i$ is disjoint from $\tX_i$.
	This is achieved by replacing each $\tX_i$ with $\tX'_i := \tX_i \sm Z_i$.
	It can be easily checked that this turns the $(2\alpha, R')$-buffer $\tfX$ into an $(\alpha, R')$-buffer $\{ \tX'_i \}_{i=1}^{rt}$, and thus we are done.
\end{proof}

\subsection{Proof of the \texorpdfstring{\nameref{lem:lemma-for-H}}{Lg}}\label{sec:proof-lemma-for-H-proof-itself}
Finally, we show the main result of \cref{sec:lemmaforH}.
\begin{proof}[Proof of \cref{lem:lemma-for-H}]
	We carry out the proofs for the cases \ref{itm:LH-r=1}~to~\ref{itm:LH-r=k}  simultaneously.
	Initially, assume $H$ admits an ordering with bandwidth at most $\beta n$ and a $(z, \beta)$-zero-free 
	$(k+1)$-colouring $\chi \colon V(H) \rightarrow \{0, 1, \dotsc, k\}$.
	If $r=k$, we will furthermore assume that $\chi$ is actually an equitable $k$-colouring.
	We will also assume, for now, that $(R,\cJ)$ is only a Hamilton framework (not zero-free).

	Then, the proof has four steps.
	At the end of Step 3 we will be done with the embedding if $H$ has no zero-coloured vertices under $\chi$, and this will cover cases \ref{itm:LH-r=1}~and~\ref{itm:LH-r=k} of \cref{lem:lemma-for-H}.
	In Step 4, we will introduce the extra hypothesis of zero-freeness to $(R,\cJ)$ to continue, and this will finish the proof in all cases. \medskip

	\noindent
	\emph{Step 1: Finding an imbalance-correcting cycle in $R$.}
	Recall that $\cJ' \subseteq \cJ$ is a tight Hamilton cycle whose cyclic order we assume to be $1, 2, \dotsc, rt$.
	Let $R' = \partial_2 \cJ'$.

	First, apply \cref{lemma:lemma-for-C-correcting} with $\xi^2$, $3\pi$ and $3 \alpha$ in place of $\xi$, $\pi$ and $\alpha$, respectively, to obtain $\ell \leq 2 r  t^2 \xi^{-2}$ coprime to $k$ and a homomorphism $\phi_3\colon V(C_{k, r \ell}) \rightarrow V(R)$ satisfying
	\begin{enumerate}[({J}1)]
		\item \label{item:lemmaforH-balancephi3} for every $i \in V(R)$,
		      \begin{equation}
			      \frac{|\phi^{-1}_3(i)|}{r\ell} = \frac{|V_i|}{rn} \pm \xi^2,
			      \label{equation:lemmaforH-k-phi3}
		      \end{equation}
		\item \label{item:lemmaforH-bufferphi3} for every $i \in V(R)$ there exists $\tilde{Y}_i \subseteq \phi^{-1}_3(i) \subseteq V(C_{k, \ell})$ such that $|\tilde{Y}_i| \ge 3 \alpha |\phi^{-1}_3(i)|$ and for each $x \in \tilde{Y}_i$ and $xy, yz \in E(C_{k, \ell})$, with $\phi_3(y) = j$ and $\phi_3(z) = l$, we have $ij, jl \in E(R')$, 
		\item \label{item:lemmaforH-forward3} for every $i \in V(R)$, there exists $\tilde{Z}_i \subseteq \phi^{-1}_3(i) \setminus \tilde{Y}_i$ such that $|\tilde{Z}_i| \ge 3\pi |\phi^{-1}_3(i)|$
		      and for each $z \in \tilde{Z}_i$, $y \in N_{C_{k, \ell}}(z)$ we have $\phi_3(y) \in \{ i - k + 1, i - k + 2, \dotsc, i - 1 \}$.
	\end{enumerate}\medskip

	\noindent
	\emph{Step 2: Preparing $H$.}
	For each $0 \leq i \leq k$, let $H_i \subseteq H$ be the set of $i$-coloured vertices of $H$ under $\chi$.
	Let $H_{-0} = V(H) \setminus H_0$.
	Note that $H_0$ could be empty.
	Let $q = \lceil \beta^{-1} \rceil + k - 1$.
	Recall that $P_{k,q}$ is the $(k-1)$st power of a path with vertices $p_1, \dotsc, p_q$,
	and $P^+_{k,q}$ is obtained from $P_{k,q}$ by adding an universal vertex $p^+$.
	For each $1 \leq j \leq k$, let $P_j = \{ p_i \colon i \in j+k\mathbb{N} \}$.
	
	By \cref{lemma:Htopowpath}, applied here with $|V(H)| = rn$ in place of $n$, there exists a homomorphism $\phi_1\colon V(H) \rightarrow V(P^+_{k,q})$ such that
	\begin{enumerate}[(P1)]
		\item $\phi_1^{-1}(\{ p^+ \}) = H_0$,
		\item \label{item:zeroblock-colourclass} for each $1 \leq j \leq k$, $\phi^{-1}_1(P_j) = H_j$,
		\item \label{item:zeroblock-bounded} $\phi_1$ restricted to $H_{-0}$ is an at most $(k \lceil \beta r n \rceil)$-to-one homomorphism to $P_{k,q}$, and
		\item \label{item:zeroblock} there exists $s \ge 0$ and indices $1 \leq i_1 \leq i_2 \leq \dotsb \leq i_s \leq q$ such that
		      \begin{enumerate}[{\upshape (i)}]
			      \item \label{item:zeroblock-wellapart} $i_{j+1} - i_j \ge z$ for all $1 \leq j < s$,
			      \item \label{item:zeroblock-zeroadjacent} for every $v \in V(H)$ adjacent to a vertex in $H_0$, there exists $1 \leq j \leq s$ such that $\phi_1(v) \in \{ p_{i_j-1}, \dotsc, p_{i_j+k} \}$.
		      \end{enumerate}
	      	Further, if $H_0 = \emptyset$, then $s = 0$.
	\end{enumerate}

	Now, note that if we are in the $r=k$ case, then $\chi$ is assumed to be an equitable $k$-colouring.
	This implies that there are no zero-coloured vertices, and each colour class has $n$ vertices.
	Thus from \ref{item:zeroblock-colourclass}, we get:

	\begin{enumerate}[(P1),resume]
		\item  \label{itm:lem-H-equitable} If $r=k$, then $\phi_1$ maps $n$ vertices to $P_j$, for each $1 \leq j \leq k$.
	\end{enumerate}
	Also, note that \cref{item:zeroblock}\ref{item:zeroblock-wellapart} implies in particular that $s \leq qz^{-1}+1$.
	From the definition of $(z, \beta)$-zero-free $(k+1)$-colourings we can deduce that
	\begin{enumerate}[(P1),resume]
		\item \label{item:zeroblock-zerofew} $|H_0| \leq s \lceil \beta n \rceil \leq (qz^{-1}+1)\lceil \beta n \rceil \leq 2 z^{-1}n \leq \xi^2 n$,
	\end{enumerate}
	where we used $1/z \ll \xi$ in the last inequality.
	
	Let $\omega = kt^{k}$.
	To allow an easier description of the embedding later, we will extend $P^+_{k,q}$ by adding dummy vertices at the beginning and the end of the power of path.
	More precisely, we consider new vertices $p_{0}, p_{-1}, \dotsc, p_{-\omega + k}$ and $p_{q + 1}, \dotsc, p_{q+\omega}$, and obtain a new graph $P^\omega_{k,q}$.
	We do this in such a way that $V(P^\omega_{k,q}) \setminus \{ p^+ \}$ is a $(k-1)$st power of a path on $q + 2 \omega -k + 1$ vertices according to the ordering $p_{- \omega + k} p_{- \omega + k + 1} \dotsb p_{-1} p_{0} p_1 \dotsb p_{q-1} p_q p_{q+1} \dotsb p_{q+ \omega}$.
	Since $P^\omega_{k, q}$ contains $P^+_{k,q}$, we can trivially extend the embedding $\phi_1 \colon V(H) \rightarrow V(P^+_{k,q})$ to an embedding $\phi^\omega_1 \colon V(H) \rightarrow V(P^\omega_{k,q})$, where the new vertices do not receive the image of any vertex of $H$.

	Now we partition $H$ in vertex-disjoint subgraphs which we will allocate separately.
	For each $a,b \in \mathbb{Z}$ with $- \omega + k - 1 \leq a \leq b \leq q + \omega + 1$, we let $H_{[a,b]} = (\phi_1^\omega)^{-1}( \{ p_i \colon a \leq i \leq b \} ) \subseteq H_1$.
	In words, $H_{[a,b]}$ consists of the preimages of $b-a+1$ consecutive vertices of $P^\omega_{k,q}$, those from $p_a$ to $p_b$.
	
	For each $1 \leq j \leq s$ we define $a_j, b_j, c_j, d_j \in \mathbb{Z}$ as
	\begin{align*}
		b_j & = i_j + k - 2,   &
		a_j & = b_j + 1 - \omega \\
		c_j & = i_j + 1,              &
		d_j & = c_j - 1 + \omega,
	\end{align*}
	These numbers and tuples were selected so that the intervals $[a_j, b_j]$ and $[c_j, d_j]$ have $\omega$ elements each;
	and the last $k$ elements of $[a_j, b_j]$ (resp. the first $k$ elements of $[c_j, d_j]$) correspond to the first (resp. last) $k$ elements of {$\{ i_j - 1, \dotsc, i_j+  k \}$.}
	Since $1/z \ll 1/t, 1/k$, we have $z > 2 \omega - k + 4$.
	Together with \cref{item:zeroblock}\ref{item:zeroblock-wellapart}, we deduce that for each $1 \leq j \leq s$, the intervals $[a_j, d_j]$ are pairwise disjoint.
	Also, note that since $1 \leq i_j \leq q$ holds for every $1 \leq j \leq s$, the intervals $[a_j, b_j]$ and $[c_j, d_j]$ are completely contained in $[- \omega + k, q + \omega]$.
	This implies that $\{ p_i \colon i \in [a_j, b_j]  \}$ and $\{ p_i \colon i \in [c_j, d_j] \}$ are well-defined sets of vertices in $P^\omega_{k,q}$.

	For each $1 \leq j \leq s$, let
	\begin{align}
		\text{$F_j = H_{[a_j+k, d_{j}-k]}$ and $H' = H_{-0} \setminus \bigcup_{1 \leq j \leq s} F_j$.}
	\end{align}
	Note that $H', F_1, \dotsc, F_s$ and $H_0$ partition the vertex set of $H$,
	and $H_0$ contains all zero-coloured vertices of $H$.
	\medskip

	\noindent
	\emph{Step 3: Embedding the non-zero coloured vertices.}
	We will embed $H'$ first using \rf{lemma:lemma-for-H-balanced} and so we proceed to check the necessary hypothesis.
	Let $r n'$ be the number of vertices in $H'$.
	By construction, for each $1 \leq j \leq s$, the set $F_j$ corresponds to the preimage of (at most) $2 \omega + 2k$ vertices of $P^\omega_{k,q}$.
	By \cref{item:zeroblock-bounded}, each vertex of $P_{k,q}$ is the image of at most $k \lceil \beta r n \rceil$ vertices of $H$ via $\phi_1$, and this also holds for $\phi^\omega_1$.
	Thus we have, for each $1 \leq j \leq s$, that $|F_j| \leq (2 \omega + 2k)k\lceil \beta r n \rceil$.
	Using this, we can write the following inequalities (with explanations to follow):
	\begin{align*}
		\sum_{j=1}^s |F_j|
		& \leq s (2 \omega + 2k)k\lceil \beta r n \rceil
		\leq 2 s (2 \omega + 2k)k^2 \beta n \\
	 	& \leq 4 q z^{-1} (2 \omega + 2k)k^2 \beta n 
	 	\leq 8 z^{-1} (2 \omega + 2k)k^2 n \leq \xi^2 n.
	\end{align*}
	Here, in the second inequality we used $r \leq k$ and used $\lceil \beta k n \rceil \leq 2 \beta k n$;
	in the third inequality we used $s \leq qz^{-1} + 1 \leq 2 q z^{-1}$, which, as noted before, follows from \cref{item:zeroblock}\ref{item:zeroblock-wellapart}.
	In the fourth inequality we recalled that $q = \lceil \beta^{-1} \rceil + k - 1$ and $\beta \ll 1/k$ to deduce $q \beta \leq 2$.
	Finally, in the last inequality we used $1/z \ll \xi, 1/k, 1/t$.
	
	Together with \cref{item:zeroblock-zerofew}, we deduce
	\begin{equation}
		n' = \frac{|H'|}{r} = n - |H_0| - \sum_{j=1}^s |F_j|
		\ge (1 - \xi^2)n - \sum_{j=1}^s |F_j| \ge (1 - 2 \xi^2)n,
		\label{equation:lemmaforH-finalbound}
	\end{equation}
	Since $\ell \leq 2 r t^2 \xi^{-2}$, $q = \lceil \beta^{-1} \rceil + k - 1$ and $\beta \ll \xi, 1/t$,
	we can assume the hierarchy $1/n' \ll \beta, 1/q \ll \xi, 1/\ell, 1/k$ is satisfied.
	Also, $\ell$ and $k$ are coprime.
	Let $\phi_1'$ be the restriction of $\phi^\omega_1$ to $H'$.
	Note that $\phi_1'$ is a homomorphism from $H'$ to $P^\omega_{k,q}$ which is at most $(k \lceil \beta r n \rceil)$-to-one.
	Hence, we deduce that $\phi_1'$ is at most $(k \lceil 2 \beta r n' \rceil)$-to-one.
	Finally, we recall that if $r=k$ then there are no zero-coloured vertices, so $H_0 = \emptyset$ and thus $s = 0$ by \ref{item:zeroblock}.
	In particular, if $r=k$, then we have $H' = H$ and thus $\phi'_1 = \phi_1$ and $n' = n$.
	Therefore, by \cref{itm:lem-H-equitable}, it follows that if $r=k$, then $|(\phi'_1)^{-1}( P_j )| = n'$ holds for each $1 \leq j \leq k$.
	
	Thus we can apply \cref{lemma:lemma-for-H-balanced} with $n', 2 \beta, \xi^{2}, H', \phi_1', \ell, q + 2 \omega + k + 3$ in place of $n, \beta, \xi, H, \phi_1, t, q$ respectively, to obtain a homomorphism $\phi'_2 \colon V(P^\omega_{\smash{k,q}}) \rightarrow V(C_{k,r \ell})$ such that the composition $\phi_2 = \phi'_2 \circ \phi'_1$ is an homomorphism from $H'$ to $C_{k, r \ell}$ which is $( (1 \pm \xi^2) \frac{n'}{\ell} )$-to-one.
	Since $n' = (1 \pm 2\xi^2)n$, it follows that $\phi_2$ is $( (1 \pm 3 \xi^2) \frac{n}{\ell} )$-to-one.

	Then $\Phi = \phi_3 \circ \phi_2$ is a homomorphism from $H'$ to $R$.
	This naturally defines a partition $\fX'$ of $V(H')$ by $X'_i = \Phi^{-1}(i)$ for each $1 \leq i \leq rt$,
	and so we have that
	\begin{enumerate}[(H1)]
		\item  $(H', \fX')$ is an $R$-partition.
	\end{enumerate}

	For all $1 \leq i \leq rt$, let $\tX'_i=\phi_2^{-1}(\tilde{Y}_i)$ with $\tfX' = \{ \tX'_i \}_{i=1}^{rt}$,
	and let $Z_i=\phi_2^{-1}(\tilde{Z}_i)$ with $\fZ = \{\tilde{Z}_i\}_{i=1}^{rt}$  where the sets $\tilde{Y}_i, \tilde{Z}_i$ are from \cref{item:lemmaforH-bufferphi3,item:lemmaforH-forward3}.
	From \cref{item:lemmaforH-balancephi3,item:lemmaforH-bufferphi3,item:lemmaforH-forward3}, the fact that $\phi_2$ is  $( (1 \pm 3 \xi^2) \frac{n}{\ell} )$-to-one and $\xi \ll \pi,\alpha$, we deduce that
	\begin{enumerate}[(H1), resume]
		\item \label{item:lemmaforH-Phi-2} $\tfX'$ is a $(2 \alpha , R')$-buffer for $(H', \fX')$,
		\item \label{item:lemmaforH-Phi-3} for each $1 \leq i \leq rt$, $Z_i \subseteq X'_i \setminus \tX'_i$ is such that $|Z_i| \geq 2 \pi |X'_i|$,
		      and for each $z \in Z_i$, $N_H(z) \subseteq X_{i-1} \cup \dotsb \cup X_{i-k+1}$, and
		\item \label{item:lemmaforH-Phi-1} for each $i \in V(R)$, $|X'_i| = |V_i| \pm 4 \xi^2 n$.
	\end{enumerate}

	At this point of the proof we stop to conclude if the input graph $H$ was $k$-colourable, which covers cases \ref{itm:LH-r=1} and~\ref{itm:LH-r=k} of \cref{lem:lemma-for-H}.
	Indeed, if $H$ is $k$-colourable then $H_0 = \emptyset$ and $s = 0$.
	Therefore, $H' = H$.
	Hence, $\Phi$ is a homomorphism from $H$ to $R$ and $\fX'$ is a partition of $V(H)$ which satisfies the required properties.
	\medskip

	\noindent
	\emph{Step 4: Embedding the zero-coloured vertices.}
	From now on, we can assume $H' \neq H$ and that $(R,\cJ)$ is actually a $(\mu/2)$-robust zero-free Hamilton framework.
	In particular (since $H_0 \neq 0$), we can, and will, assume that in case \ref{itm:LH-r=1-zero-free} of Lemma~\ref{lem:lemma-for-H}.
	This implies that $r = 1$,
	and also that there exists a copy of a $(k+1)$-clique $K$ in $\cJ$.
	Let $\{g_1, \dotsc, g_{k+1} \}$ be its vertices.
	Recall that we need to extend the homomorphism $\Phi$ to allocate the remaining vertices, which are in $H_0$ and $F_1, \dotsc, F_s$.
	
	Fix $1 \leq j \leq s$.
	The homomorphism $\Phi  = \phi_3 \circ \phi_2\colon V(H') \to V(R)$ has already allocated $H'$, including the vertices in $H_{[a_j, a_j+ k-1]}$ and $H_{[d_j - k + 1, d_j]}$ in $R$.
	Recall that, using the definition introduced in Step 2, $H_{[a_j, a_j + k - 1]}$ corresponds to $k$ vertex-disjoint (possibly empty) subsets of vertices $V_{a_j}, \dotsc, V_{a_j + k - 1}$ in $H'$.
	To be precise, for each $0 \leq i < k$, the subset $V_{a_j + i}$ is the preimage via $\phi^\omega_1$ of the vertex $p_{a_j + i} \in V(P^\omega_{k,q})$.
	Since, the vertices in $H_{[a_j, a_j+ k-1]}$ correspond to the preimage of $k$ consecutive vertices of $P^\omega_{k,q}$ via $\phi^\omega_1$, and by definition of $\phi_2$, it follows that the sets $V_{a_j}, \dotsc, V_{a_j + k - 1}$ are embedded via $\phi_2$ to $k$ consecutive vertices of $C_{k, \ell}$.
	Then, the sets $V_{a_j}, \dotsc, V_{a_j + k - 1}$ are then embedded via the homomorphism $\phi_3$ to $k$ vertices in $R$ forming a $k$-clique.
	The conclusion is that there exist vertices $v_{a_j}, v_{a_j + 1}, \dotsc, v_{a_k + k-1} \in V(R)$, which form an edge of $R$, and such that $\Phi(V_{a_j + i}) = v_{a_j + i}$ holds for each $0 \leq i < k$.
	Let $e_1 = (v_{a_j}, \dotsc, v_{a_j + k-1}) \in V(R)^{k}$ be the corresponding ordered edge of $R$.
	Similarly, we can find an ordered edge $e_4 = (v_{d_j - k + 1}, \dotsc, v_{d_j}) \in V(R)^k$ such that, if $V_{d_j - k + 1}, \dotsc, V_{d_j} \subseteq V(H')$ are the $k$ disjoint subsets corresponding to the preimages of $k$ consecutive vertices of $P^\omega_{k,q}$ via $\phi^\omega_1$, and $\Phi(V_{d_j - k + 1 + i}) = v_{d_j - k + 1 + i}$ holds for all $0 \leq i < k$.

	Let $e_2 = (g_{k-1}, g_k, g_1, \dotsc, g_{k-2})$ and $e_3 = (g_1, \dotsc, g_k)$, both $e_2, e_3$ are ordered edges in $R$ (since they are in the $(k+1)$-clique $K$).
	By \cref{proposition:usingtheclique}, $R$ contains a walk $W_1 = ( w_1 ,\dots , w_{\omega})$  from $e_{1}$ to $e_2$, and a walk $W_2=(w'_1, \dots, w'_{\omega})$  from $e_3$ to $e_{4}$  where $\omega=kt^{k}$.
	We allocate $H_{[a_j, b_j]}$ in $R$ by assigning $(\phi_1^\omega)^{-1}(p_{a_j + i - 1})$
	to $w_i \in V(R)$, for all $1 \leq i \leq \omega$,
	and we allocate $H_{[c_j, d_j]}$ in $R$ by assigning $(\phi_1^\omega)^{-1}(p_{c_j + i - 1})$
	to $w'_i \in V(R)$, for all $1 \leq i \leq \omega$.
	Our choice of $a_j, c_j, e_{1}, e_2, e_3, e_4$ and $W_1, W_2$ ensures that this allocation is well-defined.
	Doing this with all $F_j$, this extends the embedding of $H'$ to an embedding of all of $H - H_0$.
	It only remains to allocate the vertices in $H_0$, we do that as follows.

	By~\cref{item:zeroblock}\ref{item:zeroblock-zeroadjacent}, the vertices in $H_0$ are joined, in $H$, only to vertices which are located in $(\phi^\omega_1)^{-1}( \{ p_{i_j - 1}, \dotsc, p_{i_j + k}  \} )$ for $1 \leq j \leq s$.
	Recall that, by choice, the interval $\{ i_j - 1, \dotsc, i_j + k \}$ is in the union of the last $k$ elements of $[a_j, b_j]$ and the first $k$ elements of $[c_j, d_j]$.
	By the allocation of $F_j$, these vertices are allocated to vertices which are all in $e_2$ and $e_3$, which are located inside $\{g_1, \dotsc, g_k\}$.
	Thus we get a valid allocation of $H$ to $R$ by setting $\Phi(H_0) = g_{k+1}$, since $\{g_1, \dotsc, g_{k+1}\}$ forms a $(k+1)$-clique and thus $g_{k+1}$ is a neighbour (in $R$) of all $\{g_1, \dotsc, g_k\}$.
	
	Again, the final allocation naturally defines a vertex-partition $\fX$ of $H$ by setting $\Phi^{-1}(i)$ for each $1 \leq i \leq t$.
	Finally, by inequality~\eqref{equation:lemmaforH-finalbound} we deduce that the above steps change the number of vertices allocated to any given cluster by at most $2 \xi^2 n$ (from the number of vertices allocated when only the embedding of $H'$ was defined).
	In combination with \cref{item:lemmaforH-Phi-2,item:lemmaforH-Phi-3,item:lemmaforH-Phi-1}, it is straightforward to see that this final allocation satisfies all the required properties of \cref{lem:lemma-for-H}.
\end{proof}

\section{Conclusion}\label{sec:conclusion}
In this paper, we investigated a set of general properties that guarantee the existence of (powers of) Hamilton cycles as well as containment of large graphs of sublinear bandwidth.
As an application we recovered several classic results and also proved new results.
Besides getting a clearer picture of what Hamiltonicity in dense graphs is about, we hope that our work will pave the way for future results in this line of research.
We finish with some reflections and present a few problems that appear to be within reach.

\subsection{Chvátal-type degree conditions}
It would be very interesting to characterise the degree sequences that witness the existence of the $(k-1)$st power of a Hamilton cycle.
For $k=2$, this was done by Chvátal~\cite{Chv72}, who showed that a degree sequence $d_1 \leq \dots \leq d_n$ that satisfies $d_i \geq i+1$ or $d_{n-i} \geq n - i$ for every $i < n/2$ guarantees a Hamilton cycle.
Moreover, for every sequence $a_1 \leq \dots \leq a_n$ such that $a_h \leq h$ and $a_{n-h} \leq n-h-1$ for some $h < n/2$ there exists a non-Hamiltonian graph whose degree sequence $d_1 \leq \dots \leq d_n$ satisfies $d_i \ge a_i$ for all $1 \leq i\leq n$.
Up to an error term, these conditions were extended to ensure the existence of bipartite spanning graphs of sublinear bandwidth by Knox and Treglown~\cite{KT13} (as a corollary of \cref{thm:knoxtreglown-robustexpanders}).

For $k \geq 3$ much less is known.
Balogh, Kostochka and Treglown~\cite[{Question 19}]{BKT11} asked whether a degree sequence $d_1 \leq \dotsb \leq d_n$, satisfying
$	d_i \geq  \frac{k-2}{k} n  + i    $  or $ d_{n-i(k-1)+1} \geq   n  - i$ for all $i \leq n/k$,
would guarantee the existence of a $k$-clique factor.
They also provided constructions showing that the value in the second part of the condition cannot be lowered.
It is conceivable that graphs which follow this degree sequence, together with an additional additive error term, allow for a Bandwidth Theorem.
The following conjecture would be a first step in this direction.

\begin{conjecture}
	For any $\mu > 0$ and $n_0 \in \mathbb{N}$ such that the following holds for every $n \geq n_0$.
	Let $G$ be a graph with degree sequence  $d_1 \leq \dots \leq d_n$.
	Suppose that
	$d_i \ge \frac{k-2}{k} n  + i +\mu n$ or $d_{n-i(k-1)+1} \ge  n  - i +\mu n$
	for every $i \leq n/k$.
	Then $(G,H)$ is a zero-free $k$-uniform Hamilton framework  where $H =K_k(G)$.
\end{conjecture}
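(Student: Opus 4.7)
The plan is to mirror the Pósa- and Ore-type arguments of Sections~\ref{sec:posa} and~\ref{sec:ore} as closely as possible, reading the conjectured hypothesis as $d_i \geq \frac{k-2}{k}n + i + \mu n$ or $d_{n-i(k-1)} \geq n - i + \mu n$ for every $i \leq n/k$ (the $\leq$ in the excerpt appears to be a typo; this is the natural Chv\'atal-type strengthening of the BKT condition). By \cref{prop:linked-edges-for-free} it suffices to exhibit $\mu' > 0$ such that $(G, K_k(G))$ is a $\mu'$-proto-robust zero-free Hamilton framework, so I would fix an arbitrary $\mu'$-approximation $G' \subseteq G$, set $\cH' = K_k(G')$, and verify that $\cH'$ is tightly connected, spans $V(G')$, contains a $(k+1)$-clique, and admits a perfect fractional matching.

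First I would prove a deterministic Chv\'atal-type analogue of~\cref{lem:posa-connectivity} and~\cref{lem:ore-connectivity}: under the hypothesis without the $\mu n$ slack, $K_k(G)$ is tightly connected, spans all vertices, and every $\frac{k-1}{k}$-big vertex lies on a $(k+1)$-clique. The argument should proceed by induction on $k$, with Chv\'atal's theorem supplying the base case $k=2$ (a Hamilton cycle through a big vertex forces a triangle). For the inductive step one checks, as in~\cref{prop:posa-link} and~\cref{prop:ore-link}, that the link graph $G[N(v)]$ of a big vertex $v$ inherits the Chv\'atal-type condition with parameter $\frac{k-2}{k-1}$, so that the induction hypothesis yields a $k$-clique in the link. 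Tight connectivity is then obtained as before: all big vertices lie in a common tight component $\cH^\star$ (using that any two big vertices share a big common neighbour), and any putative edge outside $\cH^\star$ can be shifted via~\cref{prop:tightly-connected-observation} to an edge with strictly larger index sum, contradicting maximality. The extra care needed beyond the Ore case is that small vertices form a structured set determined by the second Chv\'atal clause, which must be shown to be fully absorbed into $\cH^\star$ via the many big neighbours they inherit from the $d_{n-i(k-1)} \geq n-i$ conditions.

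Next I would promote this to a supersaturated statement along the lines of~\cref{prop:ore-super-saturated-connectivity}: by sampling a uniformly random $s$-subset and averaging along the degree sequence, a random induced subgraph inherits the pure Chv\'atal-type condition with positive probability. This produces linearly many $k$-cliques through every vertex, linearly many $(k+1)$-cliques through every big vertex, and polynomially many tight walks of bounded length between any two prescribed $k$-cliques. Those quantities are robust enough that tight connectivity, spanning and zero-freeness all survive the passage from $K_k(G)$ to $\cH'$ for any $\mu'$-approximation $G'$, provided $\mu' \ll \mu$.

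The hard part, and the real obstacle, is producing the perfect fractional matching in $\cH'$. The natural input would be a Chv\'atal-type $K_k$-factor theorem (the BKT conjecture~\cite[Proposition~17]{BKT11}), which as far as we are aware remains open for $k \geq 3$; any proof of the present conjecture is likely to have to supply this ingredient first. Granted such a factor theorem, the rest runs in parallel with~\cref{lem:ore-perfect-matchin}: build an absorbing matching $A \subseteq \cH'$ of $(k+1)$-cliques covering every vertex $\Omega(\mu n)$ times (using the supersaturated step, with small vertices handled via the clique on their common large neighbourhood), delete $V(A)$ and a few further cliques to restore divisibility, observe that the Chv\'atal-type condition is inherited on the leftover with slack $\mu n/2$, apply the factor theorem there, and finally transfer this factor into $G'$ using the switching argument of~\cref{lem:Ore-almost-perfect-matching} (the switching operates on $G$ rather than $G'$, so only the bound on the probability that a given edge is used needs checking, and this follows exactly as before since the Chv\'atal-type condition is stable under local symmetrization). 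Absorbing the uncovered vertices into $A$ with weight $1/k$ yields the required perfect fractional matching and completes the verification.
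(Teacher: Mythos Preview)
The statement you are attempting to prove is not a theorem in the paper but an open \emph{conjecture}, stated in the Conclusion section as a natural Chv\'atal-type extension of the Pósa-type result; the paper offers no proof and indeed presents it immediately after recalling that the underlying Balogh--Kostochka--Treglown question on $K_k$-factors is itself open. There is therefore no paper proof to compare your proposal against.

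That said, your proposal is a sensible roadmap and you correctly identify the genuine obstruction. The tight-connectivity and zero-freeness parts look plausible to push through by induction on $k$ in the style of \cref{lem:posa-connectivity} and \cref{lem:ore-connectivity}, and the supersaturation-via-random-sampling step is reasonable in spirit (though the inheritance of a two-clause Chv\'atal-type condition under random restriction is more delicate than the one-clause Pósa or Ore conditions and would need care). The real gap, as you say yourself, is the perfect fractional matching: your argument explicitly assumes a Chv\'atal-type $K_k$-factor theorem that is not known for $k\ge 3$. Without that input the proof cannot close, and supplying it would amount to resolving a well-known open problem. So your proposal is not a proof but a reduction of the conjecture to (a robust form of) the BKT conjecture, which is consistent with why the paper leaves the statement as a conjecture. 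Your remark about the inequality signs being a typo for $\ge$ is also correct.
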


\subsection{Multipartite graphs}
\cref{theorem:multipartite-keevashmycroft} provides the minimum degree condition for tight Hamilton cycles in multipartite graphs, where the minimum degree is defined relative to each of the clusters.
Do other theorems have partite versions?
There are Pósa-type~\cite{MM63} and Chvátal-type~\cite[Corollary 20]{Let19} degree conditions which ensure Hamilton cycles in balanced bipartite graphs.
If the degree conditions on those theorems is augmented by an $o(n)$ term and the host graphs taken to be large enough, then our results on frameworks apply (namely, we can apply \cref{thm:main-bandwidth} with $r=k=2$), and the output is strengthened from Hamiltonicity to the containment of bipartite graphs of sublinear bandwidth.
It is certainly conceivable that these results can be extended to powers of cycles in multipartite graphs.
More generally, {it would be interesting to determine} under which general conditions the multi-partite and ordinary form of such problems are equivalent.

In a different direction than \cref{theorem:multipartite-keevashmycroft}, Lo and the second author~\cite[Corollary 1.5]{LS2019} showed a result for $k$-clique factors in balanced $k$-partite graphs where the overall minimum degree is controlled (instead of the minimum degree of every vertex to every other cluster).
\begin{theorem}
	Let $1/n \ll 1/k, \mu$ with $k \geq 2$.
	Let $G$ be a balanced $k$-partite graph on $kn$ vertices with $\delta(G) \geq (k - 3/2 + \mu )n$.
	Then $G$ has a $k$-clique factor.
\end{theorem}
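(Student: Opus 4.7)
The plan is to apply the framework machinery developed in this paper. Set $\fU = \{U_1, \dotsc, U_k\}$ to be the given partition and let $\cH = K_k(G)$; we will verify that $(G, \cH)$ is a $\mu'$-robust $\fU$-partite Hamilton framework for some $\mu' > 0$. Then Theorem~\ref{thm:main-bandwidth}\ref{item:main-bandwidth-partite} applied with $H$ being the $(k-1)$th power of a Hamilton cycle on $kn$ vertices -- which is $\Delta$-bounded, has bandwidth $k-1$, and admits the cyclic equitable $k$-colouring required by the partite bandwidth theorem -- produces an embedded copy of $H$ in $G$. The $k$-clique factor then follows by reading off every $k$-th vertex along the embedded cycle.

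The first observation is that the global hypothesis $\delta(G) \geq (k - 3/2 + \mu)n$ forces the partite minimum degree to satisfy $\delta(G; \fU) \geq (1/2 + \mu)n$: writing $a_j = \deg_G(v; U_j)$ for a vertex $v$ in some part, the bound $\sum_j a_j \geq (k - 3/2 + \mu)n$ together with $a_j \leq n$ yields $\min_j a_j \geq (1/2 + \mu)n$. More usefully, the \emph{total deficit} $\sum_j (n - a_j) \leq (1/2 - \mu)n$ is bounded independently of $k$, and this property transfers (up to a small loss) to any $\fU$-partite $(\mu', \mu')$-approximation $G'$.

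By Proposition~\ref{prop:linked-edges-for-free} it suffices to verify proto-robustness, i.e.\ that $K_k(G')$ is tightly connected and admits a perfect fractional matching for every such $G'$. Tight connectivity should follow by an adaptation of Lemma~\ref{lemma:multipartite-tightconn}: rather than invoking the partite threshold $(k-1)n/k$, one swaps a single vertex of a given $k$-clique at a time, using the bounded total deficit to guarantee a common neighbour in the desired part. The argument splits by $k$: for small $k$ a greedy completion succeeds directly, while for larger $k$ one induces on the link graph of a vertex, which inherits the same style of deficit bound; in either case the $\fU$-partite structure makes aperiodicity concerns moot, since $\cH$ is $k$-partite.

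The main obstacle is producing the perfect fractional matching in $K_k(G')$. Since $\delta(G'; \fU) < (k-1)n/k$ whenever $k \geq 3$, one cannot appeal to Theorem~\ref{theorem:multipartite-keevashmycroft}; instead the matching must be built via LP duality. A fractional matching violator would be a balanced set $S \subseteq V(G')$ whose joint $k$-clique coverage is deficient; such an $S$ would have to concentrate in few parts with a large joint non-neighbourhood, contradicting the bounded total deficit. Making this quantitative -- likely by initialising the fractional matching as the uniform weighting over all $k$-cliques (which exist in density $\Omega(1)$ by a short inclusion-exclusion from the deficit bound), and then iteratively rebalancing the vertex loads via short alternating augmentations -- is where I expect the bulk of the technical work to lie. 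Once the matching is established robustly, the framework conditions are met and Theorem~\ref{thm:main-bandwidth}\ref{item:main-bandwidth-partite} concludes the proof.
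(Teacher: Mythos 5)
There is a genuine gap, and it is located exactly where the paper itself says the difficulty lies. This theorem is not proved in the paper at all: it is quoted as a black box from Lo and the second author \cite[Corollary 1.5]{LS2019}, and immediately after stating it the authors remark that it ``seems sensible to expect'' the framework machinery could be combined with it, \emph{``However, we were unable to decide if $K_k(G)$ is tightly connected for such graphs.''} Your proposal routes the entire proof through verifying that $(G,K_k(G))$ is a robust $\fU$-partite Hamilton framework, and the step you dismiss with ``tight connectivity should follow by an adaptation of Lemma~\ref{lemma:multipartite-tightconn}'' is precisely the open point. The adaptation does not go through: the engine of Lemma~\ref{lemma:multipartite-tightconn} is Observation~\ref{lemma:multipartite-greedyextend}, which extends a set $S$ of up to $k$ vertices into a prescribed part using $|S|\,\delta(G;\fU)-(|S|-1)n>0$, and this requires $\delta(G;\fU)>\frac{k-1}{k}n$. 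Under your hypotheses you only get $\delta(G;\fU)\geq(1/2+\mu)n$, and the per-vertex total deficit bound $\sum_j(n-\deg(v;U_j))\leq(1/2-\mu)n$ does not rescue it: three vertices can each concentrate their entire deficit in the same part, leaving a common non-neighbourhood of size up to $3(1/2-\mu)n>n$, so already a triangle need not extend into a prescribed fourth part. Hence the one-vertex-at-a-time swapping that underlies tight connectivity of $K_k(G)$ fails for $k\geq 4$ (and the inductive link-graph argument inherits the same problem), which is presumably why the authors could not settle it.

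The second framework condition is also not established. You need a perfect fractional matching in $K_k(G')$ for \emph{every} $(\mu',\mu')$-approximation $G'$, and you defer this to ``LP duality'' plus ``iterative rebalancing'' without an argument; note that a fractional clique factor in this degree regime is itself a nontrivial relaxation of the statement you are trying to prove, so there is a real risk of circularity unless you import the matching from \cite{LS2019}-style lattice/absorption arguments — at which point you are no longer giving an independent proof. The correct disposition of this statement relative to the paper is: cite \cite{LS2019} for the clique factor, and record (as the authors do) that upgrading it to Hamiltonicity via Theorem~\ref{thm:main-bandwidth}\ref{item:main-bandwidth-partite} is blocked by the unresolved tight-connectivity question. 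The parts of your write-up that are sound are the reduction of the partite minimum degree and deficit bounds from the global degree condition, and the extraction of a $k$-clique factor from the $(k-1)$th power of a Hamilton cycle on $kn$ vertices; neither of these closes the actual gaps.
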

The result is best possible apart from the $\mu n$ term.
It seems sensible to expect this result could be combined with \cref{thm:main-bandwidth} to prove such graphs also contain powers of Hamilton cycles, or more generally, are partite $(\beta, \Delta, k)$-Hamiltonian.
However, we were unable to decide if $K_k(G)$ is tightly connected for such graphs.

\subsection{Robust expansion}
Robust Hamilton frameworks provide a generalisation of robust expansion in the following sense: a graph which is a robust expander and has large minimum degree contains Hamilton cycles and certain spanning bipartite graphs (as ensured by \cref{thm:knoxtreglown-robustexpanders}), where as in graphs with $k$-uniform robust Hamilton frameworks we can ensure the existence of powers of cycles and certain $k$-colourable graphs. 
The concept of robust expanders has proven to be useful to tackle problems about Hamilton cycles that go beyond their simple existence, we survey a few of them next.

Firstly, robust expanders admit a natural extension to digraphs~\cite{KOT2010} and have been used to find directed Hamilton cycles.
Secondly, robust expanders have been applied to problems of algorithmic nature, such as designing efficient parallel algorithms to find Hamilton cycles in dense graphs~\cite{CKKO12}, or problems concerning the domination ratio of the Asymmetric TSP problem~\cite[Corollary 1.5]{KO13}.
Thirdly, in some situations dense graphs that are \emph{not} robust expanders can be vertex-partitioned into a bounded number of parts, each of which is robust.
This can be applied to find Hamilton cycles under additional assumptions such as high vertex-connectivity~\cite{KLOS15} or vertex-transitivity~\cite{CHM14}.
Lastly, robust expanders have led to the resolution of long-standing problems about decompositions of graphs and digraphs into Hamilton cycles (see~\cite{KO14} for a comprehensive survey).
It would be interesting to find a generalisation of Hamilton frameworks that allows one to decompose into spanning graphs with chromatic number $k \geq 3$.
We note that some changes in the definition of frameworks are necessary, since the requirements for embedding and decomposition are known to be different in this situation.
For instance, the relative minimum degree threshold for finding a single triangle factor is $2/3$, while the threshold for finding a decomposition into triangle factors is at least $3/4$.
In the setting of minimum degree conditions, a first step in this direction was achieved by Condon, Kim, Kühn and Osthus~\cite{CKKO19} by approximately decomposing into spanning graphs of sublinear bandwidth. 

On a final note, it would be quite interesting to understand whether a characterisation for fractional matchings like the one in \rf{thm:tutte} exists for more complicated structures such as clique factors.

\subsection{Mitigating conflicts}
In many applications, the host graph is edge-coloured and we have to find a \emph{rainbow} copy of the guest graph, meaning that all its edges have distinct colours.
Apart from being a natural question on its own, this encodes many combinatorial problems such as the Ryser--Brualdi--Stein Conjecture~\cite{BR91} on transversals in latin squares and Ringel's conjecture~\cite{MPS20} about partitioning a graph into edge-disjoint copies of a tree.

Coulson and Perarnau~\cite{CP17} proved that any locally-bounded edge-colouring (i.e. an edge-colouring where every vertex sees at most $\mu n$ edges of any given colour, for some small $\mu > 0$) of a subgraph $G \subset K_{n,n}$ with $\delta(G) \geq n/2$ has a rainbow perfect matching.
One can therefore wonder, whether this is an instance of a more general phenomenon, namely that any Dirac-type problem asymptotically allows for a rainbow version?
For minimum degree conditions, this was confirmed by Glock and Joos~\cite{GJ20} who proved a Bandwidth Theorem in the rainbow setting.
We believe that our results can be generalised in a similar way.
\begin{conjecture}\label{con:rainbow-bandwidth}
	For $k,\Delta \in \NATS$, $\mu > 0$, and $r=1$ or $r=k$ there are $\nu,\beta, z$ and $n_0 \in \NATS $ with the following property.
	Let $G$ be as in the assumptions of \cref{thm:main-bandwidth}.
	Suppose that $G$ has in addition an edge-colouring such that every colour appears on at most $\nu n$ edges.
	Then $G$ contains rainbow copies of the graphs detailed in \cref{thm:main-bandwidth}~\ref{item:main-bandwidth-partite}--\ref{item:main-bandwidth-zerofree}.
\end{conjecture}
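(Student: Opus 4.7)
The plan is to follow the architecture of the proof of \cref{thm:main-bandwidth}, replacing the (standard) \fr{lem:blow-up} with a rainbow version. A rainbow Blow-Up Lemma in the form we need has been established by Glock and Joos~\cite{GJ20} as the main technical tool behind their rainbow Bandwidth Theorem under minimum degree conditions; an inspection of their statement shows that it accepts the same input as \cref{lem:blow-up} (size-compatible partitions, buffer sets, restriction pairs) together with a locally bounded edge colouring, and outputs a \emph{rainbow} embedding. Thus, apart from the colour-management bookkeeping, no new structural ideas about frameworks are required: the lemmas for $G$ (\cref{lem:lemma-for-G-cycle,lem:lemma-for-G-bandwidth}) and for $H$ (\cref{lem:lemma-for-H}) are colour-blind and can be invoked verbatim.

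More concretely, I would first apply \rf{lem:lemma-for-G-bandwidth} to obtain the reduced framework $(R,\cJ)$, the spanning tight cycle $\cJ'$, the regular partition $(G',\fV)$ and the super-regular skeleton $R'$. I would then apply \rf{lem:lemma-for-H} to the guest graph $H$ to produce a size-compatible partition $\fX$, an $(\alpha,R')$-buffer $\tfX$, and forward-neighbourhood sets $Z_i$. At this point the only genuinely new step is the construction of the image restrictions that precede the Blow-Up Lemma, which in \cref{thm:main-bandwidth} is used to reallocate the imbalance between $\fV$ and $\fX$: one picks a set $Y = \bigcup_i Y_i \subset \bigcup_i Z_i$ of pairwise far-apart vertices, a partial embedding $\psi' \colon Y \to U$, and then forms the restriction pair $(\fI,\fJ)$. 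In the rainbow setting, one must additionally ensure that the edges used by $\psi'$ (the ``pre-embedded" edges incident to $Y$) are rainbow and that the set of colours used by these edges is small, so that the rainbow Blow-Up Lemma can complete the embedding using only colours not yet employed.

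The main obstacle is therefore to produce this partial embedding $\psi'$ together with a compatible restriction pair in a \emph{colour-efficient} way. Because $|Y| \leq \xi^{1/2} n$ and each vertex of $Y$ has degree at most $\Delta$, the pre-embedded stage touches only $O(\xi^{1/2} n)$ edges; since every colour appears at most $\nu n$ times, a simple probabilistic/greedy argument (choose $\psi'$ uniformly at random among valid extensions, delete conflicts) shows one can choose $\psi'$ so that all pre-embedded edges are rainbow and no colour is used more than, say, once. With this in hand one deletes the (few) colour classes that meet the pre-embedding from $G'$, losing $o(n)$ edges per vertex; the surviving subgraph is still $(O(\eps),d/2)$-regular on $R$ and super-regular on $R'$ by \cref{proposition:robust-regular}, and its colouring is still locally $\nu$-bounded. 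Applying the rainbow Blow-Up Lemma of Glock--Joos with the same $(\rho,\zeta,\Delta,\Delta_J)$-restriction pair then yields a rainbow embedding of $H$ into $G$, completing the proof.

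The two subsidiary points that need care but should go through cleanly are: (i) verifying that the locally bounded colour condition transfers from the original colouring on $G$ to the colouring inherited by the super-regular pairs after all deletions (this is immediate since colour-class sizes only decrease), and (ii) threading the colour-efficiency argument through the zero-free case, where the $(k+1)$-clique $K$ used in \rf{lem:lemma-for-H} must also be embedded rainbow; here the hypothesis $\nu \ll \mu$ provides enough slack because robustness guarantees $\Omega(n^{k+1})$ choices for $K$, almost all of which are rainbow.
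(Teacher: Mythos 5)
The statement you are proving is stated in the paper as an open \emph{conjecture} (\cref{con:rainbow-bandwidth}); the authors offer no proof, only the remark that they ``believe that our results can be generalised in a similar way''. Your plan follows exactly the route the authors implicitly envision — rerun the proof of \cref{thm:main-bandwidth} with a rainbow Blow-Up Lemma in place of \cref{lem:blow-up} — so there is no divergence of approach to compare. The question is whether your sketch closes the gap, and it does not.

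Two concrete problems. First, the load-bearing claim that the Glock--Joos rainbow Blow-Up Lemma ``accepts the same input as \cref{lem:blow-up}'' is asserted, not verified. The version of the Blow-Up Lemma used here (Allen--Böttcher--Hàn--Kohayakawa--Person) is invoked with $(\rho,\zeta,\Delta,\Delta_J)$-restriction pairs and $(\alpha,R')$-buffers over a reduced graph $R$ with a super-regular spanning subgraph $R'$ of bounded degree; you would need to check that the rainbow variant supports image restrictions of exactly this form, and in particular that it can be told to avoid a prescribed set of colours (those consumed by the pre-embedding $\psi'$). Second, and more seriously, your colour-management step fails quantitatively as written. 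The pre-embedding touches up to $|Y|\Delta \leq \xi^{1/2}\Delta n$ edges and hence up to that many colour classes; you propose to delete all edges of these classes and appeal to \cref{proposition:robust-regular}. But a single colour class of size $\nu n$ may be entirely concentrated at one vertex $v$ of $G'$, so deleting the touched classes can remove up to $\min\{\deg(v),\,\xi^{1/2}\Delta \nu n^2\}$ edges at $v$ — i.e.\ essentially all of them — destroying super-regularity on $R'$. This is precisely the difficulty that forces Glock--Joos (and Coulson--Perarnau) to use switching or semi-random colour-tracking arguments rather than wholesale deletion of colour classes; your sketch does not supply such an argument. The analogous issue recurs in your point (ii) for the zero-free case. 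Until these two points are resolved with actual statements and estimates, the conjecture remains open.
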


\subsection{Algorithmic aspects}
For fixed $k$, one can search for a $k$-uniform Hamilton framework in polynomial time (see \cref{sec:complexity}).
On the other hand, searching for a Hamilton cycle or any of its powers is $\text{NP}$-complete in general.
It is not clear whether one can efficiently search for (or even decide about the existence of) a $\mu$-robust $k$-uniform Hamilton framework.
However, it seems reasonable that our proof method could yield an efficient algorithm which finds a $(k-1)$st power of a cycle or a given $k$-colourable sublinear-bandwidth spanning graph, or otherwise returns that the input graph is not a $\mu$-robust $k$-uniform Hamilton framework.
This could be done using the algorithmic versions of the Regularity Lemma and the Blow-Up Lemma (see~\cite{KSS98c}), subject to the derandomisation of the parts where we rely on probabilistic arguments.

\subsection{Hypergraphs}
Finally, it is natural to ask whether these types of results generalise to hypergraphs.
On the practical side, first steps in this direction were undertaken by Schülke~\cite{Sch19} for tight cycles under codegree sequence conditions in $3$-uniform hypergraphs and by Bowtell and Hyde~\cite{BH20} for perfect matchings under minimum vertex degree sequence conditions in $3$-uniform hypergraphs.
On the theoretical side, the existence of Hamilton frameworks minimum degree conditions for tight Hamilton cycles has been proposed in our earlier work~\cite{LS20}.
We plan to revisit this line of research in the near future.

\subsection*{Acknowledgments}
We thank Andrew Treglown, Katherine Staden and Louis DeBiasio for helpful discussions and feedback.
Thanks to Mengjiao Rao for a careful read and corrections, and to Felipe Subiabre for his help with finding items of the bibliography.
Finally, we express our gratitude to the anonymous referees that gave us an extraordinary large number of insightful remarks, which greatly improved the presentation and correctness of the paper.

\bibliographystyle{amsplain}
\bibliography{bibliography.bib}

\providecommand{\bysame}{\leavevmode\hbox to3em{\hrulefill}\thinspace}
\providecommand{\MR}{\relax\ifhmode\unskip\space\fi MR }
\providecommand{\MRhref}[2]{%
  \href{http://www.ams.org/mathscinet-getitem?mr=#1}{#2}
}
\providecommand{\href}[2]{#2}
\begin{thebibliography}{10}

\bibitem{ABET2020}
P.~Allen, J.~B\"{o}ttcher, J.~Ehrenm\"{u}ller, and A.~Taraz, \emph{The
  bandwidth theorem in sparse graphs}, Adv. Comb. (2020), Paper No. 6, 60.

\bibitem{ABH+16}
P.~Allen, J.~B{\"o}ttcher, H.~H{\`{a}}n, Y.~Kohayakawa, and Y.~Person,
  \emph{Blow-up lemmas for sparse graphs}, arXiv:1612.00622 (2016).

\bibitem{BKT11}
J.~Balogh, A.~V. Kostochka, and A.~Treglown, \emph{On perfect packings in dense
  graphs}, arXiv:1110.3490 (2011).

\bibitem{BKT13}
\bysame, \emph{On perfect packings in dense graphs}, Electron. J. Combin.
  \textbf{20} (2013), no.~1, Paper 57, 17.

\bibitem{Bertsekas2009}
D.~P. Bertsekas, \emph{Convex optimization theory}, Athena Scientific, Nashua,
  NH, 2009.

\bibitem{BM09}
J.~B\"{o}ttcher and S.~M\"{u}ller, \emph{Forcing spanning subgraphs via {O}re
  type conditions}, European {C}onference on {C}ombinatorics, {G}raph {T}heory
  and {A}pplications ({E}uro{C}omb 2009), Electron. Notes Discrete Math.,
  vol.~34, Elsevier Sci. B. V., Amsterdam, 2009, pp.~255--259.

\bibitem{BST08}
J.~B\"{o}ttcher, M.~Schacht, and A.~Taraz, \emph{Spanning 3-colourable
  subgraphs of small bandwidth in dense graphs}, J. Combin. Theory Ser. B
  \textbf{98} (2008), no.~4, 752--777.

\bibitem{BST09}
\bysame, \emph{Proof of the bandwidth conjecture of {B}ollob{\'a}s and
  {K}oml{\'o}s}, Math. Ann. \textbf{343} (2009), no.~1, 175--205.

\bibitem{BH20}
C.~Bowtell and J.~Hyde, \emph{A degree sequence strengthening of the vertex
  degree threshold for a perfect matching in 3-uniform hypergraphs}, SIAM J.
  Discrete Math. \textbf{36} (2022), no.~2, 1038--1063.

\bibitem{BBD+06}
S.~Brandt, H.~Broersma, R.~Diestel, and M.~Kriesell, \emph{Global connectivity
  and expansion: long cycles and factors in $f$-connected graphs},
  Combinatorica \textbf{26} (2006), no.~1, 17--36.

\bibitem{BR91}
R.~A. Brualdi and H.~J. Ryser, \emph{Combinatorial matrix theory}, Cambridge
  University Press, 1991.

\bibitem{Catlin1980}
P.~A. Catlin, \emph{On the {H}ajnal-{S}zemer\'{e}di theorem on disjoint
  cliques}, Utilitas Math. \textbf{17} (1980), 163--177.

\bibitem{Cha13}
P.~Ch{\^a}u, \emph{An {O}re-type theorem on {H}amiltonian square cycles},
  Graphs Combin. \textbf{29} (2013), no.~4, 795--834.

\bibitem{CHM14}
D.~Christofides, J.~Hladk\'{y}, and A.~M\'{a}th\'{e}, \emph{Hamilton cycles in
  dense vertex-transitive graphs}, J. Combin. Theory Ser. B \textbf{109}
  (2014), 34--72.

\bibitem{CKKO12}
D.~Christofides, P.~Keevash, D.~K\"{u}hn, and D.~Osthus, \emph{Finding
  {H}amilton cycles in robustly expanding digraphs}, J. Graph Algorithms Appl.
  \textbf{16} (2012), no.~2, 335--358.

\bibitem{Chv72}
V.~Chv{\'a}tal, \emph{On {H}amilton's ideals}, J. Combin. Theory Ser. B
  \textbf{12} (1972), no.~2, 163--168.

\bibitem{CKKO19}
P.~Condon, J.~Kim, D.~K{\"u}hn, and D.~Osthus, \emph{A bandwidth theorem for
  approximate decompositions}, Proc. Lond. Math. Soc. \textbf{118} (2019),
  no.~6, 1393--1449.

\bibitem{CLRS2009}
T.~H. Cormen, C.~E. Leiserson, R.~L. Rivest, and C.~Stein, \emph{Introduction
  to algorithms}, third ed., MIT Press, Cambridge, MA, 2009.

\bibitem{CP17}
M.~Coulson and G.~Perarnau, \emph{Rainbow matchings in {D}irac bipartite
  graphs}, Random Struct. Algorithms \textbf{55} (2019), no.~2, 271--289.

\bibitem{DMM21}
L.~DeBiasio, R.~R. Martin, and T.~Molla, \emph{Powers of {H}amiltonian cycles
  in multipartite graphs}, Discrete Math. \textbf{345} (2022), no.~4, Paper No.
  112747, 17.

\bibitem{Dir52}
G.~A. Dirac, \emph{Some theorems on abstract graphs}, Proc. Lond. Math. Soc.
  \textbf{3} (1952), no.~1, 69--81.

\bibitem{DubhashiPanconesi2009}
D.~P. Dubhashi and A.~Panconesi, \emph{Concentration of measure for the
  analysis of randomized algorithms}, Cambridge University Press, 2009.

\bibitem{DLR96}
R.~A. Duke, H.~Lefmann, and V.~R\"odl, \emph{A fast approximation algorithm for
  computing the frequencies of subgraphs in a given graph}, SIAM J. Comput.
  \textbf{24} (1995), no.~3, 598--620.

\bibitem{EMR+20}
O.~Ebsen, G.~Maesaka, Chr. Reiher, M.~Schacht, and B.~Sch{\"u}lke,
  \emph{Embedding spanning subgraphs in uniformly dense and inseparable
  graphs}, Random Structures Algorithms \textbf{57} (2020), no.~4, 1077--1096.

\bibitem{Erd64a}
P~Erd{\H{o}}s, \emph{Problem 9, {T}heory of {G}raphs and its {A}pplications},
  Proceedings of Symposium on Smolenice 1963 (M.~Fiedler, ed.), Publ. House
  Czechoslovak Academic Science, Prague, 1964, p.~159.

\bibitem{EFR86}
P.~Erd{\"o}s, P.~Frankl, and V.~R{\"o}dl, \emph{The asymptotic number of graphs
  not containing a fixed subgraph and a problem for hypergraphs having no
  exponent}, Graphs Combin. \textbf{2} (1986), no.~1, 113--121.

\bibitem{FJS18}
J.~Fan, B.~Jiang, and Q.~Sun, \emph{Hoeffding's inequality for general {M}arkov
  chains and its applications to statistical learning}, J. Mach. Learn. Res.
  \textbf{22} (2021), no.~139, 1--35.

\bibitem{Fischer1999}
E.~Fischer, \emph{Variants of the {H}ajnal-{S}zemer\'{e}di theorem}, J. Graph
  Theory \textbf{31} (1999), no.~4, 275--282.

\bibitem{FHT21}
A.~Freschi, J.~Hyde, and A.~Treglown, \emph{On deficiency problems for graphs},
  Combin. Probab. Comput. \textbf{31} (2022), no.~3, 478--488.

\bibitem{GLL+21}
F.~Garbe, R.~Lang, A.~Lo, R.~Mycroft, and N.~Sanhueza-Matamala, \emph{Lehel's
  conjecture for $3$-graphs}, in preparation (2021).

\bibitem{Gillman1998}
D.~Gillman, \emph{A {C}hernoff bound for random walks on expander graphs}, SIAM
  J. Comput. \textbf{27} (1998), no.~4, 1203--1220.

\bibitem{GJ20}
S.~Glock and F.~Joos, \emph{A rainbow blow-up lemma}, Random Structures
  Algorithms \textbf{56} (2020), no.~4, 1031--1069.

\bibitem{Gou14}
R.~J. Gould, \emph{Recent advances on the {H}amiltonian problem: {S}urvey
  {I}{I}{I}}, Graphs Combin. \textbf{30} (2014), no.~1, 1--46.

\bibitem{HS70}
A.~Hajnal and E.~Szemer\'{e}di, \emph{Proof of a conjecture of {P}.
  {E}rd{\H{o}}s}, Combinatorial theory and its applications, {II} ({P}roc.
  {C}olloq., {B}alatonf\"{u}red, 1969), 1970, pp.~601--623.

\bibitem{HKS09}
D.~Hefetz, M.~Krivelevich, and T.~Szab{\'o}, \emph{Hamilton cycles in highly
  connected and expanding graphs}, Combinatorica \textbf{29} (2009), no.~5,
  547--568.

\bibitem{HLT19}
J.~Hyde, H.~Liu, and A.~Treglown, \emph{A degree sequence {K}oml{\'o}s
  theorem}, SIAM J. Discrete Math. \textbf{33} (2019), no.~4, 2041--2061.

\bibitem{HT20}
J.~Hyde and A.~Treglown, \emph{A degree sequence version of the
  {K}\"{u}hn-{O}sthus tiling theorem}, Electron. J. Combin. \textbf{27} (2020),
  P. 3.48, 30 pages.

\bibitem{JLR01}
S.~Janson, T.~{\L}uczak, and A.~Ruci{\'n}ski, \emph{Random {G}raphs}, John
  Wiley \& Sons, 2000.

\bibitem{JS99}
J.~P. Jarvis and D.~R. Shier, \emph{Graph-theoretic analysis of finite {M}arkov
  chains}, Applied mathematical modeling: a multidisciplinary approach (1999),
  85--102.

\bibitem{KM15}
P.~Keevash and R.~Mycroft, \emph{A geometric theory for hypergraph matching},
  vol. 233, Mem. Amer. Math. Soc., 2015.

\bibitem{KM15b}
\bysame, \emph{A multipartite {H}ajnal--{S}zemer\'{e}di theorem}, J. Combin.
  Theory Ser. B \textbf{114} (2015), 187--236.

\bibitem{KK08}
H.~A. Kierstead and A.~V. Kostochka, \emph{An {O}re-type theorem on equitable
  coloring}, J. Combin. Theory Ser. B \textbf{98} (2008), no.~1, 226--234.

\bibitem{KT13}
F.~Knox and A.~Treglown, \emph{Embedding spanning bipartite graphs of small
  bandwidth}, Combin. Probab. Comput. \textbf{22} (2013), no.~1, 71--96.

\bibitem{KSS97}
J.~Koml\'{o}s, G.~N. S\'{a}rk\"{o}zy, and E.~Szemer\'{e}di, \emph{{B}low-{U}p
  {L}emma}, Combinatorica \textbf{17} (1997), no.~1, 109--123.

\bibitem{KSS98c}
\bysame, \emph{An algorithmic version of the blow-up lemma}, Random Structures
  Algorithms \textbf{12} (1998), no.~3, 297--312.

\bibitem{KSS98b}
\bysame, \emph{Proof of the {S}eymour conjecture for large graphs}, Ann. Comb.
  \textbf{2} (1998), no.~1, 43--60.

\bibitem{KS96}
J.~Koml{\'o}s and M.~Simonovits, \emph{{S}zemer{\'e}di's {R}egularity {L}emma
  and its applications in graph theory}, Combinatorics, Paul Erd{\H o}s is
  Eighty (D.~Mikl{\'o}s, V.~T. S{\'o}s, and T.~Sz{\H o}nyi, eds.), vol.~2,
  Bolyai Society Mathematical Studies, 1996, pp.~295--352.

\bibitem{KLOS15}
D.~K\"{u}hn, A.~Lo, D.~Osthus, and K.~Staden, \emph{The robust component
  structure of dense regular graphs and applications}, Proc. Lond. Math. Soc.
  (3) \textbf{110} (2015), no.~1, 19--56.

\bibitem{KO09b}
D.~K\"{u}hn and D.~Osthus, \emph{Embedding large subgraphs into dense graphs},
  Surveys in combinatorics, London Math. Soc. Lecture Note Ser., vol. 365,
  Cambridge Univ. Press, Cambridge, 2009.

\bibitem{KO13}
\bysame, \emph{Hamilton decompositions of regular expanders: a proof of
  {K}elly's conjecture for large tournaments}, Adv. Math. \textbf{237} (2013),
  62--146.

\bibitem{KO14}
\bysame, \emph{Hamilton cycles in graphs and hypergraphs: an extremal
  perspective}, Proc. ICM 2014 (Seoul, Korea), vol.~4, 2014.

\bibitem{KOT05}
D.~K\"{u}hn, D.~Osthus, and A.~Taraz, \emph{Large planar subgraphs in dense
  graphs}, J. Combin. Theory Ser. B \textbf{95} (2005), no.~2, 263--282.

\bibitem{KOT09}
D.~K{\"u}hn, D.~Osthus, and A.~Treglown, \emph{An {O}re-type theorem for
  perfect packings in graphs}, SIAM J. Discrete Math. \textbf{23} (2009),
  no.~3, 1335--1355.

\bibitem{KOT2010}
\bysame, \emph{Hamiltonian degree sequences in digraphs}, J. Combin. Theory
  Ser. B \textbf{100} (2010), no.~4, 367--380.

\bibitem{LS20}
R.~Lang and N.~Sanhueza-Matamala, \emph{Minimum degree conditions for tight
  {H}amilton cycles}, J. Lond. Math. Soc. (2) \textbf{105} (2022), no.~4,
  2249--2323.

\bibitem{Let19}
S.~Letzter, \emph{Monochromatic cycle partitions of 2-coloured graphs with
  minimum degree {$3n/4$}}, Electron. J. Combin. \textbf{26} (2019), no.~1,
  P1.19, 67 pages.

\bibitem{LM13b}
A.~Lo and K.~Markstr\"{o}m, \emph{A multipartite version of the
  {H}ajnal-{S}zemer\'{e}di theorem for graphs and hypergraphs}, Combin. Probab.
  Comput. \textbf{22} (2013), no.~1, 97--111.

\bibitem{LS2019}
A.~Lo and N.~Sanhueza-Matamala, \emph{An asymptotic bound for the strong
  chromatic number}, Combin. Probab. Comput. \textbf{28} (2019), no.~5,
  768--776.

\bibitem{MM02}
C.~Magyar and R.~Martin, \emph{Tripartite version of the {C}orr\'{a}di-{H}ajnal
  theorem}, Discrete Math. \textbf{254} (2002), no.~1-3, 289--308.

\bibitem{MS08}
R.~Martin and E.~Szemer\'{e}di, \emph{Quadripartite version of the
  {H}ajnal-{S}zemer\'{e}di theorem}, Discrete Math. \textbf{308} (2008),
  no.~19, 4337--4360.

\bibitem{McDiarmid1989}
C.~McDiarmid, \emph{On the method of bounded differences}, Surveys in
  Combinatorics, 1989: Invited Papers at the Twelfth British Combinatorial
  Conference (Cambridge), Cambridge University Press, 1989, pp.~148--188.

\bibitem{MPS20}
R.~Montgomery, A.~Pokrovskiy, and B.~Sudakov, \emph{A proof of {R}ingel's
  conjecture}, Geom. Funct. Anal. \textbf{31} (2021), no.~3, 663--720.

\bibitem{MM63}
J.~Moon and L.~Moser, \emph{On {H}amiltonian bipartite graphs}, Israel J. Math.
  \textbf{1} (1963), 163--165.

\bibitem{NSW20}
R.~Nenadov, B.~Sudakov, and A.~Z. Wagner, \emph{Completion and deficiency
  problems}, J. Combin. Theory Ser. B \textbf{145} (2020), 214--240.

\bibitem{Ore60}
O.~Ore, \emph{Note on {H}amilton circuits}, Amer. Math. Monthly \textbf{67}
  (1960), 55.

\bibitem{Pos62}
L.~P{\'o}sa, \emph{A theorem concerning {H}amilton lines}, Magyar Tud. Akad.
  Mat. Kutat{\'o} Int. K{\"o}zl \textbf{7} (1962), 225--226.

\bibitem{Rao2019}
S.~Rao, \emph{A {H}oeffding inequality for {M}arkov chains}, Electron. Commun.
  Probab. \textbf{24} (2019), 11 pages.

\bibitem{Sch03}
A.~Schrijver, \emph{Combinatorial {O}ptimization: Polyhedra and {Ef}ficiency},
  vol.~24, Springer Science \& Business Media, 2003.

\bibitem{Sch19}
B.~Sch{\"u}lke, \emph{A pair-degree condition for {H}amiltonian cycles in
  $3$-uniform hypergraphs}, arXiv:1910.02691 (2019).

\bibitem{Sey73}
P.~Seymour, \emph{Problem section}, Combinatorics: Proceedings of the British
  Combinatorial Conference, vol. 1974, 1973, pp.~201--202.

\bibitem{ST17}
K.~Staden and A.~Treglown, \emph{On degree sequences forcing the square of a
  {H}amilton cycle}, SIAM J. Discrete Math. \textbf{31} (2017), no.~1,
  383--437.

\bibitem{ST20}
\bysame, \emph{The bandwidth theorem for locally dense graphs}, Forum Math.
  Sigma \textbf{8} (2020), p42.

\bibitem{Sze76}
E.~Szemer{\'e}di, \emph{Regular partitions of graphs}, Colloq. Internat. CNRS
  \textbf{260} (1976), 399--401.

\bibitem{Tre16}
A.~Treglown, \emph{A degree sequence {H}ajnal--{S}zemer{\'e}di theorem}, J.
  Combin. Theory Ser. B \textbf{118} (2016), 13--43.

\bibitem{Tre20}
\bysame, personal communication, 2020.

\end{thebibliography}

\appendix

\section{Constructions} \label{sec:constructions}
In this section, we discuss a few constructions that in one way or the other suggest that our results and definitions are close to certain obstacles.

\subsection{Pósa-type conditions}\label{sec:posa-lower-bound}

In this subsection we show that the degree sequence condition of \rf{thm:bandwidth-posa} is almost tight.

\newcommand{\bkt}{B}

\begin{proposition} \label{proposition:extremalexample}
	Let $k \ge 3$ and let $n$ be sufficiently large and divisible by $k$.
	Then there exists a graph $G$ on $n$ vertices whose degree sequence $d_1 \leq \dots \leq d_n$ satisfies $d_i \ge \frac{k-2}{k}n  + i + \frac{n^{1/2}}{12 k}$,
	for $i \leq n/k$,
	and does not contain the $(k-1)$st power of a Hamilton cycle.
\end{proposition}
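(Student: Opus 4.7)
The plan is to adapt the extremal construction of Balogh, Kostochka and Treglown~\cite{BKT11} for $K_k$-factors. The crucial observation is that the $(k-1)$th power of a Hamilton cycle on $n$ vertices, with $k \mid n$, contains a $K_k$-factor: partition the cyclically ordered vertices into consecutive blocks of size $k$, each of which is a $k$-clique in the power. Consequently it suffices to construct $G$ with the prescribed degree sequence but with no $K_k$-factor.

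The construction I would set up starts from a slightly imbalanced complete $k$-partite graph. Choose $s \asymp \sqrt{n}/(12k)$ (with a small multiplicative constant to be tuned; one also needs $(k-1)\mid s$ for a clean count), and partition $V(G) = V_1 \cup V_2 \cup \dotsb \cup V_k$ with $|V_1| = n/k + s$ and $|V_j| = n/k - s/(k-1)$ for $j\ge 2$, so that the total is $n$. Take $G_0$ to be complete $k$-partite on this partition: since $|V_j| < n/k$ for $j \ge 2$, the graph $G_0$ has no $K_k$-factor. A direct computation gives $\deg_{G_0}(v) = (k-1)n/k - s$ for $v \in V_1$ and $\deg_{G_0}(v) = (k-1)n/k + s/(k-1)$ for $v \in V_j$, $j \ge 2$. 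The bottom $n/k$ positions of the sorted sequence lie inside $V_1$, so $G_0$ alone fails the Pósa-type bound by roughly $s + \sqrt{n}/(12k)$ at $i = n/k$.

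To remedy this I would place a carefully chosen auxiliary graph $H$ on a subset $W \subseteq V_1$. The design goals are (a) the sorted-degree boost provided by $H$ is exactly enough to shift each affected $V_1$-vertex to a position where the Pósa inequality holds; (b) $H$ is $K_3$-free, so every $K_k$ in the final graph uses at most two $V_1$-vertices; and (c) $H$ has matching number strictly less than $s$. A natural candidate is a complete bipartite graph $H = K_{a,b}$ of total order slightly above $2s$, with a calibrated imbalance so that $a$ vertices carry a large boost $b \ge s + \sqrt{n}/(12k)$ while the matching number $\nu(H) = a$ stays just below $s$; one may have to also add a few isolated high-degree vertices, or stack several vertex-disjoint bipartite blocks, to cover all of the $s + \sqrt{n}/(12k)$ positions needing repair. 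The main obstacle here—and the one that dictates the $\sqrt{n}/(12k)$ scale—is the tension between (a) and (c): boosting more degrees inevitably increases $\nu(H)$, so the parameters can only be made to agree because the boost required is of order $\sqrt{n}$.

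The verification then splits into two computations. First, with $H$ in place one shows via a counting argument, similar to Section~\ref{sec:posa} but ``in reverse'', that a $K_k$-factor would force exactly $s$ disjoint ``$2$-in-$V_1$'' $K_k$'s, since $H$ is $K_3$-free and the remaining deficit between $|V_1|$ and $|V_j|$ is exactly $s$; such $K_k$'s correspond bijectively to $s$ vertex-disjoint edges of $H$, contradicting $\nu(H) < s$. Second, one reads off the full sorted degree sequence of $G$ (isolated-in-$H$ $V_1$-vertices, then the two $H$-degree classes, then $V_2,\dotsc,V_k$) and checks position by position that $d_i \ge \tfrac{k-2}{k}n + i + \tfrac{\sqrt n}{12k}$ for $i \le n/k$: the unboosted $V_1$-iso block fits because it ends at position $n/k - s - \sqrt{n}/(12k)$, and the two $H$-degree classes are placed precisely to cover the remaining $s + \sqrt{n}/(12k)$ critical positions. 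Combining both claims with the opening observation yields the proposition.
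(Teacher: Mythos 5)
Your reduction to ``no $K_k$-factor'' is a genuinely different route from the paper's, and it has a fatal gap: the two requirements you place on the auxiliary graph $H$ are incompatible. Write $c=\sqrt{n}/(12k)$. With the base graph complete $k$-partite and $|V_1|=n/k+s$, every vertex of $V_1$ has degree $\frac{k-1}{k}n-s+d_H(v)$, and the Pósa condition at position $i=n/k$ reads $d_{n/k}\ge \frac{k-1}{k}n+c$; since $|V_1|=n/k+s$, this forces at least $s+1$ vertices of $V_1$ to have $H$-degree at least $s+c$. On the other hand, your factor-counting argument needs $\nu(H)\le s-1$. For your ``natural candidate'' $H=K_{a,b}$ this is already impossible by K\H{o}nig: a minimum vertex cover has size $\nu(H)\le s-1$, and any vertex of degree $\ge s+c$ must lie in it, so there are at most $s-1$ such vertices, not $s+1$. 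The same holds for every triangle-free $H$: take a maximum matching $M$ and let $P$ be the set of $s+1$ vertices of degree $\ge s+c$. If some $v\in P$ is unmatched, its (independent) neighbourhood is fully matched by edges with at most one end in $N(v)$, so $|M|\ge s+c$. If some edge $xy\in M$ has $x,y\in P$, then since $N(x)\cap N(y)=\emptyset$ the absence of an augmenting path forces, say, all of $N(x)$ to be matched, again giving $|M|\ge s+c$. Otherwise the $s+1$ vertices of $P$ are matched by $s+1$ distinct edges. In every case $\nu(H)\ge s$, so no admissible $H$ exists; stacking bipartite blocks or adding ``isolated high-degree vertices'' cannot escape this.

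The deeper issue is that killing the $K_k$-factor is strictly more than what is needed, and at the $\sqrt{n}$ scale it appears to be unachievable: the Balogh--Kostochka--Treglown graph $B_n$ of \cref{construction:BKT} \emph{does} contain a triangle factor (match $v$ with a star edge, use $c_n$ further star edges for triangles with two vertices in $V_2$, and complete greedily across $V_2,V_3$), yet it has no square of a Hamilton cycle. The obstruction the paper exploits is local and connectivity-type, not a factor obstruction: $N(v)=V_2$ is a disjoint union of stars and hence contains no path on four vertices, whereas any vertex on the square of a Hamilton cycle must have a $P_4$ in its neighbourhood; for general $k$ one places $B_{3n}$ inside one part of a complete $(k-2)$-partite graph and shows $N(v)$ contains no $(k-2)$th power of a path on $2k-2$ vertices. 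Your opening reduction discards exactly this feature, so the proposal cannot be repaired within the factor framework.
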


The proof of \cref{proposition:extremalexample} relies on the following construction of Balogh, Kostochka and Treglown~\cite{BKT11} which covers the case of $k = 3$.

\begin{construction} \label{construction:BKT}
	Let $n$ be divisible by $3$, and let $c_n = \lfloor n^{1/2} / 12 \rfloor$ and $k_n = \lceil n^{1/2} \rceil$.
	Let $\bkt_n$ be the graph on $n$ vertices consisting of three vertex classes $V_1 = \{v\}$, $V_2$ and $V_3$  where $|V_2| = n/3 + c_n + 1$ and $|V_3| = 2n/3 - c_n - 2$, and has the following edges:
	\begin{enumerate}[(i)]
		\item all edges from $v$ to $V_2$,
		\item all edges between $V_2$ and $V_3$, and all possible edges inside $V_3$,
		\item $k_n$ vertex-disjoint stars in $V_2$, each of size $\lfloor |V_2| / k_n \rfloor$ or $\lceil |V_2| / k_n \rceil$, which cover all of~$V_2$.
	\end{enumerate}
\end{construction}

\begin{proposition}[{\cite[Proposition 22]{BKT11}}]\label{prop:BKT}
	For $n$ sufficiently large, let $\bkt = \bkt_n$ and denote its degree sequence by $d_1 \leq \dots \leq d_{n}$.
	Then
	\begin{enumerate}[\upshape (1)]
		\item \label{item:BKTdegree} $d_i \ge n/3 + c_n + i$ for all $1 \leq i  \leq n/3$,
		\item \label{item:BKTobstacle} $\bkt [N(v)] = V_2$ does not contain any path of length $3$.
	\end{enumerate}
\end{proposition}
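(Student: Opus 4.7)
The plan is to exhibit $G$ explicitly by adapting Construction~\ref{construction:BKT}, treating $k = 3$ and $k \geq 4$ separately.

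For the base case $k = 3$, I would take $G = \bkt_n$ directly. The degree condition follows from Proposition~\ref{prop:BKT}(1): $d_i \geq n/3 + c_n + i \geq n/3 + i + \sqrt{n}/(12 \cdot 3)$ for all $i \leq n/3$ and $n$ large enough. For the non-existence of a square of a Hamilton cycle, one observes that such a cycle would make the four cycle-neighbours $u_{-2}, u_{-1}, u_1, u_2$ of $v$ lie in $N(v) = V_2$ and induce a path of length three $u_{-2}u_{-1}u_1u_2$---i.e.\ a $P_4$ inside $V_2$---which is forbidden by Proposition~\ref{prop:BKT}(2).

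For $k \geq 4$, my plan is to define $G$ as the complete join $\bkt_{n'} \ast K_{n-n'}$ with $n' \approx 3n/k$ (rounding so that $n' \equiv 0 \pmod{3}$, which changes $n-n'$ by at most a constant). This handles the degree part cleanly: every degree in $\bkt_{n'}$ is boosted by $n - n'$, so by Proposition~\ref{prop:BKT}(1), for $i \leq n'/3 = n/k$ we get
\[ d_i(G) \;\geq\; n'/3 + c_{n'} + i + (n-n') \;=\; \tfrac{k-2}{k}n + c_{n'} + i, \]
and $c_{n'} \geq \sqrt{3n/k}/12 \geq \sqrt{n}/(12k)$ for $n$ large, using $\sqrt{3k} \geq 1$ for all $k \geq 1$.

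The main obstacle is then to show that this $G$ (or a small modification of it) contains no $(k-1)$th power of a Hamilton cycle for $k \geq 4$. The direct analogue of the $k = 3$ argument is to examine the $2(k-1)$ cycle-neighbours of the BKT vertex $v$: they split into two disjoint $(k-1)$-cliques inside $N_G(v) = V_2 \cup W$ and together induce the $(k-2)$nd power of the path $P_{2(k-1)}$. Since $\omega(V_2) = 2$ (stars are triangle-free), each such $(k-1)$-clique contains at most two vertices of $V_2$, giving at most four $V_2$-vertices in total, and a case analysis of the forced edges---which are all star edges of $V_2$---ought to reveal a $P_4$ in $V_2$ exactly as in the $k = 3$ argument. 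The technical difficulty is that for the raw join two disjoint $(k-1)$-cliques can lie entirely inside $W$, sidestepping the $V_2$-structure; to close this gap I would modify the core BKT-block, for instance replacing the single vertex $v$ by a small clique $V_1 = \{v_1, \dots, v_{k-2}\}$ each of whose elements is joined to $V_2$ but whose relation to $W$ is carefully controlled, so that any $(k-1)$-clique in $N_G(v_i)$ is forced to intersect the ``sparse'' part of the construction. Calibrating the sizes of $V_1, V_2, V_3$ and $|W|$ to keep both the degree condition of the previous paragraph intact and this forced-intersection obstruction operational for all $k \geq 4$ is where the real work lies, and I expect the final construction to be a carefully tuned variant of~Construction~\ref{construction:BKT}.
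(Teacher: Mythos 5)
Your proposal does not address the statement it was supposed to prove. The target is Proposition~\ref{prop:BKT}, which asserts two concrete properties of the single graph $B_n$ from Construction~\ref{construction:BKT}: the lower bound $d_i \ge n/3 + c_n + i$ on its degree sequence for $i \le n/3$, and the fact that the induced subgraph on $V_2 = N(v)$ contains no path of length three. What you have written is instead a proof sketch of Proposition~\ref{proposition:extremalexample} (the general-$k$ extremal example), and in both the $k=3$ base case and the degree computation for $k\ge 4$ you explicitly \emph{invoke} Proposition~\ref{prop:BKT}(1) and (2) as known facts. Relative to the actual target this is circular: nothing in your argument establishes either claim about $B_n$. (Note also that the paper itself does not prove this proposition; it imports it verbatim from~\cite{BKT11}, so there is no internal proof to match, but a blind attempt still needs to supply one.)

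A correct proof is short but requires the computations you skipped. For part (1), one lists the degrees by vertex type: $\deg(v)=|V_2|=n/3+c_n+1$; a leaf of a star in $V_2$ has degree $|V_3|+2=2n/3-c_n$; a star centre has degree $|V_3|+\lfloor |V_2|/k_n\rfloor$ or $|V_3|+\lceil |V_2|/k_n\rceil$, which is at least $2n/3 - c_n + \sqrt{n}/3 - O(1) \ge 2n/3 + c_n$; and every vertex of $V_3$ has degree $n-2$. One then verifies the inequality $d_i\ge n/3+c_n+i$ in each regime, the only delicate point being the range where $d_i$ is a leaf degree: there are only $1+|V_2|-k_n \le n/3 - 2c_n$ vertices of degree at most $2n/3-c_n$ (using $k_n=\lceil\sqrt{n}\rceil \ge 3c_n+2$ for large $n$), so for those indices $n/3+c_n+i \le 2n/3-c_n = d_i$, and for all larger $i\le n/3$ one has $d_i\ge 2n/3+c_n \ge n/3+c_n+i$. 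For part (2), $B_n[V_2]$ is by construction a vertex-disjoint union of stars, and the longest path in a star forest has length two, so no path of length three exists. None of this appears in your write-up, so the statement remains unproven.
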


A necessary condition for a graph $G$ to have the square of a Hamilton cycle is that, for every $x \in V(G)$, $G[N(x)]$ contains a path of length $3$.
Thus \cref{prop:BKT}\ref{item:BKTobstacle} shows that $\bkt_n$ does not contain the square of a Hamilton cycle.
We use the graphs $\bkt_n$ to show \cref{proposition:extremalexample}.

\begin{proof}[Proof of \cref{proposition:extremalexample}]
	We can assume $k \ge 4$.
	Let $N = kn$ be divisible by $k$.
	Consider a complete $(k-2)$-partite graph with $k-3$ clusters of size exactly $n$ and one large cluster $U$ of size $3n$.
	Inside the large cluster $U$, we place a copy of the graph $\bkt_{3n}$.
	Let $\bkt$ be the resulting graph on $N$ vertices.

	Note that every vertex outside of $U$ has degree exactly $(k-1)n = (k-1)N/k$, and each vertex in $U$ has exactly $(k-3)n = (k-3)N/k$ neighbours outside $U$.
	From \cref{prop:BKT}\ref{item:BKTdegree} it is straightforward to check that if $d_1 \leq \dots \leq d_N$ is the degree sequence of $\bkt$, then $d_i \ge (k-2)N/k + c_{3n} + i \ge (k-2)N/k + N^{1/2} / (12 k) + i$ for all $i  \leq N/k$, as required.

	It remains to show that $\bkt$ does not contain the $(k-1)$st power of a Hamilton cycle.
	To this end, let $\{V_1, V_2, V_3\}$ be the partition of $U$ corresponding to the partition of $\bkt_{3n}$  where $V_1=\{v\}$.
	Observe that it suffices to show that $\bkt[N(v)]$ does not contain the $(k-2)$nd power of a path on $2k-2$ vertices.
	Let us assume that $P = (u_1, \dots ,u_{2k-2})$ is such a path and show how this results in a contradiction.

	Since $\bkt \setminus U$ is $(k-3)$-partite and $v$ has no neighbours in $V_3$, every $k$-clique containing $v$ must intersect $V_2$ in at least two vertices.
	Further, since $G[V_2]$ does not have triangles, every $k$-clique containing $v$ must intersect $V_2$ in precisely two vertices.
	Thus, every $(k-1)$-clique in $\bkt[N(v)]$ must intersect $V_2$ in exactly two vertices.
	Moreover, $k-1$ consecutive vertices in $P$ span a $(k-1)$-clique, and thus each consecutive $k-1$ vertices in $P$ must contain exactly two vertices in $V_2$.
	Let $a, b \in \{1, \dotsc, k-1\}$ be such that $a < b$ and $u_a, u_b \in V_2$.
	This implies that $u_{a+k-1}, u_{b+k-1} \in V_2$, and further both $u_b, u_{a+k-1}$ and $u_{a+k-1}, u_{b+k-1}$ are neighbours in $\bkt$ (since each of these pairs belong to a common $(k-1)$-clique of consecutive vertices of $P$).
	Thus $(u_a, u_b, u_{a+k-1}, u_{b+k-1})$ is a path of length $3$ in $V_2$.
	Since $\bkt[U]$ is a copy of $\bkt_{3n}$, this contradicts \cref{prop:BKT}\ref{item:BKTobstacle}.
\end{proof}

\subsection{Ore- and Pósa-type conditions}\label{sec:posa-ore-different}

It is an easy exercise to show that every graph satisfying the conditions of Ore's theorem also satisfies the conditions of Pósa's theorem.
For $k \geq 3$, however, a graph satisfying the conditions of \rf{thm:bandwidth-ore} does not necesarily satisfy the conditions of \rf{thm:bandwidth-posa}, so these results do not imply each other.
This is shown by the following construction.

\begin{proposition}
	For $k \geq 3$ and $1/n \ll \mu \ll 1/k$ there is graph $G$ on $n$ vertices such that
	\begin{enumerate}[{\upshape (i)}]
		\item \label{item:posaoredifferente-oreyes} for each $xy \notin E(G)$, $\deg(x)+\deg(y) \geq 2 \frac{k-1}{k}n + \mu n$, but
		\item \label{item:posaoredifferente-posano} there exists $i \leq n/k$ such that $d_i < \frac{k-2}{k}n + i$.
	\end{enumerate}
\end{proposition}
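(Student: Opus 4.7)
The plan is to construct $G$ as a graph which is \emph{very} close to the complete graph, obtained by placing a small set $S$ of vertices whose degree is slightly lowered by removing some edges to $T = V(G) \setminus S$, while keeping $S$ and $T$ cliques. The size $|S| = s$ must be carefully chosen so that, on the one hand, the $s$ vertices of $S$ all have degree only $(k-2)n/k + s - 1$ (making Pósa's condition fail at $i = s$ since $d_s < (k-2)n/k + s$), and on the other hand the removed edges can be distributed on $T$-side so evenly that each $u \in T$ still has degree at least $n + \mu n - s + 1$, which is precisely what is needed for the non-edges between $S$ and $T$ to satisfy Ore's condition with the $\mu n$ slack. Concretely, I would take $s$ to be the least integer satisfying $sn(k-2)/k \geq (\mu n + 2)(n - s)$, which for small $\mu$ and large $n$ yields $s \approx k\mu n/(k-2) = O(\mu n)$, and in particular $s \leq n/k$.

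Given this $s$, I would define $G$ by partitioning $V(G) = S \sqcup T$ with $|S| = s$, declaring both $S$ and $T$ to be cliques, and choosing a bipartite graph $B$ between $S$ and $T$ such that each $v \in S$ has exactly $(k-2)n/k$ neighbours (equivalently $2n/k - s$ non-neighbours) in $T$, while each $u \in T$ has at most $s - \mu n - 2$ non-neighbours in $S$. Then the only non-edges of $G$ lie between $S$ and $T$, and for any such non-edge $vu$ a direct count gives
\[
    \deg_G(v) + \deg_G(u) \geq \left(\tfrac{k-2}{k}n + s - 1\right) + \left(n + \mu n - s + 1\right) = 2\tfrac{k-1}{k}n + \mu n,
\]
so $G$ satisfies condition~\ref{item:posaoredifferente-oreyes}. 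Simultaneously the bottom $s$ entries of the degree sequence all equal $(k-2)n/k + s - 1$, so $d_s < (k-2)n/k + s$ with $s \leq n/k$, establishing~\ref{item:posaoredifferente-posano}.

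The only non-trivial step is the realisability of the bipartite graph $B$ with the prescribed $S$-side degree sequence and the upper bound on the $T$-side non-neighbourhoods. This reduces to verifying the inequality $s(2n/k - s) \leq (n - s)(s - \mu n - 2)$, which after rearrangement is exactly the inequality used to define $s$, so there is nothing to prove. A concrete realisation can then be produced by a round-robin construction: label $S = \{v_0, \dotsc, v_{s-1}\}$ and $T = \{u_0, \dotsc, u_{n-s-1}\}$, and declare $v_i u_j$ to be a non-edge iff $j - i(2n/k - s) \pmod{n-s}$ lies in $\{0, 1, \dotsc, 2n/k - s - 1\}$. This makes each $v \in S$ have exactly $2n/k - s$ non-neighbours in $T$ and balances the non-neighbours on $T$ to within one, so each $u \in T$ has at most $\lceil s(2n/k-s)/(n-s) \rceil \leq s - \mu n - 2$ non-neighbours in $S$, as required. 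The main delicate part of the argument is thus purely an arithmetic balancing act between the Ore slack $\mu n$ and the Pósa index $i = s$, with everything else following by inspection.
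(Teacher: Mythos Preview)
Your construction is correct and achieves the goal, but it differs from the paper's. The paper uses a three-part construction: classes $X_0, X_1, X_2$ of sizes $n/(2k)$, $3n/(2k)+\beta n$, and the remainder, with $X_0, X_1, X_2$ internal cliques, $X_2$ complete to its complement, and a sparse bipartite regular graph between $X_0$ and $X_1$ (each $X_0$-vertex has $\beta n$ neighbours in $X_1$, with $\beta=5\mu$). There the P\'osa violation occurs at the comparatively large index $i=|X_0|=n/(2k)$, and the Ore slack comes from the fact that the only non-edges lie between $X_0$ and $X_1$, with $X_1$-vertices having degree roughly $(k-1)n/k + n/(2k)$. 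Your two-part construction is leaner: the P\'osa-violating class $S$ has size only $O(\mu n)$, and the Ore arithmetic is set up so that the degree sums hit the target exactly. Both routes are valid; yours trades a slightly more delicate choice of $s$ for a simpler overall structure, while the paper's avoids any optimisation over $s$ at the cost of a third vertex class.

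One small technical point to tighten: in your final step you assert $\lceil s(2n/k-s)/(n-s)\rceil \leq s-\mu n-2$, but the inequality you have is only $s(2n/k-s)/(n-s)\leq s-\mu n-2$, and since $\mu n$ need not be an integer the ceiling could overshoot. This is harmless --- defining $s$ via $sn(k-2)/k \geq (\mu n+3)(n-s)$ instead absorbs the rounding with no effect on the asymptotics --- but as written the last inequality is not quite justified. Similarly, you should remark that one may take $\lfloor (k-2)n/k\rfloor$ neighbours in $T$ for each $v\in S$ when $k\nmid (k-2)n$; again a one-line adjustment.
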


\begin{proof}
	Let $\beta = 5 \mu$ and $n$ sufficiently large.
	We also assume $\beta, \mu, n$ are chosen so that all numbers denoting cardinalities  in what follows are integers.
	Partition a set of $n$ vertices into three sets $X_0, X_1, X_2$ of sizes $|X_0| = n/(2k)$, $|X_1| = 3n/(2k) + \beta n$ and $|X_2| = n - |X_0| - |X_1|$.
	Form $G$ by adding every edge completely contained in $X_0$, $X_1$ or $X_2$ and
	add every edge between $X_2$ and its complement.
	Moreover, between $X_0$ and $X_1$ add a bipartite graph such that every vertex in $X_0$ is joined to exactly $d_0 := \beta n$ vertices in $X_1$,
	and every vertex in $X_1$ is joined to $d_1 := \beta n |X_0|/|X_1|$ vertices in $X_0$.
	Since $\beta \ll 1/k$ we can assume $d_1 \geq \beta n/4$.

	Every vertex in $X_0$ has degree $|X_0|-1 + |X_2| + d_0 = (k-2)n/k + n/(2k) - 1$.
	Every vertex in $X_1$ has degree $|X_1|-1 + |X_2| + d_1 \geq (k-1)n/k + n/(2k) + \beta n / 4 - 1$.
	Every vertex in $X_2$ has degree $n-1$.
	Thus the degree sequence consists of the vertices of $X_0, X_1, X_2$, in that order.
	Note that $|X_0| \leq n/k$ and the $|X_0|$th vertex in the degree sequence has degree $(k-2)n/k + n/(2k) - 1 < (k-2)n/k + |X_0|$, so \cref{item:posaoredifferente-posano} holds.
	On the other hand, the only non-edges consist of a pair of vertices $xy$ with $x \in X_0, y \in X_1$.
	For these pairs, we have
	\begin{align*}
		\deg(x)+\deg(y)
		 & = |X_0|-1 + |X_2| + d_0 + |X_1|-1 + |X_2| + d_1                              \\
		 & \geq 2 \frac{k-1}{k}n - 2 + \frac{\beta n}{4} \geq 2 \frac{k-1}{k}n + \mu n,
	\end{align*}
	so \cref{item:posaoredifferente-oreyes} holds, as required.
\end{proof}

\section{Remarks on frameworks}

In this appendix we give more details about the remarks on Hamilton frameworks made in Section~\ref{section:remarksonframeworks}.

\subsection{Robustly having a Hamilton framework does not imply Hamiltonicity}\label{sec:fragile-frameworks}
In the following we construct a graph $G$ in which every approximation $G' \subset G$ admits some (spanning) $3$-uniform Hamilton framework $(G',H')$, but which does not contain a square of a Hamilton cycle.
This explains why we require the Hamilton framework itself to be robust, as opposed to robustly containing a Hamilton framework.

Let $q$ be some large prime, and let $n=4q$.
Let $P_1$ and $P_2$ be cubes (third powers) of two paths on vertex sets $v_1,\dots,v_{n}$ and $v_{n+1},\dots,v_{2n}$  where the path ordering follows the labels.
Let $C$ be a cube of a cycle on $v_1,\dots,v_{2n}$ and vertex sequence $(v_{\sigma(1)},\dots, v_{\sigma(2n)})$  where $\sigma(i) \equiv 5i \bmod 2n$.
(Note that $\sigma$ is a bijection, since $5$ is coprime to $2n$.)
Let $F$ be a graph obtained from $P_1 \cup P_2 \cup C$ by adding a vertex $v_0$ and connecting it to $v_1,v_2,v_{n+1},v_{n+2}$, {and adding the edge $v_1 v_{n+1}$.}
Let $G$ be obtained from $F$ by blowing up the vertices $v_1,\dots,v_{2n}$ by a set $V_i$ of size $m$ each, where $m$ is sufficiently large.

Let $\cH = K_3(G)$.
We claim that $\cH$ has two $3$-uniform tight components,
corresponding to the triangles spanned by $P_1,P_2,v_0$ and $C$, respectively.
Indeed, since $G$ is a blow-up of $F$, it suffices to analyse $F$.
The triangles corresponding to $P_1$ can easily seen to belong to the same tight component in $K_3(F)$, and the same is true if we consider $P_2$ or $C$ instead.
Since the edges $v_0 v_1,\, v_0 v_2,\, v_0 v_{n+1},\, v_0 v_{n+2},\, v_1 v_{n+1}$ were added to $F$, we see that the triangles spanned by $P_1$ and $P_2$ can be joined by tight walks via $v_0$, and thus belong to the same tight component.
On the other hand, the choice of $n = 4q$ ensures that no edge of $C$ is shared with $P_1$, $P_2$ or $v_1 v_{n+1}$, so indeed we have two distinct tight components.
However, $G$ does not contain the square of a Hamilton cycle, since the vertex $v_{0}$ is not on a square of a cycle.

To conclude, we show that  $(G,\cH)$ has the property that any $\mu$-approximation $G'$ of $G$ contains a $3$-uniform zero-free Hamilton framework, provided that $1/m \ll \mu \ll 1$.
Indeed, let ${\cH}' = \cH \cap K_3({G}')$, and denote by $V_i'$ the remainder of the cluster $V_i$ in $G'$.
Now, if $v_0 \in V(G')$, then there is a spanning tight component $\cH_1 \subset \cH'$ with edges corresponding to the triangles of $P_1,P_2,v_0$.
On the other hand, if $v_0 \notin V(G')$, then there is a spanning tight component $\cH_2  \subset \cH'$  whose edges correspond to the (remaining) triangles of $C$.
Since $P_1$ and $C$ are cubes of paths and cycles respectively, both $\cH_1$ and $\cH_2$ contain a tetrahedron.
Moreover, it is also not hard to see that $\cH_1$ and $\cH_2$ each have a perfect fractional matching.
Indeed, in the case of $\cH_1$ we begin by matching $v_0$ arbitrarily on a triangle with ends in $V_1'$, $V_2'$ and delete those endpoints from these clusters.
Afterwards, we may find perfect fractional matchings in each $\cH'[V_{4i+1}'\cup \dots \cup V_{4i+4}']$ for $0 \leq i \leq q-1$.
Indeed, this follows by applying \cref{lemma:multipartite-matching} to the $4$-partite graph $\cH[V_{4i+1}\cup \dots \cup V_{4i+4}]$ (with $r = 4$, $k=3$, $\mu = 1/k$, $\mu' = \mu$).
This implies that $\cH'[V_{4i+1}'\cup \dots \cup V_{4i+4}']$ has perfect fractional matchings for each $0 \leq i \leq q-1$.

Repeating the same argument for $P_2$ gives a perfect fractional matching of $\cH_1$.
Similarly, we see that $\cH_2$ has a perfect fractional matching, by first partitioning each $V_i'$ into almost balanced clusters $V_i^1,\dots,V_i^4$ and then finding perfect fractional matchings in each $\cH'[V_{\sigma(i+1)}^1\cup V_{\sigma(i+2)}^2 \cup V_{\sigma(i+3)}^3 \cup V_{\sigma(i+4)}^4]$ for $0 \leq i \leq 2n-1$.
Hence $G'$ does indeed contain a $3$-uniform zero-free Hamilton framework.

\subsection{Searching for Hamilton frameworks} \label{sec:complexity}
In the following, we argue that one can search for a $k$-uniform Hamilton framework in a given  graph $G$ on $n$ vertices in time $O(n^{7k})$.
We did not make a serious effort to optimise the number in the exponent.

We begin by identifying the tight components of $K_k(G)$.
Let $D$ be the directed graph whose vertices are the ordered $k$-cliques of $G$ and a directed edge $K_1K_2$ whenever the last $k-1$ vertices of $K_1$ coincide with the first $k-1$ vertices of $K_2$.
Note that $|V(D)| = O(n^k)$ and $|E(D)| = O(n^{k+1})$.
Recall that two vertices in a directed graph are in the same \emph{strongly connected component} if they are on a common closed directed walk.
Observe that each tight component of $K_k(G)$ corresponds to at most $k!$ strongly connected components in $G$ (corresponding to all possible reorderings of an edge).
This allows us to find all tight components of $G$ by identifying the strongly connected components of $D$, using e.g. Kosajaru's algorithm (see, e.g.~\cite[Section 22.5]{CLRS2009}), which runs in time $O(|V(D)|+|E(D)|) = O(n^{k+1})$.

Now consider a tight component $C \subset K_k(G)$.
Since $K_k(G)$ has at most $n^k$ tight components, it suffices to show that one can check in time $O(n^{6k})$ whether $C$ admits a perfect fractional matching and contains a closed tight walk of length coprime to $k$.
Finding a perfect fractional matching can be phrased as solving an instance of linear programming with $n^k$ variables (one for each edge) and $n$ restrictions (one for each vertex).
This can be done in time $O(n^{6k})$ using interior-point methods~\cite{Sch03}.
Turning to aperiodicity, let $D'$ be the directed graph associated with $C$ in the same way as above.
Observe that a tight closed walk of length $\ell$ in $C$ corresponds to a  closed directed walk of length $\ell$ in $D'$ and vice versa.
Let $d$ be the greatest common divisor of all directed cycle lengths of $D'$.
Note that $d$ is also a common divisor of the lengths of all closed directed walks in $D'$.
Using a depth-first search in $D'$, one can determine $d$ in $O(|V(D')|+|E(D')|) = O(n^{k+1})$ steps~\cite{JS99}.
Hence, it is decidable in time $O(n^{k+1})$ whether $C$ contains a closed tight walk of length coprime to $k$.
Moreover, the same algorithm also finds such a walk when $d$ is coprime to $k$.

\subsection{Linked edges}\label{sec:linked-edges}
This section is dedicated to the proof of \cref{prop:linked-edges-for-free}.
We shall use the following lemma, which is a particular case of the Graph Removal Lemma~\cite{EFR86}.

\begin{lemma} \label{lemma:graphremoval}
	For every $\eps >0$ and $k \in \NATS$ there exists $\delta >0$ and $n_0 \in \NATS$ such that for all graphs $G$ on $n\geq n_0$ vertices the following holds:
	if one needs to delete more than $\eps n^2$ edges to eliminate all $k$-cliques of $G$, then $G$ contains at least $\delta n^k$ many $k$-cliques.
\end{lemma}

\begin{proof}[Proof of \cref{prop:linked-edges-for-free}]
	Clearly, we only need to show that there exists $\mu' \leq \mu$ such that every vertex has at least $\mu' n^k$ linked edges in $K_k(G)$.
	We assume $n$ is sufficiently large. 
	Let $v \in V(G)$ be an arbitrary vertex.
	Let $G_v \subseteq G -  v$ be the subgraph formed by all the edges which intersect $N_G(v)$.
	Note that each edge in $K_k(G_v)$ intersects $N_G(v)$ in at least $k-1$ vertices and does not contain $v$,
	and thus corresponds to a linked edge of $v$ in $K_k(G)$.
	It is therefore enough to show that $G_v$ contains at least $\mu' n^k$ many $k$-cliques.
	
	We want to show that, after the removal of any $(\mu^2/8) n^2$ edges from $G_v$, at least one $k$-clique remains in $G_v$.
	Indeed, suppose we remove $(\mu^2/8) n^2$ edges $Y$ from $G_v$.
	A double-counting argument reveals that at most $(\mu/2) n$ vertices had more than $(\mu/2) n$ edges removed from them, let $X$ be those vertices.
	Note that, in particular since $v \notin V(G_v)$, $v \notin X$.
	In $G$, remove the edges corresponding to $Y$ and also remove the vertices corresponding to $X$,
	to obtain a subgraph $G' \subseteq G$.
	Note that $v \in V(G')$.
	Clearly, $G'$ is a $\mu$-approximation of $G$.
	Since $(G, K_k(G))$ is a $\mu$-proto-robust $k$-uniform Hamilton framework,
	we deduce $K_k(G')$ is a Hamilton framework.
	Since $K_k(G')$ is a Hamilton framework, it contains a perfect fractional matching, which we call $\vec{w}$.
	In particular, since $v$ is covered by $\vec{w}$, there is at least one edge $e_1$ in $K_k(G')$ containing $v$.
	Note that $v(G') = v(G) - |X| \geq n - \mu n / 2 \ge 2k$.
	If every edge of $K_k(G')$ with non-zero weight in $\mathbf{w}$ intersected $v$, then we would have $2 \leq v(G')/k = \sum_{e \in K_k(G')} \vec{w}(e) = \sum_{e\colon v \in e} \vec{w}(e) = 1$, a contradiction.
	We deduce that there must also exist an edge $e_2 \in K_k(G')$ not containing $v$.
	Since $K_k(G')$ is tightly connected, there must exist a tight walk between $e_1, e_2$,
	and the first edge of that walk not containing $v$ is a linked edge of $v$ in $K_k(G')$.
	By construction, this corresponds to a $k$-clique in $G_v - Y$, as required.
	
	Assuming $n$ is sufficiently large, what we have shown implies that $G_v$ satisfies the assumptions of \cref{lemma:graphremoval}.
	Thus there exists $\mu' > 0$ such that the number of $k$-cliques in $G_v$ is at least $\mu' n^k$, as desired.
\end{proof}

\section{Technical tools}

In this appendix we prove lemmas concerning probability or regularity which were required in Section~\ref{sec:regularity} and~\ref{section:intermediate}.
Their involved arguments are by now quite standard.

\subsection{Probability} \label{sec:appendix-probability}
In this section, we show \cref{prop:distributed-matching} by means of a concentration argument.
We recall the following classic inequalities for random variables.

\begin{lemma}[Chernoff's inequality {\cite[Theorem 2.1]{JLR01}}]\label{lem:che}
	Let $0<\alpha<3/2$ and $X \sim \emph{\text{Bin}}(n,p)$ be a binomial random variable.
	Then $\Pr\left(|X - np | > \alpha n p \right) < 2e^{-\alpha^2np/3}$.
\end{lemma}

\begin{lemma}[McDiarmid's inequality~\cite{McDiarmid1989}] \label{theorem:mcdiarmid}
	Suppose $X_1, \dotsc, X_m$ are independent Bernoulli random variables and $b_1, \dotsc, b_m \in [0, B]$.
	Suppose $X$ is a real-valued random variable determined by $X_1, \dotsc, X_m$ such that changing the outcome of $X_i$ changes $X$ by at most $b_i$ for all $1 \leq i \leq m$.
	Then, for all $\lambda > 0$, we have
	\[ \Pr \left(|X - \expectation[X] | > \lambda \right) \leq 2 \exp \left(  -\frac{2 \lambda^2}{B \sum_{i=1}^m b_i} \right).  \]
\end{lemma}

\begin{proof}[Proof of \cref{prop:distributed-matching}]
	Let $\zeta = 4\pi/\gamma$ and $p= \zeta n^{-k+1}$.
	Consider a random selection $\cX \subset V(\cH)^k$ obtained by choosing each $k$-tuple of distinct vertices independently with probability $p$.
	Note that $2\zeta \leq \sqrt{\pi}$, as $\pi \ll \gamma$.
	So $\expectation(|\cX|) \leq p n^k \leq \zeta n$, together with Markov's inequality, gives $\Pr(|\cX| > \sqrt{\pi} n) \leq \Pr(|\cX| > 2 \zeta n) \leq 1/2$.

	Next, let us call two distinct $k$-tuples from $V(\cH)^k$ \emph{overlapping} if there is a vertex occurring in both.
	Note that there are at most $k^2 n^{2k-1}$ pairs of overlapping $k$-tuples.
	Denote by $\cA$ the random variable counting the number of such pairs, both of whose components are in $\cX$.
	We have $\expectation(\cA) \leq k^2 n^{2k-1} p^2 = (k\zeta)^2 n$.
	Moreover, $\pi \geq  4(k \zeta)^2$, since $\pi \ll \gamma \ll 1/k$.
	So another application of Markov's inequality yields $\Pr(\cA > \pi n) \leq \Pr(\cA > 4(k \zeta )^2 n) \leq 1/4$.

	Finally, let $\cB_F$ denote the number of tuples in $\cX$ that correspond to a (directed) edge of some $\cF \in \fF$.
	By assumption, we can bound $\expectation(\cB_\cF) \geq \gamma n^k p = 4\pi n$.
	Moreover, since $\cB_\cF$ is binomially distributed, Chernoff's inequality (\cref{lem:che}) applied with $\alpha =1/2$ implies
	\begin{align*}
		\Pr(\cB_\cF \leq  2 \pi n) \leq 2\exp(-(1/2)^2 4 \pi n/3) = 2\exp(- \pi n/3).
	\end{align*}
	Since $1/n \ll \eta \ll \pi$, we have $2^{\eta n} \cdot 2 \exp(- \pi n/3) < 1/4$.
	It follows by the union bound that the events ``$|\cX| \leq \sqrt{\pi} n$'', ``$\cA \leq \pi n$'', and ``$\cB_\cF \geq 2 \pi n$ for all $\cF \in \fF$'' happen simultaneously with positive probability.
	Let us fix such a choice of $\cX$.
	To obtain the desired matching $M$, we simply take the edges counted by $\cB_\cF$ excluding the at most $\pi n$ overlapping ones.
\end{proof}

\subsection{Regularity}\label{sec:appendix-regularity}
In this section, we show two well-known facts about regularity, \cref{prop:random-refinement,prop:regularity-joint degree}.
For the proof of \cref{prop:random-refinement}, we will use the following handy criterion for regularity.

\begin{proposition}[Codegree criterion for regularity {\cite[Propositions 2.4 and 2.5]{DLR96}}] \label{prop:codegree-regularity}
	Let $1/m \ll \eps, d$.
	Let $G$ be a bipartite graph of density $d$ with colour classes $A$ and $B$ of size at least $m$ each.
	Then the following holds:
	\begin{enumerate}[\upshape (i)]
		\item if the pair $(A,B)$ is $\eps$-regular, then there are at least $(1-5\eps)|A|^2/2$ pairs of vertices $u,v \in A$ that satisfy $\deg(u),\deg(v) \geq ({d}-\eps) |B|$ and $|N(u) \cap N(v)| < (d+\eps)^2 |B|$, and

		\item if at least $(1-5\eps)|A|^2/2$ pairs of vertices $u,v \in A$ satisfy $\deg(u),\deg(v) \geq (d-\eps) |B|$ and $|N(u) \cap N(v)| < (d+\eps)^2 |B|$, then the pair $(A,B)$ is $(16\eps)^{1/5}$-regular.
	\end{enumerate}
\end{proposition}

\begin{proof}[Proof of \cref{prop:random-refinement}]
	Denote the minimum cluster size of $\fV$ by $m$.
	Since $(G, \fV)$ is a $(1+\eps)$-balanced $R$-partition,
	we deduce that every cluster of $\fV$ has size at most $(1+\eps)m$.
	By Chernoff's inequality (\cref{lem:che}), it follows that,
	for every $1 \leq i \leq t$ and $1\leq j \leq q$,
	\begin{align}
		\label{equ:random-refinement-cluster-size}
		(1-\eps)|V_i|/q  \leq |V_{i,j}| \leq (1+\eps)|V_i|/q
	\end{align}
	fails with probability at most $\exp({-\Omega(mq^{-1})})$.

	Denote by $d_{i,i'}$ the density of the pair $(V_i,V_{i'})$ for every $1 \leq i,i' \leq t$ and note that $d_{i,i'} \geq d$.
	For all $1 \leq j, j' \leq q$,
	an edge is included in $e_G(V_{i,j}, V_{i',j'})$ with probability $q^{-2}$,
	thus $\expectation[e_G(V_{i,j}, V_{i',j'})] = d_{i,i'} |V_i| |V_{i'}| q^{-2}$.
	Changing the random outcome of a single vertex in $V_{i} \cup V_{i'}$ changes $e_G(V_{i,j}, V_{i',j'})$ by at most $\max \{ |V_i|, |V_{i'}| \} \leq 2m$.
	Assuming \eqref{equ:random-refinement-cluster-size} holds for all $i, j$,
	an application of McDiarmid's inequality~(\cref{theorem:mcdiarmid}) gives that, for every $1 \leq i, i' \leq t$ and $1\leq j, j' \leq q$,
	\begin{align} 		\label{equ:random-refinement-dense}
		|d(V_{i,j},V_{i',j'})  - d_{i,i'}| \leq 2\eps
	\end{align}
	will fail with probability at most $\exp(-\Omega(m q^{-4}))$.

	Fix $1 \leq i,i' \leq t$ and $1 \leq j \leq q$.
	By \cref{prop:codegree-regularity} there are at least {$(1-5\eps)|V_i|^2/2$} pairs of vertices $u,v \in V_i$ that satisfy $\deg(u{, V_{i'}}),\deg(v{, V_{i'}}) \geq ({d_{i,i'}}-\eps) {|V_{i'}|}$ and $|N(u) \cap N(v) \cap V_{i'}| < (d_{i,i'}+\eps)^2 {|V'_i|}$.
	Let $X_{i,i'}$ be the set of pairs in $V_i$ as before, let $X_{i,i',j} \subseteq X_i$ be those pairs which are contained in $V_{i,j}$.
	Note that $\expectation[|X_{i,i',j}|] = |X_{i,i'}|q^{-2}$.
	Changing the random outcome of a single vertex in $V_i$ changes $X_{i,i',j}$ by at most $|V_{i}| \leq 2m$.
	Assuming item~\eqref{equ:random-refinement-cluster-size} holds for $i, j$,
	an application of McDiarmid's inequality, shows that
	\begin{equation}
		|X_{i,i',j}| \geq (1 - 10 \eps)|V_{i,j}|^2/2
	\end{equation}
	fails with probability at most $\exp(-\Omega(mq^{-4}))$.
	Further applications of McDiarmid's inequality show that, for all fixed $\{u,v\} \in X_{i,i'}$,
	and all $1 \leq j' \leq q$,
	the properties
	\begin{align}
		|N(u) \cap V_{i',j'}|           & \geq ({d_{i,i'}}-2\eps) {|V_{i',j'}|},                                         \\
		|N(v) \cap V_{i',j'}|           & \geq ({d_{i,i'}}-2\eps) {|V_{i',j'}|}, \text{ and}                             \\
		|N(u) \cap N(v) \cap V_{i',j'}| & < (d_{i,i'}+2\eps)^2 {|V_{i',j'}|} \label{equation:random-refinement-codegree}
	\end{align}
	fail with probability at most $\exp(-\Omega(mq^{-2}))$ each.

	Using the union bound,
	we conclude that the probability of failure of items~\eqref{equ:random-refinement-cluster-size} to~\eqref{equation:random-refinement-codegree} for some $1 \leq i, i' \leq t$, some $1 \leq j, j' \leq q$ or some $\{u,v\} \in X_{i,i'}$ is at most
	\begin{align*}
		tq\exp(-\Omega(mq^{-1})) + (t^2q^2 + t^2) \exp(-\Omega(-mq^{-4})) + t^2q(2m)^2 \exp(-\Omega(mq^{-2})) \leq 1/3,
	\end{align*}
	where in the last bound we used $m \ge n/2t$ and $1/n \ll 1/q, 1/t$.

	It remains to show that a partition $\fV^\ast = \{ V_{i,j} \}_{1 \leq i \leq t, 1 \leq j \leq 1}$ satisfying items~\eqref{equ:random-refinement-cluster-size} to~\eqref{equation:random-refinement-codegree} is such that $(G, \fV^\ast)$ is a $(1+\eps')$-balanced $(\eps', d)$-regular $R^\ast$-partition.
	It is clear that it is an $R^\ast$-partition.
	To see that $\fV^\ast$ is $(1+\eps')$-balanced, let $m'$ be the minimal size of a cluster in $\fV$ and let $V_{i,j}$ be any other cluster.
	By item~\eqref{equ:random-refinement-cluster-size}, we have $|V_{i,j}| \leq (1+\eps)|V_i|q^{-1} \leq (1+\eps)^2 m q^{-1} \leq (1+\eps)^2 (1 - \eps)^{-1} m' \leq (1+\eps')m'$,
	where we used that $\fV$ was $(1+\eps)$-balanced and $\eps \ll \eps'$.
	Finally, the claim that the partition is $(\eps', d)$-regular follows readily from items~\eqref{equ:random-refinement-dense} to~\eqref{equation:random-refinement-codegree} together with \cref{prop:codegree-regularity} and $\eps \ll \eps'$.
\end{proof}

\begin{proof}[Proof of \cref{prop:regularity-joint degree}]
	Using the definition of $(\eps,d)$-regularity, it is not hard to see that all but at most $(k-1)\eps m$ vertices of $V_1$ have at least $(d-\eps)m$ neighbours $V_j' \subset V_j$ for $2 \leq j \leq k$.
	Note that the pair $(V_i,V_j')$ is still $(2\eps,d)$-regular as $1/m \ll \eps \ll d$.
	Thus, all but at most $(k-2)2\eps m$ vertices of $V_2$ have at least $(d-2\eps)^2 m$ neighbours $V_j'' \subset V_j'$ for $3 \leq j \leq k$.
	Continuing this way, we see that all but at most  {$\eps' m^{k} / k!$} sets $S=\{v_1,\dots,v_k\}$ with $v_i \in V_i$ have the property that
	${|\bigcap_{1 \leq i \leq k-1} N(v_i)\cap V_k|} \geq (d-\eps')^{k-1} m$.
	By symmetry, the same holds by considering the clusters in any of its $k!$ possible orderings.
	Hence the result follows.
\end{proof}

\section{Structural tools}

In this section, we give proofs of a few structural tools used in Section~\ref{section:intermediate} that we have borrowed from our joint work with Garbe, Lo and Mycroft~\cite{GLL+21}.

\subsection{Allocation} \label{sec:flow}

In this section, we show \cref{prop:flow}.

\begin{proof}[Proof of \cref{prop:flow}]
	We will consider \emph{integer flows} $f$ defined on the $2$-graph $\hat{\cH}$, which are defined as follows.
	Consider the set $D$ which contains the directed edges $(u,v)$ and $(v,u)$ for every unordered edge $uv \in E(\hat{\cH})$.
	An integer flow $f$ is an assignment of integers to the directed edges in $D$, in such a way that $f(x,x')=-f(x',x)$ holds for every edge $xx' \in E(\hat{\cH})$.
	
	Consider a part $U \in \fU$.
	Since $\hat{\cH}{[U]}$ is connected and $\sum_{v \in {U}} \vec{b}(v) = 0$, we can choose an integer flow $f$ such that each vertex $x \in {U}$ receives flow ${\sum_{xx' \in \hat{\cH}{[U]}} f(x',x)} = \vec{b}(x)$.
	Indeed, we can construct such a flow in an iterative fashion, starting from an empty flow.
	If we are not done with our current flow $f$, then there must exist distinct vertices $x$ and $y$ in $U$ such that $0 \leq {\sum_{xx' \in E(\hat{\cH}{[U]})} f(x',x)} < \vec{b}(x)$
	and $\vec{b}(y) < {\sum_{yy' \in E(\hat{\cH}{[U]})} f(y',y)} \leq 0$.
	We can send a unit flow from $y$ to $x$ along a path joining those vertices,
	by changing the flow in the edges of the path by one.
	Clearly, this iteration takes at most $\sum_{v \in {U}} |\vec{b}(v)| \leq sn$ steps.
	Each step changes the flow of an edge at most by one,
	thus we have that in the final flow, each $xx' \in E(\hat{\cH}{[U]})$ satisfies $|f(x,x')| \leq  sn$.
	Repeating this for the others parts of $\fU$ yields the desired flow.
	
	We now define $\vec{w}\in \mathbb{Z}^{E(\cH)}$ as follows.
	Initially, we set $\vec{w} = 0$.
	Consider each $xx' \in E(\hat{\cH})$ in turns.
	Suppose there is a positive flow $c$ from~$x$ to~$x'$, that is $f(x,x') = c \ge 0$.
	Pick $Q_{xx'} \in L_\cH(x) \cap L_\cH(x')$.
	We decrease the edge weight~$\vec{w}(\{x\}\cup Q_{xx'})$ by~$c$ and increase the edge weight~$\vec{w}(\{x'\}\cup Q_{xx'}) $ by~$c$.
	
	The resulting $\vec{w}$ satisfies $\vec b (v) = \sum_{e \colon v \in e} \vec w(e)$ for all $v \in V(\cH)$ by our construction.
	Note that for each edge $e \in E(\cH)$ there are at most $k n$ edges $xx' \in E(\hat{\cH})$ such that $e = \{x\}\cup  Q_{xx'}$ (indeed, there are $k$ choices for $x \in e$ and at most $n$ choices for $x' \notin e$).
	Since each $xx'$ has flow of size at most $sn$,
	we get $\maxnorm{\vec{w}} \le ksn^2$.
\end{proof}

\subsection{Fractional matchings and bounded degree covers}\label{sec:cover}
Here we prove~\rf{lem:kcover}.
To do so, we require a few definitions.
Let $k \in \mathbb{N}$.
We say that a hypergraph~$\cG$ is a \emph{$[k]$-graph} if $|e| \in [k]$ for every edge $e \in E(\cG)$.
We use the following non-standard form of restriction for $[k]$-graphs.
Let $\cG$ be a $[k]$-graph and let $X \subseteq V(\cG)$.
Then $\cG \cap X$ is the $[k]$-graph with vertex set $X$ and edge set
$E( \cG \cap X) := \{ e \cap X \colon e \in E(\cG)\}$.
For convenience, we write in the following $e \in \cG$ instead of $e \in E(\cG)$ and $\REALS^\cG$ instead of $\REALS^{E(\cG)}$ if the context is clear.

The following proposition follows immediately from these definitions.

\begin{proposition} \label{proposition:restrictedmatching}
	Let $\cG$ be a $[k]$-graph which admits a perfect fractional matching~$\vec{w}$ and let $X \subseteq V(\cG)$.
	Then $\vec{w}' \in \mathbb{R}^{\cG \cap X}$ defined by $\vec{w}' (e') = \sum_{{e \in \cG \colon e \cap X = e'}} \vec{w}(e)$ for each edge $e' \in \cG \cap X$ yields a perfect fractional matching $\vec{w}'$ in $\cG \cap X$.
\end{proposition}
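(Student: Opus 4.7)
The plan is to verify directly that the claimed weighting $\vec{w}'$ is a (perfect) fractional matching on $\cG \cap X$, interpreting ``perfect'' here in the standard way for possibly non-uniform $[k]$-graphs, namely that each vertex has total incident weight equal to $1$ (which is the only sensible reading once the edge sizes vary).

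First I would unpack the definition of $\cG \cap X$. Each edge $e' \in \cG \cap X$ is of the form $e' = e \cap X$ for some $e \in \cG$, and the map $e \mapsto e \cap X$ sends the edges of $\cG$ onto the edges of $\cG \cap X$ (possibly many-to-one, which is exactly why the definition of $\vec{w}'(e')$ sums $\vec{w}(e)$ over all $e \in \cG$ with $e \cap X = e'$). In particular, for every vertex $v \in X$, the preimage under this map of the star at $v$ in $\cG \cap X$ is precisely the star at $v$ in $\cG$, because any $e \in \cG$ contains $v$ iff $e \cap X$ contains $v$.

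The key step is then a single exchange-of-summation computation. Fixing $v \in X$, I would write
\begin{align*}
  \sum_{e' \in \cG \cap X,\, v \in e'} \vec{w}'(e')
  &= \sum_{e' \in \cG \cap X,\, v \in e'} \sum_{\substack{e \in \cG \\ e \cap X = e'}} \vec{w}(e) \\
  &= \sum_{\substack{e \in \cG \\ v \in e \cap X}} \vec{w}(e)
  = \sum_{e \in \cG,\, v \in e} \vec{w}(e) = 1,
\end{align*}
where the last equality uses that $\vec{w}$ is a perfect fractional matching on $\cG$. This shows simultaneously that $\vec{w}'$ is a fractional matching (weights sum to at most $1$, in fact exactly $1$, at every vertex of $X$) and that it is perfect. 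Non-negativity of $\vec{w}'$ is immediate from non-negativity of $\vec{w}$, and $\vec{w}'(e') \leq 1$ follows from the $v$-star bound applied to any $v \in e'$.

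There is no real obstacle here: the statement is essentially a tautology about pushing a perfect fractional matching through the projection $e \mapsto e \cap X$, and the only thing to be careful about is to interpret ``perfect'' via the per-vertex condition $\sum_{e \ni v} \vec{w}(e) = 1$, which is the notion compatible with non-uniform edge sizes and which is already implicit in the way the paper uses perfect fractional matchings as weighted covers of the vertex set.
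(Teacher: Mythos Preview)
Your proof is correct and is exactly the immediate verification the paper has in mind; indeed, the paper does not write out a proof at all, simply stating that the proposition ``follows immediately from these definitions''. Your exchange-of-summation argument is the natural way to unpack that.
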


\begin{proposition} \label{proposition:edgeoutside}
	Let $\cG$ be a $[k]$-graph on $n$ vertices which admits a perfect fractional matching~$\vec{w}$.
	Then for any set $X \subseteq V(\cG)$ with $|X| < n/k$, there is an edge of~$\cG$ which does not intersect~$X$.
\end{proposition}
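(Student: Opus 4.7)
The plan is to proceed by contradiction via a simple double counting, using only the defining identity of a perfect fractional matching (every vertex has total incident weight exactly $1$) together with the fact that every edge in a $[k]$-graph has size at most $k$.

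First I would unpack the meaning of a perfect fractional matching in a $[k]$-graph. Since the hypergraph is not uniform, I would take it to mean (consistent with \cref{proposition:restrictedmatching}) that $\sum_{e \ni v} \vec{w}(e) = 1$ for every vertex $v \in V(\cG)$. Summing this identity over all $n$ vertices and swapping the order of summation yields
\[
n \;=\; \sum_{v \in V(\cG)} \sum_{e \ni v} \vec{w}(e) \;=\; \sum_{e \in \cG} |e|\,\vec{w}(e) \;\leq\; k \sum_{e \in \cG,\, e \neq \emptyset} \vec{w}(e),
\]
so the total weight sitting on non-empty edges is at least $n/k$.

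Next I would assume, for contradiction, that every non-empty edge of $\cG$ intersects $X$. Then for each non-empty edge $e$ we have $|e \cap X| \geq 1$, and bounding the contribution of $X$ to the vertex-weight identity gives
\[
|X| \;=\; \sum_{v \in X} \sum_{e \ni v} \vec{w}(e) \;=\; \sum_{e \in \cG} |e \cap X|\,\vec{w}(e) \;\geq\; \sum_{e \in \cG,\, e \neq \emptyset} \vec{w}(e) \;\geq\; \frac{n}{k}.
\]
This contradicts the hypothesis $|X| < n/k$, so some non-empty edge must avoid $X$.

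There is essentially no obstacle here: the argument is a one-line double count, and the only subtle point is fixing the convention for a perfect fractional matching in a $[k]$-graph (which is forced by the naming in \cref{proposition:restrictedmatching}). Empty edges in $\cG$ are harmless since they contribute $0$ to every vertex sum and the conclusion concerns only non-empty edges.
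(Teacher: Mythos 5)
Your proof is correct and is essentially the same double-counting argument as in the paper: both use $\sum_{e}|e|\vec w(e)=n$ to get total weight at least $n/k$ on non-empty edges, and bound the weight of edges meeting $X$ by $|X|$ via the vertex identity. The only difference is that you phrase it as a contradiction while the paper argues directly; your reading of "perfect fractional matching" for $[k]$-graphs also matches the convention the paper uses in its proof.
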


\begin{proof}
	Since $\sum_{e \in \cG \colon v \in e} \vec{w}(e) = 1$ for every~$v \in V(\cG)$, we have $\sum_{e \in \cG \colon e \cap X \neq \emptyset} \vec{w}(e) \leq |X|$ and $\sum_{e \in \cG} |e| \vec w(e) = n$.
	It follows from the latter that $\sum_{e \in \cG\colon} \vec{w}(e) \geq n / k > |X|$, so there must be some edge $e$ of~$\cG$ (in particular, one with $\vec{w}(e) > 0$) which does not intersect~$X$.
\end{proof}

\begin{proposition} \label{proposition:fewweights}
	Let $\cG$ be a $[k]$-graph on $n$ vertices which admits a perfect fractional matching~$\vec{w}$.
	Then there exists a perfect fractional matching~$\vec{w}'$ in~$\cG$ in which at most $n$ edges have non-zero weight.
\end{proposition}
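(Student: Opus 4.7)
The plan is to argue by a standard extreme-point / basic-feasible-solution argument from linear programming: among all perfect fractional matchings of $\cG$, pick one whose support $F := \{e \in \cG : \vec{w}'(e) > 0\}$ has minimum size, and show that this minimum size cannot exceed $n$.

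More concretely, suppose $\vec{w}$ is a perfect fractional matching with support $F$, and consider the vertex-edge incidence matrix $A \in \mathbb{R}^{V(\cG) \times F}$ defined by $A_{v,e} = 1$ if $v \in e$ and $0$ otherwise. Writing $\vec{w}|_F$ for the restriction of $\vec{w}$ to $F$, the perfect fractional matching condition says precisely $A \vec{w}|_F = \mathbf{1}$, where $\mathbf{1}$ is the all-ones vector indexed by $V(\cG)$. Assume for contradiction that $|F| > n$. Since $A$ has only $n = |V(\cG)|$ rows, its columns must be linearly dependent, so there exists a nonzero vector $\vec{v} \in \mathbb{R}^F$ with $A\vec{v} = 0$. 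After possibly replacing $\vec{v}$ by $-\vec{v}$, we may assume $\vec{v}$ has at least one negative entry.

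Extending $\vec{v}$ by zero outside $F$, the perturbation $\vec{w} + t\vec{v}$ satisfies $A(\vec{w} + t\vec{v}) = \mathbf{1}$ for every $t \in \mathbb{R}$, so it automatically meets every vertex's saturation constraint. Define
\[
t^* := \min\left\{ -\frac{\vec{w}(e)}{\vec{v}(e)} : e \in F,\ \vec{v}(e) < 0 \right\},
\]
which is strictly positive because $\vec{w}(e) > 0$ for every $e \in F$. Then $\vec{w} + t^*\vec{v}$ is non-negative, satisfies the saturation constraints at every vertex, and has at least one more zero entry than $\vec{w}$ (namely the edge achieving the minimum). Hence $\vec{w} + t^*\vec{v}$ is a perfect fractional matching with strictly smaller support than $\vec{w}$, contradicting the minimality of $F$.

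There is no real obstacle here; the only thing to be a bit careful about is the sign argument for $\vec{v}$ and the strict positivity of $t^*$, both of which are straightforward because every coordinate of $\vec{w}|_F$ is strictly positive by definition of $F$.
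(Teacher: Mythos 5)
Your proof is correct and is in essence the same argument as the paper's: the paper invokes the conic version of Carath\'eodory's theorem for the cone generated by the edge-incidence vectors $\{A_\cG \vec{1}_e\}_{e \in \cG}$, and your minimal-support, kernel-perturbation argument is exactly the standard proof of that theorem, inlined. The one point you leave implicit is that the perturbed weights remain in $[0,1]$, but this is automatic: after choosing $t^*$ so that all weights stay non-negative, for any non-empty edge $e$ and any $v \in e$ one has $\vec{w}(e) + t^*\vec{v}(e) \leq \sum_{f \ni v}\bigl(\vec{w}(f) + t^*\vec{v}(f)\bigr) = 1$.
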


\begin{proof}
	Let $P = \{ \vec{1}_e \colon e \in \cG \} \subseteq \{0,1\}^{V(\cG)}$ be the collection of all indicator vectors of the edges of $\cG$.
	Since $\cG$ admits a perfect fractional matching, the all-ones vector $\vec{1} \in \mathbb{R}^{V(\cG)}$ lies in the positive cone generated by the vectors of~$P$, i.e. it can be written as a positive combination of those vectors.
	By the conic version of Carath\'eodory's theorem (see, e.g.,~\cite[Section 1.2, Proposition 1.2.1]{Bertsekas2009}), $\vec{1}$ can in fact be written as a positive combination of at most $n$ vectors in~$P$, say $\vec{1} = \sum_{i \in [n] }\lambda_i \vec{1}_{e_i}$ for some constants $\lambda_i \ge 0 $ and some edges $e_1, \dotsc, e_n \in \cG$.
	It follows immediately that $\lambda_i \leq 1$ for all $i \in [n]$.
	The proof is complete by setting $\vec{w}'(e_i) = \lambda_i$ for each $i \in [n]$, and $\vec{w}'(e) = 0$ for every other edge $e$.
\end{proof}

\begin{proof}[Proof of \cref{lem:kcover}]
	Fix $k$.
	We say that a \emph{$\Delta$-cover} of a hypergraph is a subgraph with minimum vertex-degree at least $1$ and maximum vertex-degree at most $\Delta$.
	We will show the stronger result that every $[k]$-graph with a perfect fractional matching has a $(k^2 + 1)$-cover.
	Suppose for a contradiction that this does not hold, and let $\cG$ be a counterexample of minimal order.
	So $\cG$ is a $[k]$-graph on $n$ vertices which admits a perfect fractional matching~$\vec{w}$ but does not admit a $(k^2+1)$-cover.
	By \cref{proposition:fewweights}, we may assume that at most $n$ edges of~$\cG$ have non-zero weight in~$\vec{w}$.
	So we may assume that $n > k^2+1$.
	Let $\cH$ be the spanning subgraph of~$\cG$ consisting of all such edges, namely $\cH = \{e \in \cG \colon \vec{w}(e) \ne 0 \}$.
	Define $Y \subseteq V(\cH)$ to consist of all vertices of~$\cH$ with degree at least $k^2 + 1$ in~$\cH$.
	Then $(k^2 + 1) |Y| \leq k |\cH| \leq k n$, so $|Y| < n/k$.
	By \cref{proposition:edgeoutside}, it follows that there is a non-empty edge~$e^* \in \cH$ which does not intersect~$Y$; choose such an edge and set $X := V(\cH) \sm e^*$.
	By \cref{proposition:restrictedmatching}, there exists a perfect fractional matching~$\vec{w}'$ in $\cH \cap X$, and since $\cG$ was a minimal counterexample, it follows that $\cH \cap X$ admits a $(k^2+1)$-cover~$\cC'$.
	We now form a $(k^2+1)$-cover~$\cC$ in~$\cH$ as follows.
	For each edge~$e \in \cC'$ choose an edge~$f(e) \in \cH$ with $e \subseteq f(e)$ and $f(e) \cap X = e$ (such an edge $f(e)$ must exist by definition of $\cH \cap X$), and set $\cC := \{f(e) \colon e \in \cC'\} \cup \{e^*\}$.
	Then each vertex of~$X$ is contained in the same number of edges in $\cC$ as in $\cC'$ (that is, between $1$ and $k^2+1$).
	Furthermore, each vertex of~$e^*$ is contained in \emph{at least} one edge of~$\cC$ (namely $e^*$) and \emph{at most} $k^2$ edges of~$\cC$ (since $e^*$ does not intersect~$Y$, so each vertex of $e^*$ has degree at most $k^2$ in $\cH$, and $\cC \subseteq \cH$).
	We conclude that $\cC$ is a $(k^2+1)$-cover of~$\cH$ and therefore of~$\cG$, as required.
\end{proof}

\end{document}